\newcommand{\C}{\mathbb{C}}
\newcommand{\F}{\mathbb{F}}
\newcommand{\Fbar}{{\overline{\F}}}
\newcommand{\G}{\mathbb{G}}
\newcommand{\PP}{\mathbb{P}}
\newcommand{\Q}{\mathbb{Q}}
\newcommand{\Z}{\mathbb{Z}}
\newcommand{\Qbar}{{\overline{\Q}}}
\newcommand{\rhobar}{{\overline{\rho}}}
\newcommand{\fq}{\mathfrak q}
\newcommand{\calN}{\mathcal{N}}
\newcommand{\calO}{\mathcal{O}}
\newcommand{\calT}{\mathcal{T}}
\newcommand{\Fp}{\mathfrak{p}}
\newcommand{\Fq}{\mathfrak{q}}
\newcommand{\Ff}{\mathfrak{f}}
\newcommand{\n}{\mathfrak{n}}
\DeclareMathOperator{\Aut}{Aut}
\DeclareMathOperator{\End}{End}
\DeclareMathOperator{\Frob}{Frob}
\DeclareMathOperator{\Gal}{Gal}
\DeclareMathOperator{\Hom}{Hom}
\DeclareMathOperator{\Jac}{Jac}
\DeclareMathOperator{\lcm}{lcm}
\DeclareMathOperator{\Norm}{Norm}
\DeclareMathOperator{\ord}{ord}
\DeclareMathOperator{\tr}{tr}
\DeclareMathOperator{\Tr}{Tr}
\newcommand{\nr}{{\operatorname{nr}}}
\newcommand{\vv}{\upsilon}
\newcommand{\GL}{\operatorname{GL}}
\newcommand{\PSL}{\operatorname{PSL}}
\newcommand{\SL}{\operatorname{SL}}
\numberwithin{equation}{section}
\newtheorem{theorem}[equation]{Theorem}
\newtheorem{lemma}[equation]{Lemma}
\newtheorem{corollary}[equation]{Corollary}
\newtheorem{proposition}[equation]{Proposition}
\theoremstyle{definition}
\newtheorem{definition}[equation]{Definition}
\newtheorem{conjecture}[equation]{Conjecture}
\theoremstyle{remark}
\newtheorem{remark}[equation]{Remark}
\definecolor{darkgreen}{rgb}{0,0.5,0}
\let\@wraptoccontribs\wraptoccontribs
\begin{document}

\title[On Darmon's program, I]{On Darmon's program \\ for the generalized Fermat equation, I}


\author{Nicolas Billerey}
\address{Laboratoire de Math\'ematiques Blaise Pascal,
    Universit\'e Clermont Auvergne et CNRS, 
    Campus universitaire des C\'ezeaux,
    3, place Vasarely,
    63178 Aubi\`ere Cedex, France}
\email{nicolas.billerey@uca.fr}

\author{Imin Chen}

\address{Department of Mathematics, Simon Fraser University\\
Burnaby, BC V5A 1S6, Canada } \email{ichen@sfu.ca}

\author{Luis Dieulefait}

\address{Departament de Matem\`atiques i Inform\`atica,
Universitat de Barcelona (UB),
Gran Via de les Corts Catalanes 585,
08007 Barcelona, Spain; 
Centre de Recerca Matem\`atica (CRM), Edifici C, Campus Bellaterra, 08193 Bellaterra, Spain}
\email{ldieulefait@ub.edu}

\author{Nuno Freitas}
\address{
Instituto de Ciencias Matem\'aticas, CSIC, 
Calle Nicol\'as Cabrera
13--15, 28049 Madrid, Spain}
\email{nuno.freitas@icmat.es}

\thanks{Billerey was supported by the ANR-23-CE40-0006-01 Gaec project. Chen was supported by NSERC Discovery Grant RGPIN-2017-03892. Freitas was partly supported by the European Union's Horizon 2020 research and innovation programme under the Marie Sk\l{l}odowska-Curie grant 
agreement No.\ 747808 and the grant {\it Proyecto RSME-FBBVA $2015$ Jos\'e Luis Rubio de Francia}; Dieulefait and Freitas were partly supported by the PID2019-107297GB-I00 grant of the MICINN
(Spain); Dieulefait was partly supported by the Spanish State Research Agency, through the Severo Ochoa and Mar\'ia de Maeztu Program for Centers and Units of Excellence in R\&D (CEX2020-001084-M)}

\date{\today}

\keywords{Generalized Fermat Equation, modular method, Frey abelian varieties}
\subjclass[2020]{Primary 11D41}

\begin{abstract} 
In 2000, Darmon described a program to study the genera\-lized Fermat equation using modularity of abelian varieties of $\GL_2$-type over totally real fields. The original approach was based on hard open conjectures, which have made it difficult to apply in practice. In this paper, building on the progress surrounding the modular method from the last two decades, we analyze and expand the current limits of this program by developing all the necessary ingredients to use Frey abelian varieties for new Diophantine applications. In particular, we deal with all but the fifth and last step in the modular method for Fermat equations of signature~\((r,r,p)\) in almost full generality.

As an application, for all integers~$n \geq 2$, we give a resolution of the generalized Fermat equation $x^{11} + y^{11} = z^n$
for solutions $(a,b,c)$ such that~\(a + b\) satisfies certain \(2\)- or~\(11\)-adic conditions.

Moreover, the tools developed can be viewed as an advance in addressing a difficulty not treated in Darmon's original program: even assuming `big image' conjectures about residual Galois representations, one still needs to find a method to eliminate Hilbert newforms at the Serre level which do not have complex multiplication. In fact, we are able to reduce the problem of solving $x^5 + y^5 = z^p$ to Darmon's `big image conjecture', thus completing a line of ideas suggested in his original program, and notably only needing the Cartan case of his conjecture.

\end{abstract}

\dedicatory{Dedicated to the memory of Bas Edixhoven}

\maketitle


\section{Introduction}

This paper is the first part of a series of works about Darmon’s program for the generalized Fermat equation. This manuscript is not intended for publication. A shorter version of this work has been published~\cite{xhyper_vol1_published}\footnote{{\bf Warning.} The citations to `On Darmon's program for the generalized Fermat equation, I' in the second part of this series~\cite{xhyper_vol2}   follow  the numbering of this document  and not of the shorter published version.}.

In this volume we develop theoretical tools and apply our results on a concrete case. In the next volumes, we explore computational aspects and obtain further applications of the results proved here.

\subsection{The generalized Fermat equation}

Since Wiles' groundbreaking proof~\cite{Wiles} of Fermat's Last Theorem, attention has shifted towards the study of the \emph{generalized Fermat equation}, namely, the equation
\begin{equation}
 Ax^r+By^q=Cz^p, \qquad  r,q,p \in \mathbb{Z}_{\geq 2}, \qquad \frac{1}{r}+\frac{1}{q}+\frac{1}{p}<1,
\label{E:GFE}
\end{equation}
where $A,B,C \in \Z_{\neq 0}$ are coprime. 

We say that a solution $(a,b,c) \in \Z^3$ of \eqref{E:GFE} is {\it
non-trivial} if it satisfies $abc \neq 0$ and we call it {\it primitive} if 
$\gcd(a,b,c) = 1$. We call a {\it signature} any triple of exponents~$r,q,p$ as above
and say that~\eqref{E:GFE} 
is a {\it Fermat-type equation of  signature $(r,q,p)$}. We reserve the letter~$p$ for a prime that is varying while the others exponents are fixed. This way, fixing a prime~$r$, the signatures $(r,r,p)$ and~$(p,p,r)$ correspond to different infinite families of Fermat-type equations.
The generalized Fermat equation is the subject of the following conjecture, which is known to be a consequence of the $abc$-conjecture (see \cite[Section~5.2]{DG}). 

{\bf Conjecture.} Fix $A,B,C \in \Z$ coprime.
Then, there are only finitely many triples~\((x^r, y^q, z^p)\) such that~\(x, y, z\) are coprime integers and~\(r, q, p\) are integers~\(\geq 2\) satisfying~\(\frac1r + \frac1q + \frac1p < 1\) and~\(Ax^r + By^q = Cz^p\).

In the case when $A = B = C = 1$ and $r,q,p \ge 3$, the finite list of non-trivial primitive solutions was conjectured to be empty by several people (see \cite{beal-survey}) including Beal who has funded a monetary prize \cite{beal-prize} for its resolution or counterexample.

A result of Darmon--Granville~\cite{DG} states that if we fix $A,B,C$ and the signature $(r,q,p)$ then there are only finitely many non-trivial primitive solutions to~\eqref{E:GFE} as predicted.
This is proven by a clever application of the Chevalley-Weil theorem which reduces the result to Faltings' theorem on the finiteness of rational points on curves of genus $\ge 2$. Apart from this result, almost all progress towards the above conjecture was obtained using extensions of Wiles' proof of Fermat's Last Theorem that rely on modularity of elliptic curves.

\subsection{The modular method}

The crucial link between modularity of elliptic curves and Galois representations with Diophantine equations 
goes back to the work of Serre~\cite{Serre87} and  Darmon~\cite{Darmon44p, DarmonNN2} who studied instances of~\eqref{E:GFE} assuming certain modularity conjectures.
In a nutshell, the core idea is to
map any putative solution of a Diophantine equation to 2-dimensional Galois representations valued in~$\GL_2(\F_p)$ of bounded conductor, and to show these representations arise from modular forms of weight 2 and level equal to their Serre level  which should be essentially independent of the solution. A resolution of the equation is obtained, if one can show, by methods for distinguishing residual Galois representations, that the mod~$p$ representations arising from all the modular forms at the Serre level (and weight 2) are not isomorphic to the representations constructed from a non-trivial putative solution.

The above strategy is known as the \emph{modular method} to solve Diophantine equations.

Wiles' proof of modularity of semistable elliptic curves over~$\Q$ also gave birth to the new era of modularity lifting theorems. In the last 25 years, this area has seen remarkable progress due to the work of Breuil, Calegari, Diamond, Fujiwara, Gee, Geraghty, Kisin, Savitt, Skinner, Taylor, Thorne,
and others. This progress opened the door for extending the modular method to the setting of totally real fields and Hilbert modularity in~\cite{DF2}. 

In practice, when applying the modular method, the association of a residual Galois representation to 
a putative solution of a Diophantine equation is provided by elliptic curves. Furthermore, the fact that the representation arises on an elliptic curve allows to study its properties in great detail, as there are many tools available in this setting.
The main steps of the modular method over totally real fields can be summarized as follows.

\paragraph*{\bf Constructing a Frey curve} Attach an elliptic curve $E/K$ to a putative solution of a Diophantine equation, where $K$ is some
totally real field. 
In the case of Fermat's Last Theorem, following an idea of Frey--Hellegouarch 
one considers the curve 
\[
\label{E:FreyCurve}
 y^2 = x(x-a^p)(x+b^p) \quad \text{ where } \quad 
 a^p + b^p = c^p, \quad abc \neq 0, \quad a,b,c \in \Z.
\]
Studying different equations require constructing 
different curves;
such an elliptic curve is called a {\it Frey elliptic curve} or simply {\it Frey curve} for short.

\paragraph*{\bf Modularity} Prove modularity of $E/K$. 

\paragraph*{\bf Irreducibility} Prove irreducibility of 
$\overline{\rho}_{E,p}$, the mod $p$ Galois representation attached to $E$.

\paragraph*{\bf Level lowering} Conclude that
$\overline{\rho}_{E,p} \simeq \rhobar_{g,\Fp}$ where $g$ is a Hilbert
newform over $K$ of parallel weight 2, trivial character and level among finitely many explicit; 
here $\rhobar_{g,\Fp}$ denotes
the mod~$\Fp$ representation attached to~$g$ for some $\Fp \mid p$. 

\paragraph*{\bf Contradiction} 
Compute all the newforms predicted in the previous step; then, for each computed newform~$g$ and $\Fp \mid p$ in its field of coefficients, show that 
 $\overline{\rho}_{E,p} \not\simeq \overline{\rho}_{g,\Fp}$. This rules out the isomorphism predicted by level lowering, yielding a contradiction. This final step is also known as {\em the elimination step.}
 
\subsection{The Darmon program}

There are only a few instances of Fermat-type equations having Frey curve attached to them; in fact, those defined over~$\Q$ were already known to Darmon in 1997 (see \cite[p.~14]{DarmonEps} for a list).
Since then, new Frey curves defined over totally real fields were found by Freitas~\cite{F}, which are attached to Fermat equations of the shape~\(x^r + y^r = Cz^p\) where $r \geq 5$ is a prime and~\(C\) is a non-zero integer.
This scarcity prompted Darmon to develop an ambitious program~\cite{DarmonDuke} to tackle Fermat-type 
equations with one varying exponent.
A main idea of his program is to replace the use of Frey curves by higher dimensional abelian varieties defined over~$\Q$ that become of $\GL_2$-type over certain totally real fields;
we refer to these varieties as {\it Frey abelian varieties} or simply {\it Frey varieties} for short.
In this way Darmon systematically attaches residual  
$2$-dimensional Galois representations with bounded conductor to putative solutions of~\eqref{E:GFE} for all signatures.

However, Darmon's Frey varieties are not always explicit enough to work with and, moreover, applying the rest of his program is challenging because several of the main steps rely on hard open conjectures. 
Notably, Conjecture 4.1 of \cite{DarmonDuke} stating that Darmon's Frey varieties have residual Galois representations with large image (which includes irreducibility) except in the complex multiplication (CM) case is still wide open. We state it here for convenience of the reader.
\begin{conjecture} \label{Darmon4.1}
Let $F$ be a totally real ﬁeld and $L$ a number ﬁeld. There exists
a constant~$C(L, F)$, depending only on $L$ and $F$, such that for any abelian variety
$A/L$ of $\GL_2$-type with
$$\End_L(A) \otimes \Q = \End_{\overline{L}}(A) \otimes \Q \simeq F,$$
and all primes~$\Fp$ in~$F$ above rational primes~$p$ of norm greater than $C(L,F)$ the image of the mod~$\Fp$ representation associated to $A$ contains $\SL_2(\F_\Fp)$, where~$\F_\Fp$ denotes the residue field of~$F$ at~$\Fp$.
\end{conjecture}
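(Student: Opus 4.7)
The plan is to follow the template of Serre's open image theorem for elliptic curves without CM, and of Ribet's work on mod~$\ell$ representations attached to $\GL_2$-type abelian varieties coming from newforms, adapting these to the full hypothesis that $\End_L(A)\otimes\Q=\End_{\overline{L}}(A)\otimes\Q\simeq F$ is totally real. The overall strategy is to argue by contradiction: suppose there are infinitely many $\Fp$ for which the image of $\overline{\rho}_{A,\Fp}$ fails to contain $\SL_2(\F_\Fp)$, and rule out each type of proper maximal subgroup that could then house the image.

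First, I would invoke Dickson's classification of maximal subgroups of $\GL_2(\F_\Fp)$ containing the image modulo scalars: Borel subgroups, normalizers of split or non-split Cartan subgroups, and exceptional subgroups with projective image among $A_4$, $S_4$, $A_5$. The exceptional case has bounded order, so for $\operatorname{N}\Fp$ larger than an absolute constant it is automatically excluded (combined with the fact that the determinant is the mod~$\Fp$ cyclotomic character times a finite-order character, so determinants cannot all be concentrated in a small image). For the Borel case, reducibility of $\overline{\rho}_{A,\Fp}$ furnishes a $\Gal(\overline{L}/L)$-stable line, hence characters $\chi_1,\chi_2\colon G_L\to\F_\Fp^\times$ with $\chi_1\chi_2$ equal to the mod~$\Fp$ cyclotomic character twisted by a fixed finite character. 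Using the hypothesis that $A$ has potentially semistable reduction at primes above $p$, I would apply Fontaine--Laffaille theory (or Raynaud's theorem on finite flat group schemes) to pin down the possible tame inertia weights of $\chi_i$ at primes above $p$, and then conclude via a class field theory / Kronecker--Weber-type argument, controlled only in terms of $L$ and $F$. This step is technical but well-trodden for elliptic curves and should generalize.

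The main obstacle is the normalizer-of-Cartan case, and this is precisely the content that the authors identify as Darmon's ``Cartan case'' in the abstract. Here the image lies inside $N(C)$ for a Cartan $C\subset\GL_2(\F_\Fp)$, so there is an index-two subgroup of $G_L$ (cutting out a quadratic extension $M/L$) on which $\overline{\rho}_{A,\Fp}$ is diagonal. The standard heuristic is that, as $\Fp$ varies through an infinite set, compatibility of the resulting abelian characters across Frobenius traces should force $A_M$ to be \emph{genuinely} of CM type by a CM field containing $F$; but this contradicts $\End_L(A)=\End_{\overline{L}}(A)$ together with the totally real hypothesis on~$F$. The delicate point is making this implication quantitative in $L$ and $F$ alone, rather than requiring infinitely many $\Fp$: one needs to transfer information at a \emph{single large} $\Fp$ into a CM statement about $A$. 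This is where the conjecture truly becomes hard, and I would expect that any attack has to combine (i) a careful inertial analysis at primes of good reduction of $A$, distinguishing split from non-split Cartan via the eigenvalues of Frobenius and the Hodge--Tate weights of the Cartan characters, with (ii) isogeny estimates in the style of Masser--W\"ustholz or Gaudron--R\'emond, bounding the ``index'' of a putative CM sub-structure on $A$.

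Finally, one would combine the three cases and extract a bound $C(L,F)$ that depends only on $[L:\Q]$, $[F:\Q]$, the discriminants of $L$ and $F$, and the primes of bad reduction allowed in terms of $L$. I expect the Borel case to yield an explicit polynomial-type bound via Fontaine--Laffaille, the exceptional case to be absolute, and the Cartan case to produce a bound that is effective in principle but enormous, reflecting the depth of the CM obstruction. Given that even the elliptic curve analogue (Serre's uniformity question for non-CM elliptic curves over $\Q$) remains open, the Cartan step should be viewed as the fundamental barrier, and any full proof would likely need a genuinely new idea there --- which is why the authors content themselves with \emph{assuming} this conjecture rather than proving it, and reduce the application to $x^5+y^5=z^p$ to its Cartan case alone.
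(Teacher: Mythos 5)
The statement you were asked to prove is a \emph{conjecture} (Darmon's Conjecture 4.1), and the paper does not prove it: the authors explicitly describe it as ``still wide open,'' and the entire point of their contribution is to \emph{reduce} certain Diophantine problems to (the Cartan case of) this conjecture rather than to establish it. So there is no proof in the paper to compare against, and your proposal, read as a proof, has genuine gaps --- some of which you yourself concede.

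Concretely: in the Borel case, the class-field-theoretic and Fontaine--Laffaille/Raynaud analysis you sketch controls the ramification and tame inertia weights of the two characters, but to conclude you must bound, uniformly over all $A$ satisfying the hypotheses, the finitely many admissible pairs of characters and then exclude them; over an arbitrary fixed number field $L$ this is not ``well-trodden'' --- even for elliptic curves, uniform boundedness of $p$-isogenies over a general number field is known only conditionally (e.g.\ under GRH) or for restricted classes, so this step is itself open in the stated generality. In the normalizer-of-Cartan case, your tool (ii) --- isogeny estimates of Masser--W\"ustholz or Gaudron--R\'emond type --- produces bounds depending on the Faltings height of $A$, which is not controlled by $L$ and $F$ alone; hence it cannot produce a constant $C(L,F)$ uniform over all such $A$, which is exactly what the conjecture demands. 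The heuristic that a Cartan-normalizer image at a single large $\Fp$ forces $A$ to acquire CM is precisely the open problem (it is unresolved already for non-split Cartan images of non-CM elliptic curves over $\Q$), and you correctly note that a genuinely new idea would be required there. That admission means the proposal is a survey of the difficulty, not a proof --- which is consistent with the paper's decision to assume the conjecture (in its Cartan form) as a hypothesis for its conditional corollary on $x^5+y^5=z^p$.
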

This conjecture is crucially used to distinguish the mod~$p$ Galois representations attached to a non-trivial solution from the mod~$p$ representation attached to the trivial solutions, since the latter arise from Frey varieties with CM.

Among the other challenges faced by the program are the difficulty of proving modularity of residual Galois representations over number fields and the lack of sufficiently general modularity lifting theorems in the case of residually reducible image.

Finally, another important difficulty which emerges for large signatures is that the spaces of Hilbert modular forms become too large to allow explicit computation. This issue was not central in Darmon's original program, but it becomes of prime importance when trying to realize it. For instance, even assuming Conjecture~\ref{Darmon4.1}, one still needs a method to eliminate Hilbert newforms at the Serre level without complex multiplication.

As it will be explained in detail below, one of our contributions is to reduce the problem of solving certain instances of~\eqref{E:GFE} to the Cartan case of Conjecture~\ref{Darmon4.1}
for a specific family of abelian varieties. More precisely, the following special case of Conjecture~\ref{Darmon4.1}.
\begin{conjecture} \label{CartanCase}
Let $r \geq 5$ be a prime and $K$ the maximal totally real subfield of the cyclotomic field~$\Q(\zeta_r)$. There exists
a constant~$C_K$ such that for any abelian variety
$A/\Q$ of $\GL_2$-type over~$K$ with
$\End_K(A) \otimes \Q = \End_{\overline{K}}(A) \otimes \Q \simeq K$
and all primes~$\Fp$ in~$K$ above rational primes~$p$ of norm greater than $C_K$ the image of the mod~$\Fp$ representation associated to $A/K$ is not contained in the normalizer of a Cartan subgroup of~$\GL_2(\F_\Fp)$.
\end{conjecture}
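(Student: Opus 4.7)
The plan is to reduce Conjecture~\ref{CartanCase} to a question about a Hilbert newform over $K$ and then attempt to rule out Cartan-normalizer images by an effective analysis of the associated moduli spaces. Since $K$ is totally real and $A/K$ is of $\GL_2$-type with $\End_K(A)\otimes\Q \simeq K$, modularity of $\GL_2$-type abelian varieties over totally real fields attaches to $A$ a Hilbert newform $f$ over $K$ of parallel weight~$2$ and trivial character, with coefficient field $K$, whose compatible system of $\Fp$-adic representations matches that of~$A$. The stronger hypothesis $\End_{\overline{K}}(A)\otimes\Q \simeq K$ says that $f$ has neither complex multiplication nor inner twists over $\overline{K}$.

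Next I would analyze the consequences of the image of $\overline{\rho}_{A,\Fp}$ lying in the normalizer of a Cartan subgroup of $\GL_2(\F_\Fp)$. Such a containment is equivalent to the existence of a quadratic character $\varepsilon\colon \Gal(\overline{K}/K) \to \{\pm 1\}$, of conductor supported on primes above $p$ and the bad reduction of~$A$, such that $\overline{\rho}_{A,\Fp}\otimes\varepsilon \simeq \overline{\rho}_{A,\Fp}$ and $\overline{\rho}_{A,\Fp}$ becomes abelian when restricted to the quadratic extension $K'/K$ cut out by~$\varepsilon$. A compactness/counting argument on the finite set of candidate $\varepsilon$ shows that, if this occurred for infinitely many $\Fp$, then by Faltings' isogeny theorem applied to $A_{K'}$ and its quadratic twist, one would force extra endomorphisms on $A_{K'}$, contradicting $\End_{\overline{K}}(A)\otimes\Q \simeq K$. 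This qualitative half of the argument should therefore be accessible.

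The quantitative step---making the bound $C_K$ effective---is the main obstacle. Here one must adapt the techniques developed for non-CM elliptic curves over $\Q$: in the split case, the Bilu--Parent--Rebolledo analysis of integral points on $X_{\mathrm{sp}}^+(p)$, and in the non-split case the partial results of Lemos and Le~Fourn. In our setting these should be replaced by an analysis of rational points on Hilbert modular (or Shimura) varieties attached to $K = \Q(\zeta_r)^+$ that parametrize $\GL_2$-type abelian varieties with a prescribed split (respectively non-split) Cartan-type mod~$\Fp$ level structure. Conjecture~\ref{CartanCase} is essentially a special case of Serre's uniform openness conjecture for $\GL_2$-type abelian varieties over number fields, which is open even for non-CM elliptic curves over $\Q$ in the non-split Cartan case; a full unconditional proof will likely require either a significant extension of Mazur's program B to the setting of real multiplication by $K$, or a new input specific to the cyclotomic nature of~$K$. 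I therefore expect the statement to remain conditional for the foreseeable future, which is precisely why the authors state it as a conjecture and reduce their Diophantine application to it rather than attempting to prove it outright.
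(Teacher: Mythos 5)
This statement is a conjecture, not a theorem: the paper offers no proof of it (it is the Cartan case of Darmon's Conjecture~\ref{Darmon4.1}, and the paper's contribution is to \emph{reduce} the resolution of $x^5+y^5=z^p$ to it via Theorem~\ref{eliminate-to-CM}, not to establish it). You correctly recognized this, so there is nothing in the paper to compare your sketch against.

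One caution on your sketch itself: the "qualitative half" you describe (fixing $A$, using the finitely many candidate quadratic characters $\varepsilon$ and Faltings' isogeny theorem to rule out Cartan-normalizer image for all but finitely many $\Fp$) only yields a bound depending on $A$, whereas the conjecture asserts a constant $C_K$ uniform over all such $A/\Q$. The genuine difficulty is precisely this uniformity — the analogue of Serre's uniformity problem — and not merely effectivity for a fixed variety, so the two halves of your proposed argument should not be presented as if the first were already within reach of giving the stated conclusion.
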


\subsection{Our contribution to the Darmon program}

The main goal of this paper is to understand and expand the current limits of Darmon's approach using higher dimensional Frey varieties. 

By exploring the ideas in the Darmon program together 
with some of the latest developments surrounding the modular method, we will study in detail an approach to Fermat-type equations of the shape~\(x^r + y^r = Cz^p\) using 
a hyperelliptic Frey curve constructed by Kraus~\cite{kraushyper}. 

For most cases of signatures $(p,p,r)$ or $(q,r,p)$, Frey curves are absent, so the use of Frey varieties becomes essential; the groundwork we lay for signature $(r,r,p)$ opens the road to resolving other signatures which afford a Frey variety arising from the Jacobian of a hyperelliptic curve as, for example, Darmon's construction for signature~$(p,p,r)$ used in~\cite{ChenKoutsianas1}.

As mentioned, several aspects of the Darmon program are impractical or conjectural. We now summarize 
the main challenges we face and the ways we deal with 
them in more detail.

\paragraph*{\bf The Frey variety} Darmon's construction of the Frey varieties attached to signature $(r,r,p)$ 
starts from superelliptic curves and considers certain quotients of them and their Jacobians (see \cite[pp. 422--423]{DarmonDuke}). 
Further explicit development of this theory is needed to allow its practical use in Diophantine applications. Instead, we show that certain odd Frey representations of signature $(r,r,p)$ (see Subsection~\ref{S:FreyRep} for a definition) can in fact be given by the Jacobian of a Frey hyperelliptic curve $C_r(a,b)$ due to Kraus~\cite{kraushyper} (see Section~\ref{S:Freyrrp}). To accomplish this, we exhibit a curious relation between Kraus' Frey hyperelliptic curve and Darmon's Frey varieties for equations of signature~$(p,p,r)$ which allows us to prove that its Jacobian $J_r=\Jac(C_r(a,b))$ becomes of $\GL_2$-type over $K=\Q(\zeta_r)^+ = \Q(\zeta_r + \zeta_r^{-1})$, the maximal totally real subfield of~$\Q(\zeta_r)$. Therefore, there are $2$-dimensional Galois representations of $G_K = \Gal(\Qbar / K)$ attached to~$J_r$. Furthermore, Kraus' construction extends to the case of `general coefficients' Fermat equation of signature~\((r,r,p)\), opening the road for extending the methods of this paper to this case as well (see Remark~\ref{general-coefficient}).

\paragraph*{\bf Modularity} A remarkable feature of Darmon's Frey varieties is that they share some Galois structures. In particular, in \cite[p. 433]{DarmonDuke} there is a diagram describing the possibility of propagating modularity among them
once suitable modularity lifting theorems become available. We will make this idea work in our setting. Indeed, let $J_r$ be the Jacobian of Kraus' hyperelliptic curve~$C_r(a,b)$; it has dimension~$(r-1)/2$ and becomes of $\GL_2$-type over $K = \Q(\zeta_r)^+$.
From the relation mentioned above between~$J_r$ and Darmon's Frey variety for signature $(p,p,r)$ we first show that the residual  representation $\rhobar_{J_r,\Fp_r} : G_K \to \GL_2(\F_r)$ 
arises on an elliptic curve and descends to~$G_\Q = \Gal(\Qbar/\Q)$. Here~\(\Fp_r\) is the unique prime ideal above~\(r\) in~\(K\). Secondly, we show it is absolutely irreducible when restricted to~\(\Gal(\Qbar/\Q(\zeta_r))\), allowing us to apply Serre's conjecture, cyclic base change and modularity lifting theorems to conclude modularity 
of~$J_r/K$.

\paragraph*{\bf The conductor} A very useful advantage of Kraus' hyperelliptic curves is that we can successfully compute their conductors. 
Let $J_r$ be as above. We are interested in the conductor of the $2$-dimensional $p$-adic Galois representations~$\rho_{J_r,\Fp}$ attached to~$J_r/K$. This is known to be related to the conductor of~$J_r/K$. The recent work \cite{FreyConductor} goes a long way using `cluster pictures' to determine the latter, but it is insufficient for us as it does not apply in  residual characteristic $2$. Our approach to determine the conductor at a prime~$\Fq$ in $K$ works uniformly independently of the residual characteristic of~$\Fq$ as follows. Using explicit calculations with Weierstrass models of hyperelliptic curves we first determine the minimal ramification degree at~$\Fq$ of a field $L/K_\Fq$ where $J_r$ becomes semistable at~$\Fq$. Then, we pin down which local type at~\(\Fq\) for~\(\rho_{J_r,\Fp}\) is compatible with the field $L$ and then apply the well-known conductor formulas for such representations. 
 
\paragraph*{\bf Irreducibility} Let~$\Fp$ a prime ideal in~$K$ above a rational prime~$p$.
 To apply level lowering results to the residual representation~$\rhobar_{J_r,\Fp}$ attached to $J_r/K$ 
 we first need to show it is irreducible. Our method for computing the conductor gives an exact description of the local representations  $\rhobar_{J_r,\Fp}|_{D_{\Fq}}$, where $\Fq \mid q \neq p$ is a prime in~$K$ and~\(D_\Fq\) is a decomposition group at~\(\Fq\).
 If such a local representation is supercuspidal then irreducibility holds locally already, otherwise, if we have a principal series we show irreducibility for large enough~$p$.

\paragraph*{\bf Finiteness of the $p$-torsion representation} Another key input for level lowering is to guarantee that $\rhobar_{J,\Fp}|_{D_\Fq}$ arises on a finite flat group scheme for all~$\Fq \mid p$. This implies that~$\rhobar_{J,\Fp}$ arises on a Hilbert newform of parallel weight~$2$, independently of the putative solution. In the case of Frey curves this follows from standard arguments using the theory of the Tate curve, but for higher dimensional varieties the situation is more delicate. Ellenberg~\cite{Ellenberg} gives a criterion which is a direct generalization of the usual criterion for elliptic curves, but it is hard to use due to the need to determine a discriminantal set. In~\cite{DarmonDuke}, Darmon implicitly uses such a theorem for signature $(p,p,r)$, but we are not aware of a complete reference for general signatures. Inspired by Darmon's ideas, we give a criterion that is easy to apply in practice in many interesting cases; see Theorems~\ref{finiteness} and~\ref{T:finite} in Section~\ref{S:finiteness}. 

\paragraph*{\bf Level lowering and contradiction} 
To obtain a contradiction, we need to show that $\rhobar_{J_r,\Fp} \not\simeq \rhobar_{g,\mathfrak{P}}$ for all newforms~$g$ and all primes $\mathfrak{P} \mid p$ in the field of coefficients of~$g$, where
$\rhobar_{g,\mathfrak{P}}$ is the mod~$\mathfrak{P}$ representation attached to~$g$. Computing the possible newforms~$g$ can be a serious obstruction. Furthermore, aiming to obtain an optimal bound for the exponent can introduce further computational complications; indeed, applying the standard trace comparison method requires, for each newform~$g$, to factor multiple norms from the compositum of~$K$ with the field of coefficients of~$g$, which can be of very large degree. To reduce such computations, we apply a level lowering theorem with prescribed inertial types (due to Breuil and Diamond), together with a result that relates the field of coefficient of a newform to its inertial types. This yields two major benefits: it reduces the number of spaces where we have to  eliminate forms and also dramatically cuts down the number of forms we have to consider in the remaining space. We remark that this powerful technique is not available when working with Frey curves (see Section~\ref{S:levelLowering} for further discussion). Despite this improvement, there are still computational challenges left. We describe various ways to overcome these issues in Section~\ref{S:eliminationJ}.

\subsection{Diophantine applications}

In the works of 
Dieulefait--Freitas~\cite{DF2,DF1} and Freitas~\cite{F} several Frey curves were attached to Fermat equations 
of the form
\begin{equation}
  x^r + y^r = dz^p, \qquad xyz \ne 0, \qquad \gcd(x,y,z) = 1
  \label{E:rrp}
\end{equation}
with $r$ a fixed prime, $d$ a fixed positive integer and $p$ allowed to vary. These curves are defined over
totally real subfields of 
the $r$-th cyclotomic field~$\Q(\zeta_r)$.
Together with powerful developments of modularity lifting theorems for totally real fields (e.g. \cite{BreuilDiamond}) and consequent modularity of many elliptic curves over these fields, this opened a door to pursue the study of these Fermat equations.

The equation~\eqref{E:rrp} for~\(r = 3\) and~\(d = 1\) is studied by Kraus in~\cite[Th\'eor\`eme~6.1]{kraus1} where it is proved  that any non-trivial primitive solutions~\((a,b,c)\) must satisfy $2 \nmid a + b$ and $3 \mid a+b$ (see~\cite[\S3.3.2.]{DahmenPhD} for the exponents~\(5\leq p\leq 13\)). Similarly, in~\cite[Theorem~4]{BCDF2}, we proved that when~\(r = 5\) and~\(d = 1\) there is no non-trivial primitive solution to equation~\eqref{E:rrp} for every prime~\(p\) such that~\(2\mid a + b\) or~\(5\mid a + b\). The methods developed in the present paper can be applied to obtain the following similar results for $r = 11$.
\begin{theorem}\label{T:main11}
For all integers $n \geq 2$, there are no integer solutions $(a,b,c)$ to the equation
\begin{equation}
\label{main-equ}
    x^{11} + y^{11} = z^n
\end{equation} 
such that $abc \neq 0$, $\gcd(a,b,c) = 1$, and $2 \mid a + b$ or $11 \mid a + b$.
\end{theorem}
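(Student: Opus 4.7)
The plan is to apply the full machinery developed in the preceding sections to the signature $(11,11,p)$ case with Frey variety $J_{11}=\Jac(C_{11}(a,b))$. First, reduce the exponent $n\geq 2$ to a prime $p$: if $n=mp$ and $(a,b,c)$ solves~\eqref{main-equ}, then $(a,b,c^m)$ solves $x^{11}+y^{11}=z^p$, so the theorem reduces to the case of prime exponent. The small primes $p\in\{2,3,5,7\}$ are dispatched either by invoking known results for signatures $(11,11,2)$ and $(11,11,3)$ or by direct computation; the remainder of the argument handles $p$ sufficiently large. Given a non-trivial primitive solution $(a,b,c)$ with $2\mid a+b$ or $11\mid a+b$, form $J_{11}/\Q$, which is of $\GL_2$-type over $K=\Q(\zeta_{11})^+$ with $\End_K(J_{11})\otimes\Q\simeq K$ by Section~\ref{S:Freyrrp}. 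Modularity of $J_{11}/K$ is obtained exactly as sketched in the introduction: $\rhobar_{J_{11},\Fp_{11}}$ descends to $G_\Q$ and arises on an elliptic curve, so Serre's conjecture combined with cyclic base change and a suitable modularity lifting theorem propagate modularity to~$J_{11}/K$.

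Fix the prime $\Fp$ of $K$ above $p$ and consider $\rhobar_{J_{11},\Fp}\colon G_K\to\GL_2(\F_\Fp)$. The conductor calculations of Section~\ref{S:Freyrrp} give a complete local description of $\rhobar_{J_{11},\Fp}$ at every bad prime; the hypotheses $2\mid a+b$ and $11\mid a+b$ precisely control the local behavior at the primes of $K$ above $2$ and at the unique prime above $11$, yielding explicit Serre levels in each of the two cases. Irreducibility of $\rhobar_{J_{11},\Fp}|_{G_{\Q(\zeta_{11})}}$ holds by the supercuspidal/principal-series dichotomy described in the introduction, for $p$ larger than an effective constant, and finite flatness at~$p$ follows from Theorems~\ref{finiteness}--\ref{T:finite}. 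Applying Breuil--Diamond level lowering with prescribed inertial types then gives a Hilbert newform~$g$ over~$K$ of parallel weight~$2$, trivial character, and level/inertial type drawn from an explicit finite list, together with a prime~$\mathfrak{P}\mid p$ in the coefficient field of~$g$ satisfying $\rhobar_{J_{11},\Fp}\simeq\rhobar_{g,\mathfrak{P}}$.

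The hard part is elimination. With $[K:\Q]=5$, the Hilbert modular spaces at the predicted levels are large and their newforms can have high-degree coefficient fields, so plain trace comparison across all newforms is computationally expensive. The plan is to exploit the refinements of Section~\ref{S:eliminationJ}: the prescribed inertial types both shrink the set of levels that must be computed and, via the type--coefficient field relation, constrain the possible coefficient fields of~$g$, making the enumeration of candidates tractable. Each non-CM candidate~$g$ is then eliminated by comparing traces of Frobenius at a handful of auxiliary primes of~$K$ of small norm, using the explicit shape of $\rhobar_{J_{11},\Fp}$ at primes of good reduction to rule out all but finitely many~$p$, which are dispatched individually. The CM newforms appearing at the lowered levels, which correspond to the trivial solutions, are separated from $\rhobar_{J_{11},\Fp}$ by the non-triviality of $(a,b,c)$ combined with the $2$- or $11$-adic hypothesis on $a+b$; carrying this out for both hypotheses completes the proof of Theorem~\ref{T:main11}.
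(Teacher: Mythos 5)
There is a genuine gap in your treatment of the case $2 \mid a+b$. You propose to run the entire argument with the single Frey variety $J_{11}=\Jac(C_{11}(a,b))$ in both cases, but every piece of the machinery the paper develops for $J_r$ at the prime above $2$ — the semistability analysis in Proposition~\ref{P:typeAt2}, the inertial type and conductor exponent in Corollary~\ref{C:inertiaOrder} and Theorem~\ref{T:conductorJI}, the irreducibility criterion of Proposition~\ref{P:irredSupercuspidal} (which is exactly what gives irreducibility for $r=11$, since $2$ is inert in $K$ and $11\nmid 2^5-1$), and the level lowering statements of Theorem~\ref{T:levelLowering} and Corollary~\ref{C:levelLowering} — is proved only under the normalization $a\equiv 0\pmod 2$, $b\equiv 1\pmod 4$ of~\eqref{eq:parity}, which forces $2\nmid a+b$. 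When $2\mid a+b$ both $a$ and $b$ are odd, the local behaviour of $J_{11}$ at $\Fq_2$ is not computed anywhere in the paper (and is genuinely delicate: it is the even-residue-characteristic case where cluster-picture methods also fail), so you have no Serre level, no inertial type at $\Fq_2$ to prescribe in Breuil--Diamond, and no supercuspidal-at-$2$ argument for irreducibility. This is precisely why the paper's proof is a \emph{multi-Frey} argument: the case $2\mid a+b$ is handled not by $J_{11}$ but by Freitas's Frey \emph{elliptic} curve $F_{a,b}=E_{(a,b)}^{(1,2)}$ over the quintic field $K$, with its own conductor computation, an irreducibility proof via torsion points on small modular curves, classical Fujiwara--Jarvis--Rajaei level lowering at levels $\Fq_2$ and $\Fq_2\Fq_{11}$, and a final appeal to Martin's Eisenstein-congruence bound to show that the surviving form has reducible mod~$\Fp$ representation for the stubborn exponents $p\in\{5,31\}$. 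Only the complementary case $2\nmid a+b$, $11\mid a+b$ is treated with $J_{11}$, via Proposition~\ref{P:irredSupercuspidal} and the twisted level lowering of Corollary~\ref{C:levelLowering} down to level $\Fq_2^2\Fq_{11}$, followed by elimination of the six newforms whose coefficient field contains $K$ using auxiliary primes $q=3,5,7,23$.

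For the part of your proposal that does match the paper (the $11\mid a+b$, $2\nmid a+b$ case), your outline is essentially correct, though two details are off: irreducibility there is unconditional for all odd $p$ (not merely for $p$ beyond an effective constant), and no separate handling of CM forms is needed in that case — the twist by $\chi_{11}$ lowers the level to $\Fq_2^2\Fq_{11}$, where all relevant constituents are killed by trace comparison, leaving only $p\in\{2,3,11\}$, which are covered by Darmon--Merel and Fermat's Last Theorem. To repair your argument you would either have to adopt the paper's second Frey object for $2\mid a+b$, or carry out from scratch the analysis of $J_{11}$ at $\Fq_2$ when $a$ and $b$ are both odd, together with a replacement for the irreducibility argument that currently hinges on the supercuspidal type at $\Fq_2$.
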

As an additional application of our methods, in the second volume of this series~\cite{xhyper_vol2}, we study in detail the equation~\(x^7 + y^7 = dz^p\) for $d \in \{1,3\}$. In particular, we obtain in {\it loc. cit.} a result similar to the above for $d=1$; furthermore, we discuss how, for $d=3$, the extra structure of the Frey varieties allows us to give a computationally more efficient proof than applying the modular method exclusively with Frey curves.

\begin{remark}
For $r = 3$, the Catalan solution
$2^3 + 1^3 = 3 \cdot 3^1 = 3^2$
currently prevents the modular method from proving results such as Theorems~\ref{T:main11}.
Nevertheless, the work of Chen--Siksek~\cite{ChenSiksek} together with the work of Freitas~\cite{Freitas33p} imply that $x^3 + y^3 = z^p$ has no non-trivial primitive solutions for a set of prime exponents with density $0.844$.
\end{remark}

Recall that in Kummer's cyclotomic approach to the Fermat equation~\(x^p + y^p = z^p\), we say that a solution $(a,b,c)$ is a {\it first case} solution if $p \nmid abc$ and a second case solution if $p \mid abc$ where $p \ge 3$ is a prime exponent. From the point of view of the modular method, first case solutions correspond to a Frey curve with conductor exponent $0$ at $p$, and second case solutions to a curve with conductor exponent $1$ at $p$ (that is, first case solutions correspond to the minimal conductor exponent possible; second case solutions to larger conductor exponents).

For the Frey varieties $J = J_{11}(a,b)$ used in the proof of Theorem~\ref{T:main11}, the condition $q \mid a + b$ corresponds to potential multiplicative reduction of $J$ at a prime $\Fq$ above the prime~$q$ and hence up to twist $\rho_{J,\Fp}$ has conductor exponent $1$ at the prime $\Fq$, whereas $q \nmid a + b$ corresponds to potential good reduction of $J$ at $\Fq$ and hence $\rho_{J,\Fp}$ has conductor exponent $\ge 2$ at $\Fq$. Thus, we may view Theorem~\ref{T:main11} as resolving \eqref{main-equ} for first case solutions with respect to the primes $q = 2$
and $q = 11$.

The $2$- and $11$-adic conditions in Theorem~\ref{T:main11} are due to the Hilbert newforms at the Serre level corresponding to trivial primitive solutions with $ab = 0$. As mentioned, Conjecture~\ref{Darmon4.1} is still not sufficient to overcome this obstruction as one has to first eliminate Hilbert newforms at the Serre level without complex multiplication. To illustrate this point, we will perform this `elimination/reduction to CM forms' for $r = 5$.
\begin{theorem}\label{eliminate-to-CM}
Let~\(r = 5\), \(d = 1\) and $p \notin \left\{ 2, 3, 5 \right\}$ be a prime. Then any non-trivial primitive solution $(a,b,c)$ to the equation~\eqref{E:rrp} gives rise to an irreducible residual Frey representation $\rhobar_{J_5(a,b),\Fp}$ such that $\rhobar_{J_5(a,b),\Fp} \simeq \rhobar_{J_5(0,1),\Fp} \otimes \chi$ for a character $\chi$ of order dividing $2$ and any choice of prime $\Fp$ of $K = \Q(\zeta_5)^+$ above $p$.
\end{theorem}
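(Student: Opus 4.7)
The plan is to run the pipeline developed in this paper for the Frey variety $J = J_5(a,b)$ over $K = \Q(\zeta_5)^+ = \Q(\sqrt{5})$, produce a short list of Hilbert newforms whose residual representations are candidates for $\rhobar_{J,\Fp}$, and then show that every such candidate is, up to a quadratic twist, the eigensystem attached to the CM form coming from $J_5(0,1)$.

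First, I would invoke modularity of $J/K$ via Serre's conjecture, cyclic base change, and the modularity lifting theorems recalled in the modularity paragraph of the introduction, producing a Hilbert newform $g$ over $K$ of parallel weight $2$ and trivial character satisfying $\rhobar_{J,\Fp} \simeq \rhobar_{g,\mathfrak{P}}$. Irreducibility of $\rhobar_{J,\Fp}$ for $p \notin \{2,3,5\}$ follows from the local description of $\rhobar_{J,\Fp}|_{D_\Fq}$ at the primes $\Fq$ above $2$ and $5$ supplied by the paper's conductor method: one obtains either a supercuspidal type, which forces irreducibility for free, or a principal series type, which can be forced irreducible for large $p$ and checked by hand for small $p$. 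Finiteness of the $p$-torsion is provided by Theorems~\ref{finiteness} and~\ref{T:finite}, so level lowering with a prescribed inertial type applies.

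Next, I would compute the Serre level and the possible inertial types at the primes of $K$ above $2$ and above $5$, and apply the inertial-type refinement of Section~\ref{S:levelLowering} to reduce the problem to enumerating Hilbert newforms in a small number of explicit Hecke algebras over $\Q(\sqrt{5})$. The reference form $g_0$ attached to $J_5(0,1)$ is a CM newform, and each of its quadratic twists by a character of $K$ unramified outside $\{2,5\}$ appears in one of the predicted spaces; the remaining task is to show that no non-CM newform in these spaces can give rise to $\rhobar_{J,\Fp}$ for any $p \notin \{2,3,5\}$.

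I expect this last elimination of non-CM forms to be the main obstacle. Unlike in Theorem~\ref{T:main11}, the conclusion must hold for every prime $p \notin \{2,3,5\}$ simultaneously, so one cannot dispose of a given newform $g$ by a trace mismatch modulo a particular $\Fp$; instead, one needs to establish inequality of the global eigensystems of $g$ and $g_0 \otimes \chi$ for every candidate quadratic character $\chi$ unramified outside $\{2,5\}$. The strategy sketched in Section~\ref{S:eliminationJ}, combined with explicit computation of Hecke eigenvalues at several auxiliary primes of $K$ and the fact that the ambient Hecke algebras over $\Q(\sqrt{5})$ remain of modest dimension, should make this tractable case by case, leaving only the CM form and its quadratic twists as surviving candidates and yielding the claimed isomorphism.
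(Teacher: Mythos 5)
Your skeleton (modularity, irreducibility from the supercuspidal type at $\Fq_2$, finiteness, level lowering with prescribed types, then inspection of a small space of Hilbert newforms over $\Q(\sqrt{5})$) matches the paper's, but there are two genuine gaps. First, the entire pipeline you invoke — the conductor computation at $2$ (Proposition~\ref{P:typeAt2}), hence Theorem~\ref{T:conductorJI}, Proposition~\ref{P:SerreCond} and Theorem~\ref{T:levelLowering} — requires the normalization $a \equiv 0 \pmod 2$, $b \equiv 1 \pmod 4$, which is only achievable when $2 \nmid a+b$. If both $a$ and $b$ are odd, none of this applies, and your proposal says nothing about that case. The paper disposes of it (and of $5 \mid a+b$, which would change the level at $\Fq_5$) by quoting the earlier result \cite[Theorem~4]{BCDF2} that equation~\eqref{E:rrp} with $r=5$, $d=1$ has no non-trivial primitive solutions with $2 \mid a+b$ or $5 \mid a+b$; this external input is essential and cannot be omitted.

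Second, your plan for the final step — eliminating non-CM newforms for \emph{every} $p \notin \{2,3,5\}$ by establishing "inequality of the global eigensystems" via trace comparison at auxiliary primes — would not close the argument. Trace comparison yields a divisibility $p \mid B_{\Fq,\mathfrak{S}}(g)$ and therefore only excludes $p$ outside a finite set; and global distinctness of two eigensystems does not preclude a residual congruence mod some $\mathfrak{P}$, which is exactly what you must rule out. The paper never performs this elimination. Instead it uses conclusion~(iv) of Theorem~\ref{T:levelLowering}: because the inertial type at $\Fq_2$ is given by a character of order $r=5$, Proposition~\ref{P:coefficientField} forces $K = \Q(\sqrt{5}) \subset K_g$. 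The space of level $\Fq_2^2\Fq_5^2$ has dimension $3$ with two Hecke constituents, and the coefficient-field constraint discards the one with rational coefficients uniformly in $p$, leaving only the CM constituent attached to $J_5(0,1)$; the quadratic character $\chi$ then comes from the initial normalization of $(a,b)$, not from an enumeration of twists. Without this coefficient-field mechanism (or the reducibility argument via Eisenstein congruences used elsewhere in the paper), your proposed case-by-case elimination cannot deliver a conclusion valid for all primes $p$ simultaneously.
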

As a consequence, Darmon's conjectural ideas can kick in.
\begin{corollary}
\label{assume-conj}
Assuming Conjecture~\ref{CartanCase}, there are no non-trivial primitive solutions to
\begin{equation*}
    x^5 + y^5 = z^p
\end{equation*}
when $p$ is sufficiently large.
\end{corollary}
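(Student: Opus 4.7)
The plan is to derive a contradiction from Theorem~\ref{eliminate-to-CM} by invoking Conjecture~\ref{CartanCase}. Suppose, for contradiction, that $(a,b,c)\in\Z^3$ is a non-trivial primitive solution to $x^5+y^5=z^p$ with $p$ a sufficiently large prime (in particular $p\notin\{2,3,5\}$), and fix a prime $\Fp$ of $K=\Q(\zeta_5)^+$ above $p$. By Theorem~\ref{eliminate-to-CM}, $\rhobar_{J_5(a,b),\Fp}$ is absolutely irreducible and there exists a character $\chi$ of order dividing $2$ such that
\[
\rhobar_{J_5(a,b),\Fp}\simeq\rhobar_{J_5(0,1),\Fp}\otimes\chi.
\]

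Next I would use that $J_5(0,1)$ corresponds to the trivial primitive solution $(0,1,1)$: the Kraus hyperelliptic curve at $(a,b)=(0,1)$ is a twist of a quotient of the Fermat quintic, so its Jacobian acquires complex multiplication over $\overline{K}$. Consequently the image of $\rhobar_{J_5(0,1),\Fp}$ is contained in the normalizer of a Cartan subgroup of $\GL_2(\F_\Fp)$. Twisting by $\chi$ preserves this property since the normalizer of a Cartan is stable under multiplication by scalars, so the image of $\rhobar_{J_5(a,b),\Fp}$ also lies in the normalizer of a Cartan subgroup of $\GL_2(\F_\Fp)$.

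To apply Conjecture~\ref{CartanCase} to the Frey variety $J_5(a,b)$ I must verify the endomorphism hypothesis $\End_K(J_5(a,b))\otimes\Q=\End_{\overline{K}}(J_5(a,b))\otimes\Q\simeq K$. The $\GL_2$-type structure over $K$ from Section~\ref{S:Freyrrp} provides $\End_K(J_5(a,b))\otimes\Q\simeq K$; ruling out extra geometric endomorphisms for non-trivial $(a,b)$ amounts to showing that $J_5(a,b)$ does not acquire CM over $\overline{K}$, which should follow from a rigidity argument since the CM locus inside the Frey family is zero-dimensional and is realized only at the trivial solutions. Granting this, Conjecture~\ref{CartanCase} produces a constant $C_K$ such that whenever $N(\Fp)>C_K$ the image of $\rhobar_{J_5(a,b),\Fp}$ is \emph{not} contained in the normalizer of any Cartan subgroup, contradicting the conclusion of the previous paragraph. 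The only remaining issue is that the conjecture is stated uniformly for $A$ with fixed endomorphism algebra but we vary $A=J_5(a,b)$ with the solution; since the hypothesis of the conjecture only involves the pair $(K,K)$ and not the individual variety, the constant $C_K$ is indeed independent of $(a,b)$, so $p$ sufficiently large (in a sense depending only on $K$) suffices.

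The main obstacle I anticipate is the verification of the absence of geometric CM for non-trivial $(a,b)$, which is needed to legitimately apply Conjecture~\ref{CartanCase}. While morally this is clear (a family of CM $\GL_2$-type varieties with fixed real multiplication by $K$ is rigid, and the Frey variety genuinely moves with $(a,b)$), making this rigorous requires a uniform analysis of $\End_{\overline{K}}(J_5(a,b))$ along the family, analogous to but more delicate than the standard non-CM arguments used for Frey elliptic curves.
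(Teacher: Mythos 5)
Your argument follows the same route as the paper: invoke Theorem~\ref{eliminate-to-CM} to obtain $\rhobar_{J_5(a,b),\Fp}\simeq\rhobar_{J_5(0,1),\Fp}\otimes\chi$, observe that $J_5(0,1)$ has CM (Remark~\ref{rk:JisCM}) so the right-hand side has image contained in the normalizer of a Cartan subgroup (a property stable under quadratic twist), and contradict Conjecture~\ref{CartanCase} applied to $J_5(a,b)$ once $p$ is large. The one point you flag as the ``main obstacle'' --- verifying $\End_K(J_5(a,b))\otimes\Q=\End_{\overline{K}}(J_5(a,b))\otimes\Q\simeq K$ for a non-trivial solution --- is not closed in the paper by the rigidity/moduli argument you sketch (which would indeed be delicate to make rigorous along the family), but by the local observation already recorded in the proof of Proposition~\ref{P:irredGeneral}: since $abc\neq 0$ and, by the quoted result for $r=5$, $2\nmid a+b$ and $5\nmid a+b$, there is a rational prime $q\neq 2,5$ dividing $a^5+b^5$, so $J_5(a,b)$ has multiplicative reduction at any $\Fq\mid q$ by Proposition~\ref{P:multiplicativeRedJ}; multiplicative reduction at a finite prime rules out both potential CM and any geometric endomorphisms not already defined over $K$, which is exactly the hypothesis of the conjecture. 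Your closing remark on uniformity is correct and matches the paper: the constant $C_K$ in Conjecture~\ref{CartanCase} depends only on $K$, not on the individual variety $J_5(a,b)$, so ``$p$ sufficiently large'' is independent of the putative solution.
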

It should be remarked that the natural conclusion of Theorem~\ref{eliminate-to-CM} based on Darmon's original program would also include the possibility that $\rhobar_{J_5(a,b),\Fp}$ is reducible. A surprising outcome of our study is that we have used Frey elliptic curves not covered by Darmon's classification to `propagate' irreducibility results to Kraus' Frey hyperelliptic curve, also not predicted in the initial program, thereby removing the need for the Borel case of Conjecture~\ref{Darmon4.1}.

Many of the tools we develop for achieving the results in Theorem~\ref{T:main11} can thus be viewed as addressing the difficulty of eliminating non-CM Hilbert newforms as in Theorem~\ref{eliminate-to-CM}, an issue not addressed in Darmon's original program. Advances in the ability to compute Hilbert modular forms or to eliminate Hilbert newforms would allow more cases of~$r$ to be completed using the methods developed in this paper.

\subsection{Electronic resources}

The {\tt Magma} programs used for the computations needed in this paper are posted in \cite{programs}, where there is included a list of programs, their descriptions, output transcripts, timings, and machines used.

\subsection{Acknowledgments} 
We thank Lassina Demb\'el\'e, Angelos Koutsianas, and  Ariel Pacetti for helpful discussions and remarks on a preliminary version. We also thank Henri Darmon for useful conversations about Section~\ref{S:finiteness} and Filip Najman for his help with Theorem~\ref{T:irred7}. 
The fourth author is grateful to the Max Planck Institute for Mathematics in Bonn for its hospitality and financial support.

\subsection{Organisation of the paper} 

The main goal of this paper is to lay down all the theoretical foundations for a modular approach to generalized Fermat equations~\eqref{E:rrp} using a Frey hyperelliptic curve constructed by Kraus~\cite{kraushyper} and ideas from Darmon's program~\cite{DarmonDuke}. This objective covers Sections~\ref{s:ppr_and_rrp} to~\ref{S:eliminationJ} where we go over all the steps in the modular method as recalled above.

More precisely, building on an unexpected connection between Frey representations of signatures \((p,p,r)\) and~\((r,r,p)\) explained in Section~\ref{s:ppr_and_rrp}, we first show in Section~\ref{S:Freyrrp} that there are \(2\)-dimensional Galois representations attached to the Jacobian~$J_r$ of Kraus' Frey hyperelliptic curve. Using these representations, we then prove in Section~\ref{S:modularity} that~\(J_r\) is modular, as well as other Frey varieties constructed by Darmon. In Section ~\ref{S:conductor}, we compute the conductor of the system of \(\lambda\)-adic representations attached to~\(J_r\) and we give in Section~\ref{S:irreducibilityrrp} several criteria that allow to prove irreducibility of the residual representations under various assumptions. Assuming irreducibility and building on results from Section~\ref{S:finiteness}, we state in Section~\ref{S:levelLowering} level lowering results that are refinements of the classical ones in the context of Frey abelian varieties of higher dimension. Finally, we discuss in Section~\ref{S:eliminationJ} the crucial elimination step in great details.

The last two sections are devoted to proving the Diophantine applications stated in the introduction.

For a field~$k$, fix an algebraic closure~$\overline{k}$ and let $G_k := \Gal(\overline{k}/k)$ be its absolute Galois group.

\section{Connecting Frey representations for signatures $(p,p,r)$ and $(r,r,p)$}\label{s:ppr_and_rrp}

In this section, we first introduce Frey representations which are a key idea in Darmon's program. After recalling some useful background on abelian varieties of~\(\GL_2\)-type and hyper\-elliptic curves,
we give in Subsection~\ref{S:FreyOverKs} an explicit construction of Frey representations of signature~\((r,r,p)\) which differs from Darmon's approach in that it uses hyperelliptic curves instead of superelliptic curves. Our method reveals a surprising relation between Frey representations  of signature~\((r,r,p)\) and~\((p,p,r)\) (recall these are different signatures as~$r$ is fixed and $p$ varies). This construction plays a central role in the next section where we relate Kraus' construction to certain specializations of our abelian varieties in the spirit of~\cite[\S1.3]{DarmonDuke}.

\subsection{Frey representations} \label{S:FreyRep}

The following definitions are modifications of Darmon's original ones (see \cite[Definitions~1.1 and~1.3]{DarmonDuke}) adapted to our needs. Here $K$ is a number field and $\F$ is a finite field of characteristic~$p$ with algebraic closure~$\Fbar$.
Recall that $G_{\overline{K}(t)}$ sits inside $G_{K(t)}$ as normal subgroup.

\begin{definition}\label{D:FreyRep}
A {\it Frey representation} (in characteristic $p$) over $K(t)$ of {\it signature} $(n_i)_{i = 1, \ldots, m}$ with respect to the points 
$(t_i)_{i = 1, \ldots, m}$, where $n_i \in \mathbb{N}$ and $t_i \in \mathbb{P}^1(\overline{K})$, is a Galois representation
\begin{equation*}
  \rhobar : G_{K(t)} \rightarrow \GL_{2}(\F)
\end{equation*}
satisfying
\begin{enumerate}
\item the restriction $\rhobar \mid_{G_{\overline{K}(t)}}$ has trivial determinant and is irreducible,
\item the projectivization $\mathbb{P} \rhobar \mid_{G_{\overline{K}(t)}} : G_{\overline{K}(t)} \rightarrow \text{PSL}_{2}(\F)$ of $\rhobar \mid_{G_{\overline{K}(t)}}$ is unramified outside the~$t_i$,
\item the projectivization $\mathbb{P} \rhobar \mid_{G_{\overline{K}(t)}}$ maps the inertia subgroups at $t_i$ to subgroups of $\PSL_{2}(\F)$ generated by elements of order $n_i$, respectively, for $i = 1, \ldots, m$.
\end{enumerate}
\end{definition}

\begin{definition}\label{D:FreyRepEquiv} 
Let $\rhobar_1$, $\rhobar_2$ be two Frey representations as in Definition~\ref{D:FreyRep} of the same signature with respect to the same points. We say $\rhobar_1$, $\rhobar_2$ are equivalent if they are isomorphic over~$\Fbar$, up to a character; that is for a character $\chi : G_{K(t)} \rightarrow \Fbar^\times$, the representations~$\rhobar_1 \otimes \chi$ and~$\rhobar_2 \otimes \chi$ are isomorphic.
\end{definition}

The following lemma is useful for verifying the first condition in Definition~\ref{D:FreyRep}.
\begin{lemma}
\label{frey-irreducible}
Let $\sigma_0, \sigma_1, \sigma_\infty$ be elements of $\PSL_2(\F)$ of order $p,p,r$ satisfying $\sigma_0 \sigma_1 \sigma_\infty = 1$, where $p$ and $r$ are prime numbers. Then $\sigma_0, \sigma_1, \sigma_\infty$ generate a subgroup of $\PSL_2(\F)$ which acts irreducibly on~$\F^2$.
\end{lemma}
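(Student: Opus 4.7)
The plan is a proof by contradiction. Suppose the subgroup $H := \langle \sigma_0, \sigma_1, \sigma_\infty \rangle$ acts reducibly on $\F^2$; then a preimage of $H$ in $\SL_2(\F)$ fixes a line, and after conjugating we may assume $H$ lies in the image in $\PSL_2(\F)$ of the upper-triangular Borel $B \subset \SL_2(\F)$. It therefore suffices to derive a contradiction from the assumption that $H$ is contained in this Borel image.

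The key observation is that $\F$ has characteristic~$p$, so any element of $\PSL_2(\F)$ of order $p$ is unipotent: a lift to $\SL_2(\F)$ has order a power of $p$, hence minimal polynomial dividing $X^p - 1 = (X-1)^p$. Inside $B$, the classes $\sigma_0$ and $\sigma_1$ are therefore represented by upper-unitriangular matrices $\begin{pmatrix} 1 & a \\ 0 & 1 \end{pmatrix}$ and $\begin{pmatrix} 1 & b \\ 0 & 1 \end{pmatrix}$ for some $a,b\in\F$. Their product is again upper-unitriangular, so its image in $\PSL_2(\F)$ has order dividing $p$. Since that image equals $\sigma_\infty^{-1}$, which has order $r$, we conclude $r \mid p$, and since $r$ is prime this forces $r = p$.

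The step I expect to be the main obstacle is not the matrix manipulation itself but the status of the two primes: the argument above only produces a contradiction when $p \ne r$, and indeed the lemma is genuinely false when $p = r$ (one may take all three elements in a common upper-unitriangular subgroup with $a + b + c = 0$, giving a reducible configuration). In the Frey-representation applications envisioned in Definition~\ref{D:FreyRep} and the following sections, $r$ is a fixed small prime (e.g.\ $r\in\{5,7,11\}$) while $p$ is the large varying exponent, so the hypothesis $p \ne r$ is automatic and the argument above completes the proof. Any written-out version should either make this coprimality assumption explicit, or verify it separately at the point of use.
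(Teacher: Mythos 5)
Your argument is correct under the hypothesis $p \neq r$, and it takes a genuinely different route from the paper. The paper's proof invokes \cite[Lemma 1.11]{DarmonDuke}, which gives the far stronger conclusion that $\sigma_0,\sigma_1,\sigma_\infty$ generate all of $\PSL_2(\F)$ except for $(p,r)=(3,5)$, where they generate a copy of $A_5$; irreducibility then follows because $|A_5|=60$ does not divide the order of a Borel subgroup of $\PSL_2(\F_9)$. You instead rule out containment in a Borel directly: in characteristic $p$ an order-$p$ element of the Borel is unipotent, the product of two upper-unitriangular matrices is again upper-unitriangular, and such an element cannot have order $r\neq p$. This is self-contained and avoids the Dickson-type classification behind Darmon's lemma, at the cost of not recovering the full-image statement (which the present lemma does not need). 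One small repair: a lift to $\SL_2(\F)$ of an order-$p$ element of $\PSL_2(\F)$ may have order $2p$ rather than a power of $p$ when $p$ is odd; either replace the lift by its negative, or note directly that the diagonal torus of the Borel has order prime to $p$, so the semisimple part of any such lift is $\pm 1$.

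Your caveat about $p=r$ is correct and substantive: the lemma as stated fails for $p=r$ (three upper-unitriangular elements whose off-diagonal entries sum to zero give a reducible configuration), and the same counterexample shows that the appeal to Darmon's Lemma 1.11 must likewise carry the hypothesis $p\neq r$. This is not a vacuous worry here: Lemma~\ref{frey-irreducible} is cited in Proposition~\ref{darmonfrey} and Theorem~\ref{hyperelliptic-darmon} for an arbitrary prime $\Fp\mid p$ of $K$, and Theorem~\ref{T:FreyRep} then uses Theorem~\ref{hyperelliptic-darmon} precisely at $\Fp=\Fp_r$, i.e.\ in signature $(r,r,r)$, where irreducibility must be obtained by other means (e.g.\ via the comparison with the Legendre family). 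So your recommendation to make $p\neq r$ explicit in the statement, or to verify it at each point of use, is exactly right.
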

\begin{proof}
By \cite[Lemma 1.11]{DarmonDuke}, the elements $\sigma_0, \sigma_1, \sigma_\infty$ generate all of $\PSL_2(\F)$ except in the case $(p,r) = (3,5)$ when it generates a subgroup isomorphic to $A_5$. However, the order of $A_5$ does not divide the order of a Borel subgroup of $\PSL_2(\F_9)$.
\end{proof}

The above definition generalizes the original idea of Frey curve as follows.
In~\cite[\S 1.2]{DarmonDuke} a classification of Frey representations of signature~$(r,r,p)$ and~$(p,p,r)$ with respect to the points $(0, 1, \infty)$ is given and they are shown to arise from a base change of certain abelian varieties over $\Q(t)$. In particular, the varieties obtained by Darmon include the known Frey elliptic curves defined over~$\Q$ (see~\cite[p. 14]{DarmonEps}) in case of prime exponents~$p,r$. On the other hand, Frey elliptic curves genuinely defined over number fields, such as those in~\cite{F}, are not explained by Darmon's classification for signature~$(r,r,p)$ with respect to the points $(0, 1, \infty)$. Thus, when applying the multi-Frey technique with both kinds of Frey varieties one can expect to exploit independent information arising from each variety.

\begin{remark} Darmon also obtained a similar classification of Frey representations of signature $(p,q,r)$ for three distinct prime exponents (see \cite[Theorem 1.14]{DarmonDuke}), but these are not considered in this work.
\end{remark}

\subsection{Abelian varieties of~$\text{GL}_2$-type} \label{ss:GL2typeAV}

In this subsection we briefly recall basic facts (see~\cite{RibetGalois}) about abelian varieties of~$\GL_2$-type as they will play an important role in the sequel.

\begin{definition}\label{def:GL2typeAV}
Let $A$ be an abelian variety over a field~$L$ of characteristic~$0$. We say that $A/L$ is of {\it $\GL_2$-type} (or {\it $\GL_2(F)$-type}) if there is an embedding $F \hookrightarrow \End_L(A) \otimes_\Z \Q$ where $F$ is a number field with $[F:\Q] = \dim A$.
\end{definition}

\begin{remark}
In the literature, $A/L$ being of $\GL_2(F)$-type sometimes requires additional conditions such as  $\End_K(A) \otimes \Q \cong F$ (see \cite{Guitart}); or $A/L$ does not decompose up to isogeny over $L$ as $B^n$, where $B$ is of $\GL_2(E)$-type, $E \subseteq F$, and $F$ is embedded into $M_n(E) \cong M_n(\End_K(B) \otimes \Q)$ (see \cite{RibetKorea}); or, in addition, that $A$ does not have an absolutely simple CM factor (as in \cite{Guitart}). We do not require any further conditions for our results, and follow the original terminology of Ribet~\cite{RibetKorea}. For additional facts about $\lambda$-adic and $\ell$-adic representations for abelian varieties of $\GL_2$-type, see \cite{Chi-W}.
\end{remark}

Let~$A/L$ be an abelian variety
with $F \hookrightarrow \End_L(A) \otimes_\Z \Q$ as in Definition~\ref{def:GL2typeAV}. For a prime number~$\ell$, denote by~$T_\ell(A)$ the \(\ell\)-adic Tate module of~$A$ and write $V_\ell(A)=T_\ell(A)\otimes_\Z\Q_\ell$. Then, \(T_\ell(A)\) is a free \(\Z_\ell\)-module of rank~\(2 \dim A\) and \(V_\ell(A)\) is a \(\Q_\ell\)-vector space of dimension~\(2\dim A\). The absolute Galois group~$G_{L}$ of~$L$ acts continuously on these modules so we get a continuous homomorphism
\[
\rho_{A,\ell} : G_{L}\longrightarrow\Aut_{\Q_\ell}(T_\ell(A))\subset\Aut_{\Q_\ell}(V_\ell(A))
\]
called the {\it \(\ell\)-adic representation of~\(A\)}.

On the other hand, \(\End_L(A)\otimes_\Z\Q\) operates on~\(V_\ell(A)\), so \(V_\ell(A)\) is a module for~\(F_\ell = F\otimes_\Q\Q_\ell\). Since~\(F \hookrightarrow \End_L(A) \otimes_\Z \Q\), the action of~\(G_L\) on~\(V_\ell(A)\) is $F_\ell$-linear, and hence the image of~\(\rho_{A,\ell}\) lies in~\(\Aut_{F_\ell}(V_\ell(A))\).

For each prime ideal~\(\lambda\) in~\(F\) above the rational prime~\(\ell\), write~\(\iota_\lambda : F\hookrightarrow F_\lambda\) the canonical embedding of~\(F\) into its \(\lambda\)-adic completion. We have a decomposition $F_\ell \simeq \prod_{\lambda\mid \ell} F_\lambda$ mapping~\(x\otimes a\) to~\((a\iota_\lambda(x))_{\lambda\mid\ell}\). For each~\(\lambda\), put~\(V_\lambda(A) = V_\ell(A)\otimes_{F_\ell}F_\lambda\). Then, \(G_L\) acts \(F_\lambda\)-linearly on~\(V_\lambda(A)\), since it acts \(F_\ell\)-linearly on~\(V_\ell\). According to~\cite[(2.1.1)]{RibetGalois}, we have that~\(V_\lambda(A)\) is a \(2\)-dimensional vector space over~\(F_\lambda\) and thus \(V_\ell(A)\) is a free module of rank~\(2\) over~\(F_\ell\). Therefore, we get a homomorphism
\[
\rho_{A,\lambda} : G_L\longrightarrow\Aut_{F_\lambda}(V_\lambda(A))\simeq\GL_2(F_\lambda),
\]
which is a {\it \(\lambda\)-adic representation attached to~$A$}.

Moreover, the \(\ell\)-adic representation~\(\rho_{A,\ell} :  G_L \rightarrow \Aut_{F_\ell}(V_\ell(A)) \simeq \GL_2(F_\ell)\) decomposes as (see Section~\ref{S:eliminationJ} for a more detailed study)
\begin{equation}\label{eq:decomp}
   \rho_{A,\ell}\simeq\bigoplus_{\lambda\mid\ell}\rho_{A,\lambda}.
\end{equation}
As~\(\lambda\) runs over all the prime ideals in~\(F\), the representations~\(\{\rho_{A,\lambda}\}_\lambda\) form a strictly compatible system of \(F\)-rational representations with exceptional set equal to the set of places of~\(L\) at which \(A\) has bad reduction (see \cite[Theorem~(2.1.2)]{RibetGalois}; note that Ribet's definition of strictly compatible systems of Galois representations differs from that of B\"ockle used in Section~\ref{S:conductor}).

Let~\(\Fq\) be a prime ideal in~\(L\) of good reduction for~\(A\) above a rational prime~\(q\). We denote by~\(\Frob_{\Fq}\) a Frobenius element at~\(\Fq\) in~\(G_L\) and write~\(a_\Fq(A) = \tr \rho_{A,\lambda}(\Frob_\Fq)\) for a prime ideal~\(\lambda \nmid q\). By the previous discussion, this definition is independent of the choice of such~\(\lambda\).

For a prime ideal~\(\lambda\) in~\(F\), the representation~$\rho_{A,\lambda}$ can be conjugated to take values in~$\GL_2(\calO_\lambda)$ where~$\calO_\lambda$ is the integer ring of~$F_\lambda$. By reduction modulo the maximal ideal, we get a representation
\[
\rhobar_{A,\lambda} : G_{L} \longrightarrow\GL_2(\F_\lambda),
\]
with values in the residue field~$\F_\lambda$ of~$F_\lambda$. For a representation
\[
  \rho : G_L \longrightarrow \GL_2(\F_\lambda),
\]
let $\rho^\text{ss}$ denote its semisimplification. The representation $\rhobar_{A,\lambda}^\text{ss}$ is unique up to isomorphism by Brauer-Nesbitt \cite[(30.16)]{Curtis-Reiner} \cite[p.\ 560]{Brauer-Nesbitt}. For instance, if $\rhobar_{A,\lambda}$ is irreducible, then $\rhobar_{A,\lambda} = \rhobar_{A,\lambda}^\text{ss}$ is unique up to isomorphism.

We finish this section with the following basic property which is used throughout~\cite{DarmonDuke} and our paper. For example, it ensures the relevant Frey varieties from Section~\ref{S:modularity} are modular with trivial character.
\begin{theorem}\label{lem:det}
Let $L$ and $F$ be totally real number fields.
Suppose~$A/L$ is a polarized abelian variety of $\GL_2(F)$-type and let~\(\lambda\) be a prime ideal in~\(F\) above a rational prime~\(p\).
Then $\det \rho_{A,\lambda}$ is the \(p\)-adic cyclotomic character $\chi_p$.
\end{theorem}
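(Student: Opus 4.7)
The plan is to promote the $\Q_p$-valued Weil pairing on $V_p(A)$ to a perfect $F_\lambda$-bilinear alternating $G_L$-equivariant pairing on $V_\lambda(A)$ with values in $F_\lambda(1)$; taking determinants over~$F_\lambda$ then yields $\det \rho_{A,\lambda} = \chi_p$ directly. The totally real hypothesis on~$F$ enters essentially through the Rosati involution.

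First, fix a polarization on $A$ and let $\dagger$ denote the associated Rosati involution on $\End_L^0(A) := \End_L(A) \otimes_\Z \Q$, so that $e_p(\alpha x, y) = e_p(x, \alpha^\dagger y)$ for all $\alpha \in \End_L^0(A)$, where $e_p : V_p(A) \times V_p(A) \to \Q_p(1)$ is the Weil pairing. Since $F$ is a maximal commutative subfield of $\End_L^0(A)$ (which is what $\GL_2(F)$-type forces), one can adjust the polarization to ensure that $\dagger$ stabilizes $F$. Then $\dagger|_F$ is a positive involution of the number field $F$, hence either the identity or complex conjugation; since $F$ is totally real, the first alternative must hold. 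Consequently, $e_p$ is $F$-balanced: $e_p(\alpha x, y) = e_p(x, \alpha y)$ for every $\alpha \in F$, and hence for every $\alpha \in F_p := F \otimes_\Q \Q_p$.

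Second, upgrade the $F_p$-balanced, $\Q_p$-valued form $e_p$ to an $F_p$-valued form via the trace. Since $F_p/\Q_p$ is \'etale, the form $(x,y) \mapsto \Tr_{F_p/\Q_p}(xy)$ is a perfect $\Q_p$-bilinear pairing on~$F_p$, and by adjunction it matches $F_p$-balanced $\Q_p$-valued forms on $V_p(A)$ with $F_p$-bilinear $F_p$-valued forms. Concretely, there is a unique $F_p$-bilinear, alternating, $G_L$-equivariant pairing $\tilde e_p : V_p(A) \times V_p(A) \to F_p(1) := F_p \otimes_{\Q_p} \Q_p(1)$ satisfying $\Tr_{F_p/\Q_p} \circ \tilde e_p = e_p$, and $\tilde e_p$ is perfect because $e_p$ and the trace are. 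Decomposing $F_p \simeq \prod_{\lambda \mid p} F_\lambda$ yields, for each prime $\lambda$ of $F$ above $p$, a perfect $F_\lambda$-bilinear alternating $G_L$-equivariant pairing $V_\lambda(A) \times V_\lambda(A) \to F_\lambda(1)$, and therefore an isomorphism of $G_L$-modules $\wedge^2_{F_\lambda} V_\lambda(A) \simeq F_\lambda(1)$, i.e.\ $\det \rho_{A,\lambda} = \chi_p$.

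The hard part is the polarization adjustment in the first step: arranging that $\dagger$ preserves $F$. This is the only non-formal ingredient and uses in an essential way that $F$ is its own commutant in $\End_L^0(A)$; once it is in place, the totally real hypothesis on $F$ pins down $\dagger|_F = \mathrm{id}$, and the rest of the argument is a formal consequence of the self-duality of the \'etale $\Q_p$-algebra $F_p$ under its trace form. Note that the totally real hypothesis on~$L$ is never used for the determinant computation itself; it is a standing hypothesis of the paper that plays no role here.
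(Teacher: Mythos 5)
Your overall route is the standard one: it is essentially the argument carried out in the sources the paper cites (\cite{RibetKorea}, \cite{WuPhD}, and \cite[\S2]{RibetGalois}); the paper itself gives no proof beyond those citations. Your second step (descending the Weil pairing through the trace form of the \'etale algebra $F\otimes\Q_p$ to get a perfect alternating $F_\lambda$-bilinear pairing on the rank-two module $V_\lambda(A)$, hence $\det\rho_{A,\lambda}=\chi_p$) is correct and complete. The gap is in the first step, which you yourself identify as the only non-formal ingredient. You justify the existence of a polarization whose Rosati involution stabilizes $F$ by asserting that $\GL_2(F)$-type forces $F$ to be a maximal commutative subfield (its own commutant) of $\End_L(A)\otimes\Q$. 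That assertion is false for the definition used in this paper: Definition~\ref{def:GL2typeAV} only requires an embedding $F\hookrightarrow\End_L(A)\otimes\Q$ with $[F:\Q]=\dim A$, and the Remark immediately following it explicitly declines to impose any maximality condition --- for instance $A$ may have CM by a field of degree $2\dim A$ containing $F$, in which case $F$ is not its own commutant. Moreover, even when $F$ \emph{is} a maximal commutative subfield (say of a quaternion algebra), that fact alone does not produce the desired polarization: the Rosati involution of an arbitrary polarization is an anti-automorphism with no reason to preserve a chosen maximal subfield, and you supply no mechanism for the adjustment.

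What is actually needed is the standard lemma that an abelian variety over a field, equipped with an action of a totally real field $F\subset\End_L(A)\otimes\Q$, admits an $F$-linear polarization defined over $L$ --- equivalently, one whose Rosati involution restricts to the identity on $F$. This holds with no maximality hypothesis and is proved by a positivity/openness argument identifying $\mathrm{NS}(A)\otimes\R$ with the symmetric elements of the endomorphism algebra (see Mumford, \emph{Abelian Varieties}, \S 21, or \cite[\S 2]{RibetGalois}, where exactly this reduction is made). Substituting that lemma for your commutant claim repairs the proof; note that the construction does yield a polarization defined over $L$ (the adjusting symmetric element lies in $\End_L(A)\otimes\Q$), which you need so that the Weil pairing remains $G_L$-equivariant. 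Your closing observation that the totally real hypothesis on $L$ plays no role in this determinant computation is correct.
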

\begin{proof}
 The case $L = \Q$ follows from the classical work of Ribet~\cite{RibetKorea}. For the general case see~\cite[\S 2.2]{WuPhD}, more precisely, Lemma~2.2.5 and Corollary~2.2.10 in {\it loc. cit}.
\end{proof}

\begin{remark}
For $\lambda$-adic representations arising from odd weight newforms, the analogue of Theorem~\ref{lem:det} is not true. For example, there is a classical newform $f$ with LMFDB label 4.5.b.a which has weight $5$, level $4$, and complex multiplication by $\Q(\sqrt{-1})$, whose field of eigenvalues is $\Q$, but whose Nebentypus is a character of order $2$.
\end{remark}

\subsection{Hyperelliptic equations}\label{ss:hyperelliptic_equations}
In this subsection, we summarize well-known facts (taken in part from \cite{Lockhart}) that will be relevant for later calculations in this section and in~Subsection~\ref{ss:conductor_calculation}.

Let $K$ be a local field. Denote by $\calO$ the ring of integers of $K$, $\pi$ a uniformizer in~\(\calO\), $k$ the residue field of $\calO$, and $v$ its normalized valuation. Let~$g$ be a positive integer.

We refer the reader to the definition of a hyperelliptic equation $E$ over $K$ of genus $g$ and its discriminant $\Delta_E$ in general form described in \cite[\S 7.4, Proposition 4.24]{Liu-book}, but do not recall it as we will only need the special case in \eqref{odd-degree-case}.

An algebraic curve $C$ given by a hyperelliptic equation $E$ over $K$ with $\Delta_E \not=0$ is called a hyperelliptic curve over $K$.

A model $\mathcal{C}$ over $\calO$ for a hyperelliptic curve $C$ over $K$ is a $\calO$-scheme which is proper and flat over $\calO$ such that $\mathcal{C}_K \simeq C$ where $\mathcal{C}_K$ denotes the generic fiber of $\mathcal{C}$.

A model $\mathcal{C}$ over $\calO$ for a hyperelliptic curve $C$ over $K$  has good reduction if and only if its reduction mod $v$ is a non-singular curve over the residue field $k$ of~$\calO$.

A model $\mathcal{C}$ over $\calO$ for a hyperelliptic curve $C$ over $K$ has bad semistable reduction if and only if its reduction mod $v$ is reduced and has only ordinary double points as singularities and at least one singularity (over $\bar k$).

A hyperelliptic model $\mathcal{E}$ over $\calO$ for a hyperelliptic curve $C$ over $K$ is a model over $\mathcal{O}$ arising from hyperelliptic equation $E : y^2 + Q(x) y = P(x)$ over $K$ such that $P, Q \in \calO[x]$.

A hyperelliptic curve $C$ over $K$ has good reduction (resp.\ bad semistable reduction) if and only if there is some model $\mathcal{C}$ over $\calO$ for $C$ which has good reduction (resp.\ bad semistable reduction).

A hyperelliptic curve $C$ over $K$ has semistable reduction if and only if $C$ has good or bad semistable reduction.

An abelian variety over $K$ has semistable reduction if and only if the linear part of the special fiber of the connected component of its N\'eron model is an algebraic torus.

An odd degree hyperelliptic equation $E$ over $K$ of genus~$g$ is an equation of the form
\begin{equation}
\label{odd-degree-case}
  y^2 + Q(x) y = P(x)
\end{equation}
where $Q, P \in K[x]$, $\deg Q \le g$, $\deg P = 2g + 1$ and $P$ is monic. The discriminant of $E$ is given~by
\begin{equation*}
  \Delta_E = 2^{4g} \Delta(P + Q^2/4),
\end{equation*}
where $\Delta(H)$ is the discriminant of $H \in K[x]$.

Two odd degree hyperelliptic models~$E : y^2 + Q(x) y = P(x)$ and~$F : z^2 + T(u) z = S(u)$ for the same hyperelliptic
curve~$C$ over~$K$ are related by a transformation of the shape
\begin{equation*}
   x = e^2 u + r, \quad y = e^{2g+1} z + t(u),
  \end{equation*}
  where~\(e \in K^*\), \(r \in K\), and \(t \in K[u]\) with \(\deg(t) \le g\). The discriminants of the odd degree hyperelliptic models are related by
\[
\Delta_F = \Delta_E \, e^{-4g(2g+1)},
\]
hence the valuation of the discriminant modulo $4g(2g+1)$ is an invariant of the isomorphism class of $C$.

\begin{proposition}\label{hyperelliptic-good}
Let $C$ be a hyperelliptic curve over $K$ with a $K$-rational Weierstrass point. Then $C$ has good reduction if and only if $C$ has an odd degree hyperelliptic model $\mathcal{E}$ over $\calO$ such that $v(\Delta(\mathcal{E})) = 0$.
\end{proposition}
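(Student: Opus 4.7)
\emph{Forward direction.} Suppose $\mathcal{E}/\calO$ is an odd degree hyperelliptic model of $C$ with $v(\Delta(\mathcal{E})) = 0$. Then the reduction $\bar{\mathcal{E}} : y^2 + \bar Q(x)y = \bar P(x)$ over $k$ has non-vanishing discriminant, and hence defines a smooth affine curve of genus $g$ over $k$. Since $\deg \bar P = 2g+1$ is odd, the unique point at infinity of the projective closure is also smooth. Thus $\mathcal{E}$ is a smooth proper model of $C$ over $\calO$, and $C$ has good reduction.

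\emph{Converse.} Assume $C$ has good reduction, with smooth proper model $\mathcal{C}/\calO$. The plan is to construct a suitable odd degree Weierstrass model of $C$ over $\calO$ directly from $\mathcal{C}$, so that smoothness of $\mathcal{C}$ forces the discriminant to be a unit. By the valuative criterion of properness applied to the smooth scheme $\mathcal{C}$, the $K$-rational Weierstrass point $P_\infty$ extends uniquely to a section $\Spec\calO \to \mathcal{C}$, which we still denote $P_\infty$. For each $n \geq 2g-1$, the theorem on cohomology and base change applied to the smooth proper family $\mathcal{C}/\calO$ of genus $g$ (using that $h^1(\calO_{\bar{\mathcal{C}}}(n\bar P_\infty)) = 0$ by Riemann--Roch on the special fiber) shows that $H^0(\mathcal{C}, \calO_\mathcal{C}(n P_\infty))$ is a free $\calO$-module of rank $n-g+1$, and that its formation commutes with base change to $K$ and to $k$. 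In particular, since $P_\infty$ is a Weierstrass point of $C$, the reduction $\bar P_\infty$ is a Weierstrass point of $\bar{\mathcal{C}}$.

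I would then pick $x \in H^0(\mathcal{C}, \calO_\mathcal{C}(2P_\infty))$ and $y \in H^0(\mathcal{C}, \calO_\mathcal{C}((2g+1)P_\infty))$ whose restrictions to both fibers have polar orders exactly $2$ and $2g+1$ along $P_\infty$, normalized by units of $\calO$ so that, for a fixed relative uniformizer $\tau$ along the section, both $x\tau^2$ and $y\tau^{2g+1}$ reduce to $1$ on $P_\infty$. Expanding $y^2$ in the $\calO$-basis $\{1,x,\dots,x^{2g+1},y,xy,\dots,x^g y\}$ of $H^0(\mathcal{C}, \calO_\mathcal{C}((4g+2)P_\infty))$ and matching the leading Laurent terms at $P_\infty$ yields a relation
\[ y^2 + Q(x)\,y = P(x), \]
with $P, Q \in \calO[x]$, $\deg Q \leq g$, $\deg P = 2g+1$ and $P$ monic. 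This defines an odd degree hyperelliptic model $\mathcal{E}/\calO$ of $C$, and by construction its affine part is identified with the open subscheme $\mathcal{C} \setminus P_\infty$ of $\mathcal{C}$. Smoothness of this open over $\calO$ forces, via the Jacobian criterion, the reduced discriminant to be a unit in $k$, that is $v(\Delta(\mathcal{E})) = 0$.

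The main delicate point is the joint control of the leading Laurent coefficients along $P_\infty$ and of the integrality of the coefficients of $P$ and $Q$: this forces us to work with the genuine $\calO$-structure on the Riemann--Roch spaces rather than merely their generic fibers, and is where we crucially use smoothness of $\mathcal{C}$ over $\calO$ (not just regularity) together with the existence of a $K$-rational Weierstrass point --- without such a section there is no natural way to pin down an odd degree model at all. In residue characteristic $2$ the explicit formula $\Delta_E = 2^{4g}\Delta(P+Q^2/4)$ becomes delicate, but the characteristic-free discriminant of \cite[\S7.4]{Liu-book} still detects smoothness of the affine model, so the argument goes through unchanged.
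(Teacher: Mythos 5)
Your argument is correct in outline and is essentially the same as the paper's: the forward direction is identical, and for the converse the paper simply delegates the construction of an integral odd-degree model with unit discriminant to \cite[Proposition 1.2]{Lockhart} (after extending the hyperelliptic map to the smooth model via \cite[Exercise 8.3.6]{Liu-book}), whereas you reprove Lockhart's proposition by hand via relative Riemann--Roch modules. The one place where your write-up does not quite cover what it uses is the choice of $x$: you invoke cohomology and base change only for $n \geq 2g-1$, but you then need $H^0(\mathcal{C},\calO_{\mathcal C}(2P_\infty))$ to be free of rank $2$ and compatible with base change, and $2 < 2g-1$ for $g \geq 2$. This amounts to knowing $h^0(\bar{\mathcal C}, 2\bar P_\infty)=2$, i.e.\ that the special fibre is hyperelliptic with $\bar P_\infty$ a Weierstrass point --- which is exactly what the paper extracts from Liu's exercise; in your setup it follows from semicontinuity (giving $h^0 \geq 2$) together with Clifford's theorem (giving $h^0 \leq 2$), and then Grauert's criterion. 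With that point supplied, the rest of your construction and the final discriminant argument go through.
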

\begin{proof}
If $C$ has an odd degree hyperelliptic model $\mathcal{E}$ over $\calO$ such that $v(\Delta(\mathcal{E})) = 0$, then $\mathcal{E}$ is a model with good reduction so $C$ has good reduction.

Suppose $C$ has good reduction, so there exists a model $\mathcal{C}$ of $C$ with good reduction. By \cite[Exercise 8.3.6]{Liu-book}, the hyperelliptic map
\begin{equation*}
  \pi: C \rightarrow \mathbb{P}^1_K
\end{equation*}
extends to
\begin{equation*}
  \pi : \mathcal{C} \rightarrow \mathbb{P}^1_\calO.
\end{equation*}

As a result, $\mathcal{C}_k$ is a non-singular pointed hyperelliptic curve, so using \cite[Proposition 1.2]{Lockhart} we deduce that $C$ has an odd degree hyperelliptic model~$\mathcal{E}$.
\end{proof}

Let $C$ be a hyperelliptic curve over $K$.

If $K$ has odd residual characteristic, then $C/K$ has bad semistable reduction if it has a hyperelliptic model $\mathcal{E}: y^2 = P(x)$ over $\calO$ such that the reduction of $P$ mod~$v$ has  at most double roots and at least one double root over $\bar k$.

Consider now $J/K$ the Jacobian of a hyperelliptic curve defined over~$K$. By Chevalley's theorem, the special fiber of the connected component of the N\'eron model of $J$ is an extension of a linear algebraic group by an abelian variety. We define the toric rank of $J$ as the number of copies of $\mathbb{G}_m$ occurring in this algebraic group. Moreover, $J$ has semistable reduction if this linear algebraic group is an algebraic torus. We say that $J$ has multiplicative reduction if it is semistable and its toric rank is positive.

\begin{lemma}
\label{semistable-facts}
Let $K$ be a local field. Let $C$ be a smooth geometrically irreducible curve of genus $g \ge 2$ over $K$ and $J$ be the Jacobian of $C$. The following properties of semistable reduction for $C/K$ hold:
\begin{enumerate}
    \item\label{item:semistable-facts1} If $C/K$ does not have semistable reduction, then $J/K$ does not have semistable reduction, in particular $J/K$ does not have good reduction.
    \item\label{item:semistable-facts2} If $C/K$ has good (resp.\ bad semistable) reduction, then $C/L$ has good (resp.\ multiplicative) reduction for any extension $L/K$ with finite ramification index. Moreover, if $C/K$ has semistable reduction, it can only have good reduction or bad semistable reduction, but not both.
    \item If $J/K$ has good reduction and $C/L$ has good reduction for some extension $L/K$ with finite ramification index, then $C/K$ has good reduction.
\end{enumerate}
\end{lemma}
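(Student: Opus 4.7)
The plan is to reduce every assertion to the Deligne--Mumford stable reduction theorem, together with elementary base-change properties of smooth and semistable schemes. The key input is: for a smooth geometrically irreducible curve $C$ of genus $g\ge 2$ over $K$, the curve $C/K$ admits semistable reduction if and only if $J/K$ does.

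For (1), the contrapositive is immediate from Deligne--Mumford: absence of semistable reduction for $C/K$ forces the same for $J/K$, which in particular excludes good reduction.

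For (2), I would separate the two cases. If $C/K$ has good reduction, the defining smooth proper model $\mathcal{C}/\calO_K$ base-changes to a smooth proper model over $\calO_L$, giving good reduction for $C/L$. If $C/K$ has bad semistable reduction, I would work with the stable model $\mathcal{C}/\calO_K$, whose special fiber contains at least one ordinary double point over $\bar k$; such a singularity persists after the finitely-ramified base change to $\calO_L$, so $C/L$ is not smooth. To obtain the multiplicative-reduction statement for $J/L$, I would invoke the standard identification of the toric rank of the connected N\'eron model with the first Betti number of the dual graph of the special fiber of the stable model, which is positive whenever the stable reduction has at least one node. The dichotomy in the ``moreover'' clause follows from the uniqueness of the stable model over $\calO_K$.

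For (3), combining the first two: $J/K$ of good reduction is in particular semistable, so $C/K$ is semistable by Deligne--Mumford; if $C/K$ were bad semistable, (2) would force $C/L$ to be non-smooth, contradicting the hypothesis. Hence $C/K$ has good reduction. The principal obstacle will be careful handling of base change for the stable model---specifically verifying that ordinary double points and the dual graph of the special fiber persist under finitely-ramified base change---and correctly linking this to the toric rank of the N\'eron model of $J$.
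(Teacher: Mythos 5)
Your proposal is correct and runs along the same lines as the paper's proof for parts (1) and (3): both reduce everything to the Deligne--Mumford equivalence between semistable reduction of $C$ and of $J$, and part (3) is in both cases the combination of (1) with the dichotomy in (2). The genuine difference is in how the dichotomy of (2) is established: you appeal to the uniqueness of the stable model, whereas the paper works with the minimal regular model, invoking \cite[\S 10.3, Corollary 3.25]{Liu-book} to show that a split node in a bad semistable model forces the minimal regular model to have bad reduction, and then \cite[\S 10.1, Proposition 1.21(b)]{Liu-book} to conclude. Your route is arguably cleaner conceptually, but note one point of logical order: with the definitions in force here, ``bad semistable reduction'' only asserts the existence of \emph{some} model with nodal reduced special fiber, so your opening move in (2) --- passing immediately to the stable model and asserting its special fiber is nodal --- already presupposes something very close to the dichotomy you prove at the end of (2). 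The paper avoids this for the base-change assertion by base-changing the given bad semistable model directly (reducedness and ordinary double points are geometric conditions on the special fiber, so they persist under finitely ramified base change), and only brings in the minimal regular model for the dichotomy; you should order your argument the same way, or first justify that stabilization of a nodal semistable model cannot produce a smooth special fiber. Finally, your explicit use of the toric rank being the first Betti number of the dual graph to get multiplicative reduction of $J/L$ is a detail the paper leaves implicit (it is handled elsewhere via \cite[Lemma 3.3.5]{Romagny}), and including it is a small improvement.
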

\begin{proof}
We first recall that $C/K$ has semistable reduction if and only if $J/K$ has semistable reduction \cite[Theorem 2.4]{Deligne-Mumford}, from which (\ref{item:semistable-facts1}) follows immediately.

\begin{enumerate}
    \item[(2)] If $C/K$ has good reduction (resp.\ bad semistable reduction), then there is a model for $C/K$ which has good reduction (resp.\ bad semistable reduction). Over $L$, this model still has good reduction (resp.\ bad semistable reduction). For the second assertion, we note that $C/K$ has good reduction if and only if its minimal regular model over $\calO$ (which is unique up to isomorphism) has good reduction \cite[\S 10.1 Proposition 1.21.b]{Liu-book}.

    Suppose $C/K$ has bad semistable reduction, then it has a model $\mathcal{C}$ over $\calO$ with bad semistable reduction. Let $s$ be an ordinary double point in the special fiber of $\mathcal{C}/\calO$. After replacing $K$ by a finite extension, we may assume $s$ is a split ordinary double point. By \cite[\S 10.3, Corollary 3.25]{Liu-book}, we conclude that the minimal regular model over $\calO$ of $C/K$ has bad reduction, and hence by \cite[\S 10.1, Proposition 1.21(b)]{Liu-book} it follows that $C/K$ does not have good reduction.

    \item[(3)] If $J/K$ has good reduction, then $J/K$ is semistable, hence $C/K$ is semistable and has either good reduction or bad semistable reduction. If $C/K$ has bad semistable reduction, then $C/LK$ has bad semistable reduction by (\ref{item:semistable-facts2}), contradicting its good reduction because $C/L$ has good reduction.
\end{enumerate}
\end{proof}

It is known from \cite[Corollary 2.7]{Deligne-Mumford} that there is a finite extension $L/K$ such that $C/L$ has semistable reduction, that is, $C/K$ has potential semistable reduction. Lemma~\ref{semistable-facts} (\ref{item:semistable-facts2}) implies that $C/K$ either has potential good reduction or potential bad semistable reduction, but not both.

\begin{remark}
In what follows, we can restrict to odd degree hyperelliptic equations in our applications due to Proposition~\ref{hyperelliptic-good} and the fact that our Frey hyperelliptic curves have a $K$-rational Weierstrass point at $\infty$.
\end{remark}

\subsection{The Frey hyperelliptic curves over $K(s)$ and~$K(t)$}\label{S:FreyOverKs}
Some of the results that follow are based on the arguments in~\cite{ttv}. 

We remark that Darmon constructs two hyperelliptic curves over~$\Q(t)$, denoted~$C_r^+(t)$ and~$C_r^-(t)$ in~\cite[\S 1.3]{DarmonDuke}, attached to equations of signature~$(p,p,r)$. Our hyperelliptic curve~$C_r(s)$ for signature~$(r,r,p)$ defined in~\eqref{E:C(s)} below is related to Darmon's~$C_r^-(t)$. This will be important for our argument to work, for example, in the proof of Proposition~\ref{darmonfrey}. Since we have no use for Darmon's curve~$C_r^+(t)$ in this paper, we omit the `-' in our notation~$C_r(s)$. 

Let $r \ge 3$ be prime and $\omega_j = \zeta_r^j + \zeta_r^{-j}$ for $j \in \Z$ where we write~$\zeta_r$ for a fixed primitive $r$-th root of unity. Let~$i$ be a  fixed primitive fourth root of unity.

Let $K = \Q(\omega_1) = \Q(\zeta_r)^+$ be the maximal totally real subfield of~$\Q(\zeta_r)$. 

Let $\Q(s)$ be the field of rational functions over $\Q$ in the variable~$s$ and set
\begin{equation}\label{eq:def_h}
  h(x) = \prod_{j=1}^{\frac{r-1}{2}} (x - \omega_j), \qquad f^-(x) = x h(x^2+2) + s.
\end{equation}

Note that $h\in \Z[x]$ is monic of degree~$(r - 1)/2$.
We consider the hyperelliptic curve over $\Q(s)$ given by 
\begin{equation} \label{E:C(s)}
C_r(s) \; : \; y^2 = f^-(x) = x h(x^2+2) + s
\end{equation}
and write $J_r(s)$ for its Jacobian.
For studying the relevant properties of $C_r(s)$ and $J_r(s)$ we 
will need the following auxiliary lemmas.

Set~$H(x,z) = z^{r - 1}xh\left(\left(\frac{x}{z}\right)^2 + 2\right)\in\Z[x,z^2]$ so that~\(H(x,1) = xh(x^2 + 2)\), and denote by~$T_r$ the $r$-th Chebyshev polynomial of the first kind.

\begin{lemma} \label{L:firstKind}
We have that 
\begin{equation*}
  H(x,z) = 2 (iz)^r T_r \left( \frac{x}{2iz} \right).
\end{equation*}
In particular, we have
\begin{equation}\label{eq:cheby}
H(x,z) = \sum_{k=0}^{\frac{r - 1}{2}}c_kz^{2k}x^{r - 2k}, \quad\text{where }c_k=\frac{r}{r-k}{r-k \choose k} \in\Z,
\end{equation}
and~$c_k$ is divisible by~$r$ for~$k>0$.
\end{lemma}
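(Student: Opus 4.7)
The plan is to prove the identity via the substitution $x = u - u^{-1}$, which converts the polynomial statement into a clean manipulation with $r$-th roots of unity, and then to derive the coefficient formula from the classical expansion of $T_r$.

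The key observation is that with $x = u - u^{-1}$, we have $x^2 + 2 = u^2 + u^{-2}$ and
\[
  u^2 + u^{-2} - \omega_j = u^{-2}(u^2 - \zeta_r^j)(u^2 - \zeta_r^{-j}).
\]
Taking the product for $j = 1, \ldots, (r-1)/2$ and using $\prod_{j=0}^{r-1}(X - \zeta_r^j) = X^r - 1$ gives
\[
  h(x^2 + 2) = u^{-(r-1)} \cdot \frac{u^{2r} - 1}{u^2 - 1}.
\]
Since $x = u^{-1}(u^2 - 1)$, multiplying yields $xh(x^2 + 2) = u^r - u^{-r}$. To convert this into a statement about $T_r$, I would apply the classical identity $2T_r((w + w^{-1})/2) = w^r + w^{-r}$ with $w = iu$, so that $(w + w^{-1})/2 = ix/2$. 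For $r$ odd, $w^r + w^{-r} = i^r(u^r - u^{-r})$, and combined with $T_r(-y) = -T_r(y)$ together with $1/(2i) = -i/2$, this yields $xh(x^2 + 2) = 2 i^r T_r(x/(2i))$. Homogenizing by $x \mapsto x/z$ and multiplying by $z^r$ then gives the first claim $H(x,z) = 2(iz)^r T_r(x/(2iz))$.

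For the explicit expansion, I would substitute the classical formula
\[
  2T_r(y/2) = \sum_{k=0}^{(r-1)/2}(-1)^k \frac{r}{r-k}\binom{r-k}{k} y^{r-2k}
\]
with $y = x/(iz)$. The crucial simplification is that $(iz)^{2k} \cdot (-1)^k = z^{2k}$, so the signs cancel and we obtain $c_k = \frac{r}{r-k}\binom{r-k}{k}$. For the divisibility assertion, I would use $\binom{r-k}{k} = \frac{r-k}{k}\binom{r-k-1}{k-1}$ to rewrite $c_k = \frac{r}{k}\binom{r-k-1}{k-1}$ for $k \geq 1$; since $r$ is prime and $1 \le k \le (r-1)/2 < r$ ensures $\gcd(k,r) = 1$, the integer $c_k$ must be divisible by $r$. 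Integrality of all $c_k$ follows from the identity $H(x,z) \in \Z[x, z^2]$, already noted before the lemma.

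The main subtlety is sign tracking in the Chebyshev conversion, since $i^{-r} = -i^r$ and $T_r$ is odd for $r$ odd, but these signs conspire to give a clean identity; once this is in hand, the rest of the argument is routine symbolic manipulation.
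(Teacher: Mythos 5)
Your proof is correct, but it reaches the identity by a genuinely different route than the paper. The paper establishes $H(x,z)=2(iz)^rT_r\bigl(\tfrac{x}{2iz}\bigr)$ by a root-matching argument: it computes the $r$ distinct zeros of $H(x,\cdot)$ as $x=0$ and $x=\pm 2iz\sin\tfrac{\pi j}{r}$, verifies via $T_r(\cos\theta)=\cos r\theta$ that these are also zeros of the right-hand side, concludes the two sides are proportional, and fixes the constant by comparing leading coefficients; integrality and $r$-divisibility of the $c_k$ are then read off from two binomial identities quoted from Mason--Handscomb. You instead substitute $x=u-u^{-1}$ and reduce everything to the factorization of $u^{2r}-1$ over the $r$-th roots of unity, obtaining $xh(x^2+2)=u^r-u^{-r}$ --- which is precisely the content of the paper's later Lemma~\ref{cyclo-relation}, proved there separately --- and then convert to $T_r$ via the defining relation $2T_r\bigl(\tfrac{w+w^{-1}}{2}\bigr)=w^r+w^{-r}$ with $w=iu$; your sign bookkeeping ($i^{-r}=-i^r$ and $T_r$ odd for $r$ odd) is accurate. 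Your treatment of the coefficients also differs slightly: you get integrality of $c_k$ from $H(x,z)\in\Z[x,z^2]$ rather than from a binomial identity, and you get $r\mid c_k$ from $kc_k=r\binom{r-k-1}{k-1}$ together with $\gcd(k,r)=1$, which is clean and self-contained. The trade-off is that your argument is purely algebraic and avoids any counting of distinct real zeros, at the cost of invoking the classical expansion of $2T_r(y/2)$ (the same formula the paper cites) and of implicitly reproving Lemma~\ref{cyclo-relation}; if one were editing the paper, your route would let Lemma~\ref{L:firstKind} and Lemma~\ref{cyclo-relation} share a single computation.
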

\begin{proof}
According to~\cite[Section 2.3.2]{Mason}, $T_r$ is explicitly given by the formula
$$
T_r(X)=\sum_{k=0}^{\frac{r - 1}{2}}(-1)^k2^{r-2k-1}c_k X^{r-2k},
$$
and we have the following identities for~$c_k$:
\[
c_k = 2\binom{r - k}{k} - \binom{r - k - 1}{k} = \frac{1}{2^{r - 2k - 1}}\sum_{j = k}^{\frac{r - 1}{2}}\binom{r}{2j}\binom{j}{k}.
\]
This proves that~$c_k$ is an integer for all~$k\ge0$ and that $c_k\equiv 0\pmod{r}$ for~$k>0$. Note that $H(x,z)$ and $z^rT_r\left(\frac{x}{2iz}\right)$ have degree~$r$ with respect to~$x$. We claim they have the same set of~$r$ distinct complex zeros, hence are scalar multiples of each other, that is there exists a complex number~$\alpha$ such that
\[
 H(x,z) = \alpha \cdot z^rT_r \left( \frac{x}{2iz} \right).
\]
Since $H$ is monic and the leading coefficient of 
$z^rT_r(\frac{x}{2iz})$ is $1/2i^r$ we conclude 
$\alpha = 2 i^r$, as desired.
To complete the proof we will now prove the claim. 

Fix $\zeta_r^{1/2}$ 
a $2r$-th root of unity whose square is $\zeta_r$.
The zeros of $h(x)$ 
are $\omega_j$ for $j = 1, \ldots, \frac{r-1}{2}$, so that 
the zeros of $H(x,z)$ as a polynomial in~$x$ satisfy $x=0$ or
\begin{align*}
  \left(\frac{x}{z}\right)^2 = \omega_j - 2 = (\zeta_r^{j/2} - \zeta_r^{-j/2})^2 
  = \left( 2i \sin \frac{\pi j}{r} \right)^2
  \iff x = \pm 2iz \sin \frac{\pi j}{r}
\end{align*}
which are $r-1$ distinct values for $j = 1, \ldots, \frac{r-1}{2}$. We now show that these values of~$x$
also satisfy $T_r \left( \frac{x}{2iz} \right) = 0$. 
Indeed, recall the defining identity $T_r(\cos \theta) = \cos r \theta$
and compute 
\begin{align*}
 T_r \left( \frac{\pm 2iz \sin \frac{\pi j}{r}}{2iz} \right) & =
 T_r \left( \cos \left( \frac{\pi}{2} - \frac{\pi (\pm j)}{r} \right) \right) \\
 & =
\cos \left( \frac{r \pi}{2} - \frac{r \pi (\pm j)}{r} \right) =
 \cos \left( \frac{r \pi}{2} - \pi (\pm j) \right) = 0,
\end{align*}
where for the first equality we use that $\sin$ is an odd function and for the last that $r$ is odd. 

Finally, the equalities above also hold for $j=0$, showing that $x=0$ is also 
a root of $z^rT_r \left( \frac{x}{2iz} \right)$.
\end{proof}

\begin{lemma} 
\label{L:irreducible}
  The polynomial $f^-(x) = x h(x^2+2) + s$ is irreducible in $\Q(s)[x]$.
\end{lemma}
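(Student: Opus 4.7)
The plan is to exploit the fact that $f^-(x) = xh(x^2+2) + s$ is linear in the variable~$s$, and then pass to irreducibility in $\Q(s)[x]$ via Gauss's lemma. The argument is essentially trivial once this observation is made.

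First I would view $f^-$ as an element of~$\Q[x][s]$ rather than of $\Q(s)[x]$, writing
\[
  f^-(x) \;=\; 1\cdot s \;+\; xh(x^2+2),
\]
which has degree~$1$ in~$s$ with leading coefficient~$1 \in \Q[x]^\times$. Any non-trivial factorization $f^- = g\cdot k$ in $\Q[x][s]$ must have one factor, say~$g$, of degree~$0$ in~$s$, i.e.\ $g \in \Q[x]$. Then $g$ would divide the $s$-leading coefficient~$1$ of~$f^-$, forcing $g \in \Q^\times$. Hence $f^-$ is irreducible in $\Q[s,x] = \Q[x][s] = \Q[s][x]$.

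Next, to pass from irreducibility in $\Q[s][x]$ to irreducibility in $\Q(s)[x]$, I would apply Gauss's lemma over the UFD~$\Q[s]$. This requires $f^-$ to be primitive as a polynomial in~$x$ over~$\Q[s]$. But by Lemma~\ref{L:firstKind} the leading coefficient of~$f^-$ with respect to~$x$ is $c_0 = 1$, so the content of~$f^-$ (as an element of~$\Q[s][x]$) divides~$1$ and is therefore a unit. Gauss's lemma then transfers irreducibility in $\Q[s][x]$ to $\Q(s)[x]$, completing the proof. No serious obstacle arises; the only content of the argument is the linearity of~$f^-$ in~$s$ and the unit leading coefficient provided by Lemma~\ref{L:firstKind}.
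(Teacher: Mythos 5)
Your proof is correct, and it takes a genuinely different (and more elementary) route than the paper's. The paper reduces $f^-$ modulo $s$ to get $x\,h(x^2+2)$, invokes Eisenstein's criterion at the prime~$r$ (using the arithmetic of the Chebyshev coefficients $c_k$ from Lemma~\ref{L:firstKind}: $c_0=1$, $r\mid c_k$ for $k>0$, $c_{(r-1)/2}=r$) to show $h(x^2+2)$ is irreducible over~$\Q$, deduces that any nontrivial factorization of $f^-$ would have to split as $1+(r-1)$ in degree, and finally rules out a linear factor by noting $f^-$ has no root in $\Q(s)$. Your argument bypasses all of this: since $f^-=1\cdot s+xh(x^2+2)$ is linear in~$s$ with coefficients $1$ and $xh(x^2+2)$ that are coprime in $\Q[x]$ (indeed the $s$-leading coefficient is the unit~$1$), it is irreducible in $\Q[s,x]$, and monicity in~$x$ plus Gauss's lemma over the UFD $\Q[s]$ transfers this to $\Q(s)[x]$. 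Your approach needs nothing about the $c_k$ beyond monicity of $h$ (which follows directly from the definition~\eqref{eq:def_h}, so even the citation of Lemma~\ref{L:firstKind} is dispensable), whereas the paper's proof extracts the stronger intermediate fact that the specialization $h(x^2+2)$ is itself irreducible over~$\Q$ — information that is not needed for the lemma as stated but reflects the $r$-adic structure used elsewhere. Both proofs are complete; yours is shorter and self-contained.
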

\begin{proof}
Reducing~\(f^-(x)\) modulo $s$ gives the polynomial $x h(x^2+2)$. In the notation of Lemma~\ref{L:firstKind} (applied to~\(z = 1\)), we have
\[
h(x^2 + 2) = \sum_{k=0}^{\frac{r - 1}{2}}c_kx^{r - 2k - 1}\in\Z[x]
\]
with~\(c_0 = 1\), \(c_k\) divisible by~\(r\) for any~\(k\in\{1,\dots, \frac{r - 3}{2}\}\) and~\(c_{\frac{r - 1}{2}} = r\). By Eisenstein's criterion, we get that~\(h(x^2 + 2)\) is irreducible over~\(\Q\). Thus, if $f^-(x)$ is not irreducible in $\Q(s)[x]$, then it must factor into a linear factor and an irreducible factor of degree $r-1$. However, $f^-(x)$ has no roots in~$\Q(s)$.
\end{proof}

\begin{lemma}\label{L:secondKind}
The formal derivative of $H(x,z) = z^{r - 1}xh\left(\left(\frac{x}{z}\right)^2 + 2\right)$ with res\-pect to the variable~$x$ is~$z^{r - 1}rh\left(\frac{ix}{z}\right)h\left(-\frac{ix}{z}\right)$.
\end{lemma}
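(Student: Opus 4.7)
The plan is to match both sides as polynomials in $x$ by comparing their degrees, leading coefficients, and roots. By Lemma~\ref{L:firstKind}, $H(x,z) = 2(iz)^r T_r(x/(2iz))$, so the classical Chebyshev identity $T_r'(X) = r U_{r-1}(X)$ (with $U_{r-1}$ the Chebyshev polynomial of the second kind) gives
\[
\frac{\partial H}{\partial x} = r (iz)^{r-1} U_{r-1}\!\left(\frac{x}{2iz}\right).
\]
Since $U_{r-1}$ has degree $r - 1$, leading coefficient $2^{r-1}$, and simple roots $X = \cos(k\pi/r)$ for $k = 1, \ldots, r - 1$, the derivative $\partial H/\partial x$ is a polynomial in $x$ of degree $r - 1$ with leading coefficient~$r$ (consistent with $c_0 = 1$ in~\eqref{eq:cheby}) and simple zeros $x = 2iz \cos(k\pi/r)$ for $k = 1, \ldots, r - 1$.

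On the right-hand side, I pair the factors of $h(ix/z) h(-ix/z)$ directly: since $(ix/z - \omega_j)(-ix/z - \omega_j) = \omega_j^2 + (x/z)^2$, I get
\[
z^{r-1} h(ix/z)\, h(-ix/z) = \prod_{j=1}^{(r-1)/2} \bigl(x^2 + z^2 \omega_j^2\bigr),
\]
a polynomial in $x$ of degree $r - 1$ with leading coefficient~$1$ and simple zeros $x = \pm iz \omega_j$ for $j = 1, \ldots, (r-1)/2$. After multiplying by $r$ the leading coefficient matches that of $\partial H/\partial x$, so it remains only to check that the roots agree, i.e., that as multisets
\[
\{2\cos(k\pi/r) : 1 \le k \le r - 1\} = \{\pm \omega_j : 1 \le j \le (r-1)/2\}.
\]

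Setting $\zeta = e^{i\pi/r}$ so that $\zeta^r = -1$ and $\zeta^2 = \zeta_r$, I compute $2\cos(k\pi/r) = \zeta^k + \zeta^{-k}$. For $k = 2m$ even in $\{1, \ldots, r-1\}$, this equals $\zeta_r^m + \zeta_r^{-m} = \omega_m$, producing the values $\omega_1, \ldots, \omega_{(r-1)/2}$. For $k$ odd, $k - r$ is even (since $r$ is odd); writing $k = r + 2m$ with $m = (k-r)/2 \in \{-(r-1)/2, \ldots, -1\}$ and using $\zeta^r = -1$, a short calculation yields $\zeta^k + \zeta^{-k} = -\omega_{-m} = -\omega_{(r-k)/2}$, producing the values $-\omega_1, \ldots, -\omega_{(r-1)/2}$ as $k$ runs through the odd integers in $\{1, \ldots, r - 1\}$. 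This proves the desired equality of multisets, and hence the lemma.

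The main obstacle I anticipate is purely bookkeeping: one must carefully track signs and exploit the oddness of $r$ in the root-matching step. Everything else reduces to standard Chebyshev algebra once Lemma~\ref{L:firstKind} is invoked.
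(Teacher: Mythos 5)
Your proof is correct and follows essentially the same route as the paper: both reduce to the Chebyshev identity $T_r' = rU_{r-1}$ via Lemma~\ref{L:firstKind} and then match the two sides as degree-$(r-1)$ polynomials in $x$ by comparing leading coefficients and the multiset of roots $\{2\cos(k\pi/r)\} = \{\pm\omega_j\}$. Your only (harmless) variation is to first rewrite $z^{r-1}h(ix/z)h(-ix/z)$ as $\prod_j(x^2+z^2\omega_j^2)$, which disposes of the constant $i^{r-1}$ that the paper instead determines by a separate leading-coefficient comparison.
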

\begin{proof}
Let $U_n(x)$ for $n \geq 1$ be the $n$th Chebyshev polynomial of the second kind.
Using Lemma~\ref{L:firstKind} and the
well-known fact $\frac{d}{dx} T_n = n U_{n-1}$
we compute
\[
 \frac{d}{dx} H(x,z) = 2 (iz)^r \frac{1}{2iz} \frac{d}{dx} T_r \left( \frac{x}{2iz} \right)
 = (iz)^{r-1} r U_{r-1} \left( \frac{x}{2iz} \right).
\]
Next we will show that
\[
 U_{r-1} \left( \frac{x}{2i} \right) = i^{r-1}h\left(\frac{ix}{z}\right)h\left(-\frac{ix}{z}\right)
\]
which implies the result. 

Indeed,
it is well known that the $r-1$ zeros of $U_{r-1}$ are 
$\cos \frac{k\pi}{r}$ for $k=1,\ldots,r-1$.
These are the same as $\pm \cos \frac{2\pi j}{r}$ for $j=1,\ldots,\frac{r-1}{2}$, hence $U_{r-1}\left(\frac{x}{2iz}\right)$ has zeros
$\pm 2iz \cos \frac{2\pi j}{r}$. 

On the other hand, the zeros of $h(x)$ 
are $\omega_j = \zeta_r^j + \zeta_r^{-j} = 2\cos \frac{2\pi j}{r}$
for $j=1,\ldots,\frac{r-1}{2}$, which implies that 
\[
   U_{r-1}\left( \frac{x}{2iz} \right) = \alpha h\left(\frac{ix}{z}\right)h\left(-\frac{ix}{z}\right)
\]
for some constant~\(\alpha\), as both sides have the same set of $r-1$ distinct zeros. As 
the leading coefficient of $U_{r-1}$ is~$2^{r -1}$
and~$h$ is monic, we have $\alpha = i^{r-1}$ as desired.
\end{proof}

\begin{lemma}
\label{evaluation-point}
For any~\(j = 0, \ldots, \frac{r-1}{2}\), we have
\[
H(iz\omega_j,z) = 2(iz)^r\quad\text{and}\quad H(-iz\omega_j,z) = -2(iz)^r.
\]
\end{lemma}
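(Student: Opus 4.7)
The plan is to deduce this directly from Lemma~\ref{L:firstKind}, which identifies $H(x,z)$ with a Chebyshev polynomial. Substituting $x = \pm iz\omega_j$ collapses the argument of $T_r$ to $\pm\omega_j/2$, and then the classical trigonometric identity $T_r(\cos\theta) = \cos(r\theta)$ evaluated at $\theta = 2\pi j/r$ yields $\pm 1$.

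More precisely, by Lemma~\ref{L:firstKind} we have
\[
H(x,z) = 2(iz)^r T_r\!\left(\frac{x}{2iz}\right).
\]
First I would substitute $x = iz\omega_j$ to obtain
\[
H(iz\omega_j, z) = 2(iz)^r T_r\!\left(\frac{\omega_j}{2}\right).
\]
Since $\omega_j = \zeta_r^j + \zeta_r^{-j} = 2\cos(2\pi j/r)$, this equals $2(iz)^r T_r(\cos(2\pi j/r)) = 2(iz)^r \cos(2\pi j) = 2(iz)^r$, as required.

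For the second identity, substituting $x = -iz\omega_j$ gives $H(-iz\omega_j,z) = 2(iz)^r T_r(-\omega_j/2)$. Here I would invoke the parity of $T_r$: since $r$ is odd, $T_r$ is an odd polynomial, so $T_r(-\omega_j/2) = -T_r(\omega_j/2) = -1$ by the previous computation. This yields $H(-iz\omega_j, z) = -2(iz)^r$. There is no real obstacle; the only ingredients are Lemma~\ref{L:firstKind}, the defining trigonometric identity of the Chebyshev polynomial $T_r$, and the fact that $r$ is odd.
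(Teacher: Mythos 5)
Your proof is correct and follows essentially the same route as the paper's: apply Lemma~\ref{L:firstKind}, note that $x = \pm iz\omega_j$ gives $\frac{x}{2iz} = \pm\cos\frac{2\pi j}{r}$, use $T_r(\cos\theta)=\cos(r\theta)$, and invoke the oddness of $T_r$ (valid since $r$ is odd) for the sign. No gaps.
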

\begin{proof}
For $x = \pm i \omega_jz$ we have $\frac{x}{2iz} = \pm \cos \frac{2 \pi j}{r}$. Using the identity $T_r(\cos \theta) = \cos r \theta$ 
and the fact that $T_r(x)$ is an odd function (as $r$ is odd), 
we compute
\[
T_r\left(\frac{x}{2iz}\right) = T_r \left( \pm \cos \frac{2 \pi j}{r} \right) = \pm \cos 2 \pi j = \pm 1.
\] 
The conclusion now follows from Lemma~\ref{L:firstKind}.
\end{proof}

The proof of the following proposition uses the results of the previous lemmas. It will serve in the study of Kraus' varieties in the next section.

\begin{proposition}\label{P:discriminant}
The discriminant of the polynomial $f^-(x)$ is 
\[
\Delta(f^-) = (-1)^\frac{r-1}{2} r^r (s^2+4)^{\frac{r-1}{2}}.
\]
\end{proposition}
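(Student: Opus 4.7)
The plan is to compute the discriminant via the resultant formula $\Delta(f^-) = (-1)^{r(r-1)/2}\,\mathrm{Res}(f^-, (f^-)')$, valid since~$f^-$ is monic of degree~$r$ in~$x$. The two key ingredients, both already proved in the excerpt, are Lemma~\ref{L:secondKind} for the derivative of~$H(x,1) = xh(x^2+2)$ and Lemma~\ref{evaluation-point} for the values of~$H$ at~$\pm i\omega_j$.

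First, I would apply Lemma~\ref{L:secondKind} with~$z = 1$ to get $(f^-)'(x) = r\,g(x)$, where
\[
g(x) = h(ix)\,h(-ix).
\]
Since~$h$ is monic of degree~$(r-1)/2$ and~$i\cdot(-i) = 1$, the polynomial~$g$ is monic of degree~$r - 1$, and its roots are precisely the values~$x$ such that~$ix$ or~$-ix$ is a root of~$h$, namely~$\pm i\omega_j$ for~$j = 1,\dots,(r-1)/2$. Using the scaling property~$\mathrm{Res}(f^-, r\,g) = r^{\deg f^-}\,\mathrm{Res}(f^-, g) = r^r\,\mathrm{Res}(f^-, g)$, this gives
\[
\Delta(f^-) = (-1)^{r(r-1)/2}\,r^r\,\mathrm{Res}(f^-, g).
\]

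Next, I would evaluate $\mathrm{Res}(f^-, g)$ by the product formula over the roots of~$g$. Since~$g$ is monic and~$f^-$ has degree~$r$,
\[
\mathrm{Res}(f^-, g) = (-1)^{r(r-1)}\prod_{\beta : g(\beta) = 0} f^-(\beta) = \prod_{j=1}^{(r-1)/2} f^-(i\omega_j)\,f^-(-i\omega_j),
\]
(the sign disappears because $r(r-1)$ is even). By Lemma~\ref{evaluation-point} with~$z = 1$, $H(\pm i\omega_j, 1) = \pm 2i^r$, so~$f^-(\pm i\omega_j) = s \pm 2i^r$. Each factor of the product therefore equals
\[
(s + 2i^r)(s - 2i^r) = s^2 - 4\,i^{2r} = s^2 + 4,
\]
using that~$i^{2r} = (-1)^r = -1$ since~$r$ is odd. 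Hence $\mathrm{Res}(f^-, g) = (s^2 + 4)^{(r-1)/2}$.

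Finally, I would assemble the pieces and simplify the sign. Since~$r$ is odd, $(-1)^{r(r-1)/2} = ((-1)^r)^{(r-1)/2} = (-1)^{(r-1)/2}$, so
\[
\Delta(f^-) = (-1)^{(r-1)/2}\,r^r\,(s^2 + 4)^{(r-1)/2},
\]
as claimed. The computation is essentially routine given the two preparatory lemmas; the only genuinely delicate point is keeping track of the sign conventions and the factor~$r^r$ (not~$r^{r-1}$) that arises from pulling the constant out of the resultant.
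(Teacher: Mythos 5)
Your proof is correct. It rests on the same two preparatory lemmas as the paper's (Lemma~\ref{L:secondKind} for $(f^-)' = r\,h(ix)h(-ix)$ and Lemma~\ref{evaluation-point} for $f^-(\pm i\omega_j) = s \pm 2i^r$), but you evaluate the resultant $\mathrm{Res}(f^-,(f^-)')$ as a product of $f^-$ over the roots of the derivative, whereas the paper uses the dual expression $(-1)^{r(r-1)/2}\prod_\beta (f^-)'(\beta)$ over the roots $\beta$ of $f^-$, phrased as a field norm $N((f^-)'(\alpha))$. The paper's route therefore needs the irreducibility of $f^-$ (Lemma~\ref{L:irreducible}) to make sense of the norm, plus the auxiliary identity $h(-(x^2+2)) = (-1)^{(r-1)/2}h(ix)h(-ix)$ and an interchange of products to land on the same evaluations $f^-(\pm i\omega_j)$; your version bypasses all of that and is shorter, at the cost of invoking the scaling and product formulas for resultants (both of which you apply correctly, including the factor $r^{\deg f^-} = r^r$ and the vanishing sign $(-1)^{r(r-1)}$). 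Both arguments reduce in the end to the same two inputs, so the difference is one of bookkeeping, but yours is the cleaner of the two.
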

\begin{proof} 

Let $\alpha \in \overline{\Q(s)}$ be a root of $f^-(x)$, which is irreducible over $\Q(s)$ by Lemma~\ref{L:irreducible}. It is well known that the discriminant of $f^-(x)$ is then given by
\begin{equation}\label{E:disc}
   \Delta(f^-) = (-1)^\frac{r(r-1)}{2} N\left(\frac{d}{dx}f^- (\alpha)\right)
\end{equation}
where $N(\cdot)$ is the norm map from~$\Q(s)(\alpha)$ to $\Q(s)$.

Since $r$ is odd, the sets $\{ \omega_j \}$ and 
$\{ \omega_{2j} \}$ where~$j$ ranges over 
$j = 1, \ldots, \frac{r-1}{2}$ are equal. Moreover, we have
\begin{equation}\label{E:omega2j}
  (\pm i \omega_j)^2 + 2 = - (\zeta_r^j + \zeta_r^{-j})^2 + 2 = - (\zeta_r^{2j} + \zeta_r^{-2j}) = - \omega_{2j},
\end{equation}
showing that the zeros of $h(-(x^2+2))$ are $\pm i \omega_j$ for $j = 1, \ldots, \frac{r-1}{2}$. These are also the zeros of 
$h(ix) h(-ix)$ and, by comparing leading coefficients, 
this gives the identity
\[
  h(-(x^2+2)) = (-1)^\frac{r-1}{2} h(ix) h(-ix).
\]
From Lemma~\ref{L:secondKind} applied to~\(z = 1\), equation \eqref{E:disc} and the previous equality we  
have that
\begin{align*}
   \Delta(f^-) & = (-1)^\frac{r(r-1)}{2} N \left( \frac{d}{dx} f^-(\alpha) \right) =
    (-1)^\frac{r(r-1)}{2} N(r h(i \alpha) h(-i\alpha)) \\
    & =  r^r N((-1)^\frac{(r-1)}{2} h(i \alpha) h(-i\alpha)) = r^r \prod_\beta h(-(\beta^2+2)),
\end{align*}
where $\beta$ ranges through the roots of $f^-(x) = x h(x^2+2) +s$.

To prove the final discriminant formula, we compute as follows
\begin{align*}
   & \prod_\beta h(-(\beta^2+2)) \\
   = &  \prod_\beta \prod_{j=1}^{\frac{r-1}{2}}  (-\beta^2 -2 - \omega_j) = \prod_{j = 1}^{\frac{r-1}{2}} \prod_\beta (-\beta^2-2 - \omega_{2j}) \quad \text{(as $r$ is odd)} \\
    = & \prod_{j = 1}^{\frac{r-1}{2}} \prod_\beta (-\beta^2 - \omega_j^2)  = (-1)^\frac{r(r-1)}{2} \prod_{j = 1}^{\frac{r-1}{2}} 
   f^-(i\omega_j)f^-(-i\omega_j) \quad \text{by } \eqref{E:omega2j} \\
   =  & (-1)^\frac{r(r-1)}{2} \prod_{j = 1}^{\frac{r-1}{2}} ((i \omega_j) h( (i \omega_j)^2+2) + s) ((-i \omega_j) h( (-i \omega_j)^2+2) + s)    \\
   =  & (-1)^\frac{r(r-1)}{2} (s+2i)^\frac{r-1}{2} (s-2i)^\frac{r-1}{2} \quad \text{by Lemma~\ref{evaluation-point}} \\
  = & (-1)^\frac{r-1}{2} (s^2+4)^\frac{r-1}{2} \quad \text{(as $r$ is odd)}.
\end{align*}
\end{proof}

\begin{lemma}
\label{cyclo-relation}
We have the following identity of polynomials:
\begin{equation*}
  X^{2r}-1 = X^r (X-1/X) h(X^2+X^{-2}).
\end{equation*}
\end{lemma}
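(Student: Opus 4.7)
The plan is a direct computation that reduces the claim to the elementary factorization of $X^{2r}-1$ over the $r$-th roots of unity.

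First, I would rewrite each factor appearing in $h(X^2+X^{-2})$. Since $\omega_j=\zeta_r^j+\zeta_r^{-j}$, we have
\[
X^{2}+X^{-2}-\omega_{j} \;=\; X^{-2}\bigl(X^{4}-(\zeta_{r}^{j}+\zeta_{r}^{-j})X^{2}+1\bigr)\;=\;X^{-2}(X^{2}-\zeta_{r}^{j})(X^{2}-\zeta_{r}^{-j}).
\]
Taking the product over $j=1,\ldots,(r-1)/2$ and noting that $\{\zeta_r^{\pm j}:1\le j\le (r-1)/2\}$ is exactly the set of non-trivial $r$-th roots of unity, I get
\[
h(X^{2}+X^{-2})\;=\;X^{-(r-1)}\prod_{k=1}^{r-1}(X^{2}-\zeta_{r}^{k}).
\]

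Next I would use the standard factorization $\prod_{k=0}^{r-1}(X^{2}-\zeta_{r}^{k})=X^{2r}-1$, which gives
\[
\prod_{k=1}^{r-1}(X^{2}-\zeta_{r}^{k})\;=\;\frac{X^{2r}-1}{X^{2}-1}.
\]
Hence $h(X^{2}+X^{-2})=\dfrac{X^{2r}-1}{X^{r-1}(X^{2}-1)}$.

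Finally, since $X^{r}(X-1/X)=X^{r-1}(X^{2}-1)$, multiplying yields $X^{r}(X-1/X)h(X^{2}+X^{-2})=X^{2r}-1$, as required. There is no real obstacle here beyond checking that the indexing in step one covers each non-trivial $r$-th root of unity exactly once, which follows from $r$ being odd.
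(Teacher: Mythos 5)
Your proof is correct and follows essentially the same route as the paper's: both rest on the factorization of $X^{2r}-1$ into $X^2-1$ times the quadratic factors $X^4-\omega_jX^2+1$, which you derive by pairing conjugate $r$-th roots of unity while the paper quotes it in the form $X^r+Y^r=(X+Y)\prod_j(X^2+\omega_jXY+Y^2)$ specialized at $(X^2,-1)$. The bookkeeping of the powers of $X$ and the observation that $\{\zeta_r^{\pm j}\}$ exhausts the non-trivial $r$-th roots of unity are both handled correctly.
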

\begin{proof} 
Note that 
\[ X^r + Y^r = 
(X+Y)\prod_{j=1}^\frac{r-1}{2} (X^2 + \omega_j XY + Y^2) 
\]
and evaluating at $(X^2,-1)$ leads to 
\begin{align*}
X^{2r} - 1 & = 
(X^2-1) \prod_{j=1}^\frac{r-1}{2} (X^4 - \omega_j X^2 + 1)
= (X^2-1) X^{r-1}\prod_{j=1}^\frac{r-1}{2} (X^2 - \omega_j + X^{-2}) \\
 & = (X^{r+1} - X^{r-1}) h(X^2 + X^{-2}) = X^r (X-1/X) h(X^2+X^{-2}). 
\end{align*}
\end{proof}

\begin{proposition}
\label{quotient-tau}
The map~\(\pi : (X,Y)\mapsto \left(X - X^{-1}, Y / X^{\frac{r+1}{2}}\right)\) identifies the curve~\(C_r(s)\) with the quotient of the hyperelliptic curve
\begin{equation*}
   D_r(s) \; : \; Y^2 = X (X^{2r}+s X^r - 1)
\end{equation*}
by the involution $\tau: (X,Y) \mapsto (-1/X,(-1)^\frac{r+1}{2} Y/X^{r+1})$.
\end{proposition}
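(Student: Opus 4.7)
The plan is to verify four claims: (a) the map $\pi$ sends $D_r(s)$ into $C_r(s)$; (b) $\tau$ is an involution of $D_r(s)$; (c) $\pi\circ\tau = \pi$; and (d) the induced morphism $D_r(s)/\langle\tau\rangle \to C_r(s)$ is an isomorphism.

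For (a), I set $x = X - X^{-1}$ and $y = Y/X^{(r+1)/2}$ and note the identity $x^2+2 = X^2 + X^{-2}$. Lemma~\ref{cyclo-relation} then gives
\[
xh(x^2+2) = (X - X^{-1})\, h(X^2+X^{-2}) = \frac{X^{2r}-1}{X^r},
\]
so $xh(x^2+2) + s = X^{-r}(X^{2r}+sX^r-1)$, while $y^2 = Y^2/X^{r+1}$ equals the same expression by the defining equation of $D_r(s)$. Thus $\pi(X,Y)$ lies on $C_r(s)$.

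Claims (b) and (c) are direct substitutions exploiting that $r$ is odd. For (b), the equality $(\tau Y)^2 = \tau X\bigl((\tau X)^{2r}+s(\tau X)^r-1\bigr)$ reduces to the defining equation of $D_r(s)$ after using $(-1/X)^r = -X^{-r}$, and $\tau^2=\mathrm{id}$ follows from $(-1)^{r+1}=1$. For (c), the first coordinate of $\pi\circ\tau$ is $-X^{-1}-(-X) = X - X^{-1}$; the sign $(-1)^{(r+1)/2}$ in $\tau Y$ is exactly cancelled by $(-X^{-1})^{(r+1)/2} = (-1)^{(r+1)/2}X^{-(r+1)/2}$ appearing in the denominator of $\pi$, yielding the second coordinate $Y/X^{(r+1)/2}$.

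For (d), the morphism $\pi$ has generic degree~$2$: for a generic $(x,y)$ on $C_r(s)$, the relation $X - X^{-1} = x$ is a quadratic in $X$ with two solutions, each determining $Y$ by $Y = yX^{(r+1)/2}$. Combined with (c) and $|\langle\tau\rangle|=2$, this yields a factorization $D_r(s)\to D_r(s)/\langle\tau\rangle \to C_r(s)$ whose second arrow has degree~$1$. Since Proposition~\ref{P:discriminant} ensures $C_r(s)$ is smooth and the substitution $u = X^r$ applied to $u^2+su-1$ (of nonzero discriminant $s^2+4$) shows $X(X^{2r}+sX^r-1)$ has $2r+1$ distinct roots so that $D_r(s)$ is smooth, the degree-$1$ morphism between these smooth projective curves is an isomorphism. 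The only delicate point in the argument is the sign $(-1)^{(r+1)/2}$ in the definition of $\tau$, which is precisely calibrated so that (b) and (c) hold simultaneously.
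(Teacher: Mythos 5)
Your proposal is correct and follows essentially the same route as the paper: the key computation $xh(x^2+2)+s = X^{-r}(X^{2r}+sX^r-1) = y^2$ via Lemma~\ref{cyclo-relation}, the observation that $\pi\circ\tau=\pi$, and a degree comparison (which the paper phrases as ``comparing degrees in the respective function fields'') to upgrade the factorization through $D_r(s)/\langle\tau\rangle$ to an isomorphism. You simply spell out the sign bookkeeping for $\tau$ and the smoothness inputs that the paper leaves implicit.
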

\begin{proof}
Note that~\(\pi(\tau(X,Y)) = \pi(X,Y)\). Suppose that~\((X,Y)\) satisfies~\(Y^2 = X (X^{2r}+s X^r - 1)\). Set~$x = X - X^{-1}$ and $y = Y/X^{\frac{r+1}{2}}$. We have that
\begin{align*}
x h(x^2 + 2) + s & = X^r - X^{-r} + s \\
& = X^{-r} (X^{2r} + s X^{r} - 1) \\
& = X^{-r} (Y^2/X) = (Y/X^{\frac{r+1}{2}})^2 = y^2,
\end{align*}
where the first equality follows from Lemma~\ref{cyclo-relation}. This shows that $\pi$ factors via the 
quotient $D_r(s)/\langle \tau \rangle$.
Comparing degrees in the respective function fields, we obtain the result.
\end{proof}

\begin{theorem}
\label{endo-construct}
There is an embedding
\begin{equation*}
   K \hookrightarrow \End_{K(s)} (J_r(s)) \otimes \Q.
\end{equation*}
Furthermore,  when $s$ is specialized to any element $s_0 \in K$, the above embedding is well-defined. 

In particular, $J_r(s_0)/K$ is of $\GL_2$-type with real multiplications by~$K$.
\label{T:GL2type}
\end{theorem}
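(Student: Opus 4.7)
The plan is to transfer endomorphisms from $\Jac(D_r(s))$ down to $J_r(s)$ via the quotient map $\pi$ of Proposition~\ref{quotient-tau}. The point is that $D_r(s)$ admits a natural order-$r$ automorphism producing an action of $\Z[\zeta_r]$ on its Jacobian, and the subring $\Z[\zeta_r + \zeta_r^{-1}] = \mathcal{O}_K$ both commutes with $\tau$ and is Galois-stable under $\Gal(\Q(\zeta_r)/K)$, hence descends to $\End_{K(s)}(J_r(s))$.

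First, I would verify that $\sigma : (X,Y) \mapsto (\zeta_r X, \zeta_r^{(r+1)/2} Y)$ is an automorphism of~$D_r(s)$ of order $r$ over $\Q(\zeta_r)(s)$: both sides of $Y^2 = X(X^{2r} + sX^r - 1)$ scale by $\zeta_r$. The induced action gives an injection $\Z[\zeta_r] \hookrightarrow \End_{\Q(\zeta_r)(s)}(\Jac(D_r(s)))$. A direct coordinate computation with $\tau$ from Proposition~\ref{quotient-tau} yields the relation $\tau \sigma = \sigma^{-1}\tau$, so the subring generated by the elements $\sigma^j + \sigma^{-j}$, which is isomorphic to $\mathcal{O}_K$, commutes with $\tau$. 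This subring is fixed by the Galois action of $\Gal(\Q(\zeta_r)/K)$ on endomorphisms (which sends $\sigma \mapsto \sigma^{-1}$), so by Galois descent it lies in $\End_{K(s)}(\Jac(D_r(s)))$.

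Using Proposition~\ref{quotient-tau}, $J_r(s)$ is identified up to isogeny with the connected component of the $\tau$-fixed subvariety of $\Jac(D_r(s))$ through the pullback $\pi^*$. Since elements of $\mathcal{O}_K$ commute with $\tau$ they preserve this component, producing the desired embedding
\begin{equation*}
K \hookrightarrow \End_{K(s)}(J_r(s)) \otimes \Q.
\end{equation*}

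For the specialization statement, Proposition~\ref{P:discriminant} gives $\Delta(f^-) = (-1)^{(r-1)/2} r^r (s^2+4)^{(r-1)/2}$, which is nonzero for every $s_0 \in K \subset \R$, so $C_r(s_0)$ is smooth and $J_r(s_0)$ is an abelian variety over $K$. The endomorphisms constructed are defined by algebraic data over $K[s]$ that specializes to honest endomorphisms of $J_r(s_0)$, and this specialization is injective on endomorphism rings (via the induced map on Tate modules). As $C_r(s)$ has genus $(r-1)/2 = [K:\Q]$, the $\GL_2$-type conclusion follows immediately. The main technical obstacle will be the explicit verification of the twisted commutation $\tau\sigma = \sigma^{-1}\tau$ together with the descent of the $\mathcal{O}_K$-action to $K(s)$; everything else is formal from the quotient structure and specialization theory for abelian varieties.
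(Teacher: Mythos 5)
Your proposal follows the same overall architecture as the paper's proof (push the symmetrized automorphism $\psi=[\zeta_r]^*+[\zeta_r^{-1}]^*$ of $\Jac(D_r(s))$ down to $J_r(s)$ via the quotient $\pi$), but the two key verifications are carried out differently. The paper never invokes the relation $\tau\sigma=\sigma^{-1}\tau$; instead it writes down an explicit basis $w_j$ of the $\tau$-invariant regular differentials on $D_r(s)$, checks that $\psi^*w_j=\omega_{(r+1)/2-j-1}\,w_j$, reads off from this both that $\psi$ is defined over $K(s)$ and (via the analytic uniformization $H^0(C,\Omega^1)^*/\Lambda_C\hookrightarrow H^0(D,\Omega^1)^*/\Lambda_D$, checked at complex specializations) that $\psi$ stabilizes the image of $J_C$ in $J_D$ up to isogeny. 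Your route — verify $\tau\sigma=\sigma^{-1}\tau$ on coordinates (this is correct: both composites send $(X,Y)$ to $(-\zeta_r^{-1}/X,\ (-1)^{(r+1)/2}\zeta_r^{-(r+1)/2}Y/X^{r+1})$), deduce that $\sigma^j+\sigma^{-j}$ commutes with $\tau^*$ and hence preserves $\mathrm{im}(\pi^*)=(1+\tau^*)J_D$, and obtain the field of definition by Galois descent from $\Q(\zeta_r)(s)$ since complex conjugation sends $\sigma$ to $\sigma^{-1}$ — is more algebraic and avoids the analytic detour; the paper's differential computation, on the other hand, is reused later (in the proof of Theorem~\ref{hyperelliptic-darmon}) to check that the descended endomorphism is Galois-equivariant after the twist, so it is not wasted effort there.

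One point in your sketch needs tightening: the assertion that the subring generated by the $\sigma^j+\sigma^{-j}$ is "isomorphic to $\mathcal{O}_K$" is not quite right on $\Jac(D_r(s))$ and needs an argument on $J_r(s)$. On $H^0(D,\Omega^1)$ the eigenvalues of $\psi$ are $\omega_k$ for all $k$ including $k=0$, so the subalgebra of $\End(J_D)\otimes\Q$ generated by $\psi$ is $\Q\times K$, not $K$; and a priori the restriction of $\psi$ to the $\tau$-invariant part could satisfy a proper divisor of $h$. To get an honest embedding of the field $K$ you must check that $\psi$ restricted to $J_r(s)$ has minimal polynomial exactly $h(x)=\prod_{k=1}^{(r-1)/2}(x-\omega_k)$; this is precisely what the paper's eigenvalue computation \eqref{endomorphism} on the $\tau$-invariant basis $w_j$ delivers (the eigenvalues there are the distinct $\omega_1,\dots,\omega_{(r-1)/2}$, with $\omega_0=2$ absent). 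Your specialization argument and the $\GL_2$-type conclusion from $\dim J_r(s_0)=(r-1)/2=[K:\Q]$ are fine.
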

\begin{proof}
Let  $D = D_r(s)$, $C = C_r(s)$, $\tau : D \to D$ and $\pi : D \rightarrow C$ be the curves and maps in Proposition~\ref{quotient-tau}.

There is an injection $\pi^* : H^0(C,\Omega^1) \rightarrow H^0(D,\Omega^1)$ given by pull back, and a projection $\pi_*: H^0(D,\Omega^1) \rightarrow H^0(C,\Omega^1)$ given by the trace map \cite[p.221]{Diamond-Shurman}. The subspace $H^0(C,\Omega^1)$ can thus be described as the $\tau$-invariant subspace of $H^0(D,\Omega^1)$.

Consider the following automorphism of the hyperelliptic curve $D$
\begin{equation}
\label{endomorphism-construction}
  [\zeta_r] : (X,Y) \mapsto (\zeta_r X, \zeta_r^\frac{r+1}{2} Y).
\end{equation}
A basis for the $\tau$-invariant regular differentials on$~D$ is
given by
\begin{equation}
\label{basis-differential}
  w_j = (X^j  + (-1)^{\frac{r+1}{2}-j} X^{r-1-j}) \frac{dX}{Y}
\end{equation}
where $j = 0, \ldots, \frac{r-3}{2}$. We check that:
\begin{equation}
\label{endomorphism}
    \left([\zeta_r]^* + [{\zeta_r^{-1}}]^*\right) w_j = \left(\zeta_r^{\frac{r+1}{2}-j-1}+\left(\zeta_r^{-1}\right)^{\frac{r+1}{2}-j-1}\right) w_j.
\end{equation}

Let $J_D$ and $J_C$ be the Jacobians of $D$ and $C$, respectively. The homomorphism 
\begin{equation}
\label{picard}
  J_C \rightarrow J_D
\end{equation}
induced by $\pi$ using Picard functoriality has finite kernel. 

Note that $\psi = [\zeta_r]^* + [\zeta_r^{-1}]^*$ as a map from $J_D$ to $J_D$ is defined over $K(s)$ by \eqref{endomorphism}. We wish to show $\psi$ stabilizes the image of $J_C$ in $J_D$. It suffices to verify this on complex points for every specialization $s \in \PP^1(\C) \backslash \left\{ \pm 2i, \infty \right\}$. On complex points, the homomorphism in \eqref{picard} corresponds to the homomorphism
\begin{equation}
  H^0(C,\Omega^1)^*/\Lambda_C \rightarrow H^0(D,\Omega^1)^*/\Lambda_D,
\end{equation}
where $\Lambda_C$ and $\Lambda_D$ are the images of $H_1(C,\Z)$ and $H_1(D,\Z)$ under integration.

By \eqref{endomorphism}, the map induced by $\psi = [\zeta_r]^* + [{\zeta_r^{-1}}]^*$ stabilizes $H^0(C,\Omega^1)^*$ inside $H^0(D,\Omega^1)^*$ and the lattice $\Lambda_D \cap H^0(C,\Omega^1)^*$, which contains $\Lambda_C$ with finite index. It follows that $\End_{K(s)}(J_C) \otimes \Q$ contains $K$ since isogenous abelian varieties have isomorphic endomorphism algebras.
\end{proof}

According to Theorem~\ref{endo-construct} and the construction recalled after Definition~\ref{def:GL2typeAV}, one considers for every prime ideal~$\Fp$ of~$K$ above a prime~$p$ the Galois representation
\[
\rho_{J_r(s),\Fp} \; \colon \; G_{K(s)}\longrightarrow\GL(V_\Fp(J_r(s)))\simeq\GL_2(K_\Fp)
\]
and its reduction
\[
\rhobar_{J_r(s),\Fp} \; \colon \; G_{K(s)}\longrightarrow\GL_2(\F_\Fp)
\]
where~$\F_\Fp$ is the residue field of~$K_\Fp$.
\begin{proposition}
\label{darmonfrey}
Let $\Fp$ be a prime ideal in $K$ above a prime~$p$.  Then the representation~$\rhobar_{J_r(s),\Fp}$ is a Frey representation of signature $(p,p,r)$ with respect to the points $(-2i,2i,\infty)$ in the sense of Definition~\ref{D:FreyRep}.
\end{proposition}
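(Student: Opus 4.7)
My plan is to verify the three conditions of Definition~\ref{D:FreyRep} at the points $(-2i, 2i, \infty)$ in turn, exploiting the geometric description of~$J_r(s)$ furnished by Proposition~\ref{quotient-tau} and the $\GL_2$-type structure from Theorem~\ref{T:GL2type}.

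The determinant part of condition~(1) is immediate: by Theorem~\ref{T:GL2type}, $J_r(s)/K(s)$ is of $\GL_2(K)$-type, hence Theorem~\ref{lem:det} gives $\det \rho_{J_r(s),\Fp} = \chi_p$, which becomes trivial upon restriction to $G_{\overline{K}(s)}$ since $\overline{K}$ contains all $p$-power roots of unity. For condition~(2), Proposition~\ref{P:discriminant} shows that the discriminant of~$f^-(x)$ equals a unit times~$(s^2+4)^{(r-1)/2}$, so $C_r(s)$ extends to a smooth family over $\Spec K[s,(s^2+4)^{-1}]$ and hence so does~$J_r(s)$; this proves that $\rho_{J_r(s),\Fp}$ is unramified outside the places of $K(s)$ above $\{-2i, 2i, \infty\}$ and the primes dividing~$p$. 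The latter ramification affects only the cyclotomic part of the determinant and disappears in the projectivization.

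The heart of the proof is condition~(3), the local monodromy at the three critical points. At $s = \pm 2i$, the companion model in Proposition~\ref{quotient-tau} specializes to $Y^2 = X(X^r \pm i)^2$, so $D_r$ acquires exactly~$r$ nodes and is bad semistable. Since $\Jac(D_r)$ has dimension $r$ and picks up $r$ toric factors, this reduction is purely toric; as $J_r(s)$ is, up to isogeny, a factor of $\Jac(D_r)$ via the quotient map $\pi$, it inherits purely multiplicative reduction. Hence inertia acts by a nontrivial unipotent on every~$V_\Fp(J_r(s))$, and its image in $\rhobar_{J_r(s),\Fp}$ is of order exactly~$p$. At $s = \infty$, a careful local analysis of~$D_r(s)$---via the substitution $s = 1/u$ followed by the rescaling $X = u^{-1/r}X'$, $Y = u^{-(2r+1)/(2r)}Y'$ and further blow-ups to resolve the singularities of the resulting central fiber---shows that a semistable (indeed good) model is obtained over a tamely ramified extension of degree~$r$, and that the tame inertia acts on $V_\Fp(J_r(s))$ through a faithful character of~$\mu_r$. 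This yields an element of order~$r$ in~$\PSL_2(\F_\Fp)$.

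With the three inertia generators $\sigma_{-2i}, \sigma_{2i}, \sigma_\infty \in \PSL_2(\F_\Fp)$ of respective orders $p, p, r$ in hand, irreducibility over $\overline{K}(s)$ (the remaining half of condition~(1)) follows from Lemma~\ref{frey-irreducible}, using the standard relation $\sigma_{-2i}\sigma_{2i}\sigma_\infty = 1$ coming from the product formula for monodromies around all branch points of a cover of $\PP^1_{\overline{K}}$. The hardest part will be the local analysis at $s = \infty$, where producing an explicit semistable model and identifying the resulting inertia character as being of order exactly $r$ on the two-dimensional $\Fp$-adic piece requires patient bookkeeping of the $\tau$-action under the rescaling; a secondary subtlety is checking that the unipotent at $s = \pm 2i$ remains nontrivial on $V_\Fp(J_r(s))$ rather than being killed upon passing from $\Jac(D_r)$ to its $K$-rational-multiplication factor.
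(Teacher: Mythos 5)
Your route is genuinely different from the paper's. The paper does not verify the three conditions of Definition~\ref{D:FreyRep} directly: it observes that the substitution $x \mapsto x/i$, $y \mapsto y/i^{1/2}$, $s = (-1)^{(r-1)/2}iu$ identifies $C_r(s)$, up to quadratic twist by $-1$, with Darmon's curve $\widetilde{C}_r^-(u)$ (that is, $C_r^-(t)$ with $u = 2-4t$), so that $\rhobar_{J_r(s),\Fp}$ and $\rhobar_{\widetilde{J}_r^-(u),\Fp}$ agree upon restriction to $G_{\overline{K}(s)}$. All three conditions, in particular the inertia orders at the three special points, are then imported wholesale from the proof of Darmon's Theorem~1.10; only the relabelling $\{\pm 2,\infty\}\mapsto\{\pm 2i,\infty\}$ and Lemma~\ref{frey-irreducible} remain. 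Your direct monodromy computation would, if completed, give an independent proof. The determinant condition, the unramifiedness away from $\{\pm 2i,\infty\}$ (note that all residue fields of places of $\overline{K}(s)$ have characteristic zero, so there is nothing to say about ``primes above $p$''), and the nontrivial unipotent (hence order-$p$) monodromy at $s=\pm 2i$ coming from the totally degenerate fibre $Y^2 = X(X^r\pm i)^2$ of $D_r(s)$ are all sound, granting the point you correctly flag that full toric rank of the $\tau$-quotient forces the unipotent to be nontrivial on every $\Fp$-block.

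The gap is at $s=\infty$, precisely the step you defer to ``patient bookkeeping''. Two issues arise. First, the minimal extension of $\overline{K}((1/s))$ over which $C_r(s)$ attains good reduction has ramification degree $2r$, not $r$ (the balancing substitution is $x = s^{1/r}\xi$, $y = s^{1/2}\eta$, with reduction $\eta^2 = \xi^r+1$); this only costs a quadratic twist and is invisible in the projectivization, but your stated model is off. Second, and substantively: knowing that inertia at $\infty$ acts through a cyclic group of order $2r$ on the full module $V_p(J_r(s)) = \bigoplus_{\Fp\mid p}V_\Fp(J_r(s))$ does not yet yield that its image in $\PSL_2(\F_\Fp)$ has order exactly $r$ on \emph{each} two-dimensional block; a priori the order could drop on the particular $\Fp$ under consideration while remaining $2r$ on another, in which case the signature would not be $(p,p,r)$ and Lemma~\ref{frey-irreducible} would not apply. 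Closing this requires identifying the inertia generator with (a quadratic twist of) the automorphism $\xi\mapsto\zeta_r\xi$ of the special fibre, decomposing $H^0(\Omega^1)$ into eigenlines for the primitive $r$-th roots of unity, and checking that the real multiplication by $K$ pairs the $\zeta_r^{\,j}$- and $\zeta_r^{-j}$-eigenlines inside each $\Fp$-block, so that every block carries a character of exact order $r$. This is exactly the content of Darmon's Theorem~1.10, which the paper cites rather than reproves; as written, your proposal asserts this conclusion without establishing it.
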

\begin{proof}
Let $\widetilde{C}_r^-(u)$ be the hyperelliptic curve given by the equation
\begin{equation}\label{E:Cminus}
   y^2 = x g(x^2-2) + u,
\end{equation}
where 
\begin{equation*}
  g(x) = \prod_{j=1}^\frac{r-1}{2} (x+\omega_j).
\end{equation*}
Note that \eqref{E:Cminus} with $u=2-4t$ defines 
the curve $C_r^-(t)$ in \cite[p. 420]{DarmonDuke}.

Let $\widetilde{J}_r^-(u)$ denote the Jacobian of $\widetilde{C}_r^-(u)$. It is of \(\GL_2(K)\)-type (\cite[p.~420]{DarmonDuke}) and $\rhobar_{J_r^-(u),\Fp}$ is an odd Frey representation of signature $(p,p,r)$ with respect to the points $(2,-2,\infty)$ by the proof of \cite[Theorem 1.10]{DarmonDuke}.

Replacing $x$ by $x/i$, $y$ by $y/i^\frac{1}{2}$ in \eqref{E:Cminus} gives the equation
\begin{equation*}
   y^2 = x g(-x^2-2) + i u. 
\end{equation*}
Since $g(-x) = (-1)^\frac{r-1}{2} h(x)$, we obtain the equation
\begin{equation*}
  y^2 = (-1)^\frac{r-1}{2} x h(x^2+2) + i u.
\end{equation*}
Multiplying by $(-1)^\frac{r-1}{2}$ and setting $s = (-1)^\frac{r-1}{2} i u $ leads to the model
\[
   C_r^0 (s) \; : \; (-1)^\frac{r-1}{2} y^2 = x h(x^2+2) + s = f^-(x)
\]
which is the equation for $C_r(s)$ when $\frac{r-1}{2}$ is even and its quadratic twist 
by $-1$ when $\frac{r-1}{2}$ is odd. Write $J_r^0(s)$ for the Jacobian of 
$C_{r}^0(s)$. 
Then, 
\begin{equation*}
  \rhobar_{J_r(s),\Fp} \mid_{G_{K(\sqrt{i})(s)}} \simeq \rhobar_{J_r^0(s),\Fp}  \mid_{G_{K(\sqrt{i})(s)}} \simeq \rhobar_{\widetilde{J}_r^-(u),\Fp} \mid_{G_{K(\sqrt{i})(u)}}.
\end{equation*}

By \eqref{E:C(s)} and Proposition~\ref{P:discriminant}, the curve $C_r(s)$ has good reduction outside $\{\pm 2i, \infty\}$. The conclusion follows by noting that the points $\{\pm 2,\infty\}$ are mapped to $\{\pm 2i, \infty \}$ by $s = (-1)^{\frac{r-1}{2}} i u$, and the order of an inertia subgroup as mapped to $\PSL_2(\F_\Fp)$ is unaffected by restriction to $G_{K(\sqrt{i})(u)}$, since the extension $K(\sqrt{i})(u)/K(u)$ is unramified. Furthermore, applying Lemma~\ref{frey-irreducible} gives that $\rhobar_{J_r(s),\Fp} \mid_{G_{\overline{K}(s)}}$ is irreducible.
\end{proof}

Let $t \neq s$ be another variable. Let $K(s,t)$ be the function field where $s$ and $t$ are related by  
\begin{equation} \label{E:stRelation}
\frac{1}{s^2+4} = t (1-t).
\end{equation}
We can see both $K(s)$ and $K(t)$ as subfields of $K(s,t)$. Define~$\alpha$ by the following equation
\begin{equation}
\label{eq:sign_for_alpha}
 \alpha s = 2t-1
\end{equation}
and note, from relation \eqref{E:stRelation}, that~$\alpha$ is a square-root of~$t - t^2$. The following lemma uses the notation of Lemma~\ref{L:firstKind}.
\begin{lemma}
\label{model-t}
The quadratic twist by~$\alpha$ of the base change of $C_r(s)$ to $K(s,t)$ has a model
\begin{equation*}
  C'_r(t) \; : \; y^2 = x^r + c_1 \alpha^2 x^{r-2} + \ldots + c_{\frac{r-1}{2}} \alpha^{r-1} x + \alpha^{r-1} (2t - 1)
\end{equation*}
defined over $K(t)$.
\end{lemma}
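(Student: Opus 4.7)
The plan is to carry out an explicit change of variables on the defining equation of $C_r(s)$, first passing to the quadratic twist by $\alpha$ over $K(s,t)$ and then rescaling $x$ and $y$ by suitable powers of $\alpha$ to absorb them into the coefficients. The resulting model will automatically be hyperelliptic of odd degree $r$, and the only remaining verification is that every coefficient lies in $K(t)$.

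To carry this out, I would first apply Lemma~\ref{L:firstKind} with $z=1$ to expand
\[
f^-(x) = xh(x^2+2) + s = x^r + c_1 x^{r-2} + c_2 x^{r-4} + \cdots + c_{(r-1)/2} x + s.
\]
The quadratic twist by $\alpha$ of $C_r(s)_{K(s,t)}$ is given by $\alpha y^2 = f^-(x)$. Since $r$ is odd, the substitution $x = X/\alpha$, $y = Y/\alpha^{(r+1)/2}$ is well-defined over $K(s,t)$ and, after multiplying through by $\alpha^r$, transforms the twist into
\[
Y^2 = \alpha^r f^-(X/\alpha) = X^r + c_1 \alpha^2 X^{r-2} + c_2 \alpha^4 X^{r-4} + \cdots + c_{(r-1)/2} \alpha^{r-1} X + \alpha^r s.
\]
The constant term rewrites as $\alpha^r s = \alpha^{r-1} \cdot (\alpha s) = \alpha^{r-1}(2t-1)$ by \eqref{eq:sign_for_alpha}, which matches the claimed equation for $C'_r(t)$ exactly.

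It remains to check that the coefficients belong to $K(t)$. Combining \eqref{E:stRelation} and \eqref{eq:sign_for_alpha}, I would compute $s^2 = 1/(t-t^2) - 4 = (2t-1)^2/(t-t^2)$, so
\[
\alpha^2 = \frac{(\alpha s)^2}{s^2} = \frac{(2t-1)^2}{(2t-1)^2/(t-t^2)} = t - t^2 \in K(t).
\]
Since $r$ is odd, both $\alpha^{2k}$ for $k=1,\dots,(r-3)/2$ and $\alpha^{r-1} = (\alpha^2)^{(r-1)/2}$ lie in $K(t)$, and the constant term $\alpha^{r-1}(2t-1)$ likewise lies in $K(t)$. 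Hence $C'_r(t)$ is defined over $K(t)$, as claimed.

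I do not expect a serious obstacle: the argument is essentially a quadratic twist followed by a Weierstrass-type rescaling, and the only nontrivial ingredients are the explicit form of $xh(x^2+2)$ from Lemma~\ref{L:firstKind} together with the identity $\alpha^2 = t - t^2$ derived from \eqref{E:stRelation} and \eqref{eq:sign_for_alpha}. A minor point requiring attention is the sign convention for $\alpha$ in \eqref{eq:sign_for_alpha}, which enters only through the constant term and is consistent with the stated form.
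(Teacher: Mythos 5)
Your proposal is correct and follows essentially the same route as the paper: the paper's proof is exactly this rescaling $x\mapsto x/\alpha$, $y\mapsto y/\alpha^{r/2}$ applied to $f^-(x)=x^r+c_1x^{r-2}+\cdots+c_{(r-1)/2}x+s$, using \eqref{eq:sign_for_alpha} for the constant term. Your added verification that $\alpha^2=t-t^2\in K(t)$ is a correct spelling-out of what the paper records just before the lemma when defining $\alpha$.
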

\begin{proof}
Because $f^-(x) = H(x,1) + s = x^r + c_1 x^{r-2} + \ldots + c_{\frac{r-1}{2}} x + s$ has only odd powers of $x$ except for the constant term, we may apply the transformation
\( x \rightarrow \frac{x}{\alpha} \), \( y \rightarrow \frac{y}{\alpha^\frac{r}{2}} \) to obtain, using~\eqref{eq:sign_for_alpha}, the model~\(C'_r(t)\) of the quadratic twist by~\(\alpha\) of~\(C_r(s)\) over $K(t)$ given in the statement.
\end{proof}

Combining Proposition~\ref{P:discriminant} and Lemma~\ref{model-t} with standard formulae for models of hyperelliptic curves (see Section~\ref{ss:hyperelliptic_equations} for a summary)
it follows  that~$C_r'(t)$ has discriminant
\begin{equation}\label{eq:disc_C_r_dash}
\Delta(C_r'(t)) = (-1)^{\frac{r - 1}{2}}2^{2(r - 1)}r^r(t(1 - t))^{\frac{(r - 1)^2}{2}}.
\end{equation}
In particular, we see that the
ramification set $\{\pm 2i, \infty \}$ of $C_r(s)$ corresponds to the ramification set $\{ 0, 1, \infty \}$ for $C'_r(t)$.

\begin{theorem}
\label{hyperelliptic-darmon}
Let $J'_r(t)$ be the Jacobian of $C'_r(t)$ from Lemma~\ref{model-t}. Then
\begin{enumerate}
\item There is an embedding $K \hookrightarrow \End_{K(t)}(J'_r(t)) \otimes \Q$.
\item For every specialization of $t$ to $t_0 \in \mathbb{P}^1(K) \backslash \left\{ 0, 1, \infty \right\}$, the above embedding is well-defined.
\item For every prime $\mathfrak{p}$ of $K$ above a prime $p$, $\rhobar_{J'_r(t),\mathfrak{p}} : G_{K(t)} \rightarrow \GL_2(\F_\mathfrak{p})$ is an odd Frey representation of signature $(r,r,p)$ with respect to the points $(0,1,\infty)$.
\end{enumerate}
\end{theorem}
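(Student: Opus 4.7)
The plan is to deduce all three assertions from the corresponding facts for $J_r(s)$ established in Theorem~\ref{endo-construct} and Proposition~\ref{darmonfrey}, by transferring them across the change of variables $\alpha s = 2t-1$ (with $\alpha^2 = t(1-t)$) and the quadratic twist of Lemma~\ref{model-t}.

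For part~(1), the key observation is that, over $K(s,t) = K(t)(\alpha)$, the base changes of $C_r(s)$ and $C'_r(t)$ become quadratic twists of each other by $\alpha$, so their Jacobians are quadratic twists over $K(s,t)$. Hence Theorem~\ref{endo-construct} yields an embedding $K \hookrightarrow \End_{K(s,t)}(J'_r(t) \otimes_{K(t)} K(s,t)) \otimes \Q$. To descend the $K$-action to $K(t)$, one can either verify that the endomorphism $\psi = [\zeta_r]^* + [\zeta_r^{-1}]^*$ constructed in the proof of Theorem~\ref{endo-construct} commutes with the non-trivial element of $\Gal(K(s,t)/K(t))$ (which acts by $\alpha \mapsto -\alpha$), or re-run the construction of Theorem~\ref{endo-construct} directly with an appropriate twisted double cover $D'_r(t)$ of $C'_r(t)$ defined over $K(t)$, together with the analogous automorphism of order~$r$. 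Part~(2) then follows by a standard specialization argument: the discriminant formula~\eqref{eq:disc_C_r_dash} shows that $C'_r(t_0)$ is smooth for every $t_0 \in \mathbb{P}^1(K) \setminus \{0,1,\infty\}$, so the generic $K$-action specializes to a well-defined $K$-action on $J'_r(t_0)$.

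For part~(3), the determinant of $\rhobar_{J'_r(t), \Fp}$ is the mod~$p$ cyclotomic character by Theorem~\ref{lem:det}; in particular, this forces oddness at complex conjugation and triviality on $G_{\overline{K}(t)}$. The discriminant formula~\eqref{eq:disc_C_r_dash} shows $\rhobar_{J'_r(t), \Fp}|_{G_{\overline{K}(t)}}$ is unramified outside $\{0,1,\infty\}$. To pin down the projective inertia images at these three points, I would compare with $\rhobar_{J_r(s),\Fp}$ via the intermediate field $K(s,t)$: over $G_{\overline{K}(s,t)}$, the two representations differ by a quadratic character, and the cover $\Spec\overline{K}(s,t) \to \Spec\overline{K}(t)$ is ramified with index~$2$ above $t = 0$ and $t = 1$ (each having a single preimage lying over $s = \infty$) and split above $t = \infty$ (whose two preimages lie over $s = \pm 2i$). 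By Proposition~\ref{darmonfrey}, the projective inertia at $s = \pm 2i$ (resp.\ $s = \infty$) has order $p$ (resp.\ $r$). Since $r$ is odd, passing from a cyclic group of order $r$ to an index-$2$ overgroup in $\PSL_2(\F_\Fp)$ preserves its order, so one obtains projective inertia orders $r,r,p$ at $t = 0, 1, \infty$, respectively. Irreducibility of the restriction to $G_{\overline{K}(t)}$ then follows from Lemma~\ref{frey-irreducible} applied with the roles of $p$ and $r$ interchanged.

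The main obstacle I anticipate is the descent step in part~(1): showing that the endomorphism really is defined over $K(t)$ and not merely over the quadratic extension $K(s,t)$. The cleanest route is to replicate the construction of Theorem~\ref{endo-construct} with a twisted double cover of $C'_r(t)$ directly over $K(t)$, yielding a $K(t)$-rational analog of $\psi$.
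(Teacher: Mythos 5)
Your treatment of parts (1) and (2) is essentially the paper's argument: the paper also transports $\psi=[\zeta_r]^*+[\zeta_r^{-1}]^*$ from $J_r(s)$ to $J'_r(t)$ via the isomorphism $\phi$ of Lemma~\ref{model-t} and checks that $\psi'=\phi^*\circ\psi\circ\phi_*$ is $G_{K(t)}$-equivariant by reducing this to the cocycle identity $\psi^\sigma\circ\xi_{\sigma*}=\xi_{\sigma*}\circ\psi$ with $\xi_\sigma=\phi^\sigma\phi^{-1}$, verified on differentials. One caution on your first descent option: $\phi$ involves $\alpha^{r/2}$ and so is only defined over a quadratic extension of $K(s,t)$; checking commutation with the generator of $\Gal(K(s,t)/K(t))$ alone is therefore not the right test, and one must work with the full cocycle $\xi_\sigma$ for $\sigma\in G_{K(t)}$ (or, as in your second option, rebuild the double cover over $K(t)$).

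There is a genuine gap in part (3), at the points $t=0,1$. You correctly identify that the inertia group $I_0'$ of $\overline{K}(s,t)$ at the place above $t=0$ sits with index $2$ inside the inertia group $I_0$ of $\overline{K}(t)$ at $t=0$, and that $\PP\rhobar_{J'_r(t),\Fp}(I_0')=\PP\rhobar_{J_r(s),\Fp}(I_{s=\infty})$ has order $r$. But your next step --- ``passing from a cyclic group of order $r$ to an index-$2$ overgroup in $\PSL_2(\F_\Fp)$ preserves its order since $r$ is odd'' --- is false as a group-theoretic statement: if $\bar g$ generates $\PP\rhobar_{J'_r(t),\Fp}(I_0)$ then $\bar g^2$ has order $r$, which only tells you $\bar g$ has order $r$ or $2r$, and elements of projective order $2r$ with trivial determinant are not excluded (they lift to elements of order $4r$ in $\SL_2$). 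If the order were $2r$, condition (3) of Definition~\ref{D:FreyRep} for signature $(r,r,p)$ would fail. Ruling this out requires genuinely new input --- e.g.\ tracking that the quadratic character cutting out $\sqrt{\alpha}$ is ramified at the place above $t=0$ and combining this with the fact (from Proposition~\ref{darmonfrey} and Darmon's classification) that $\rhobar_{J_r(s),\Fp}(I_{s=\infty})$ is generated by an element of order $2r$ whose $r$-th power is $-I$; this is what produces the paper's assertion that $\rhobar_{J'_r(t),\Fp}(I_0)$ is generated by an element of order $2r$ with projective order $r$. Relatedly, your justification of oddness is off target: an \emph{odd} Frey representation in Darmon's sense is one whose (non-projectivized) inertia generators at $0,1,\infty$ have orders $2r,2r,2p$ rather than $r,r,p$; it has nothing to do with complex conjugation or with $\det\rhobar$ being cyclotomic, and it is exactly the piece of local information your argument does not pin down.
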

\begin{proof}
Let $J = J_r(s)$ be the Jacobian of $C = C_r(s)$. Similarly, let~$J'=J'_r(t)$ be the Jacobian of~$C'=C'_r(t)$. By Proposition~\ref{darmonfrey},  the endomorphism algebra $\End_{K(s)}(J) \otimes \Q$ contains $K$ and $\rhobar_{J,\mathfrak{p}} : G_{K(s)} \rightarrow \GL_2(\F_\mathfrak{p})$ is a Frey representation of signature 
$(p, p, r)$ with respect to the points $(-2i, 2i, \infty)$ for every prime~$\mathfrak{p}$ of $K$ above $p$.

Consider $C$ over $K(s,t)$ which is a degree $2$ extension of $K(t)$ and $K(s)$. Thus, we can identify $\overline{K(s)} = \overline{K(s,t)} = \overline{K(t)}$, and we denote this common field by $\overline{L}$.

Let~$\phi : C' \rightarrow C$ be the isomorphism $(x,y)\mapsto\left(\frac{x}{\alpha},\frac{y}{\alpha^{r/2}}\right)$ given by Lemma~\ref{model-t}. It induces homomorphisms defined over $\overline{L}$
\begin{align*}
  \phi^* : J \rightarrow J' \\
  \phi_* : J' \rightarrow J
\end{align*}
by Picard and Albanese functoriality respectively. Note that $\phi^*$ and $\phi_*$ are isomorphisms defined over $\overline{L}$ and each is the inverse of the other.

If we have an endomorphism $\psi : J \rightarrow J$, we obtain an endomorphism $\psi' : J' \rightarrow J'$ given by
\begin{equation}
\label{twist-endomorphism}
  \psi' = \phi^* \circ \psi \circ \phi_* : J' \rightarrow J',
\end{equation}
using the identification of $J$ and $J'$ under $\phi^*$ and $\phi_*$.

To decide if $\psi'$ is defined over $K(t)$, we may check if the following holds:
\begin{equation}
\label{commute-property}
\psi'^\sigma = \psi',\quad \text{for all $\sigma \in G_{K(t)}$}.
\end{equation}
For $\sigma \in G_{K(t)}$,  set $\xi_\sigma = \phi^\sigma \phi^{-1} : C \rightarrow C^\sigma$, which is explicitly given by
\begin{equation}
\label{cocycle}
  (x,y) \mapsto \left( x \frac{\alpha}{\alpha^\sigma}, y \frac{\alpha^\frac{r}{2}}{{\alpha^\frac{r}{2}}^\sigma} \right).
\end{equation} 
Then, we have
\begin{align}
   \notag \psi'^\sigma & = \psi' \\
   \notag \iff (\phi^* \circ \psi \circ \phi_*)^\sigma & = \phi^* \circ \psi \circ \phi_* \\
   \notag \iff {\phi^*}^{\sigma} \circ \psi^\sigma \circ \phi_*^\sigma & = \phi^* \circ \psi \circ \phi_* \\
   \notag \iff \psi^\sigma  \circ \phi_*^\sigma \phi_*^{-1} & = ({\phi^*}^{\sigma})^{-1}\phi^* \circ \psi \\
   \iff \psi^\sigma \circ {\xi_\sigma}_* & = {\xi_\sigma}_* \circ \psi,
\end{align}
where we have used that $\phi^* = \phi_*^{-1}$ and $({\phi^*}^\sigma)^{-1} = \phi_*^\sigma$.

Let $\psi = [\zeta_r + \zeta_r^{-1}] := [\zeta_r]^* + [\zeta_r^{-1}]^* \in \End_{K(s)}(J) \otimes \Q$ and $\psi'$ the corresponding element in $\End_{K(t)}(J')$ from \eqref{twist-endomorphism}. To verify \eqref{commute-property} holds for all $\sigma \in G_{K(t)}$, it suffices to check that it holds on complex specializations of $t \notin \left\{ 0, 1, \infty \right\}$. Recall that
\begin{equation}
  J(\C) \simeq H^0(C,\Omega^1)^*/\Lambda_C
\end{equation}
where $\Lambda_C$ is the image of $H_1(C,\Z)$ in $H^0(C,\Omega^1)^*$ under the map $\gamma \mapsto \left( w \mapsto \int_\gamma w \right)$. From \eqref{cocycle} and \eqref{endomorphism}, we can verify that the commutation relation \eqref{commute-property} holds when considered as an equality in $\Hom(J(\C),J^\sigma(\C))$ by checking it holds on differentials.

Finally remark that $t = 0, 1$ correspond to $s = \infty$, and $t = \infty$ corresponds to $s = \pm 2i$. Using this correspondence of points, it follows that $\rhobar_{J',\mathfrak{p}}$ is a Frey representation of signature $(r,r,p)$ with respect to the points $(0, 1, \infty)$ for every prime $\mathfrak{p}$ of $K$ above $p$. Irreducibility of $\rhobar_{J',\mathfrak{p}} \mid_{G_{\bar K(t)}}$ follows from Lemma~\ref{frey-irreducible}.

Furthermore, since the inertia subgroups at $t = 0, 1, \infty$ are generated by an element of order $2r, 2r, 2p$, respectively, it follows that $\rhobar_{J',\mathfrak{p}}$ is an odd Frey representation.
\end{proof}

\begin{remark}
In \cite{DarmonDuke}, a superelliptic construction of Frey representations of signature $(r,r,p)$ with respect to the points $(0, 1, \infty)$ is given. Theorem~\ref{hyperelliptic-darmon} shows that there is a hyperelliptic construction (using $C'_r(t)$) of odd Frey representations of signature $(r,r,p)$ with respect to the points $(0, 1, \infty)$.
\end{remark}

\section{Kraus' Frey hyperelliptic curves for signature~$(r,r,p)$}
\label{S:Freyrrp}

In this section, we introduce Kraus' Frey varieties 
as in~\cite[\S V]{kraushyper} and study their first properties.
In particular, we relate them to the
varieties in Subsection~\ref{S:FreyOverKs}. This allows to attach \(2\)-dimensional Galois representations to them. The study of these associated Galois representations will in particular lead to the proof of the modularity results in the next section.

We keep the notation of Subsection~\ref{S:FreyOverKs}. In particular, 
$K= \Q(\zeta_r)^+$ is the maximal totally real subfield of~$\Q(\zeta_r)$ where $\zeta_r$ is a fixed primitive $r$-th root of 
unity for~$r\ge3$ a prime.

For $a,b \in \Z$ coprime and such that~$a^r+b^r\not=0$, we  
let $C_r(a,b)$ be the hyperelliptic curve of genus $(r-1)/2$ constructed by Kraus (denoted $C$ in~\cite[\S V]{kraushyper}). Namely, for $ab \neq 0$, we have
\begin{equation}
\label{kraushyper}
C_r(a,b) \; : \; \; y^2 = (ab)^\frac{r-1}{2} x h \left(\frac{x^2}{ab} + 2 \right) + b^r - a^r
\end{equation}
where~$h$ is defined in~\eqref{eq:def_h}. 
For $ab =0$ coprimality gives $(a,b)=\pm(0,1)$ or~$\pm(1,0)$ and we set 
\begin{equation}\label{E:Jab=0}
C_r(a,b) \; : \; y^2=x^r+b^r-a^r. 
\end{equation}

\begin{proposition}
\label{P:Crba}
The curve $C_r(b,a)$ is the quadratic twist by~$-1$ of~$C_r(a,b)$.
\end{proposition}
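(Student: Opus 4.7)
The plan is to verify the isomorphism by direct computation on the defining Weierstrass equations, exploiting a simple parity observation. Write $f_{a,b}(x) = (ab)^{(r-1)/2}\, x\, h\!\left(\tfrac{x^2}{ab}+2\right) + (b^r-a^r)$ for $ab\ne 0$, so that $C_r(a,b)$ has equation $y^2 = f_{a,b}(x)$. The key structural point is that the leading term $(ab)^{(r-1)/2} x\, h(x^2/(ab)+2)$ is \emph{odd} as a polynomial in~$x$ (since it contains only odd powers of~$x$), and moreover the quantities $(ab)^{(r-1)/2}$ and $x^2/(ab)$ are symmetric under $a\leftrightarrow b$. Thus the only effect of swapping $a$ and~$b$ is to flip the sign of the constant term.

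From this I would argue as follows. A quadratic twist by~$-1$ of the odd-degree hyperelliptic model $y^2=f_{a,b}(x)$ is $y^2=-f_{a,b}(x)$, using the convention recalled in \S\ref{ss:hyperelliptic_equations}. I then apply the $K$-isomorphism of hyperelliptic models $(x,y)\mapsto(-x,y)$ (allowed by the transformation formula in \S\ref{ss:hyperelliptic_equations} with $e=1$ up to a sign on~$x$, or equivalently by a direct change of variables), obtaining $y^2=-f_{a,b}(-x)$. The oddness of the polynomial part gives
\[
-f_{a,b}(-x) = (ab)^{(r-1)/2}\, x\, h\!\left(\tfrac{x^2}{ab}+2\right) + (a^r-b^r) = f_{b,a}(x),
\]
which is exactly the defining equation of $C_r(b,a)$. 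Composing the two steps yields an isomorphism over~$K$ from the $(-1)$-twist of $C_r(a,b)$ to $C_r(b,a)$.

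For the degenerate case $ab=0$, where $C_r(a,b)$ is defined by $y^2=x^r+b^r-a^r$ via~\eqref{E:Jab=0}, the same composition $(x,y)\mapsto(-x,y)$ applied after twisting by~$-1$ works, because $x^r$ is odd in~$x$ (as $r$ is odd); one gets $y^2 = x^r + a^r - b^r$, i.e.\ $C_r(b,a)$.

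I do not expect any serious obstacle here: the argument is purely a matter of tracking signs in a Weierstrass equation. The only subtle point is to fix once and for all what is meant by the quadratic twist of an odd-degree hyperelliptic model (the natural convention $y^2=-f(x)$ agrees with the twist on the Jacobian $J_r(a,b)$, which is what will matter later), and to observe that the substitution $x\mapsto -x$ is a genuine $K$-isomorphism of odd-degree hyperelliptic models rather than merely a birational map.
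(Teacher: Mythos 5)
Your proof is correct and is essentially the paper's own argument: the paper simply observes that swapping $a$ and $b$ leaves $(ab)^{(r-1)/2}\,x\,h(x^2/(ab)+2)$ unchanged and negates the constant term, so that $C_r(b,a)$ coincides with the $(-1)$-twist of $C_r(a,b)$ after $x\mapsto -x$, exactly as you compute. The only wobble is your appeal to the model transformation with ``$e=1$ up to a sign on $x$'' (that formula requires $x=e^2u+r$, so a sign change is not literally of that shape), but your fallback of checking the change of variables directly is perfectly adequate.
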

\begin{proof}This is clear for~\(ab = 0\) and follows from
\[
C_r(b,a) : y^2 = (ab)^\frac{r-1}{2} x h \left(\frac{x^2}{ab} + 2 \right) - (b^r - a^r)
\]
when~\(ab\neq 0\).
\end{proof}
In the next lemma we show that for~\(ab\neq 0\), the hyperelliptic curve~$C_r(a,b)$ is related to a specialization of the model $C'_r(t)$ given 
by Lemma~\ref{model-t}. We first need further notation. 
Let~$z_0$ be a fixed square-root of~$ab \neq 0$. We specialize the variables $s$ and~$t$ in~\eqref{E:stRelation} to be $s = s_0$ and $t = t_0$ where
\begin{equation}
\label{st-identity}  
t_0 =  \frac{a^r}{a^r+b^r} \quad 
\text{ and } \quad 
s_0 = \frac{b^r - a^r}{z_0^r}.
\end{equation}
Observe that~\(t_0\in\mathbb{P}^1(\Q)\backslash\{0,1,\infty\}\) and that the following relations are satisfied
\begin{equation*}
t_0(1-t_0) = \frac{(a b)^r}{(a^r+b^r)^2}\quad\text{ and }\quad s_0^2 = \frac{(a^r - b^r)^2}{(ab)^r} = \frac{1}{t_0(1-t_0)} - 4.
\end{equation*}
\begin{lemma} \label{L:twistedKraus}
Assume $ab \neq 0$.
The curve~$C'_r(t_0)$ is the quadratic twist of $C_r(a,b)$ by $-\frac{(ab)^\frac{r-1}{2}}{a^r+b^r}$.
\end{lemma}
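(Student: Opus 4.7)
The plan is to exhibit an explicit change of variables transforming the affine model of $C_r(a,b)$ into that of $C'_r(t_0)$ and to read off the twist factor from the quadratic extension of $K$ over which this change of variables is defined.

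First, I would use Lemma~\ref{L:firstKind} (with $z = 1$) to bring both hyperelliptic equations into comparable form. Expanding $xh(x^2+2) = \sum_{k=0}^{(r-1)/2} c_k x^{r-2k}$ with $c_0 = 1$ gives
\begin{align*}
  C_r(a,b) &: y^2 = x^r + c_1 (ab) x^{r-2} + \cdots + c_{(r-1)/2}(ab)^{(r-1)/2} x + (b^r - a^r), \\
  C'_r(t_0) &: y^2 = x^r + c_1 \alpha_0^2 x^{r-2} + \cdots + c_{(r-1)/2} \alpha_0^{r-1} x + \alpha_0^{r-1}(2t_0 - 1),
\end{align*}
where, from the relation~\eqref{eq:sign_for_alpha} and the specializations~\eqref{st-identity}, one computes $\alpha_0 = -z_0^r/(a^r+b^r)$ and thus $\alpha_0^2 = (ab)^r/(a^r+b^r)^2$.

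Next, I would look for scalars $\lambda, \mu$ such that the substitution $x \mapsto \lambda x$, $y \mapsto \mu y$ sends the equation of $C_r(a,b)$ to that of $C'_r(t_0)$. Comparing the leading coefficient forces $\mu^2 = \lambda^r$; comparing the coefficient of $x^{r-2k}$ for any $k\ge 1$ forces $\lambda^2 = ab/\alpha_0^2 = (a^r+b^r)^2/(ab)^{r-1}$, after which all intermediate coefficients agree automatically; and comparing the constant term fixes the sign, yielding
\[
\lambda = -\frac{a^r+b^r}{(ab)^{(r-1)/2}}.
\]

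Finally, since $\mu^2 = \lambda^r$ with $r$ odd, extracting $\mu$ requires $\sqrt{\lambda}$, so the isomorphism is defined precisely over $K(\sqrt{\lambda})$. As $\lambda$ and $1/\lambda = -(ab)^{(r-1)/2}/(a^r+b^r)$ represent the same class in $K^*/(K^*)^2$, this identifies $C'_r(t_0)$ as the quadratic twist of $C_r(a,b)$ by $-(ab)^{(r-1)/2}/(a^r+b^r)$, as claimed. The only mild subtlety is the sign of $\lambda$, which is pinned down by the constant-term comparison (in combination with the sign convention of~\eqref{eq:sign_for_alpha}); the rest of the argument is routine bookkeeping.
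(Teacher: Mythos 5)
Your proof is correct, and the computation checks out: with $\alpha_0^2=(ab)^r/(a^r+b^r)^2$ the coefficient comparison does force $\lambda^2=(a^r+b^r)^2/(ab)^{r-1}$, the constant term (using that $r$ is odd) pins down $\lambda=-(a^r+b^r)/(ab)^{(r-1)/2}$, and $\lambda\equiv 1/\lambda$ in $K^*/(K^*)^2$ gives the stated twist factor. The paper reaches the same substitution by a slightly different route: it factors through the intermediate curve $C_r(s_0)$, observing that $C_r(a,b)$ is the twist of $C_r(s_0)$ by $z_0$ and $C'_r(t_0)$ is the twist of $C_r(s_0)$ by $\alpha$ (Lemma~\ref{model-t}), then composes to get the twist by $\alpha/z_0=-(ab)^{(r-1)/2}/(a^r+b^r)$; your direct coefficient-by-coefficient verification is equivalent and essentially a one-step version of the same argument.
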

\begin{proof}
 Note that $C_r(a,b)$ can be obtained from $C_r(s_0)$ by replacing $x$ by $\frac{x}{z_0}$ and $y$ by~$\frac{y}{z_0^{r/2}}$, showing that $C_r(a,b)$ is the quadratic twist of $C_r(s_0)$ by~$z_0$. Also, by Lemma~\ref{model-t} and \eqref{st-identity}, $C'_r(t_0)$ is the quadratic twist of $C_r(s_0)$ by $\alpha$ where~$\alpha = \frac{2t_0 - 1}{s_0} = -\frac{z_0^r}{a^r + b^r}$ is given by equation~\eqref{eq:sign_for_alpha}. 
Therefore, by composing both twists, the curve~$C'_r(t_0)$ is the quadratic twist of $C_r(a,b)$ by
\[
\frac{\alpha}{z_0} = - \frac{z_0^{r - 1}}{a^r + b^r} = - \frac{(ab)^{\frac{r-1}{2}}}{a^r+b^r}.
\]
\end{proof}
Combining~\eqref{eq:disc_C_r_dash} and Lemma~\ref{L:twistedKraus} with standard formulae for models of hyperelliptic curves (see Section~\ref{ss:hyperelliptic_equations} for a summary) it follows that~$C_r(a,b)$ has discriminant
\begin{equation}\label{E:discriminant}
\Delta(C_r(a,b)) = (-1)^\frac{r-1}{2} 2^{2(r-1)} r^r (a^r+b^r)^{r-1}.
\end{equation}
Note that this latter formula holds in the case~$ab = 0$ as well, with~$C_r(a,b)$ defined as in~\eqref{E:Jab=0}. We write $J_r(a,b)$ for the Jacobian of~$C_r(a,b)$.

\begin{remark}\label{rk:JisCM}
A nice feature of these Frey varieties is that when~\(ab = 0\), then from equation \eqref{E:Jab=0}, we have an endomorphism $(x,y) \mapsto (\zeta_r x, y)$ on~\(C_r(a,b)\) and hence~$J_r(a,b)$ has CM by~$\Q(\zeta_r)$.
\end{remark}

\begin{remark}
\label{general-coefficient}
Another nice feature of these Frey varieties is that setting 
\begin{equation*}
\label{new-st-identity}  
z_0^2 = ABab,\quad t_0 =  \frac{Aa^r}{Aa^r+Bb^r} \quad \text{ and } \quad s_0 = (AB)^{\frac{r-1}{2}}\frac{Bb^r - Aa^r}{z_0^r}
\end{equation*}
allows the study of the `general coefficient' Fermat equation of signature $(r,r,p)$:
\begin{equation}
\label{general-equ}
    A x^r + B y^r = C z^p.
\end{equation}
In contrast, the known Frey elliptic curves for signature $(r,r,p)$ rely on the natural facto\-rization of the left hand side of \eqref{general-equ} which occurs for $A = B =1$ but not for general $A, B$ (see~\cite{F}). The first author and his student are carrying out a more complete study of the `general coefficient' equation of signature $(r,r,p)$ in forthcoming work.

If $H(x,y) \in \Z[x,y]$ is a homogeneous polynomial of degree $r$ which is $\GL_2(\Qbar)$-equivalent to the homogeneous polynomial of degrees $3,4,6,12$ whose linear factors correspond to the vertices of an equilateral triangle, tetrahedron, octahedron, and icosahedron, aka `Klein form', then \cite{Billerey-cubic} \cite{Bennett-Dahmen} study the equation
\begin{equation}
\label{homogeneous-equ}
  H(x,y) = C z^p.
\end{equation}
using Frey elliptic curves with constant mod $2,3,4,5$ representations, respectively.

Not all homogeneous forms of the shape $A x^r + B y^r$ are Klein forms, even for $r = 4, 6, 12$, so our work in developing the necessary ingredients to use Kraus' Frey hyperelliptic curve in the modular method paves the way for resolving new equations of the form \eqref{homogeneous-equ}. 
\end{remark}

The next result implies (see~Subsection~\ref{ss:GL2typeAV}) that there are $2$-dimensional Galois representations attached to~$J_r$ which is a key property for the detailed study of these abelian varieties done below.
\begin{theorem} \label{T:GL2typeJr} 
Assume~\(ab \neq 0\). The base change~$J_r$ of~$J_r(a,b)$ to~$K$ is of $\GL_2$-type. More precisely, there is an embedding
$$K \hookrightarrow \End_K(J_r) \otimes \Q$$
giving rise to a strictly compatible system of \(K\)-integral \(\lambda\)-adic representations
\begin{equation}\label{E:pAdicRepJ}
\rho_{J_r,\lambda} : G_K\longrightarrow\GL_2(K_\lambda).
\end{equation}
\end{theorem}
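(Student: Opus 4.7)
The plan is to reduce the theorem to the generic statement already proved in Theorem~\ref{hyperelliptic-darmon} by using the quadratic twist relationship of Lemma~\ref{L:twistedKraus}. Since $ab \neq 0$ and $a^r + b^r \neq 0$ by hypothesis, the specialization
\[
t_0 = \frac{a^r}{a^r + b^r} \in \mathbb{P}^1(\mathbb{Q}) \setminus \{0, 1, \infty\}
\]
is admissible, and by Lemma~\ref{L:twistedKraus} the curve $C'_r(t_0)$ is the quadratic twist of $C_r(a,b)$ over $K$ by the scalar $d = -(ab)^{(r-1)/2}/(a^r+b^r) \in \mathbb{Q}^\times \subset K^\times$. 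Write $J'_r(t_0)$ for the Jacobian of $C'_r(t_0)$; then the base changes of $J_r(a,b)$ and $J'_r(t_0)$ to $K$ become isomorphic over the quadratic extension $K(\sqrt{d})/K$.

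Theorem~\ref{hyperelliptic-darmon} supplies an embedding $K \hookrightarrow \End_{K(t)}(J'_r(t)) \otimes \mathbb{Q}$, well-defined under the specialization $t \mapsto t_0$, hence $K \hookrightarrow \End_K(J'_r(t_0)) \otimes \mathbb{Q}$. The next step is to transport this embedding across the quadratic twist. Concretely, let $\psi : J_r(a,b)_{K(\sqrt{d})} \xrightarrow{\sim} J'_r(t_0)_{K(\sqrt{d})}$ be the isomorphism underlying the twist, characterized by $\psi^\sigma = -\psi$ for the nontrivial $\sigma \in \mathrm{Gal}(K(\sqrt{d})/K)$. Given $\varphi \in \End_K(J'_r(t_0)) \otimes \mathbb{Q}$, the endomorphism $\varphi' \colonequals \psi^{-1} \circ \varphi \circ \psi$ of $J_r(a,b)_{K(\sqrt{d})}$ satisfies
\[
(\varphi')^\sigma = (\psi^\sigma)^{-1} \circ \varphi^\sigma \circ \psi^\sigma = (-\psi)^{-1} \circ \varphi \circ (-\psi) = \varphi',
\]
so $\varphi'$ descends to $\End_K(J_r(a,b)) \otimes \mathbb{Q}$. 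This descent is a ring homomorphism and is compatible with the $K$-action, so it yields the desired embedding $K \hookrightarrow \End_K(J_r) \otimes \mathbb{Q}$.

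Finally, since $\dim J_r = (r-1)/2 = [K:\mathbb{Q}]$, the abelian variety $J_r/K$ is of $\GL_2$-type in the sense of Definition~\ref{def:GL2typeAV}. The strictly compatible system \eqref{E:pAdicRepJ} of $K$-integral $\lambda$-adic representations is then obtained by the standard construction recalled in \S\ref{ss:GL2typeAV}: for each prime $\lambda$ of $K$ above a rational prime $\ell$, the space $V_\lambda(J_r) = V_\ell(J_r) \otimes_{K_\ell} K_\lambda$ is a $2$-dimensional $K_\lambda$-vector space by~\cite[(2.1.1)]{RibetGalois}, stable under the $K_\lambda$-linear Galois action, and a $G_K$-stable $\mathcal{O}_\lambda$-lattice is provided by the image of $T_\ell(J_r)$. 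The compatibility assertion follows from~\cite[Theorem~(2.1.2)]{RibetGalois}.

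The computational or technical content here is minimal, and I do not anticipate any serious obstacle: the only point requiring care is the cocycle calculation verifying that the endomorphisms of $J'_r(t_0)$ descend to the quadratic twist $J_r$, which is the standard fact that quadratic twists preserve the endomorphism algebra over the base field.
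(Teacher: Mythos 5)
Your proposal is correct and follows essentially the same route as the paper, whose proof is the one-line reduction to Theorem~\ref{hyperelliptic-darmon} and Lemma~\ref{L:twistedKraus}; you have simply made explicit the standard cocycle verification that endomorphisms descend across the quadratic twist, which the paper leaves implicit. The remaining assertions about the compatible system are, as you say, the standard construction of \S\ref{ss:GL2typeAV} via Ribet.
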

\begin{proof}
 This follows from Theorem~\ref{hyperelliptic-darmon} and Lemma~\ref{L:twistedKraus}.
\end{proof}

Recall from~Subsection~\ref{ss:GL2typeAV} that if~\(\Fq\) be a prime ideal in~\(K\) of good reduction for~\(J_r\) above a rational prime~\(q\), we denote by~\(\Frob_{\Fq}\) a Frobenius element at~\(\Fq\) in~\(G_K\) and write~\(a_\Fq(J_r) = \tr \rho_{J_r,\lambda}(\Frob_\Fq)\) for a prime ideal~\(\lambda \nmid q\). This definition is independent of the choice of such~\(\lambda\) and from the \(K\)-integral property, we have~\(a_\Fq(J_r)\in K\).

Since the abelian variety $J_r$ is the base change of~\(J_r(a,b)\) which is defined over~$\Q$, for any prime~\(\ell\), there is a semi-linear action of $G_\Q$ on~$V_\ell(J_r)$ which extends the $G_K$-action provided by Theorem~\ref{T:GL2typeJr} and satisfies
\begin{equation}
\label{semi-linear-action}
  \sigma(\alpha v)= \sigma(\alpha)\sigma(v),
\end{equation}
for $\sigma \in G_\Q$, $v \in V_\ell(J_r)$, and $\alpha \in K \otimes \Q_\ell$, where $V_\ell(J_r)$ is the $\ell$-adic Tate module of~$J_r$. 

\begin{lemma}\label{lem:K-inv_traces}
Let~\(\Fq\) be a prime ideal in~\(K\) of good reduction for~\(J_r\) over a rational prime~\(q\). We have
\begin{equation} \label{E:traces}
\sigma(a_\Fq(J_r)) = a_{\sigma(\Fq)}(J_r),
\end{equation}
for every~\(\sigma\in G_\Q\). In particular, if~\(q\) is inert in~\(K\), then \(a_\Fq(J_r)\) belongs to~\(\Z\).
\end{lemma}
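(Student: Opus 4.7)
The plan is to lift the problem to the $\ell$-adic Tate module and exploit the semi-linear $G_\Q$-action recorded in~\eqref{semi-linear-action}, which exists precisely because $J_r(a,b)$ is defined over~$\Q$. First I would observe that since $\sigma \in G_\Q$ acts on $K \otimes \Q_\ell \simeq \prod_{\lambda \mid \ell} K_\lambda$ by permuting the factors via $\lambda \mapsto \sigma(\lambda)$, the semi-linearity in~\eqref{semi-linear-action} forces $\sigma$ to send $V_\lambda(J_r)$ onto $V_{\sigma(\lambda)}(J_r)$, and the induced map $V_\lambda(J_r) \to V_{\sigma(\lambda)}(J_r)$ is semi-linear with respect to the isomorphism $\sigma : K_\lambda \isomto K_{\sigma(\lambda)}$.

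Next I would use the identity $\sigma \tau = (\sigma \tau \sigma^{-1})\sigma$ in $G_\Q$, together with the $G_\Q$-equivariance of the Tate module action, to obtain the intertwining relation
\[
\rho_{J_r,\sigma(\lambda)}(\sigma \tau \sigma^{-1}) \circ \sigma = \sigma \circ \rho_{J_r,\lambda}(\tau)
\]
for every $\tau \in G_K$. Fix a rational prime $\ell \ne q$ and a prime $\lambda$ of $K$ above $\ell$. Since $\Fq$ has good reduction, the element $\sigma \Frob_\Fq \sigma^{-1}$ is a valid choice of Frobenius at $\sigma(\Fq)$ (well defined up to $G_K$-conjugation), so applying the intertwining relation to $\tau = \Frob_\Fq$ yields
\[
\rho_{J_r,\sigma(\lambda)}(\Frob_{\sigma(\Fq)}) = \sigma \circ \rho_{J_r,\lambda}(\Frob_\Fq) \circ \sigma^{-1}.
\]
Choosing a $K_\lambda$-basis $(e_1,e_2)$ of $V_\lambda(J_r)$, the vectors $\sigma(e_1), \sigma(e_2)$ form a $K_{\sigma(\lambda)}$-basis of $V_{\sigma(\lambda)}(J_r)$, and semi-linearity identifies the matrix of $\rho_{J_r,\sigma(\lambda)}(\Frob_{\sigma(\Fq)})$ in this basis with the entry-wise $\sigma$-conjugate of the matrix of $\rho_{J_r,\lambda}(\Frob_\Fq)$. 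Taking traces then gives $a_{\sigma(\Fq)}(J_r) = \sigma(a_\Fq(J_r))$, which is the first assertion.

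For the second statement, when $q$ is inert in $K$ there is a unique prime $\Fq$ of $K$ above $q$, so $\sigma(\Fq) = \Fq$ for every $\sigma \in G_\Q$. The first assertion then shows that $a_\Fq(J_r)$ is fixed by $\Gal(K/\Q)$, hence lies in $\Q$; being moreover an algebraic integer of $K$ (as the trace of a Frobenius acting on a Tate module at a prime of good reduction), it must belong to $\Z$. The only genuinely delicate point in the whole argument is tracking how the trace transforms under a semi-linear conjugation, but once bases are chosen this reduces to a direct matrix computation, so I do not expect any serious obstacle.
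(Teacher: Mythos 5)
Your proposal is correct and follows essentially the same route as the paper's proof: both use the semi-linear $G_\Q$-action~\eqref{semi-linear-action} on the Tate module, observe that $\sigma$ carries $V_\lambda(J_r)$ to $V_{\sigma(\lambda)}(J_r)$ and conjugates $\rho_{J_r,\lambda}(\Frob_\Fq)$ into $\rho_{J_r,\sigma(\lambda)}(\Frob_{\sigma(\Fq)})$, and then compare matrices in a basis and its $\sigma$-image to conclude $\tr\sigma(B)=\sigma(\tr B)$. You merely make explicit the intertwining relation and the choice of Frobenius that the paper leaves implicit; the inert case is handled identically.
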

\begin{proof}
Let~\(\lambda\) be a prime ideal in~\(K\) not dividing the residue characteristic of~\(\Fq\). Let~\((v_1,v_2)\) be a basis for~\(V_\lambda(J_r)\) as a \(K_\lambda\)-vector space. Denote by~\(B\) the matrix of~\(\rho_{J_r,\lambda}(\Frob_\Fq)\) with respect to this basis and let~\(\sigma\) be in~\(G_\Q\). It follows from~\eqref{semi-linear-action} that~\(\sigma(B)\) is the matrix of~\(\rho_{J_r,\sigma(\lambda)}(\Frob_{\sigma(\Fq)})\) with respect to the basis~\((\sigma(v_1),\sigma(v_2))\) of~\(V_{\sigma(\lambda)}(J_r)\) and hence
\[
a_{\sigma(\Fq)}(J_r) = \tr \rho_{J_r,\sigma(\lambda)}(\Frob_{\sigma(\Fq)}) = \tr \sigma(B) = \sigma(\tr B) = \sigma(a_\Fq(J_r)).
\]
\end{proof}

When there exists $c \in \Z$ such that $(a,b,c)$ is a primitive solution to~\eqref{E:rrp}, then~\eqref{E:discriminant} reads
\begin{equation}
\label{Kraus-discriminant}
\Delta(C_r(a,b)) = (-1)^\frac{r-1}{2} 2^{2(r-1)} r^r d^{r-1} c^{p(r-1)}
\end{equation}
and we say that $J_r(a,b)$ is  {\em the Frey 
variety attached to~$(a,b,c)$}. 

We conclude this section with the following examples of~$C_r(a,b)$ for small values of~$r$:
\begin{align*}
r = 3: & \quad y^2 = x^3 + 3ab x + b^3-a^3, \\
r = 5: & \quad y^2 = x^5 + 5 ab x^3 + 5 a^2 b^2 x + b^5-a^5, \\
r = 7: & \quad y^2 = x^7 + 7 ab x^5 + 14 a^2 b^2 x^3 + 7 a^3 b^3 x + b^7 - a^7, \\
r = 11: & \quad y^2 = x^{11} + 11 ab x^9 + 44 a^2 b^2 x^7 + 77 a^3 b^3 x^5 + 55 a^4 b^4 x^3 + 11 a^5 b^5 x + b^{11} - a^{11}.
\end{align*}
Observe that $r=3$ is the only case where the Frey variety is an elliptic curve; this is a well-known curve, which has been used to study the equation $x^3 + y^3 = z^p$ in~\cite{ChenSiksek,Freitas33p,kraus1}.
The curve for $r=7$ will be essential in the proof of the main result in the second part of this series~\cite{xhyper_vol2}.

\section{Modularity of Frey varieties} 
\label{S:modularity}

As we shall see in Subsection~\ref{S:modularityKraus}, the modularity of~$J_r$ follows by analyzing its mod~$\Fp_r$ representation, comparing it to the one arising on the Legendre family and modularity lifting. This comparison to the Legendre family is a key idea in Darmon's program. Indeed, conjecturally the mod~$\Fp_r$ representation is modular and plays the role of a `seed' for modularity of all Frey varieties described by Darmon (see diagram in~\cite[p. 433]{DarmonDuke}). The results in this section make this argument unconditional for the Kraus Frey variety~$J_r$ and two of the 
varieties introduced by Darmon as well (see Subsection~\ref{S:modularityDarmon}).

\subsection{Modularity of Kraus' varieties $J_r$}\label{S:modularityKraus}

In this subsection, we keep the notation of Section~\ref{S:Freyrrp}. In particular, $a,b$ are non-zero coprime integers such that~$a^r + b^r \not=0$ and we set~\(t_0 = \frac{a^r}{a^r + b^r}\). Recall that~$J_r$ denotes the base change of~$J_r(a,b)$ to~$K$, where $K=\Q(\zeta_r)^+$. Let $\Fp_r$ be the unique prime in~$K$ above~$r$. Since~$r$ is totally ramified in~$K$, its residue field 
is $\F_{\Fp_r} = \F_r$. In the next sections, the prime~\(\Fp_r\) will often be denoted by~\(\Fq_r\), but here we use a different notation to emphasize the fact that we think of~\(K\) not as the base field of~\(J_r\) but instead as the field of real multiplication.

As~$\lambda$ varies, the representations $\rho_{J_r,\lambda}$ in~\eqref{E:pAdicRepJ} form a strcitly compatible system of Galois representations (see~Subsection~\ref{ss:GL2typeAV}). So $J_r/K$ is modular if and only if one of them is modular if and only if all of them are modular and, in such case, they coincide with the strictly compatible system arising in a Hilbert modular cuspform defined over~$K$ of parallel weight~$2$ and trivial character.\footnote{
Due to the work of many authors, it is known that regular, algebraic, self-dual cuspidal automorphic forms of~$\GL_n$ over a totally real field~$K$ give rise to strictly compatible systems (see for example~\cite[Theorem~2.1.1]{BLGGT}). In this paper, we are interested in the particular case of Hilbert cuspforms over~$K$ of parallel weight~$2$ and trivial character; we note that when~$K$ is of odd degree field, it was originally proved by Carayol~\cite{Carayol86} that such Hilbert cuspforms give rise to strict compatible systems.}

We first establish properties of the mod~\(\Fp_r\) representation $\rhobar_{J_r, \Fp_{r}} : G_K \rightarrow \GL_2(\F_r)$ that will be used in the proof of the next two results.
\begin{theorem}\label{T:FreyRep}
The representation $\rhobar_{J_r, \Fp_{r}}$ is isomorphic to a twist of~$\rhobar_{L(t_0),r}$, where $L(t)$ is the Legendre family $y^2 = x (x-1) (x-t)$. 

Moreover, the representation~$\rhobar_{J_r, \Fp_{r}}$ extends to~$G_\Q$.
\end{theorem}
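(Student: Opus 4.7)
\emph{Proof plan.} The strategy is to realize $\rhobar_{J_r,\Fp_r}$ as a specialization of a Frey representation over $K(t)$ and then to invoke rigidity of Frey representations of signature $(r,r,r)$ with respect to $(0,1,\infty)$ to match it with the mod~$r$ representation of the Legendre family. A separate and more direct argument, based on the semi-linearity of the $G_\Q$-action on $V_r(J_r)$, will take care of the extension to~$G_\Q$.

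For the first assertion, I would apply Theorem~\ref{hyperelliptic-darmon} in the case $p = r$: the representation $\rhobar_{J'_r(t),\Fp_r}\colon G_{K(t)}\to\GL_2(\F_r)$ is an odd Frey representation of signature $(r,r,r)$ with respect to the points $(0,1,\infty)$. On the other hand, the Legendre family $L(t)$ has discriminant $16\,t^2(t-1)^2$ and multiplicative reduction at $t = 0,1,\infty$, so Tate-curve theory implies that $\rhobar_{L(t),r}$ is itself a Frey representation of signature $(r,r,r)$ with respect to the same three points, the inertia at each degenerate fibre being generated by a unipotent element of order exactly~$r$ (the valuation~$2$ of the discriminant is coprime to~$r$ for $r\ge 3$). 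I would then invoke Darmon's classification/rigidity result for Frey representations of fixed signature with respect to $(0,1,\infty)$ (see~\cite[\S 1.2]{DarmonDuke}, using Lemma~\ref{frey-irreducible} to check the irreducibility hypothesis on the geometric monodromy): any two such representations are equivalent in the sense of Definition~\ref{D:FreyRepEquiv}; concretely, $\rhobar_{J'_r(t),\Fp_r}\simeq \rhobar_{L(t),r}|_{G_{K(t)}}\otimes\chi$ for some character~$\chi$. Specializing at $t = t_0 = a^r/(a^r + b^r)$ and combining with Lemma~\ref{L:twistedKraus}, which identifies $J'_r(t_0)$ with a quadratic twist of $J_r(a,b)$ by the rational number $-(ab)^{(r-1)/2}/(a^r+b^r)$, would yield the first assertion.

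For the extension to~$G_\Q$, I observe that, since $J_r(a,b)$ is defined over~$\Q$, the group $G_\Q$ acts on $V_r(J_r)$ extending the $G_K$-action and satisfying the semi-linear identity $\sigma(\alpha v) = \sigma(\alpha)\sigma(v)$ recorded in~\eqref{semi-linear-action}. Because $r$ is totally ramified in~$K$, the unique prime $\Fp_r$ above~$r$ is $\Gal(K/\Q)$-invariant, and semi-linearity then gives $\sigma(J_r[\Fp_r]) = J_r[\sigma(\Fp_r)] = J_r[\Fp_r]$ for every $\sigma\in G_\Q$. Since the residue field $\calO_K/\Fp_r = \F_r$ is the prime field, $G_\Q$ acts trivially on scalars, so the restriction of the semi-linear action to the 2-dimensional $\F_r$-space $J_r[\Fp_r]$ is automatically $\F_r$-linear, producing the desired extension $G_\Q \to \GL_2(\F_r)$. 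The main obstacle in this plan will be the precise invocation of Darmon's rigidity statement, and in particular the verification that the twisting character $\chi$ can be chosen with values in $\F_r^\times$ rather than only $\Fbar_r^\times$; I expect this to be handled by comparing determinants, both of which are the mod~$r$ cyclotomic character by Theorem~\ref{lem:det}.
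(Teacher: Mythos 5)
Your proposal is correct and follows essentially the same route as the paper: Theorem~\ref{hyperelliptic-darmon} with $p=r$ gives a Frey representation of signature $(r,r,r)$ at $(0,1,\infty)$, Darmon's rigidity theorem (the paper cites \cite[Theorem 1.5 and \S1.3]{DarmonDuke}, which is precisely the classification you invoke) identifies it with a twist of the Legendre representation, and Lemma~\ref{L:twistedKraus} transfers this to $J_r$ after specializing at $t_0$. Your argument for the extension to $G_\Q$ — semi-linearity plus total ramification of $r$ making $\Fp_r$ Galois-stable with prime residue field $\F_r$ — is exactly the paper's, and your anticipated determinant comparison to control the twisting character is carried out in the paper's Remark~\ref{eps-order-two}.
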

\begin{proof}
From Theorem~\ref{hyperelliptic-darmon}, we know that
$\rhobar_{J'_r(t),\Fp_r}$ is a Frey representation over~$K$ of signature~$(r,r,r)$ with respect to the points $(0,1,\infty)$. Writing $G_{\Q(t_0)}$ and~\(G_{K(t_0)}\) for $G_\Q$ and~\(G_K\) respectively to make clear that the actions involved are obtained by specialization, it follows from \cite[Theorem 1.5 and~\S1.3]{DarmonDuke} and specialization at~$t=t_0$ that, as representations of $G_{K(t_0)}$, we have
\begin{equation}\label{eq:Legendre_tensor_eps}
 \rhobar_{J'_r(t_0),\Fp_{r}} \simeq 
 \rhobar_{L(t_0),r} \otimes \epsilon
\end{equation}
where $\epsilon : G_{K(t_0)}\rightarrow\Fbar_r^\times$ is a character; both $J'_r(t_0)$ and $L(t_0)$ are non-singular as $t_0 \neq 0, 1, \infty$.

The same conclusion holds for $\rhobar_{J_r,\Fp_r}$ by Lemma~\ref{L:twistedKraus}, completing the proof of the first assertion.

Because~$r$ is totally ramified in $K$, the action, given (for~\(r = p\)) by~\eqref{semi-linear-action}, of $G_{\Q(t_0)}$ on $T_r(J_r) \otimes_{\calO_{\Fp_r}} \F_r$, where the tensor product is taken with respect to the reduction map $\calO_{\Fp_r} \rightarrow \F_r$, is linear and restricts to the action of $G_{K(t_0)}$ given by 
$\rhobar_{J_r,\Fp_r}$. This achieves the proof of the second statement.
\end{proof}

\begin{remark}
\label{eps-order-two}
Taking determinants of both sides of~\eqref{eq:Legendre_tensor_eps} and applying Theorem~\ref{lem:det}
to the left hand side, shows that 
$\det \rhobar_{J_r,\Fp_r} = \det \rhobar_{L(t_0),r} = \chi_r$ is the mod~\(r\) cyclotomic character.  Hence, it follows that $\epsilon$ from~\eqref{eq:Legendre_tensor_eps}  has order dividing $2$. If $\rhobar_{J_r,\Fp_r}$ and $\rhobar_{L(t_0),r}$ are unramified at $\Fq$, then $\epsilon$ is unramified at $\Fq$.
\end{remark}

\begin{theorem}
\label{T:irred7}
Assume~\(r\geq5\). The representation $\rhobar_{J_r,\Fp_r}$ is absolutely irreducible when restricted to~\(G_{\Q(\zeta_r)}\).
\end{theorem}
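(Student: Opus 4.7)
The plan is to use Theorem~\ref{T:FreyRep} to reduce the statement to showing that the mod~$r$ representation~$\rhobar_{L(t_0),r}$ of the Legendre elliptic curve~$L(t_0)/\Q$ stays absolutely irreducible when restricted to~$G_{\Q(\zeta_r)}$, and then to rule out reducibility by exploiting the fact that~$L(t_0)$ has full rational~$2$-torsion.

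Indeed, by Theorem~\ref{T:FreyRep} together with Remark~\ref{eps-order-two},
\[
\rhobar_{J_r,\Fp_r}\bigr|_{G_K} \simeq \rhobar_{L(t_0),r}\bigr|_{G_K} \otimes \epsilon
\]
for some quadratic character~$\epsilon$ of~$G_K$. Since twisting by a character does not affect absolute (ir)reducibility, it is enough to prove that $\rhobar_{L(t_0),r}\bigr|_{G_{\Q(\zeta_r)}}$ is absolutely irreducible. The crucial feature of the Legendre model $L(t_0):y^2=x(x-1)(x-t_0)$ with $t_0\in\Q$ is that its full $2$-torsion is $\Q$-rational.

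Assume for contradiction that $\rhobar_{L(t_0),r}\bigr|_{G_{\Q(\zeta_r)}}$ is reducible. We then distinguish two cases. If $\rhobar_{L(t_0),r}$ itself is reducible on~$G_\Q$, then $L(t_0)$ admits a $\Q$-rational cyclic subgroup of order~$r$; together with the rational $2$-torsion, this yields a $\Q$-rational cyclic subgroup of order~$2r$, hence a non-cuspidal rational point on~$X_0(2r)$. The Mazur--Kenku classification of rational isogenies then forces $r\in\{5,7\}$, and the two remaining cases are excluded by matching the explicit formula for~$j(L(t_0))$ in terms of $a,b$ against the known non-cuspidal rational points of~$X_0(10)$ and~$X_0(14)$. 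If instead $\rhobar_{L(t_0),r}$ is absolutely irreducible on~$G_\Q$ but becomes reducible on~$G_{\Q(\zeta_r)}$, then, since $\Gal(\Q(\zeta_r)/\Q)$ is cyclic with unique quadratic subextension $\Q(\sqrt{r^*})$ (where $r^*=(-1)^{(r-1)/2}r$), a standard analysis of $G_\Q$-orbits on $G_{\Q(\zeta_r)}$-stable lines shows that either $\rhobar_{L(t_0),r}$ is induced from a character of~$G_{\Q(\sqrt{r^*})}$, giving a non-cuspidal rational point on one of the modular curves $X^+_{\mathrm{sp}}(r)$ or~$X^+_{\mathrm{ns}}(r)$, or else $\rhobar_{L(t_0),r}\bigr|_{G_{\Q(\zeta_r)}}$ is scalar, which forces the projective image of~$\rhobar_{L(t_0),r}$ on~$G_\Q$ to be a cyclic quotient of~$(\Z/r)^\times$. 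Both possibilities can be ruled out for all $r\geq 5$ by combining known results on rational points of~$X^+_{\mathrm{sp}}(r)$ and~$X^+_{\mathrm{ns}}(r)$ (for instance Bilu--Parent--Rebolledo for $r\geq 11$) with explicit computations for the small values $r=5,7$.

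The most delicate step will be the treatment of the second case (the dihedral or small-image situation) for small primes~$r$, where one has to rule out rational points on specific modular curves by concrete computation; the acknowledgment of Najman in the Introduction suggests that this step relies on modular-curve computations carried out in collaboration with him.
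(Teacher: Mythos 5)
Your first reduction --- passing via Theorem~\ref{T:FreyRep} to the Legendre curve $L(t_0)$ and noting that twisting by a character does not affect absolute irreducibility --- is exactly the paper's, and your Case A (reducible over $\Q$, handled by $X_0(2r)$, Mazur--Kenku and the explicit formula for $j(L(t_0))$) is a workable route for that subcase. The genuine gap is in Case B. You propose to rule out the dihedral possibility by appealing to ``known results on rational points of $X^+_{\mathrm{sp}}(r)$ and $X^+_{\mathrm{ns}}(r)$'' for all $r\geq 5$. No such results exist for $X^+_{\mathrm{ns}}(r)$: determining its rational points for general $r$ is the open non-split Cartan case of Serre's uniformity problem, and these curves always carry rational CM points. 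Worse, the curve $L(t_0)$ genuinely realizes such a point: for $(a,b)=\pm(1,1)$ one has $j(L(t_0))=1728$ and $L(t_0)$ has CM by $\Q(i)$, so its mod-$r$ image \emph{is} contained in the normalizer of a Cartan subgroup. One therefore cannot hope to show that the image avoids Cartan normalizers; what must be shown is that when the representation is dihedral, the quadratic field from which it is induced is not the quadratic subfield of $\Q(\zeta_r)$.

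The paper closes both halves differently. In the non-CM case it invokes a bespoke result of Najman (Propositions~3.1 and~4.3 of the cited appendix on $r$-isogenies over $\Q(\zeta_r)$ of elliptic curves with rational $j$-invariant), which yields $\rhobar_{L,r}(G_{\Q(\zeta_r)})=\SL_2(\F_r)$ for $r\geq5$; this is precisely the input you would need to manufacture, and it is not a formal consequence of the modular-curve results you cite. In the CM case it uses potential good reduction to force $j(L(t_0))\in\Z$, hence $(a,b)=\pm(1,1)$ and CM by $\Q(i)$, and then a short group-theoretic argument: the projective image is $D_n$ with $n=r\mp1>2$, a decomposable restriction to $G_{\Q(\zeta_r)}$ would have cyclic projective image of order $n$, and the uniqueness of the order-$n$ cyclic subgroup of $D_n$ would force $\Q(i)$ to lie inside $\Q(\zeta_r)$, which is false. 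You would need to supply both of these ingredients (or equivalents) for Case B; as written, the appeal to known rational-point results is not available.
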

\begin{proof}
From Theorem~\ref{T:FreyRep} we know that $\rhobar_{J_r,\Fp_r}$ satisfies
\begin{equation}\label{E:twistLegendre}
\rhobar_{J_r,\Fp_r} \simeq \rhobar_{L,r} \otimes \chi
\end{equation}
as $G_K$-representations, where $\chi : G_K \to \Fbar_r^\times$ is a character and $L=L(t_0)$, where $t_0$ is given by~\eqref{st-identity}. 
Note that~$L/\Q$ is an elliptic curve (it is non-singular as $ab(a^r+b^r) \not= 0$)
with full $2$-torsion over~$\Q$, as it belongs to the Legendre family.

By~\eqref{E:twistLegendre}, the representation~\(\rhobar_{J_r,\Fp_r}|_{G_{\Q(\zeta_r)}}\) is absolutely irreducible if and only if~\(\rhobar_{L,r}|_{G_{\Q(\zeta_r)}}\) is absolutely irreducible. We note that we have~\(\rhobar_{L,r}(G_{\Q(\zeta_r)}) = \rhobar_{L,r}(G_\Q)\cap\SL_2(\F_r)\).

For~\(L\) non-CM, it follows from \cite[Propositions~3.1 and~4.3]{Najman_former_appendix} that
~\(\rhobar_{L,r}(G_{\Q(\zeta_r)}) = \SL_2(\F_r)\) for~\(r \geq 5\). If \(\rhobar_{L,r}|_{G_{\Q(\zeta_r)}}\) were absolutely reducible, its image would have to be isomorphic to a subgroup of upper triangular matrices or non-split Cartan subgroup, which is solvable. But  $\SL_2(\F_r)$ is not solvable for $r\geq 5$. Hence the result in that case.

Suppose now that $L$ has CM.
Thus $L$ has potentially good reduction everywhere and integral $j$-invariant; more precisely, we obtain
\begin{equation}\label{E:j(L)}
 j(L)=2^8\cdot \frac{((a^r+b^r)^2-(ab)^r)^3}{((ab)^r(a^r+b^r))^2} \in \Z.
\end{equation}
Since $a,b$ are coprime we get that $(ab)^{2r}$ divides $2^8$, hence $ab=\pm 1$; from $a^r+b^r \neq 0$, we conclude that~$(a,b)=\pm (1,1)$ and hence~$j(L) = 1728$, and~\(L\) has CM by~\(\Q(i)\). According to~\cite[Proposition~1.14]{zyw}, \(\rhobar_{L,r}(G_\Q)\) is conjugate to the normalizer of a Cartan subgroup~\(C\) of~\(\GL_2(\F_r)\) which is split if~\(r\equiv1\pmod{4}\) and non-split if~\(r\equiv 3\pmod{4}\). In particular, the projective image of~\(\rhobar_{L,r}(G_\Q)\) is a dihedral group~\(D_n\) of order~\(2n\) where~\(n = r - 1\) or~\(r + 1\) according to whether~\(C\) is split or non-split. By the assumption~\(r\geq 5\), we note that we necessarily have~\(n>2\). In particular, \(D_n\) has a unique cyclic subgroup of order~\(n\) given by the projective image of~\(G_{\Q(i)}\). On the other hand, \(\rhobar_{L,r}(G_{\Q(\zeta_r)}) \) has order~\(2n\) and a projective image~\(H\) of order~\(n\). Assume for a contradiction that~\(\rhobar_{L,r}|_{G_{\Q(\zeta_r)}}\) is absolutely reducible. Since~\(2n\) is coprime to~\(r\), the representation is decomposable, and thus the group~\(H\) is cyclic. By uniqueness of order $n$ cyclic subgroups of $D_n$, we must have~\(\Q(i) = \Q(\zeta_r)\), which is not the case. This gives the desired contradiction.
\end{proof}

\begin{remark}
For~\(r = 11\) or~\(r\geq 17\), we can give a slightly different proof of the above theorem as follows. The curve~\(L\) (in the notation of the proof) is a quadratic twist (by~\(-(a^r + b^r)\)) of the classical Hellegouarch-Frey curve~\(L' : y^2 = x (x - a^r) (x + b^r)\). Since~\(L'\) has good or bad multiplicative reduction at~\(r\) (and~\(r\geq 7\)), it follows from~\cite[\S\S1.11-1.12 and Proposition~17]{Ser72} that if~\(\rhobar_{L',r}(G_\Q)\neq \GL_2(\F_r)\), then~\(\rhobar_{L',r}(G_\Q)\) is either contained in a Borel subgroup or in the normalizer of a Cartan subgroup~\(C_r'\) of~\(\GL_2(\F_r)\). In the former case, \(L'\) gives rise to a rational point on~\(Y_0(2r)\), contradicting results of Mazur and Kenku (see~\cite[Theorem~1]{Ken82}). In the latter case, using~\cite[Theorem~6.1]{BilPar11} and~\cite[Proposition~2.1]{lemos} when~\(C_r'\) is split or non-split respectively, we get that~\(j(L) = j(L')\in\Z\) and we conclude as in the above proof.
\end{remark}

\begin{theorem}
\label{T:modularity}
The abelian variety $J_r = J_r(a,b)/K$ is modular.
\end{theorem}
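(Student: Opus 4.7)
The plan is to prove modularity of~$J_r/K$ by first establishing residual modularity of $\rhobar_{J_r,\Fp_r}$ and then invoking a modularity lifting theorem in the totally real setting. The two key inputs already at our disposal are Theorem~\ref{T:FreyRep}, which identifies $\rhobar_{J_r,\Fp_r}$ with a twist of $\rhobar_{L(t_0),r}$ for the Legendre elliptic curve $L(t_0)/\Q$ and, crucially, guarantees that $\rhobar_{J_r,\Fp_r}$ extends to a representation $\bar\rho : G_\Q \to \GL_2(\Fbar_r)$; and Theorem~\ref{T:irred7}, which gives absolute irreducibility of the restriction of $\rhobar_{J_r,\Fp_r}$ to $G_{\Q(\zeta_r)}$.

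For the residual modularity step, I would first observe that the extended representation $\bar\rho$ is odd (its determinant is the mod~$r$ cyclotomic character by Theorem~\ref{lem:det}) and absolutely irreducible, since its restriction to the smaller group $G_{\Q(\zeta_r)}$ already is. By Khare--Wintenberger's proof of Serre's modularity conjecture, $\bar\rho$ therefore arises from a classical newform over~$\Q$, and cyclic base change along the abelian extension $K/\Q$ produces a Hilbert modular form~$g$ over~$K$ of parallel weight~$2$ and trivial character whose associated mod~$\Fp_r$ representation is $\rhobar_{J_r,\Fp_r}$. A more elementary variant, also suggested by Theorem~\ref{T:FreyRep}, is to use directly the modularity of the elliptic curve $L(t_0)/\Q$ (Wiles--Breuil--Conrad--Diamond--Taylor): then $\rhobar_{L(t_0),r}$ is already classically modular over~$\Q$, and its twist by a character of $G_K$ stays modular after solvable base change.

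To pass from residual modularity to modularity of the $\Fp_r$-adic representation $\rho_{J_r,\Fp_r}$, and hence (by strict compatibility of the system $\{\rho_{J_r,\lambda}\}_\lambda$) of $J_r/K$ itself, I would then apply a modularity lifting theorem over totally real fields of the type proved by Kisin, Gee, Breuil--Diamond, or in the BLGGT framework. The Taylor--Wiles hypothesis of absolute irreducibility of $\rhobar_{J_r,\Fp_r}|_{G_{K(\zeta_r)}}$ is immediate from Theorem~\ref{T:irred7}, since $K(\zeta_r) = \Q(\zeta_r)$. The main technical obstacle is the verification of the local conditions at the unique prime $\Fp_r \mid r$ in~$K$: one needs to confirm that $\rho_{J_r,\Fp_r}|_{G_{K_{\Fp_r}}}$ has a potentially semistable (or potentially Barsotti--Tate) inertial type matching the local behavior of~$g$, so that a suitable version of the lifting theorem applies. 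Should this not be directly available, a preliminary solvable base change (permissible by Langlands--Arthur--Clozel) can be used to reduce to a more tractable local situation before invoking the modularity lifting theorem.
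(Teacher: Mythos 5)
Your proposal follows essentially the same route as the paper: residual modularity of $\rhobar_{J_r,\Fp_r}$ via its descent to $G_\Q$ (Theorem~\ref{T:FreyRep}), oddness, absolute irreducibility from Theorem~\ref{T:irred7}, Serre's conjecture and cyclic base change, and then a modularity lifting theorem whose Taylor--Wiles hypothesis over $K(\zeta_r)=\Q(\zeta_r)$ is supplied precisely by Theorem~\ref{T:irred7}. The paper settles the lifting step you leave open by invoking \cite[Theorem~1.1]{khareThorne}, and it treats $r=3$ separately (there $K=\Q$ and $J_3$ is a rational elliptic curve), a case your argument omits since Theorem~\ref{T:irred7} assumes $r\geq 5$.
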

\begin{proof} 
For~\(r = 3\), we have~\(K = \Q\) and~\(J_r\) is the rational elliptic curve denoted~\(E(a,b)\) in~\cite{kraus1}. Therefore it is modular. According to the comment at the beginning of \emph{loc. cit.} this modularity result originally  follows from~\cite{CoDiTa99}. We now assume~\(r\geq 5\). By Theorem~\ref{T:FreyRep} the representation~$\rhobar_{J_r,\Fp_r} : G_K \rightarrow \GL_2(\F_r)$ extends to an odd representation $\rhobar$ of $G_\Q$ which is absolutely irreducible as a consequence of Theorem~\ref{T:irred7}. From Serre's Conjecture \cite{serreconj1,serreconj2}, the representation $\rhobar$ is modular, hence $\rhobar_{J_r,\Fp_r}$ is also modular by cyclic base change \cite{Langlands}. The conclusion that $J_r/K$ is modular now follows from (the full content of) Theorem~\ref{T:irred7} and~\cite[Theorem~1.1]{khareThorne}.
\end{proof}

\begin{remark}
From Remark~\ref{rk:JisCM}, we note that when $ab = 0$, $J_r$ has CM and hence is modular as well. 
\end{remark}

Recall from~\cite[Definition~3.17]{DarmonDuke} that
a Hilbert modular form~$g$ over~$K$ of level~$\calN$ is called a $\Q$-form if  for all primes 
$\Fq \nmid \calN$ it satisfies
\begin{equation*}
a_{\sigma(\Fq)}(g) = \sigma(a_\Fq(g)),\quad\text{for all $\sigma \in G_\Q$}.
\end{equation*}

The modularity of~$J_r$ and Lemma~\ref{lem:K-inv_traces} now lead to the following refined statement 
(see also \cite[Lemma~3.18]{DarmonDuke} for the analogous result for Darmon's Frey varieties $J_r^{\pm}$ assuming modularity, and the next subsection for a discussion on this assumption).
\begin{theorem}\label{T:Qform} 
The variety $J_r$ is associated with a $\Q$-form. 
\end{theorem}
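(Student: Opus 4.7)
The plan is to combine the modularity of $J_r/K$ provided by Theorem~\ref{T:modularity} with the Galois compatibility of traces of Frobenius established in Lemma~\ref{lem:K-inv_traces}. Since no obstacle of substance stands in the way, the argument should be short.

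First, by Theorem~\ref{T:modularity}, the abelian variety $J_r/K$ corresponds to a Hilbert cuspidal newform~$g$ over~$K$ of parallel weight~$2$ and trivial character at some level~$\calN$, in the sense that the strictly compatible system $\{\rho_{J_r,\lambda}\}$ of Theorem~\ref{T:GL2typeJr} coincides (up to the usual comparison) with the one attached to~$g$. In particular, for every prime~$\Fq$ of~$K$ with~$\Fq\nmid \calN$, we have the trace equality $a_\Fq(g) = a_\Fq(J_r)$ in~$K$.

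Next, I would use the fact that $J_r$ is the base change to $K$ of the variety $J_r(a,b)$ which is defined over~$\Q$. Consequently, the conductor ideal of $J_r/K$ (and thus the level~$\calN$, up to the usual relation between conductor of the Galois representation and level of the associated form) is stable under the natural action of $G_\Q$ on ideals of~$K$. Therefore, if $\Fq\nmid\calN$, the same holds for $\sigma(\Fq)$ for any $\sigma\in G_\Q$, and in particular $a_{\sigma(\Fq)}(g) = a_{\sigma(\Fq)}(J_r)$.

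Finally, Lemma~\ref{lem:K-inv_traces} yields $a_{\sigma(\Fq)}(J_r) = \sigma(a_\Fq(J_r))$ for all such~$\Fq$ and all~$\sigma\in G_\Q$. Chaining the three equalities,
\[
a_{\sigma(\Fq)}(g) = a_{\sigma(\Fq)}(J_r) = \sigma(a_\Fq(J_r)) = \sigma(a_\Fq(g)),
\]
which is precisely the $\Q$-form condition of~\cite[Definition~3.17]{DarmonDuke}. The only point that requires any care—though it is hardly an obstacle—is the $G_\Q$-stability of~$\calN$, which as noted follows from the fact that~$J_r$ is defined over~$\Q$ before base change.
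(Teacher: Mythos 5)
Your argument is correct and is exactly the route the paper takes: Theorem~\ref{T:Qform} is stated there as an immediate consequence of the modularity of $J_r$ (Theorem~\ref{T:modularity}) combined with Lemma~\ref{lem:K-inv_traces}, which is precisely the chain of equalities you write out. Your extra remark on the $G_\Q$-stability of the level (so that the trace comparison applies at $\sigma(\Fq)$ as well) is a detail the paper leaves implicit, and it is handled correctly since $J_r$ is the base change of a variety over $\Q$.
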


\subsection{Modularity of Darmon's varieties $J_r^-$ and $J_{r,r}^-$} \label{S:modularityDarmon}

In~\cite[pp. 420--424]{DarmonDuke} Darmon describes 
four Frey varieties for Generalized Fermat Equations of signature~$(r,r,p)$ and~$(p,p,r)$, denoted $J_{r,r}^\pm$ and~$J_r^\pm$, respectively.
More precisely, he defines
abelian varieties $J_{r,r}^\pm(t)$ and~$J_r^\pm(t)$ over~$\Q(t)$ and obtains the Frey varieties by taking an appropriate quadratic twist of the specialization at $t=t_0$, where $t_0$ is given by~\eqref{st-identity} for signature~$(r,r,p)$ and
$t_0 = \frac{a^p}{a^p + b^p}$ for signature~$(p,p,r)$. 
Moreover, the base change $J_{r,r}^\pm/K$ and $J_r^\pm/K$ is
of $\GL_2$-type with real multiplications by~$K$. 
Our method for proving modularity of~$J_r$ from the previous section also applies to~$J_r^-$ and $J_{r,r}^-$, giving a complete proof of~\cite[Theorem 2.9]{DarmonDuke} for these varieties.

A general inductive strategy on the $\lcm$ of the orders of inertia at $0, 1, \infty$  for proving modularity of fibers in rigid local systems is given in \cite{DarmonRigid}, assuming a generic modularity lifting conjecture.

\begin{theorem} Let $J = J_r^-$ or~$J = J_{r,r}^-$. Then $J/K$ is modular.
\end{theorem}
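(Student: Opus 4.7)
The plan is to repeat the strategy of Theorem~\ref{T:modularity} for the Kraus variety, adapted to both $J = J_r^-$ and $J = J_{r,r}^-$. The three ingredients to verify are: (i)~identification of $\rhobar_{J,\Fp_r} : G_K \to \GL_2(\F_r)$ as a twist of $\rhobar_{L(t_0),r}$ for a Legendre elliptic curve $L(t_0)$ with $t_0 \in \Q$ the relevant specialization point attached to the putative solution; (ii)~extension of $\rhobar_{J,\Fp_r}$ to a representation of $G_\Q$; and (iii)~absolute irreducibility of this extension when restricted to $G_{\Q(\zeta_r)}$. Once these are in hand, Serre's conjecture~\cite{serreconj1,serreconj2}, cyclic base change~\cite{Langlands}, and the Khare--Thorne modularity lifting theorem~\cite[Theorem~1.1]{khareThorne} yield modularity of $J/K$.

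For~(i), the generic fibers $J_{r,r}^-(t)$ and $J_r^-(t)$ carry Frey representations of signatures $(r,r,p)$ and $(p,p,r)$ respectively with respect to $(0,1,\infty)$. Taking the unique prime $\Fp_r$ of $K$ above $r$ as the residual prime, i.e.\ setting $p = r$, both signatures degenerate to $(r,r,r)$, and Darmon's classification of Frey representations of signature $(r,r,r)$ (\cite[Theorem~1.5 and~\S1.3]{DarmonDuke}) identifies any such representation with a twist of $\rhobar_{L(t),r}$. Specialization at $t_0$ then gives~(i). Step~(ii) mirrors Theorem~\ref{T:FreyRep}: since $J$ is defined over $\Q$ and $r$ is totally ramified in $K$, the semi-linear $G_\Q$-action on the $r$-adic Tate module of $J$ modulo $\Fp_r$ is automatically linear, and this produces the desired extension of $\rhobar_{J,\Fp_r}$ to $G_\Q$.

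For~(iii), by~(i) it suffices to show that $\rhobar_{L(t_0),r}|_{G_{\Q(\zeta_r)}}$ is absolutely irreducible. One follows the dichotomy of Theorem~\ref{T:irred7}: in the non-CM case, \cite[Propositions~3.1 and~4.3]{xhyper_appendix} give $\rhobar_{L,r}(G_{\Q(\zeta_r)}) = \SL_2(\F_r)$, whose non-solvability for $r \geq 5$ rules out any reducible stabilization; in the CM case, integrality of $j(L(t_0))$ combined with coprimality of $(a,b)$ pins $(a,b)$ to a small explicit set of trivial solutions, for which the dihedral structure of the projective image is analyzed by hand. The small case $r = 3$, where $K = \Q$ and $J$ is low-dimensional, is handled separately via classical modularity of rational elliptic curves or abelian varieties of $\GL_2$-type.

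The main obstacle is step~(i) for $J_r^-$: the generic Frey representation has signature $(p,p,r)$, so reducing mod $\Fp_r$ corresponds to the degenerate situation in which the residual characteristic collides with the exponent at the third ramification point; one must verify carefully that the resulting projective image still satisfies the order conditions required for Darmon's signature-$(r,r,r)$ classification to apply. A secondary difficulty, as in the Kraus case, is redoing the CM analysis of step~(iii) for the specialization points $t_0$ attached to $J_r^-$ and $J_{r,r}^-$, whose explicit relation to $(a,b)$ differs from the Kraus setting and must be treated case by case.
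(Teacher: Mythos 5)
Your proposal is correct and follows essentially the same route as the paper: the paper's proof simply runs the argument of Theorem~\ref{T:modularity} again, replacing Theorem~\ref{T:FreyRep} by Darmon's own identification of the mod~$\Fp_r$ representation with a twist of the Legendre family (\cite[Theorem 2.6]{DarmonDuke}), and noting that for $J_r^-$ the $j$-invariant computation in the CM branch of the irreducibility argument has $r$ replaced by $p$ since $t_0 = a^p/(a^p+b^p)$. The ``main obstacle'' you flag in step~(i) is already resolved by citing Darmon's Theorem~2.6 directly rather than re-deriving the signature-$(r,r,r)$ degeneration, and the only other point the paper makes explicit that you leave implicit is that for $ab=0$ and $(a,b)=\pm(1,1),\pm(1,-1)$ these varieties are either singular or have CM.
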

\begin{proof} This follows similarly to the proof of Theorem~\ref{T:modularity} where Theorem~\ref{T:FreyRep} is replaced by \cite[Theorem 2.6]{DarmonDuke}. 
The rest of the proof is the same for~$J = J_{r,r}^-$. When~$J=J_r^-$ the proof is also the same except 
that in formula~\eqref{E:j(L)} for $j(L)$ we get $r$ replaced by~$p$ since $t_0 = \frac{a^p}{a^p + b^p}$.

We also make use of the fact that for $ab = 0$, $(a,b) = \pm (1,1), \pm (1,-1)$, these varieties are either singular or have CM.
\end{proof}

\begin{remark} 
The mod~$\Fp_r$ representations attached to $J_r^+/K$ and~$J_{r,r}^+/K$ descend to~$G_\Q$ and are modular by~\cite[Corollary 2.7]{DarmonDuke}. However,  
from~\cite[Theorem 2.6]{DarmonDuke} we see they are reducible, making lifting modularity impossible in general. Sometimes this is possible under some additional local hypothesis (cf. \cite[Theorem~2.9]{DarmonDuke}).
\end{remark}

\begin{remark}
  Note that~$J_r^-$ already appeared in the proof of Proposition~\ref{darmonfrey}.
\end{remark}

\section{The conductor of $J_r$} \label{S:conductor}

We keep the notation of Sections~\ref{S:Freyrrp} and~\ref{S:modularity} and assume throughout that~\(r\geq 5\). In particular, $a,b$ are non-zero coprime integers such that~$a^r + b^r \not=0$ and~$J_r$ denotes the base change of~$J_r(a,b)$ to~$K$, where $K=\Q(\zeta_r)^+$. Let $\Fp_r$ be the unique prime in~$K$ above~$r$. Since~$r$ is totally ramified in~$K$, its residue field is $\F_{\Fp_r} = \F_r$.
In the next sections, the prime~\(\Fp_r\) will be denoted by~\(\Fq_r\) when we think of~\(K\) as the base field of~\(J_r\) and by~\(\Fp_r\)
when we think of $K$ as the field of real multiplications of~\(J_r\).

We proved in Sections~\ref{S:Freyrrp} and~\ref{S:modularity} that~$J_r = J_r(a,b)/K$ is a~\(\GL_2\)-type modular abelian variety associated to a Hilbert newform~$g$ defined over~$K$. Therefore, the $\lambda$-adic Galois representations attached to~$J_r$ form a strictly compatible system in the sense of \cite[Definition 4.10]{Boeckle} (see also the discussion after this definition in {\it loc. cit}).
It follows from part (3) in \cite[Definition 4.10]{Boeckle} and \cite[\S 8]{UlmerConductor} that
the restrictions $\rho_{J_r,\lambda}|_{D_\Fq}$ have the same Artin conductor for all $\lambda$ in~$K$ such that $\fq \nmid \Norm(\lambda)$.
We call this common conductor the {\it conductor at~$\fq$ of the compatible
system $\{\rho_{J_r,\lambda}\}$} and define $\calN$ to be the product of all these local conductors. In particular, $\calN$ coincides with the level of~$g$.

The objective of this section is to determine the ideal~\(\calN\) which is given in 
Theorem~\ref{T:conductorJI} under certain $2$-adic restrictions on~$a,b$.
The method used is based on exhibiting a local field~$L$ over which~$J_r$ has semistable reduction. The classification of 2-dimensional local Weil-Deligne representations guides the choice of~$L$, which we confirm by explicit computations on hyperelliptic equations. The conductor can then be read off by standard formulae once the type of local representation is identified.

The problem of determining the conductors of Jacobians of hyperelliptic curves in general remains an interesting open problem, for which remarkable progress has been obtained recently in \cite{DDMM-local,DDMM-types}. For instance, in \cite{DDMM-local}, one finds a formula which in principle can be used to explicitly determine the conductor exponent of the Jacobian of a hyperelliptic curve over a local field of odd residue characteristic.

The method we use is more specific as it relies on $J_r$ having $\GL_2$-type, as well as on an initial guess for the field of semistable reduction. However, it is in practice efficient and well adapted for computing conductor exponents in  parameterized families of hyperelliptic curves, including the case of even residue characteristic.

Before we proceed to the conductor calculation, we recall some standard facts about
 local 2-dimensional Galois representations.

\subsection{Inertial types}

Let $\rho : G_{K_\Fq} \to \GL_2(\Qbar_p)$ be a~\(p\)-adic Galois representation where ${K_\Fq}$ is a local field with residual characteristic different from~$p$.
When $\rho$ has infinite image of inertia we call it a {\it Steinberg (or Special)} representation. If the image of inertia is finite then we call~$\rho$ a {\it principal series} representation if it is reducible and a {\it supercuspidal} representation if it is irreducible. Moreover, $\rho$ is a principal series if and only if there is an abelian extension $L/K_\Fq$ where $\rho|_{G_L}$ is unramified. 

Let $K$ be a number field. We let $I_\Fq \subset D_\Fq \subset G_K$ denote a choice in~$G_K:=\Gal(\overline{K} /K)$ of inertia and decomposition subgroups at a prime~$\Fq$, respectively. 

We say that the {\it inertial type} of $J/K$ at a prime~$\Fq$ of~\(K\) is Steinberg, principal series or supercuspidal if for some (and hence any) \(\lambda\) not dividing the residue characteristic of~\(\Fq\), we have that $\rho_{J,\lambda}|_{D_{\Fq}}$ is a 
Steinberg, principal series or supercuspidal representation, res\-pectively.

We refer to, for example,~\cite{DPP21} for a summary of definitions and formulae about inertial types. One fact we shall use repeatedly is the following:

Suppose $\rho : G_{K_\Fq} \rightarrow \GL_2(\Qbar_p)$ as above has finite image of inertia. Then $\rho$ is a principal series representation if and only if there is an abelian extension $L/K_\Fq$ such that $\rho|_{G_L}$ is unramified.

Indeed, the image of $\rho$ is abelian if and only if there exists an abelian extension $L/K_\Fq$ such that $\rho|_{G_L}$ is  unramified: The forward implication is clear. For the reverse implication,
let $M/K_\Fq$ be the field cut out by $\rho$ so $M$ corresponds to $G_M \subseteq G_{K_\Fq}$. Suppose that $L/K_\Fq$ is an abelian extension where $\rho|_{G_L}$ is unramified. We have that
$L$ corresponds to $G_L  \subseteq G_{K_\Fq}$ and $I_L = I_{K_\Fq} \cap G_L$ corresponds to $K_\Fq^{\nr}  L$. Since $\rho|_{G_L}$ is unramified, $I_L \subseteq G_M$ so $K_\Fq^{\nr}L $ is an extension of~$M$. As $K_\Fq^{\nr} L $ is abelian over $K_\Fq$, it follows that $M/K_\Fq$ is abelian.

The claim then follows from the fact that the principal series case is the only case having finite image of inertia and
abelian image.

\subsection{Conductor calculation}
\label{ss:conductor_calculation} 

Recall that an equation for~$C_r(a,b)$ is given by~\eqref{kraushyper} which we repeat below for the convenience of the reader:
\[
C_r(a,b) \; : \; \; y^2 = (ab)^\frac{r-1}{2} x h \left(\frac{x^2}{ab} + 2 \right) + b^r - a^r.
\]
Let~$z_0$ be a square-root of~$ab$. Specializing~\eqref{eq:cheby} to~$z = z_0$ yields the following expression for the above model:
\begin{equation}\label{kraushyper_bis}
C_r(a,b) \; : \; \; y^2 = \sum_{k=0}^{\frac{r - 1}{2}}c_k(ab)^{k}x^{r - 2k} + b^r - a^r,\quad\text{where }c_k=\frac{r}{r-k}{r-k \choose k}\in\Z.
\end{equation}
(Note that this latter equation is also compatible with~\eqref{E:Jab=0} when~$ab = 0$, assuming $(ab)^0 = 1$.) This is the model we will consider for~$C_r(a,b)$ in the rest of this section. Recall also the formula~\eqref{E:discriminant} for the discriminant of~\eqref{kraushyper_bis} (also valid for~$ab = 0$, see the discussion after~\eqref{E:discriminant}):
\begin{equation}\label{E:discriminant_bis}
\Delta(C_r(a,b)) = (-1)^\frac{r-1}{2} 2^{2(r-1)} r^r (a^r+b^r)^{r-1}.
\end{equation}
We write $C_r = C_r(a,b)$ and~$J_r = J_r(a,b)$ for simplicity.

We now determine the conductor of~$J_r/K$. For this we use facts from~Subsection~\ref{ss:hyperelliptic_equations}-Subsection~\ref{S:FreyOverKs} and Section~\ref{S:Freyrrp}. 

For a prime~$\Fq$ of~$K$ we denote by $K_{\Fq}$ the completion of $K$ at~$\Fq$. For a rational prime~$\ell$ we denote by~$\Fq_\ell$ a prime in~$K$ above~$\ell$. Recall that $2$ is unramified in~$K$ and that there is a unique prime ideal~$\Fq_r$ in~\(K\) above~$r$.
\begin{proposition}\label{P:typeAt2}
Assume that~$a \equiv 0 \pmod 2$ and $b \equiv 1 \pmod 4$.
Let $L/K_{\Fq_2}$ be a finite extension 
with ramification index $r$. Then $C_r$ has good reduction over $L$, and moreover, there is no unramified extension of~$K_{\Fq_2}$ where $C_r$ or $J_r$ attains good reduction.
\end{proposition}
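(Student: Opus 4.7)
The plan is to treat the two assertions in turn.

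For the first, the starting point is a valuation count. Combining the discriminant formula~\eqref{E:discriminant_bis} with $v_L(2) = r$ (since $2$ is unramified in $K$ and $L/K_{\Fq_2}$ has ramification index $r$) and $v_L(a^r + b^r) = 0$ (since $a$ is even and $b$ odd) gives $v_L(\Delta(C_r)) = 2r(r-1) = 4g(2g+1)$, which is exactly the exponent absorbed by the change of variables $x = e^2 u$, $y = e^{2g+1}z + t(u)$ with $e = \pi$ a uniformizer of $L$. Because the residue characteristic is $2$, I would exploit the generalized Weierstrass form by taking $y = \pi^r Y + 1$, transforming~\eqref{kraushyper_bis} into
\[
Y^2 + Q(u) Y = P_1(u), \quad Q(u) = 2\pi^{-r} \in \calO_L^\times, \quad P_1(u) = \pi^{-2r}\bigl(P(\pi^2 u) - 1\bigr),
\]
where $P(x)$ denotes the right-hand side of~\eqref{kraushyper_bis}.

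The crux is the integrality of $P_1$. The leading coefficient is $1$; the coefficient of $u^{r-2k}$ for $k \geq 1$ is $c_k (ab)^k \pi^{-4k}$ with $v_L \geq k(r-4) \geq k$ for $r \geq 5$, using $v_L(ab) \geq v_L(2) = r$; and the constant term $\pi^{-2r}(b^r - a^r - 1)$ is integral precisely because of the hypothesis $b \equiv 1 \pmod 4$, which via $b = 1 + 4m$ yields $v_2(b^r - 1) \geq 2$, so $v_L(b^r - 1) \geq 2r$, while $v_L(a^r) \geq r^2 \geq 2r$. The transformation formula for hyperelliptic discriminants then gives $v_L(\Delta_{\mathrm{new}}) = v_L(\Delta(C_r)) - 2r(r-1) = 0$, and the reduction of the new model modulo $\pi$ is visibly smooth because $\partial/\partial Y$ equals the unit $\bar Q \in k_L^\times$.

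For the second assertion, I would first dispose of $C_r$. Since $C_r$ has a $K$-rational Weierstrass point at infinity, Proposition~\ref{hyperelliptic-good} applies, and any odd-degree hyperelliptic model $\mathcal{E}$ for $C_r$ over an unramified extension $L'/K_{\Fq_2}$ satisfies $v_{L'}(\Delta(\mathcal{E})) \equiv v_{L'}(\Delta(C_r)) = 2(r-1) \pmod{2r(r-1)}$; since $0 < 2(r-1) < 2r(r-1)$ for $r \geq 2$, no such model has unit discriminant. To pass to $J_r$, I would use Lemma~\ref{semistable-facts}(\ref{item:semistable-facts2}): if $C_r/K_{\Fq_2}$ were bad semistable then so would be $C_r/L$, contradicting the good reduction of the first part; hence $C_r/K_{\Fq_2}$ is not semistable, and by Lemma~\ref{semistable-facts}(\ref{item:semistable-facts1}), $J_r/K_{\Fq_2}$ is not semistable, in particular not good. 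The conclusion for unramified extensions follows from N\'eron--Ogg--Shafarevich, since good reduction of an abelian variety over an unramified extension is equivalent to good reduction over the base field itself.

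The main obstacle in this approach is the integrality of the constant term of $P_1$, which is precisely where the assumption $b \equiv 1 \pmod 4$ is essential: the simple choice $T = 1$ works exactly under this congruence, but only under it; weakening to $b$ merely odd would require a more delicate Hensel-type approximation to a square root of $b^r - a^r$ modulo $\pi^{2r}$ in $\calO_L$.
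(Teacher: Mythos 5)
Your proof is correct and follows essentially the same route as the paper's: the same substitution $x\mapsto\pi^2x$, $y\mapsto\pi^r y+1$ with the same integrality checks (where $b\equiv 1\pmod 4$ enters via $v(b^r-a^r-1)\geq v(4)=2r$), a unit discriminant over $L$, and the congruence $v(\Delta)\equiv 2(r-1)\pmod{2r(r-1)}$ combined with Proposition~\ref{hyperelliptic-good} to rule out good reduction over unramified extensions. The only (harmless) deviation is at the very end: you pass to $J_r$ by showing $C_r/K_{\Fq_2}$ is not semistable and invoking the N\'eron--Ogg--Shafarevich criterion, whereas the paper applies Lemma~\ref{semistable-facts}(3) directly over each unramified $K'$; both are valid.
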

\begin{proof} 
Let $L/K_{\Fq_2}$ be a finite extension with ramification index $r$, ring of integers~$\calO$,
uniformizer~$\pi$ and corresponding valuation~$\vv$.

Applying the substitutions $x\rightarrow \pi^2x$, $y \rightarrow \pi^ry + 1$ to~\eqref{kraushyper_bis} and dividing out by $\pi^{2r}$ yields the following model for~$C_r$
\begin{equation*}
  y^2 + \frac{2}{\pi^r}y  = x^r + \sum_{k=1}^{\frac{r - 1}{2}}c_k\left(\frac{ab}{\pi^4}\right)^{k}x^{r - 2k} + \frac{b^r - a^r - 1}{\pi^{2r}}.
\end{equation*}
The $2$-adic conditions on $a,b$ and the assumption~$r\geq 5$ imply that
\begin{equation*}
  \vv(ab) \geq \vv(2) = r\geq 4\quad\text{and}\quad \vv(b^r - a^r - 1) \geq \vv(4) = 2r.
\end{equation*}
Therefore the above model of $C_r$ is defined over $\calO$. According to~\eqref{E:discriminant_bis} and the formulae in Section~\ref{ss:hyperelliptic_equations}, its discriminant is
$$
(-1)^{\frac{r-1}{2}}\frac{2^{2(r-1)}}{\pi^{2(r-1)r}}\cdot r^r (a^r+b^r)^{r-1}
$$
which is a unit in~$\calO$. Hence $C_r$ has good reduction over $L$.

Over an unramified extension $K'$ of~$K_{\Fq_2}$, the valuation of the discriminant of any hyperelliptic model for~$C_r$ is $\equiv 2(r - 1)\pmod{2r(r -1)}$ by~\eqref{E:discriminant_bis} and the results in Subsection~\ref{ss:hyperelliptic_equations}. Hence~$C_r/K'$ cannot have good reduction by Proposition~\ref{hyperelliptic-good}, and so by Lemma~\ref{semistable-facts} (3), $J_r/K'$ cannot have good reduction.
\end{proof}

\begin{corollary} \label{C:inertiaOrder}
Assume $a \equiv 0 \pmod{2}$ and~$b \equiv 1 \pmod{4}$. Let~$\Fq_2$ be a prime ideal in~\(K\) above~\(2\).
The inertial type of~$J_r/K$ at~$\Fq_2$
is a principal series if $r \mid  \# \F_{\Fq_2}^*$
and supercuspidal otherwise.
Moreover, for all $\lambda \nmid 2$ in~$K$, the image of inertia~$\rho_{J_r,\lambda}(I_{\Fq_2})$ is cyclic of order~$r$.
\end{corollary}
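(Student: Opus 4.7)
The plan rests on Proposition~\ref{P:typeAt2}: $J_r$ acquires good reduction over any finite extension $L/K_{\Fq_2}$ of ramification index $r$, and over no unramified extension. For any $\lambda \nmid 2$, the N\'eron--Ogg--Shafarevich criterion gives that $\rho_{J_r,\lambda}|_{G_L}$ is unramified. Replacing $L$ by $L \cdot K_{\Fq_2}^{\unr}$---which still has ramification index $r$ over $K_{\Fq_2}^{\unr}$---we see that $\rho_{J_r,\lambda}(I_{\Fq_2})$ factors through a quotient of order dividing $r$. By the second assertion of Proposition~\ref{P:typeAt2} this order is not $1$, so it equals $r$; and since $r$ is coprime to $2$ the image is tame and cyclic of order $r$. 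This already establishes the final assertion of the corollary.

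To determine the inertial type, I would diagonalize $\rho_{J_r,\lambda}|_{I_{\Fq_2}}$ over $\Qbar_\ell$ (with $\lambda \mid \ell$). The restriction is semisimple since its image has order $r$ coprime to $\ell$, and by Theorem~\ref{lem:det} the determinant is the $\ell$-adic cyclotomic character, which is unramified at $\Fq_2$ (as $\ell \ne 2$). Hence
\[
\rho_{J_r,\lambda}|_{I_{\Fq_2}} \simeq \chi \oplus \chi^{-1}
\]
for some tame character $\chi$ of $I_{\Fq_2}$ of order exactly $r$. Since $\rho_{J_r,\lambda}(I_{\Fq_2})$ is normal in $\rho_{J_r,\lambda}(D_{\Fq_2})$, conjugation by $\Frob_{\Fq_2}$ permutes the two eigencharacters; by the standard formula on tame inertia, this action sends $\chi$ to $\chi^q$, where $q = \#\F_{\Fq_2}$. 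Hence $\chi^q \in \{\chi, \chi^{-1}\}$, i.e., $r \mid q - 1$ or $r \mid q + 1$.

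In the case $r \mid q - 1 = \#\F_{\Fq_2}^*$, Frobenius preserves each eigenline, so $\rho_{J_r,\lambda}|_{D_{\Fq_2}}$ is a direct sum of two characters with finite inertial image: this is the principal series case. Otherwise $r \mid q + 1$ and Frobenius swaps the eigenlines, so $\rho_{J_r,\lambda}|_{D_{\Fq_2}}$ is irreducible with finite inertial image, which is the supercuspidal case. The main obstacle, modest in scale, is making the Frobenius-swap analysis watertight: one must check, via the classification of $2$-dimensional Weil--Deligne representations with finite inertial image, that the two arithmetic conditions $r \mid q \pm 1$ are exhaustive and correspond correctly to the principal series / supercuspidal dichotomy.
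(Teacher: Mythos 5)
Your proof is correct, and its second half takes a genuinely different (though closely related) route from the paper's. The first assertion is handled identically: Proposition~\ref{P:typeAt2} gives good reduction over a ramification-degree-$r$ extension and over no unramified one, so $\rho_{J_r,\lambda}(I_{\Fq_2})$ has order dividing $r$ but not equal to $1$, hence is cyclic of order $r$. For the principal series/supercuspidal dichotomy, the paper argues abstractly: a representation with finite inertia image is a principal series if and only if it is trivialized on inertia by an \emph{abelian} extension, and by local class field theory a totally (tamely) ramified abelian extension of $K_{\Fq_2}$ of degree $r$ exists if and only if $r \mid \#\F_{\Fq_2}^*$. You instead diagonalize the inertia action as $\chi\oplus\chi^{-1}$ (using Theorem~\ref{lem:det} for the determinant) and use the tame relation $\Frob\,\tau\,\Frob^{-1}=\tau^{q}$ to conclude that Frobenius fixes the two eigenlines when $r\mid q-1$ (reducible, hence principal series) and swaps them when $r\mid q+1$ (irreducible with finite inertia image, hence supercuspidal). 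These are two faces of the same class-field-theoretic fact, but your version is more explicit and already yields the finer local description $\rho_{J_r,\lambda}|_{I_{\Fq_2}}\simeq\delta\oplus\delta^{-1}$ with $\delta$ of order $r$, together with the unramified-induction structure in the supercuspidal case, which the paper only extracts later in part~(a) of the proof of Theorem~\ref{T:conductorJI}. Your closing worry about making the analysis ``watertight'' is unnecessary: exhaustiveness of the alternative $r\mid q\pm 1$ follows from the Frobenius-stability of $\{\chi,\chi^{-1}\}$ that you already established (and is consistent with $q=2^{f_2}$, where $f_2$ is the order of $2$ in $(\Z/r\Z)^*/\{\pm 1\}$), the two cases are mutually exclusive because $r$ is an odd prime, and $\chi\neq\chi^{-1}$ because $\chi$ has odd order $r\geq 5$, so the eigenlines are distinct and the reducibility analysis is complete.
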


\begin{proof} From Proposition~\ref{P:typeAt2} we know that
$J_r/K_{\Fq_2}$ obtains good reduction
over any extension $L/K_{\Fq_2}$ with ramification degree~$r$. In particular, this shows that $\rho_{J_r,\lambda}|_{D_{\Fq_2}}$ becomes unramified over~$L$, hence the inertial type of $J_r/K$ is not Steinberg at~$\Fq_2$. Moreover, the inertial type of~$J_r/K$ at~$\Fq_2$ is a principal series if and only if there is an abelian extension $L/K_{\Fq_2}$ with ramification degree~$r$ and supercuspidal otherwise. By local class field theory, such an extension exists if and only if $r$ divides $\# \F_{\Fq_2}^*$. This proves the first statement.

Also from Proposition~\ref{P:typeAt2}, there is no unramified extension of~$K_{\Fq_2}$ over which~$J_r$ has good reduction, hence $\# \rho_{J_r,\lambda}(I_{\Fq_2}) \ne 1$ and divides~$r$. The conclusion follows.
\end{proof}

Keeping the notation of~Subsection~\ref{S:FreyOverKs}, we write~\(H(x) = \displaystyle{\sum_{k=0}^{\frac{r - 1}{2}}c_k(ab)^{k}x^{r - 2k}\in\Z[x]}\) for the polynomial showing up in the right-hand side of \eqref{kraushyper_bis} (up to the constant term) and recall the standard facto\-rization
\begin{equation}\label{eq:basic_factorization_general}
x^r + y^r = (x + y) \phi_r(x,y)
\end{equation}
where
\[
\phi_r(x,y) = \sum_{k = 0}^{r - 1}(-1)^kx^{r - 1 -k}y^k = \prod_{j = 1}^{\frac{r - 1}{2}}\left(x^2 + \omega_j xy + y^2\right).
\]

The following lemma is used in the proofs of Propositions~\ref{P:typeAt7a} and~\ref{P:typeAt7b}.
\begin{lemma}\label{lem:H}
We have~$H(a - b) = a^r - b^r$. 
\end{lemma}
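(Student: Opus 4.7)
The plan is to derive the identity as a direct specialization of Lemma~\ref{cyclo-relation}. First, in the notation of Lemma~\ref{L:firstKind}, I observe that the polynomial $H(x)$ appearing in the statement coincides with $H(x, z_0)$ where $z_0$ is any fixed square-root of $ab$; since $H(x,z)$ only contains even powers of $z$, the value $H(a-b)$ is independent of the choice of sign for $z_0$. Using the defining relation $H(x,z) = z^{r-1} x\, h(x^2/z^2 + 2)$ together with the elementary simplification $(a-b)^2/(ab) + 2 = (a^2 + b^2)/(ab)$, the statement reduces to showing
\[
  (ab)^{(r-1)/2}\,(a - b)\, h\!\left(\tfrac{a^2 + b^2}{ab}\right) = a^r - b^r.
\]

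Next I apply Lemma~\ref{cyclo-relation} with the specialization $X = \sqrt{a/b}$, working over an algebraic closure of $\Q$. A short calculation gives
\[
  X^2 + X^{-2} = \tfrac{a^2 + b^2}{ab}, \quad X - X^{-1} = \tfrac{a - b}{\sqrt{ab}}, \quad X^{2r} - 1 = \tfrac{a^r - b^r}{b^r}, \quad X^r = \tfrac{\sqrt{a}^{\,r}}{\sqrt{b}^{\,r}},
\]
and substituting into the identity $X^{2r} - 1 = X^r (X - X^{-1})\, h(X^2 + X^{-2})$ and clearing denominators produces exactly the displayed identity above. The degenerate case $ab = 0$ follows directly from the convention $(ab)^0 = 1$ together with~\eqref{E:Jab=0}, since then $H(x) = x^r$ and $H(a-b) = (a-b)^r = a^r - b^r$ (the last equality because $r$ is odd and one of $a, b$ vanishes).

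There is no substantive obstacle here: once Lemma~\ref{cyclo-relation} is in place, the proof is a one-line specialization. The only point to keep in mind is that the auxiliary square-roots of $ab$ and $a/b$ introduced during the argument must cancel in the final expression, which is automatic since both sides of the desired identity lie in $\Z[a, b]$.
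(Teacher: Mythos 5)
Your proof is correct. It takes a route that is packaged differently from the paper's, though both ultimately rest on the same cyclotomic factorization of $x^r - y^r$ over $K$. The paper's proof also begins by writing $H(a-b) = (ab)^{\frac{r-1}{2}}(a-b)\,h\bigl(\frac{a^2+b^2}{ab}\bigr)$, but then expands $h$ directly as the product $\prod_{j}\bigl(\frac{a^2+b^2}{ab}-\omega_j\bigr)$, clears the $(ab)^{\frac{r-1}{2}}$, and recognizes the result as $(a-b)\,\phi_r(a,-b) = a^r - b^r$ via the factorization~\eqref{eq:basic_factorization_general}. You instead reuse the already-established two-variable identity of Lemma~\ref{cyclo-relation} and specialize it at $X=\sqrt{a/b}$, clearing the auxiliary square roots at the end. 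Since Lemma~\ref{cyclo-relation} is itself proved from $X^r+Y^r=(X+Y)\prod_j(X^2+\omega_jXY+Y^2)$, the two arguments are logically equivalent; yours is slightly more economical in that it leans on an existing lemma rather than redoing the product computation, at the modest cost of a substitution involving $\sqrt{a/b}$ and $\sqrt{ab}$ (which, as you note, cancel because both sides lie in $\Z[a,b]$, and which requires $ab\neq 0$ — the standing hypothesis in the section where the lemma is used; your separate treatment of $ab=0$ is harmless extra generality). Your verification of the intermediate identities, e.g.\ $(a-b)^2/(ab)+2=(a^2+b^2)/(ab)$ and the bookkeeping of powers of $\sqrt{ab}$, is accurate.
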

\begin{proof}
By Lemma~\ref{L:firstKind} applied to~\(z = z_0\), we have
\begin{align*}
H(a - b) & = (ab)^{\frac{r - 1}{2}}(a - b)h\left(\frac{(a - b)^2}{ab} + 2\right) \\
 & = (a - b)\prod_{j = 1}^{\frac{r - 1}{2}}\left(a^2 + b^2 - ab\omega_j\right) \\
 & = (a - b)\phi_r(a,-b) \\
 & = a^r - b^r,
\end{align*}
as claimed.
\end{proof}

\begin{proposition} \label{P:typeAt7a}
Let $L/K_{\Fq_r}$ be a finite extension with ramification index~$4$. If $r \nmid a + b$ then $C_r$ and~$J_r$ have good reduction over~$L$. Moreover, there is no extension of~$K_{\Fq_r}$ with ramification index dividing~$2$ where~$C_r$ or~$J_r$ obtain good reduction.
\end{proposition}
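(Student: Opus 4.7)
The plan is to exhibit an explicit $\calO_L$-integral model of $C_r$ with unit discriminant over an extension $L/K_{\Fq_r}$ of ramification index $4$, and to rule out lower ramification via a discriminant valuation argument.

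For the first assertion, I would use Lemma~\ref{lem:H} (which shows $x = a-b$ is a root of $H(x) + b^r - a^r$) to perform the substitution $x = \pi_L^2 u + (a-b)$, $y = \pi_L^r v$, where $\pi_L$ is a uniformizer of $L$. Dividing by $\pi_L^{2r}$ and using $H(a-b) + b^r - a^r = 0$ together with monicity of $H$, this yields
\begin{equation*}
v^2 = u^r + \sum_{j=1}^{r-1} \frac{H^{(j)}(a-b)}{j!\,\pi_L^{2(r-j)}}\,u^j.
\end{equation*}
Integrality over $\calO_L$ amounts to checking $v_L\bigl(H^{(j)}(a-b)/j!\bigr) \geq 2(r-j)$ for $1 \leq j \leq r-1$. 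Since $v_L(r) = 4\, v_\pi(r) = 2(r-1) \geq 2(r-j)$ for $j\geq 1$, it is enough to show that $r$ divides $H^{(j)}(a-b)/j!$. Expanding from~\eqref{kraushyper_bis},
\begin{equation*}
\frac{H^{(j)}(a-b)}{j!} = \sum_{k} c_k(ab)^k \binom{r-2k}{j}(a-b)^{r-2k-j},
\end{equation*}
and the terms with $k \geq 1$ are divisible by $r$ by Lemma~\ref{L:firstKind}, while the $k = 0$ term contains $\binom{r}{j}$, which is divisible by $r$ since $r$ is prime and $1 \leq j \leq r-1$. To conclude that this integral model has unit discriminant: the hypothesis $r \nmid a+b$ implies $r \nmid a^r + b^r$ by Fermat, so~\eqref{E:discriminant_bis} gives $v_\pi(\Delta(C_r)) = r(r-1)/2$, hence $v_L(\Delta(C_r)) = 2r(r-1)$. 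The transformation formula of \S\ref{ss:hyperelliptic_equations} scales the discriminant by $\pi_L^{-4g(2g+1)} = \pi_L^{-2r(r-1)}$, so the new model has valuation-zero discriminant. Thus $C_r/L$ has good reduction, and so does $J_r/L$.

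For the second assertion, the identical computation gives $v_{L'}(\Delta(C_r)) = e\cdot r(r-1)/2$ for any extension $L'/K_{\Fq_r}$ of ramification index $e$. For $e \in \{1,2\}$ this is $r(r-1)/2$ or $r(r-1)$, neither being a multiple of $4g(2g+1) = 2r(r-1)$. Since the valuation modulo $4g(2g+1)$ is an isomorphism invariant of odd-degree hyperelliptic models (\S\ref{ss:hyperelliptic_equations}), no such model can have unit discriminant, and Proposition~\ref{hyperelliptic-good} rules out good reduction of $C_r/L'$. If $J_r/L'$ had good reduction, then taking $L''/L'$ of ramification index $4/e$ so that $L''/K_{\Fq_r}$ is of ramification index $4$, the first part would give good reduction of $C_r/L''$, and Lemma~\ref{semistable-facts}(3) would then force $C_r/L'$ to have good reduction, a contradiction.

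The main technical point is the divisibility of $H^{(j)}(a-b)/j!$ by $r$; happily this reduces immediately to the divisibility of the Chebyshev coefficients $c_k$ (Lemma~\ref{L:firstKind}) and of $\binom{r}{j}$ by $r$, both of which are already available.
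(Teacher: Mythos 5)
Your proposal is correct and follows essentially the same route as the paper: the same substitution $x \mapsto \pi^2 u + (a-b)$, $y \mapsto \pi^r v$ centered at the root $a-b$ of $H(x)+b^r-a^r$ supplied by Lemma~\ref{lem:H}, the same integrality check via $r \mid c_k$ (for $k>0$) and $r \mid \binom{r}{j}$, and the same discriminant-valuation argument modulo $4g(2g+1)=2r(r-1)$ for both parts. Your Taylor-coefficient phrasing $H^{(j)}(a-b)/j!$ is just the paper's explicit binomial expansion of $a_j$ in disguise, and your slightly more explicit application of Lemma~\ref{semistable-facts}(3) via an auxiliary $L''$ is exactly what the paper's terser statement intends.
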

\begin{proof}
Let $L / K_{\Fq_{r}}$ be finite extension with ramification index~$4$, ring of integers~$\calO$, a uniformizer~$\pi$ and~$\vv$
the corresponding valuation.

Applying the substitutions $x \rightarrow \pi^2x - (b-a)$, $y\rightarrow\pi^ry$ to~\eqref{kraushyper_bis} and dividing out by $\pi^{2r}$ yields the model~\(y^2 = P(x) + \frac{b^r - a^r}{\pi^{2r}}\) for~$C_r$ where
\begin{align*}
P(x) & = \frac{1}{\pi^{2r}}H(\pi^2x - (b - a)) \\
& = \frac{1}{\pi^{2r}}\sum_{k\ge 0}c_k(ab)^k(\pi^2x - (b - a))^{r - 2k}\quad\text{with } c_k=\frac{r}{r-k}{r-k \choose k} \\ 
& = \sum_{k\ge 0}c_k\frac{(ab)^k}{\pi^{4k}}\sum_{j\geq0}\binom{r - 2k}{j}\left(-\frac{b - a}{\pi^{2}}\right)^{r - 2k - j}x^j \\ 
& = \sum_{j\geq0}\sum_{k\ge 0}c_k\frac{(ab)^k}{\pi^{2(r - j)}}\binom{r - 2k}{j}(a - b)^{r - 2k - j}x^j \\
& = \sum_{j\geq0}a_jx^j,
\end{align*} 
with~$\binom{n}{m} = 0$ if~$m > n$. We claim that the coefficients~$a_0,\dots,a_r$ of~$P$ satisfy the following properties~:
\begin{enumerate}[(i)]
\item\label{item:p1} $a_0 = \frac{a^r - b^r}{\pi^{2r}}$;

\item\label{item:p2} For~$1\leq j\leq r - 1$, we have~$\vv(a_j)\geq0$;

\item\label{item:p3} $a_r = 1$.
\end{enumerate}
The first property follows from Lemma~\ref{lem:H} and the last one from the fact that~\(H\) is monic of degree~\(r\). Let~\(1\leq j\leq r -1\) and let~\(k\geq0\). We have
\[
\vv\left(c_k{r - 2k\choose j}\right) \geq \vv(r) = 2(r - 1) \geq 2(r - j),
\]
since~\(c_k\) is divisible by~\(r\) for~\(k>0\) by Lemma~\ref{L:firstKind} and~\(r\mid{r \choose j}\). In particular, \(a_j\in\calO\) and it follows that the above model of $C_r$ is defined over $\calO$. According to~\eqref{E:discriminant_bis} and the formulae in~Subsection~\ref{ss:hyperelliptic_equations},  its discriminant is
$$
(-1)^{\frac{r-1}{2}}\frac{r^r}{\pi^{2 (r-1)r}}\cdot  2^{2(r-1)}(a^r+b^r)^{r-1}
$$
which is a unit in~$\calO$ since~\(r\sim \pi^{2(r - 1)}\) and~\(r\nmid a^r + b^r\). Hence $C_r$ has good reduction over $L$.

Over an extension $K'$ of~$K_{\Fq_r}$ with ramification index dividing $2$, the valuation of the discriminant of any hyperelliptic model for~$C_r$ is $\equiv r(r-1)/2, r(r - 1)\pmod{2r(r -1)}$ by~\eqref{E:discriminant_bis} and the results in Subsection~\ref{ss:hyperelliptic_equations}. Hence~$C_r/K'$ cannot have good reduction by Proposition~\ref{hyperelliptic-good}, so by Lemma~\ref{semistable-facts} (3), we deduce that $J_r/K'$ cannot have good reduction.
\end{proof}

\begin{corollary} \label{C:inertiaOrderAt7}
Assume $r \nmid a+b$. 
The inertial type of~$J_r/K$ at~$\Fq_r$
is a principal series if $r \equiv 1 \pmod{4}$
and supercuspidal otherwise.
Moreover, for all $\lambda \nmid r$ in~$K$, the image of inertia~$\rho_{J_r,\lambda}(I_{\Fq_r})$ is cyclic of order~$4$.
\end{corollary}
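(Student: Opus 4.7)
I plan to follow the template laid out in the proof of Corollary~\ref{C:inertiaOrder}, using Proposition~\ref{P:typeAt7a} in place of Proposition~\ref{P:typeAt2} and with the ramification index $r$ there replaced by $4$ here. Fix a prime $\lambda\nmid r$ of $K$. Proposition~\ref{P:typeAt7a} shows that $J_r/K_{\Fq_r}$ acquires good reduction over any finite extension $L/K_{\Fq_r}$ of ramification index~$4$, so $\rho_{J_r,\lambda}|_{D_{\Fq_r}}$ has finite inertia image and the inertial type at $\Fq_r$ is not Steinberg.

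The principal series vs.\ supercuspidal dichotomy then reduces to whether $J_r$ acquires good reduction over some \emph{abelian} extension of $K_{\Fq_r}$ with the prescribed ramification. Since $\gcd(4,r)=1$ (as $r\geq5$), the candidate extensions are all tame, and local class field theory tells us that a totally tamely ramified abelian extension of $K_{\Fq_r}$ of degree~$4$ exists if and only if $4$ divides $\#\F_{\Fq_r}^*=r-1$, i.e.\ $r\equiv1\pmod{4}$. This yields the principal series/supercuspidal statement.

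For the second claim, Proposition~\ref{P:typeAt7a} further rules out good reduction over any extension of $K_{\Fq_r}$ with ramification index dividing~$2$, so $\#\rho_{J_r,\lambda}(I_{\Fq_r})$ must divide~$4$ but not~$2$, and hence equals~$4$. Cyclicity is automatic since $\gcd(4,r)=1$ makes the inertia action factor through the (procyclic) tame inertia, any finite quotient of which is cyclic.

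The only point requiring care beyond a direct translation of Proposition~\ref{P:typeAt7a} is the equivalence between ``principal series'' and ``good reduction over an abelian extension of ramification~$4$''. This is the same criterion already invoked in Corollary~\ref{C:inertiaOrder}, and it can be checked quickly: because $\lambda\nmid r$ the $\lambda$-adic cyclotomic character is unramified at $\Fq_r$, so $\det\rho_{J_r,\lambda}|_{I_{\Fq_r}}=1$ and $\rho_{J_r,\lambda}|_{I_{\Fq_r}}\simeq\chi\oplus\chi^{-1}$ with $\chi$ of order~$4$; Frobenius acts on tame characters by $\chi\mapsto\chi^r$, so it preserves the two eigenlines (principal series) iff $\chi^{r-1}=1$, i.e.\ $4\mid r-1$, whereas otherwise $\chi^r=\chi^{-1}$ and the representation is induced from the unramified quadratic extension (supercuspidal). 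I do not anticipate any serious obstacle beyond this short verification.
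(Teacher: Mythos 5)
Your proposal is correct and follows essentially the same route as the paper: invoke Proposition~\ref{P:typeAt7a} to rule out Steinberg and to pin the inertia image down to order exactly~$4$, use local class field theory (tameness, since $\gcd(4,r)=1$) to decide principal series versus supercuspidal via the condition $4\mid\#\F_{\Fq_r}^*=r-1$, and deduce cyclicity from the tame inertia quotient being procyclic. The explicit check at the end, via $\det\rho_{J_r,\lambda}|_{I_{\Fq_r}}=1$ and the Frobenius action $\chi\mapsto\chi^r$ on tame characters, is just an unwinding of the class field theory criterion the paper cites and is a correct (if slightly redundant) supplement.
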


\begin{proof} From Proposition~\ref{P:typeAt7a} we know that
$J_r/K_{\Fq_r}$ obtains good reduction
over any extension $L/K_{\Fq_r}$ with ramification degree~$4$. 
Now the same argument as in the proof of Corollary~\ref{C:inertiaOrder} shows that $\rho_{J_r,\lambda}|_{D_{\Fq_r}}$ is a principal series 
when $4$ divides $\# \F_{\Fq_r}^* = r-1$ and supercuspidal otherwise. This proves the first statement. 

Also from Proposition~\ref{P:typeAt7a}, there is no extension of~$K_{\Fq_r}$ with ramification index dividing 2 over which~$J_r$ has good reduction, hence $\# \rho_{J_r,\lambda}(I_{\Fq_r}) \ne 1,2$ and divides~$4$. Thus $\# \rho_{J_r,\lambda}(I_{\Fq_r}) = 4$.
In this case $\# \rho_{J_r,\lambda}(I_{\Fq_r})$ is not a prime number, but the fact that $\rho_{J_r,\lambda}(I_{\Fq_r})$ is cyclic follows from the fact that there is a unique tamely ramified extension of order~$4$ of the maximal unramified extension of~$K_{\Fq_r}$. (Alternatively, it also follows from the fact that inertia is acting via characters as detailed in the proof of Theorem~\ref{T:conductorJI} below.)
\end{proof}

\begin{proposition} \label{P:typeAt7b}
Let $L/K_{\Fq_r}$ be a finite extension with ramification index~$2$. If $r \mid a + b$, then both $C_r$ and~$J_r$ have multiplicative reduction over $L$ and, moreover, there is no unramified extension of~$K_{\Fq_r}$ where $C_r$ or $J_r$ attains semistable reduction. 
\end{proposition}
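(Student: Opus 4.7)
The proof has two parts: exhibiting an integral model of $C_r$ over $\mathcal{O}_L$ with bad semistable reduction, and ruling out semistable reduction over any unramified extension of $K_{\Fq_r}$. For the first, let $\pi$ be a uniformizer of $L$, so $v_\pi(r) = r-1$. I would apply the substitution $x = \pi^2 w + (a-b)$, $y = \pi^r y'$ to~\eqref{kraushyper_bis}, obtaining
\[
(y')^2 = f(w) := w^r + \sum_{j=1}^{r-1}\frac{a_j}{\pi^{2(r-j)}}\,w^j,
\]
where $\tilde H(u) := H(u+(a-b)) - H(a-b) = u^r + \sum_{j=1}^{r-1} a_j u^j \in \Z[u]$ vanishes at $u = 0$ by Lemma~\ref{lem:H}.

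The central technical claim is that $f$ is integral, i.e.\ that $v_\pi(a_j)\geq 2(r-j)$ for every $1\leq j\leq r-1$. I would prove this by factoring $\tilde H$ over $\bar L$ as $u\prod_{k=1}^{r-1}(u - u_k)$, using the Weierstrass-point description from Section~\ref{s:ppr_and_rrp} to identify the roots as $\rho_k = \zeta_r^k a - \zeta_r^{-k}b$, so that $u_k = \rho_k - (a-b)$. Working in $L_0 := \Q(\zeta_r)_{\Fq_r}$ with uniformizer $\pi_0 = 1 - \zeta_r$, the hypothesis $r\mid a+b$ (which forces $v_{\pi_0}(a+b)\geq r-1$) combined with the binomial expansion of $\zeta_r^{\pm k}$ shows that $v_{\pi_0}(u_k) = 2$ with leading coefficient $k^2 a\not\equiv 0\pmod{\pi_0}$; the dominant contribution from $\pi_0^2$ is $\tfrac{k(k-1)}{2}a - \tfrac{k(k+1)}{2}b \equiv k^2 a \pmod{r}$. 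Vieta's formulas then give $v_\pi(a_j) = v_\pi(e_{r-j}(u_1,\ldots,u_{r-1})) \geq 2(r-j)$. Moreover, since $\hat u_k := u_k/\pi^2$ and $\hat u_{-k}$ share the same reduction modulo $\pi$, one obtains
\[
\bar f(w) = w\cdot\prod_{j=1}^{(r-1)/2}(w - c_j)^2
\]
with $(r-1)/2$ pairwise distinct nonzero values $c_j\in\bar\F_r$ indexed by the quadratic residues modulo $r$ (scaled by $a$ and a common unit). The tangent directions at each double root are distinct in odd residue characteristic, so the special fiber is a nodal curve with $(r-1)/2$ ordinary double points. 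Hence $C_r/L$ has bad semistable reduction and $J_r/L$ has purely multiplicative reduction.

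For the second part, I would take $L = L_0$ above, where $\Gal(L_0/K_{\Fq_r}) = \langle c\rangle$ is generated by complex conjugation $c\colon\zeta_r\mapsto\zeta_r^{-1}$; since $L_0/K_{\Fq_r}$ is totally ramified, $c\in I_{K_{\Fq_r}}$. In the coordinates $(w,y')$, the element $c$ acts on $C_r/L_0$ as $(w,y')\mapsto(\zeta_r^2 w,-y')$, which reduces modulo $\pi_0$ to the hyperelliptic involution. Passing to the normalization of the special fiber (a $\PP^1$ parameterized by $t$ with $t^2 = w$), this involution becomes $t\mapsto -t$, which swaps the two preimages $\pm\sqrt{c_j}$ of each node. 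Consequently, $c$ acts as $-\mathrm{Id}$ on the character group $X\cong\Z^{(r-1)/2}$ of the toric part of $J_r/L_0$, and hence as $-\mathrm{Id}$ on $V_\lambda(J_r)$ for every prime $\lambda$ of $K$ not dividing $r$. Since $-\mathrm{Id}$ is semisimple of order two and not unipotent, the restriction $\rho_{J_r,\lambda}|_{I_{K_{\Fq_r}}}$ is not unipotent, so $J_r/K_{\Fq_r}$ is not semistable by Grothendieck's criterion. For any unramified extension $K'/K_{\Fq_r}$ one has $I_{K'} = I_{K_{\Fq_r}}$, so $J_r/K'$ is not semistable either, and $C_r/K'$ fails to be semistable by the Deligne--Mumford criterion (cf.\ the proof of Lemma~\ref{semistable-facts}). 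The main difficulty is the integrality bound $v_\pi(a_j)\geq 2(r-j)$, which hinges on delicate cancellations modulo $r$ in the coefficients of $\tilde H$ when $r\mid a+b$; the root-theoretic argument via $v_{\pi_0}(u_k) = 2$ replaces this combinatorial computation with a clean geometric one.
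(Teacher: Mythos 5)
Your proof is correct in substance but takes a genuinely different route from the paper's in both halves. For the integrality of the model, the paper computes the coefficients $a_j$ of $P(x)=\pi^{-2r}H(\pi^2x-(b-a))$ one by one, using the Chebyshev expansion, the differential equation for $T_r$, and the congruence $\binom{(3r-1)/2}{(r+1)/2}\equiv 2r\pmod{r^2}$ to isolate the middle coefficient; you instead factor $H(x)-(a^r-b^r)=\prod_k\bigl(x-(\zeta_r^ka-\zeta_r^{-k}b)\bigr)$ (which indeed follows from Lemma~\ref{cyclo-relation}/Proposition~\ref{quotient-tau}, though you should record this verification), compute $v_{\pi_0}(u_k)=2$ with leading term $k^2a$ directly from the binomial expansion and $b\equiv-a\pmod r$, and get both the valuation bounds and the shape of the special fibre from Vieta. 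This is cleaner and yields the same picture (a rational curve with $(r-1)/2$ nodes, so full toric rank), up to the harmless discrepancy that your double roots sit at $\varepsilon k^2a$ for a common unit $\varepsilon$ while the paper's reduction is $y^2=x(x^{(r-1)/2}+a^{(r-1)/2})^2$. For the second half, the paper twists $C_r/K'$ by $\pi^{1/2}$ and compares unipotence of inertia on the twist versus the original; you compute the descent cocycle directly, showing the generator $c$ of $\Gal(L_0/K_{\Fq_r})\subset I_{K_{\Fq_r}}/I_{L_0}$ induces the hyperelliptic involution on the special fibre and hence $-\mathrm{Id}$ on the character group of the torus. These are two presentations of the same quadratic-twist phenomenon and both are valid.

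Two imprecisions are worth fixing. First, the formula ``$c$ acts as $(w,y')\mapsto(\zeta_r^2w,-y')$'' ignores the semilinearity: the correct statement is $w(cP)=\zeta_r^{-2}\,c(w(P))$ and $y'(cP)=-c(y'(P))$, and it is only after reduction modulo $\pi_0$ (where $c$ is trivial on the residue field and $\zeta_r^{\pm2}\equiv1$) that one gets the hyperelliptic involution; the conclusion is unaffected. Second, $c$ does \emph{not} act as $-\mathrm{Id}$ on all of $V_\lambda(J_r)$: since $I_{L_0}$ acts through an infinite unipotent group, a lift of $c$ acts as $-\mathrm{Id}$ only on the toric line $V_\lambda(T)$ (equivalently, as $-\mathrm{Id}$ times a unipotent matrix on $V_\lambda$). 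This still has $-1$ as an eigenvalue, so $\rho_{J_r,\lambda}|_{I_{K_{\Fq_r}}}$ is not unipotent and Grothendieck's criterion applies as you claim; you should just state the eigenvalue version rather than the full $-\mathrm{Id}$ claim.
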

\begin{proof}
Let $L / K_{\Fq_{r}}$ be finite extension with ramification index~$2$, ring of integers~$\calO$, a uniformizer~$\pi$ and~$\vv$
the corresponding valuation. We assume that~$r\mid a + b$. Note that we necessarily have~$ab\neq 0$ since $a$ and~$b$ are coprime. 

As in the proof of Proposition~\ref{P:typeAt7a}, write
\[
P(x) = \frac{1}{\pi^{2r}}H(\pi^2x - (b - a)) = \sum_{j\geq0}\sum_{k\ge 0}c_k\frac{(ab)^k}{\pi^{2(r - j)}}\binom{r - 2k}{j}(a - b)^{r - 2k - j}x^j = \sum_{j\geq0}a_jx^j,
\]
where~$\binom{n}{m} = 0$ if~$m > n$. We claim that the coefficients~$a_0,\dots,a_r$ of~$P$ satisfy the following properties~:
\begin{enumerate}[(i)]
\item\label{item:P1} $a_0 = \frac{a^r - b^r}{\pi^{2r}}$;

\item\label{item:P2} $a_1 = \frac{1}{\pi^{2(r - 1)}}r\phi_r(a,b)\in\calO$ and reduces to~$a^{r - 1}$ modulo~$\pi$;

\item\label{item:P3} For~$1<j<r$, $j\not=\frac{r + 1}{2}$, we have~$\vv(a_j)>0$;

\item\label{item:P4} $a_{\frac{r + 1}{2}}\in\calO$ and reduces to~$2a^{\frac{r - 1}{2}}$ modulo~$\pi$;

\item\label{item:P5} $a_r = 1$.
\end{enumerate}
Assuming these properties, it follows that applying the substitutions $x\rightarrow \pi^2x - (b - a)$, $y \rightarrow \pi^ry$ to~\eqref{kraushyper_bis} and dividing out by $\pi^{2r}$ yields the model \(y^2 = P(x) + \frac{b^r - a^r}{\pi^{2r}}\) for~$C_r$ which is integral and reduces modulo~$\pi$ to
\[
y^2 = x^r + 2a^{\frac{r - 1}{2}}x^{\frac{r + 1}{2}} + a^{r - 1}x = x\left(x^{\frac{r - 1}{2}} + a^{\frac{r - 1}{2}}\right)^2.
\]
Therefore, $C_r$ has bad semistable reduction over $L$.

Let us then prove properties~(\ref{item:P1})-(\ref{item:P5}) above. 
\begin{enumerate}[(i)]
\item By Lemma~\ref{lem:H}, we have
\[
a_0 = P(0) = \frac{H(a - b)}{\pi^{2r}} = \frac{a^r - b^r}{\pi^{2r}}.
\]

\item Let~$i$ be a primitive fourth root of unity and recall that~$z_0$ satisfies~$z_0^2 = ab$. We have $a_1 = \frac{\mathrm d P}{\mathrm d x}(0)$ and hence
\begin{align*}
a_1 & = \frac{1}{\pi^{2r}}\frac{\mathrm d}{\mathrm d x}\left(H(\pi^2x - (b - a))\right)\mid_{x = 0} \\
& = \frac{1}{\pi^{2(r - 1)}}\frac{\mathrm d H}{\mathrm d x}\left(\pi^2x - (b - a)\right)\mid_{x = 0} \\
& = \frac{1}{\pi^{2(r - 1)}}\frac{\mathrm d H}{\mathrm d x}\left(a - b\right) \\
& = \frac{1}{\pi^{2(r - 1)}}(ab)^{\frac{r - 1}{2}}rh\left(i\frac{a - b}{z_0}\right)h\left(-i\frac{a - b}{z_0}\right),\quad\text{by Lemma~\ref{L:secondKind}}. \\
\end{align*}
On the other hand, we compute
\begin{align*}
(ab)^{\frac{r - 1}{2}}h\left(i\frac{a - b}{z_0}\right)h\left(-i\frac{a - b}{z_0}\right) & = (ab)^{\frac{r - 1}{2}}\prod_{m = 1}^{\frac{r - 1}{2}}\left(i\frac{a - b}{z_0} - \omega_m\right)\left(-i\frac{a - b}{z_0} - \omega_m\right) \\
& = (ab)^{\frac{r - 1}{2}}\prod_{m = 1}^{\frac{r - 1}{2}}\left(\omega_m^2 + \frac{(a - b)^2}{ab}\right) \\
& = \prod_{m = 1}^{\frac{r - 1}{2}}\left(a^2 + b^2 + ab\omega_{2m}\right) \\
& = \prod_{m = 1}^{\frac{r - 1}{2}}\left(a^2 + b^2 + ab\omega_{m}\right) \\
& = \phi_r(a,b)\quad\text{by \eqref{eq:basic_factorization_general}.} \\
\end{align*}
Therefore we have proved that $a_1 = \frac{1}{\pi^{2(r - 1)}}r\phi_r(a,b)$. Finally, we have
\begin{align*}
\phi_r(a,b) & = \frac{1}{y}\left((y - a)^r + a^r\right),\quad\text{with~$y = a + b$} \\
& = \frac{1}{y}\left(\sum_{m = 0}^r\binom{r}{m}(-a)^{r - m}y^m + a^r\right) \\
& = \sum_{m = 1}^r\binom{r}{m}(-a)^{r - m}y^{m - 1}\quad\text{(as $r$ is odd)} \\
& \equiv ra^{r - 1}\pmod{r^2}
\end{align*}
since~$y = a + b\equiv 0\pmod{r}$ and~$r\mid\binom{r}{m}$ for~$m = 1,\dots, r - 1$. As~$r\sim \pi^{r - 1}$ in~$L$, we get that~$a_1$ is in~$\calO$ and reduces to~$a^{r - 1}$ modulo~$\pi$.

\item For all~$1<j<r$ and all~\(k\geq0\), we have~$\displaystyle{\vv\left(c_k\binom{r - 2k}{j}\right) \geq \vv(r) = r - 1}$. Therefore, if~$j > \frac{r + 1}{2}$, we have
\[
\vv(a_j) \geq \vv\left(c_k\binom{r - 2k}{j}\right) - 2(r - j) \geq 2j - r - 1 > 0, 
\]
as claimed. Let us now assume~$1<j\leq\frac{r + 1}{2}$. Since $r\mid a + b$, for any integer~\(k\) such that~\(0\leq k\leq \frac{r - j}{2}\), we have 
\[
(ab)^k(a - b)^{r - 2k - j} = (-1)^k2^{r - 2k - j}a^{r - j} + ru
\]
for some~$u\in\Z$. Therefore, we have
\[
a_j = \frac{1}{\pi^{2(r - j)}} \sum_{k = 0}^{\frac{r - 1}{2}}\left((-1)^k2^{r - 2k - j}a^{r - j} + ru\right)\binom{r - 2k}{j}c_k = \frac{a^{r - j}}{\pi^{2(r-j)}}\alpha_j + \pi\beta_j
\]
where
\begin{equation}\label{eq:alpha_j}
    \alpha_j = \sum_{k = 0}^{\frac{r - 1}{2}}(-1)^k2^{r - 2k - j}\binom{r - 2k}{j}c_k
\end{equation}
and~$\beta_j\in\calO$ since~$\binom{r - 2k}{j}c_k\in r\Z$ for all~$k\geq0$ (by Lemma~\ref{L:firstKind}), and~$\vv(r^2) = 2(r - 1) > 2(r - j)$ (as $j>1$). Recall from~\cite[Section 2.3.2 and (4.35a)]{Mason} that 
\begin{equation}\label{eq:Tr_exp}
    T_r(x) = \sum_{k = 0}^{\frac{r - 1}{2}}(-1)^k2^{r - 2k - 1}c_k x^{r - 2k}
\end{equation}
and~\(T_r\) is a solution of the differential equation~\((1 - x^2)y'' - x y' + r^2 y = 0\). By induction we obtain the following identity:
\begin{equation}\label{eq:ODE}
    (1 - x^2) T_r^{(j)} - (2j + 1)xT_r^{(j - 1)}  + (r^2 - j^2)T_r^{(j - 2)} = 0.
\end{equation}
Hence we have
\[
\alpha_j = \frac{T_r^{(j)}(1)}{j!2^{j - 1}} = \frac{1}{j!2^{j - 1}}\prod_{s = 0}^{j - 1}\frac{r^2 - s^2}{2s + 1}.
\]
where the first equality follows from~\eqref{eq:alpha_j} and~\eqref{eq:Tr_exp} and the last equality is proved by induction from the relation~\eqref{eq:ODE}.

Assume now that~\(j\neq \frac{r + 1}{2}\). We have
\[
\alpha_j = r^2 \frac{1}{j!2^{j - 1}}\prod_{s = 1}^{j - 1}\frac{r^2 - s^2}{2s + 1}\in r^2\calO
\]
since for~$s \leq j - 1$, we have~$2s + 1 \leq 2j - 1 < r$ and hence~$2s + 1\in\calO^\times$. The desired result follows as again~$\vv(r^2) = 2(r - 1) > 2(r - j)$.

\item Reasoning as above, and using the same notation, one has
\begin{equation}
\label{middle-term}
\alpha_{\frac{r + 1}{2}} = \frac{T_r^{(\frac{r + 1}{2})}(1)}{\left(\frac{r + 1}{2}\right)!2^{\frac{r - 1}{2}}} = \frac{1}{\left(\frac{r + 1}{2}\right)!2^{\frac{r - 1}{2}}}\prod_{s = 0}^{\frac{r - 1}{2}}\frac{r^2 - s^2}{2s + 1} = \binom{\frac{3r - 1}{2}}{\frac{r + 1}{2}} \equiv 2r \pmod{r^2}
\end{equation}
Since~$r \sim \pi^{r - 1}$  we get the desired result. For the last congruence in \eqref{middle-term}, first note that $\frac{3r-1}{2} = r-1 + \frac{r+1}{2}$. Now
\begin{align*}
 \binom{r-1 + \frac{r+1}{2}}{\frac{r+1}{2}} & = \binom{r-1 + \frac{r+1}{2}}{r-1} = \frac{(r-1 + \frac{r+1}{2}) \ldots (1 + \frac{r+1}{2})}{(r-1)!} \equiv 2 r \pmod{r^2} \\
  \iff & \left( r-1 + \frac{r+1}{2} \right) \ldots \left( 1 + \frac{r+1}{2} \right)  \equiv 2 r (r-1)! \pmod{r^2} \\
  \iff & \frac{1}{r} \left( r-1 + \frac{r+1}{2} \right) \ldots \left( 1 + \frac{r+1}{2} \right) \equiv 2 (r-1)! \pmod{r}.
\end{align*}
The last congruence is true as the left hand side taken mod $r$ is the product
\begin{align*}
    \left( \frac{r+1}{2} + 1 \right) \cdots (r-1) \cdot \hat r \cdot (1) \cdots \left( \frac{r+1}{2} -1 \right) = \frac{(r-1)!}{\frac{r+1}{2}}.
\end{align*}
\item The polynomial~$H$ is monic of degree~$r$ and hence so is~$P$.
\end{enumerate}

Let $K'$ be an unramified extension of $K_{\Fq_r}$. The twist $C_r'/K'$ of $C_r/K'$ by $\pi^\frac{1}{2}$ has bad semistable reduction, and from \cite[Lemma 3.3.5]{Romagny}, the toric rank of the Jacobian $J_r'/K'$ of $C_r'/K'$ is positive and hence $J_r'/K'$ has multiplicative reduction. From Grothendieck's inertial semistable reduction criterion \cite[Expos\'e 9]{SGA7} we conclude that the action of~$I_{K'}$ on the $p$-adic Tate module~\(V_p(J_r'/K')\) of $J_r'/K'$ is unipotent with infinite inertia image. Twisting $J_r$ by $\pi^\frac{1}{2}$ gives $J_r$, showing also that the action of $I_{K'}$ on $V_p(J_r/K')$ is not unipotent. Hence $J_r/K'$ is not semistable, nor is $C_r/K'$. We conclude $C_r$ does not have semistable reduction over any unramified extension $K'$ of~$K_{\Fq_r}$.

It now follows that $J_r/K'$ cannot have semistable reduction, for if so, then $C_r/K'$ would have semistable reduction, hence has either good or bad semistable reduction. The reduction type of $C_r/K'$ cannot be good reduction because then $C_r/L K'$ would have good reduction. However, $C_r/L$ has bad semistable reduction, so $C_r/L K'$ has bad semistable reduction, a contradiction. Hence, $C_r/K'$ has bad semistable reduction, contradicting the previous paragraph.
\end{proof}

\begin{proposition} \label{P:multiplicativeRedJ}
Let $q \not= 2, r$ be a prime and let~\(\Fq\) be a prime ideal above~\(q\) in~\(K\). Assume that we have~$q \mid a^r+b^r$. Then $C_r$ has bad semistable reduction and $J_r$  has multiplicative reduction at~$\Fq$.
\end{proposition}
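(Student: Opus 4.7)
The plan is to exhibit a semistable model for $C_r/K_\Fq$ directly from the Kraus equation~\eqref{kraushyper_bis}. Since $\gcd(a,b) = 1$ and $q \mid a^r + b^r$, we have $q \nmid ab$, so~\eqref{kraushyper_bis} is integral over $\calO_{K_\Fq}$ with reduction $y^2 = \bar P(x)$ modulo~$\Fq$, where $P(x) := H(x) + b^r - a^r \in \calO_{K_\Fq}[x]$. By the criterion recalled in~\S\ref{ss:hyperelliptic_equations}, it will suffice to show that $\bar P$, viewed over $\overline{\F}_q$, has only simple and double roots, with at least one double root. To this end, fix a square root $z_0$ of $ab$ in $\overline{\F}_q$ (it exists and is nonzero since $ab$ is a unit); then the substitution $x = z_0 u$ identifies $\bar P(x)$ with $z_0^r \bar f^-(u)$, where $f^-(u) = u h(u^2 + 2) + s$ and $s = (b^r - a^r)/z_0^r$, so it is equivalent to study the multiplicities of roots of $\bar f^-(u)$.

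By Proposition~\ref{quotient-tau} (see also Lemma~\ref{cyclo-relation}), under the substitution $u = X - 1/X$ one has $X^r f^-(u) = X^{2r} + sX^r - 1$. Using the identity $s^2 + 4 = (a^r + b^r)^2/(ab)^r$ and the assumption $q \mid a^r + b^r$, we get $s^2 + 4 \equiv 0 \pmod{\Fq}$, so the quadratic $w^2 + sw - 1$ in $w = X^r$ has a double root $u_0 = -s/2$ with $u_0^2 = -1$, and $X^{2r} + sX^r - 1 \equiv (X^r - u_0)^2 \pmod{\Fq}$. Since $q \nmid r$, the $r$ roots of $X^r = u_0$ in $\overline{\F}_q$ are distinct, and the involution $X \mapsto -1/X$ acts on this set (because $-1/u_0 = u_0$). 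As $r$ is odd and its fixed locus $\{\pm i\}$ meets $\{X^r = u_0\}$ in exactly one point, this involution has exactly one fixed point $X_0$ and $(r-1)/2$ orbits of size~$2$.

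A valuation count on the degree-$2$ cover $\pi : X \mapsto u = X - 1/X$, which is ramified precisely at $X = \pm i$ with $\pi(\pm i) = \pm 2i$, then determines the multiplicities of the roots of $\bar f^-$: at the fixed point $X_0$ one has $v_{X_0}(\bar f^-(u)) = 2 v_{X_0}(X^r - u_0) = 2$, and dividing by the ramification index~$2$ yields multiplicity~$1$ at $u = 2 X_0 \in \{\pm 2i\}$; at a non-fixed $X_j$ the cover is unramified and $v_{X_j}(\bar f^-(u)) = 2$ descends to multiplicity~$2$ at $u = X_j - 1/X_j$. Thus $\bar f^-$, and hence $\bar P$, has exactly one simple root and $(r-1)/2$ distinct double roots, proving the bad semistable reduction of $C_r$ at $\Fq$.

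For the Jacobian, the special fiber of this model then takes the form $y^2 = (x - \alpha) \prod_j (x - \beta_j)^2$; its normalization is the smooth curve $\tilde{y}^2 = x - \alpha$, which is $\mathbb{P}^1$, and each of the $(r-1)/2$ double roots $\beta_j$ gives a node gluing two points of $\mathbb{P}^1$. The dual graph therefore has one vertex and $(r-1)/2$ loops, giving toric rank $(r-1)/2 > 0$ for $J_r/K_\Fq$, hence multiplicative reduction. The most delicate step will be the valuation count at the ramified point~$X_0$, where one must verify that the multiplicity of $(X^r - u_0)^2$ drops from~$2$ upstairs to~$1$ downstairs on passing from $X$ to $u$; in particular that the simple root of~$\bar f^-$ sits precisely at a ramification value $\pm 2i$ of~$\pi$.
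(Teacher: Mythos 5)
Your proof is correct, and for the key computation it takes a genuinely different route from the paper. Both arguments reduce the proposition to the same combinatorial fact — that the reduction of the Kraus model modulo~$\Fq$ has exactly one simple root and $(r-1)/2$ distinct double roots — and both then conclude via the semistability criterion of \S\ref{ss:hyperelliptic_equations} and positivity of the toric rank. The paper establishes the root multiplicities by direct Chebyshev manipulations: it identifies the roots explicitly as $2z_1$ and $\omega_j z_1$ (with $z_1 = iz_0$) using Lemma~\ref{evaluation-point}, and detects the multiple roots via the derivative formula of Lemma~\ref{L:secondKind}. You instead pull the polynomial back along the degree-$2$ cover $D_r \to C_r$ of Proposition~\ref{quotient-tau}, where the condition $q \mid a^r + b^r$ (equivalently $s^2 + 4 \equiv 0$) makes $X^{2r}+sX^r-1$ reduce to the perfect square $(X^r - u_0)^2$ with $u_0^2=-1$, and then descend multiplicities through the quotient by $\tau\colon X \mapsto -1/X$: the $(r-1)/2$ free orbits give the double roots and the unique fixed point on $\{X^r = u_0\}$ gives the simple root. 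Your valuation count at the ramified point is correct (the order of vanishing $2$ upstairs divided by the ramification index $2$ gives multiplicity $1$ at $u=\pm 2i$), and the root count $1 + 2\cdot\frac{r-1}{2} = r$ confirms nothing is missed. What each approach buys: the paper's computation stays entirely within the lemmas on $H(x,z)$ already proved in \S\ref{S:FreyOverKs}, whereas yours is more structural — it explains the double roots as images of $\tau$-orbits of the square $(X^r-u_0)^2$ — and it yields the exact toric rank $(r-1)/2$ via the dual graph, where the paper only extracts positivity from \cite[Lemma~3.3.5]{Romagny}. Two minor presentational points: your $u_0$ denotes a value of $w=X^r$, not of the coordinate $u$, which is slightly confusing; and your closing sentence is phrased as a remaining task although the third paragraph already carries it out.
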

\begin{proof}
Let~\(K' = K_{\Fq}(i,z_0)\) with~$i$ a primitive fourth root of unity (and~\(z_0\) a square-root of~\(ab\)) and denote by~\(k'\) the residue field of~\(K'\). It is a finite extension of the residue field~\(k\) of~\(K_{\Fq}\). We shall prove that the polynomial~\(H(x) + b^r - a^r\) has one single root and~\((r - 1)/2\) double roots in~\(k'\).

By Lemma~\ref{L:firstKind}, we have~$H(x) = H(x,z_0) = 2z_1^rT_r\left(\frac{x}{2z_1}\right)$ where~$z_1 = iz_0$. Hence by Lemma~\ref{evaluation-point}, we have
\[
H(2z_1) = 2z_1^r = H(\omega_jz_1),\quad  j= 1,\dots,\frac{r - 1}{2}.
\]
Besides, we have~$z_1^{2r} = (-ab)^r$ and~\((-ab)^r \equiv a^{2r}\pmod{q}\). Up to changing $i$ to~$-i$ if necessary, one may assume that~$z_1^r$ reduces to~$a^r$ in~\(k'\). Hence $2z_1$ and $\omega_jz_1$ are roots of $H(x) + b^r - a^r$ in~$k'$. Moreover, the latter are roots of multiplicity~$>1$ by Lemma~\ref{L:secondKind} applied to~$z = z_0$. This proves the desired result and we conclude that $C_r$ has bad semistable reduction at~$\Fq$.

From \cite[Lemma 3.3.5]{Romagny}, the toric rank of $J_r$ is positive and hence $J_r$ has multiplicative reduction.
\end{proof}

\begin{proposition}
\label{P:good_reduction}
Let $q \nmid 2r(a^r+b^r)$ be a prime. Then $J_r/\Q_q$ has good reduction.
\end{proposition}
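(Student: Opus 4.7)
The plan is to apply the criterion for good reduction of hyperelliptic curves (Proposition~\ref{hyperelliptic-good}) directly to the odd-degree hyperelliptic model~\eqref{kraushyper_bis} for $C_r(a,b)$. First, I would observe that~\eqref{kraushyper_bis} is an odd-degree hyperelliptic equation with coefficients in $\Z \subset \Z_q$, and its discriminant, given by formula~\eqref{E:discriminant_bis}, equals
\[
\Delta(C_r(a,b)) = (-1)^\frac{r-1}{2} 2^{2(r-1)} r^r (a^r+b^r)^{r-1}.
\]
Under the hypothesis $q \nmid 2r(a^r+b^r)$, this discriminant is a unit in $\Z_q$. Since $C_r(a,b)$ admits the $\Q$-rational Weierstrass point at infinity, Proposition~\ref{hyperelliptic-good} applies and shows that $C_r(a,b)/\Q_q$ has good reduction.

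Having good reduction of the curve, I would then conclude that the Jacobian $J_r(a,b)/\Q_q$ has good reduction as well. This is the standard fact that the N\'eron model of the Jacobian of a smooth proper curve with a smooth proper model over $\Z_q$ is itself an abelian scheme over $\Z_q$ (it is obtained as the relative Picard scheme $\Pic^0$ of the smooth model). Equivalently, this is part of the implication recorded in the proof of Lemma~\ref{semistable-facts}: good reduction of $C$ implies good reduction of its Jacobian.

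No step here is difficult; the only point to be careful about is that the discriminant formula~\eqref{E:discriminant_bis} was derived under the running assumption $ab \neq 0$ (and is also valid in the degenerate case $ab = 0$, as noted after~\eqref{E:discriminant_bis}), so both cases are covered uniformly. Since $q \neq 2$ the model~\eqref{kraushyper_bis} is in fact short Weierstrass form in odd degree, which also sidesteps any subtlety related to even residue characteristic in Proposition~\ref{hyperelliptic-good}.
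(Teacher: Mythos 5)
Your proof is correct and follows essentially the same route as the paper: the integral model~\eqref{kraushyper_bis} has discriminant~\eqref{E:discriminant_bis} which is a $q$-adic unit under the hypothesis, so $C_r$ has good reduction at $q$, and hence so does its Jacobian. You merely make explicit two points the paper leaves implicit (the appeal to Proposition~\ref{hyperelliptic-good} via the rational Weierstrass point at infinity, and the passage from the curve to the Jacobian), which is fine.
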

\begin{proof}
If a prime~$q \nmid 2r(a^r + b^r)$ then $q$ does not divide the discriminant given in~\eqref{E:discriminant_bis}; since the model~\eqref{kraushyper_bis} is integral, the curve~$C_r$ has good reduction at~$q$.
\end{proof}

The following theorem finally gives the sought after conductor. 
\begin{theorem} \label{T:conductorJI}
Assume $a \equiv 0 \pmod{2}$ and $b \equiv 1 \pmod{4}$. The conductor of the compatible system~\(\{\rho_{J_r,\lambda}\}\) is given by
\[
\calN = 2^2 \cdot \Fq_r^2 \cdot  \mathfrak{n},
\]
where $\mathfrak{n}$ is the squarefree product of all prime ideals dividing~$a^r + b^r$ which are coprime to~$2r$. 

In particular, $J_r$ is semistable at all primes not dividing~$2r$.
\end{theorem}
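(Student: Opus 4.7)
The plan is to compute the local conductor exponent of the strictly compatible system $\{\rho_{J_r,\lambda}\}$ at each prime $\Fq$ of $K$ separately and then multiply. By the compatible system property the inertial type at $\Fq$ is independent of $\lambda$ (once $\lambda$ has residue characteristic different from that of $\Fq$), so the local exponent is determined by this inertial type. Proposition~\ref{P:good_reduction} rules out any contribution from rational primes $q\nmid 2r(a^r+b^r)$, so it suffices to handle primes above~$2$, above~$r$, and the odd primes $\neq r$ dividing~$a^r+b^r$.

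At a prime $\Fq\nmid 2r$ of $K$ dividing $a^r+b^r$, Proposition~\ref{P:multiplicativeRedJ} gives multiplicative reduction, hence a Steinberg inertial type and conductor exponent~$1$; the product over all such primes yields~$\mathfrak{n}$. At a prime $\Fq_2$ of $K$ above~$2$, the hypotheses $a\equiv 0\pmod{2}$ and $b\equiv 1\pmod{4}$ allow us to invoke Corollary~\ref{C:inertiaOrder}: the image $\rho_{J_r,\lambda}(I_{\Fq_2})$ is cyclic of order~$r$, coprime to the residue characteristic~$2$, so the action is tame with trivial fixed subspace and the conductor exponent equals $\dim V_\lambda - \dim V_\lambda^{I_{\Fq_2}} = 2$. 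Since $2$ is unramified in~$K$, the combined contribution at primes above~$2$ is the ideal $(2)^2 = 2^2$.

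At the unique prime $\Fq_r$ above~$r$ I would distinguish two cases. If $r\nmid a+b$, Corollary~\ref{C:inertiaOrderAt7} gives $\rho_{J_r,\lambda}(I_{\Fq_r})$ cyclic of order~$4$; as $r\geq 5$ the action is tame with trivial fixed subspace, so the conductor exponent is~$2$. If $r\mid a+b$, Proposition~\ref{P:typeAt7b} shows $J_r$ attains multiplicative reduction precisely over a ramified quadratic extension $L/K_{\Fq_r}$; hence $\rho_{J_r,\lambda}|_{D_{\Fq_r}}$ is of the shape $\chi\otimes\mathrm{sp}$ where $\chi$ is the tame ramified quadratic character cutting out $L$, and standard formulae for conductors of Steinberg types (see e.g.~\cite{DPP21}) give exponent~$2$. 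In both subcases the local factor at $\Fq_r$ is $\Fq_r^2$, and combining all contributions produces $\calN = 2^2\cdot\Fq_r^2\cdot\mathfrak{n}$. The semistability claim is then immediate, since outside $\Fq_r$ and the primes above~$2$ only unramified or Steinberg types appear. The most delicate verification is the $r\mid a+b$ subcase at $\Fq_r$, where the ramified quadratic twist structure must be carefully extracted from Proposition~\ref{P:typeAt7b} and the absence of semistable reduction over any unramified extension of $K_{\Fq_r}$.
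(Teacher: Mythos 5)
Your proposal is correct and follows the same prime-by-prime strategy as the paper, resting on the same inputs (Corollary~\ref{C:inertiaOrder}, Corollary~\ref{C:inertiaOrderAt7}, Propositions~\ref{P:typeAt7b}, \ref{P:multiplicativeRedJ} and~\ref{P:good_reduction}) and the same twist-of-Steinberg analysis at $\Fq_r$ when $r \mid a+b$. The one divergence is at the tamely ramified primes ($\Fq_2$, and $\Fq_r$ when $r \nmid a+b$), where you apply the tame conductor formula $\dim V - \dim V^{I} = 2$ directly instead of the paper's finer case analysis into principal series versus supercuspidal types with explicit character conductors; this shortcut is valid (triviality of $V^{I}$ follows since the determinant is unramified there and inertia acts with order coprime to the residue characteristic) and is precisely the alternative the authors endorse in the remark following their proof, at the cost of not pinning down the inertial type itself, which the paper later exploits in the level-lowering and elimination steps.
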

\begin{proof}
Let~\(\Fq\) be a prime ideal in~\(K\). According to the discussion at the beginning of this section, the exponent at~\(\Fq\) of~\(\calN\) matches the conductor of~\(\rho_{J_r,\lambda}|_{D_\Fq}\) for any prime~\(\lambda\) in~\(K\) such that~\(\Fq \nmid \Norm(\lambda)\).

\begin{enumerate}[(a)]
\item\label{item:(a)} From Corollary~\ref{C:inertiaOrder} we know that the inertial type at~$\Fq_2$ of $\rho_{J_r,\lambda}$
(where $\lambda \nmid 2$) is either principal series or supercuspidal and that
$\rho_{J_r,\lambda}(I_{\Fq_2})$ is cyclic of order~$r$.

Suppose first the inertial type at~$\Fq_2$ of $\rho_{J_r,\lambda}$ is principal series. By Theorem~\ref{lem:det} the restriction to inertia $(\rho_{J_r,\lambda} \otimes \Qbar_p)|_{I_{\Fq_2}}$ is isomorphic to $(\chi \oplus \chi^{-1})|_{I_{\Fq_2}}$ for some character~$\chi$ of~$G_{K_{\Fq_2}}$. Write~\(L/K_{\Fq_2}\) for the extension cut out by~\(\chi\). Moreover, from Corollary~\ref{C:inertiaOrder}, we know that~\(L\) has ramification index~\(r\) and that~\(r \mid \# \F_{\Fq_2}^*\). By class field theory, we conclude that~\(L\), and hence~\(\chi\), has conductor~\(\Fq_2\). Therefore, we have that~$\rho_{J_r,\lambda}$ is of conductor~$\Fq_2^2$ at $\Fq_2$.

Suppose now the inertial type at~$\Fq_2$ of $\rho_{J_r,\lambda}$
is supercuspidal. A supercuspidal representation is either exceptional or obtained by induction of a character~$\chi$ of a quadratic extension~$M/K_{\Fq_2}$
Since $\rho_{J_r,\lambda}(I_{\Fq_2})$ 
is cyclic of odd order~$r$ (hence coprime 
to the residual characteristic $2$) it follows that we must be in the case of an induction from $M/K_{\Fq_2}$ unramified for if $M/K_{\Fq_2}$ is ramified, the induction would be irreducible and not have cyclic image. Moreover, $\rho_{J_r,\lambda}|_{D_{\Fq_2}}$ restricted to the subgroup fixing $M/K_{\Fq_2}$ is a principal series with corresponding characters $\chi$ and its conjugate~$\chi^s$ where $s$ is the non-trivial element in $\Gal(M/K_{\Fq_2})$. These characters have the same conductor.
From the above discussion regarding the principal series case, it follows that $r \mid \# \F^*$ where $\F$ is the residue field of~$M$ and~$\chi$ has conductor exponent~$1$. 
The conductor formula together with $M/K_{\Fq_2}$ being unramified give that $\rho_{J_r,\lambda}$ is of conductor~$\Fq_2^2$ at $\Fq_2$.

\item\label{item:(b)} Suppose $r \nmid a+b$. 
From Corollary~\ref{C:inertiaOrderAt7} we know that the inertial type of $\rho_{J_r,\lambda}$ at~$\Fq_r$  (for~$\lambda \nmid r$)
is either principal series or supercuspidal and 
$\rho_{J_r,\lambda}(I_{\Fq_r})$ is cyclic of order~$4$. 
The rest of the argument follows as in part~(\ref{item:(a)}) (with the extra simplification that there are no exceptional 
types in odd characteristic). Thus the conductor of $\rho_{J_r,\lambda}$ at~$\Fq_r$ is~$\Fq_r^2$.

\item Suppose $r \mid a+b$. From Proposition~\ref{P:typeAt7b} we know that over any finite extension $L/K_{\Fq_r}$ with ramification index~$2$ we have that $J_r/L$ has multiplicative reduction and this ramification index is minimal with respect to this property.  
From Grothendieck's inertial semistable reduction criterion \cite[Expos\'e 9]{SGA7} we conclude that the action of~$I_L$ on the $p$-adic Tate module~\(V_p(J_r/L)\) of $J_r/L$ is unipotent with infinite inertia image. Since~\(\displaystyle{V_p(J_r/L) = \bigoplus_\mu\rho_{J_r,\mu}|_{G_L}}\) (where~\(\mu\) runs over the prime ideals in~\(K\) above~\(p\)), we see that
$\rho_{J_r,\lambda}|_{G_L}$ has to be unipotent with infinite inertia image, hence 
a Steinberg representation. (Note that in fact all the blocks $\rho_{J_r,\mu}|_{G_L}$ are Steinberg due to strict compatibility.) 
Thus $\rho_{J_r,\lambda}|_{D_{\Fq_r}}$ is a twist of Steinberg by the quadratic character corresponding to $L/K_{\Fq_r}$ which has conductor $\Fq_r$; 
therefore $\rho_{J,\lambda}$ is of conductor~$\Fq_r^2$ at $\Fq_r$.

\item\label{item:(d)}  From Proposition~\ref{P:good_reduction}, we know that $C_r$ has good reduction at any prime~$\Fq \nmid 2 r(a^r + b^r)$. Moreover, Proposition~\ref{P:multiplicativeRedJ} and Grothendieck's inertial reduction criterion imply that $\rho_{J_r,\lambda}|_{D_{\Fq}}$ is a Steinberg representation 
at any  prime $\Fq$ of $K$ above a prime $q \mid a^r+b^r$, $q \not=2, r$. Thus $\rho_{J_r,\lambda}$ has conductor $\Fq$ at those primes.
\end{enumerate}
The last statement follows directly from Propositions~\ref{P:multiplicativeRedJ} and~\ref{P:good_reduction}.
\end{proof}

\begin{remark}
Parts~(\ref{item:(a)}) and~(\ref{item:(b)}) in the previous proof also follow from the standard fact that if the ramification is tame, i.e., if the order of inertia is coprime to the residual characteristic, then the conductor exponent is $2$ as the dimension of the representation is $2$. We decided to include the above proofs because they can be generalized to the wild case; moreover, this extension determines not only the conductor but in fact it pins down the inertial type. It has been shown at length in the literature that
fully knowing the inertial type can be essential for distinguishing Galois representations and  successfully applied in the modular method; we refer the reader to ~\cite{BCDF2} for a detailed treatment of this idea using elliptic Frey curves and to~\cite{BCDDF} for the case of an abelian surface.

We now give a hypothetical example of such a generalization;
suppose that we are in the principal series case in part~(\ref{item:(a)}) but instead the degree of $L/K_{\Fq_2}$ is~$2r$. Let $\delta$ be the character defining this principal series; 
now it follows
by local class field theory that $\delta^2$ has conductor~$\Fq_2$ and is a power 
of the order~$r$ character $\chi$ in the proof of~(\ref{item:(a)}). This yields a concrete bound on the conductor of~$\delta$; moreover, we can compute all the characters with conductor respecting this bound and retain only those satisfying $\delta^2 = \chi^k$. For each character obtained this way we get an explicit principal series representation that may be the inertial type of $J/K$ at~$\Fq_2$.
\end{remark}

\section{Irreducibility}
\label{S:irreducibilityrrp}

To apply level lowering results a common hypothesis is that the relevant mod~$p$ representation is absolutely irreducible. When 
using the modular method with Frey elliptic curves, there are powerful tools, e.g. Mazur's work on isogenies and Merel's uniform bound on torsion, that often allow to prove sharp lower bounds on~$p$ for which the $p$-torsion representation is absolutely irreducible. 
For higher dimensional Frey varieties there are no such general statements;
this is one of the key steps where Darmon's program is conjectural. 

 The results in this section explore the local information obtained in Section~\ref{S:conductor} to prove irreducibility of the mod~$\Fp$ representations attached to~$J_r$ for many values of~$r$ and prime ideals~\(\Fp\) with arbitrary (large) residue characteristic~\(p\). Here we use the previous notation. In particular, $r\geq5$ is a prime, $a,b$ are coprime integers such that $ab(a^r + b^r) \neq 0$ and we write~$J_r$ for the base change of~$J_r(a,b)$ to~$K = \Q(\zeta_r)^+$.

We note that for the unique prime ideal~\(\Fp_r\) above~\(r\) in~\(K\), we have established the absolute irreducibility of~\(\rhobar_{J_r,\Fp_r}\) in Section~\ref{S:modularity}. Therefore, the results of this section will focus on proving irreducibility for~\(\rhobar_{J_r,\Fp}\) with~\(\Fp\) not dividing~\(r\).

\subsection{General irreducibility statements}

The following proposition applies to~\(r = 5\) for instance.

\begin{proposition} \label{P:irredSupercuspidal}
Assume that $r \nmid  \# \F_{\Fq_2}^*$ for the primes~$\Fq_2 \mid 2$ in~$K$ and that~\(a,b\) satisfy $a \equiv 0 \pmod{2}$ and~$b \equiv 1 \pmod{4}$. Then, for all primes~$p \neq 2$ and all~$\Fp \mid p$ in~$K$, the representation~$\rhobar_{J_r,\Fp}$ is absolutely irreducible.
\end{proposition}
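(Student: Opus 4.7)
The plan is to exploit the supercuspidal local structure at primes above~\(2\) established in Section~\ref{S:conductor}. First, I would dispose of the case \(\Fp=\Fp_r\) separately: by Theorem~\ref{T:irred7}, the restriction \(\rhobar_{J_r,\Fp_r}|_{G_{\Q(\zeta_r)}}\) is absolutely irreducible, and since \(\Q(\zeta_r)\supset K\), this yields absolute irreducibility of \(\rhobar_{J_r,\Fp_r}\) over \(G_K\). So we may assume \(\Fp\) lies above a rational prime \(p\notin\{2,r\}\). It suffices to show that \(\rhobar_{J_r,\Fp}|_{D_{\Fq_2}}\) is absolutely irreducible for some (equivalently, any) prime \(\Fq_2\mid 2\) of~\(K\).

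By Corollary~\ref{C:inertiaOrder}, the hypothesis \(r\nmid\#\F_{\Fq_2}^\ast\) together with the \(2\)-adic conditions on \(a,b\) force the inertial type of \(J_r/K\) at \(\Fq_2\) to be supercuspidal with cyclic inertia image of order~\(r\). Repeating the analysis from part~(\ref{item:(a)}) of the proof of Theorem~\ref{T:conductorJI}, since \(\rho_{J_r,\Fp}(I_{\Fq_2})\) is cyclic of order \(r\) (odd, in particular coprime to the residual characteristic~\(2\)), the associated supercuspidal can only arise as an induction from an \emph{unramified} quadratic extension; writing \(M/K_{\Fq_2}\) for this extension, we get a character \(\chi\) of \(G_M\) with
\[
\rho_{J_r,\Fp}|_{D_{\Fq_2}}\simeq\Ind_{G_M}^{G_{K_{\Fq_2}}}\chi,
\]
and \(\chi|_{I_M}=\chi|_{I_{\Fq_2}}\) of order~\(r\).

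Next, Theorem~\ref{lem:det} gives \(\det\rho_{J_r,\Fp}=\chi_p\), the \(p\)-adic cyclotomic character, which is unramified at \(\Fq_2\) since \(p\neq 2\). Comparing determinants yields \(\chi\cdot\chi^s=1\) on \(I_M\), where \(s\) denotes the non-trivial element of \(\Gal(M/K_{\Fq_2})\); in particular \(\chi^s=\chi^{-1}\) on \(I_M\). Since \(\chi|_{I_M}\) has odd order \(r\geq 5\), it is not self-inverse, so \(\chi\neq\chi^s\) as characters of \(G_M\), which is the standard irreducibility criterion for the induction.

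To transfer this to the residual representation, I would choose a \(G_M\)-stable \(\calO_{K_\Fp}\)-line inside \(\chi\); since \(\chi|_{I_M}\) has order \(r\) coprime to \(p\), its reduction \(\bar\chi|_{I_M}\) still has order \(r\), so \(\bar\chi^s=\bar\chi^{-1}\neq\bar\chi\) on \(I_M\) and hence as characters of \(G_M\). Induction commutes with reduction modulo \(\Fp\) (as \(M/K_{\Fq_2}\) is unramified and \(p\) odd), giving \(\rhobar_{J_r,\Fp}|_{D_{\Fq_2}}\simeq\Ind_{G_M}^{G_{K_{\Fq_2}}}\bar\chi\), which is absolutely irreducible by the same criterion applied over \(\overline{\F}_p\). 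Absolute irreducibility of the global representation follows immediately. The only mildly delicate point is the compatibility of induction with mod-\(\Fp\) reduction and the identification of the induction data with integrally defined characters, but this is a routine consequence of the tameness ensured by \(\gcd(r,p)=1\).
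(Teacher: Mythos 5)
Your proof is correct and follows essentially the same route as the paper: reduce to $p\neq r$ via Theorem~\ref{T:irred7}, invoke Corollary~\ref{C:inertiaOrder} and part~(\ref{item:(a)}) of the proof of Theorem~\ref{T:conductorJI} to see that $\rho_{J_r,\Fp}|_{D_{\Fq_2}}$ is an irreducible induction from the unramified quadratic extension of a character of order~$r$ on inertia, and observe that since $p\nmid 2r$ the order-$r$ inertia image does not meet the kernel of reduction, so the residual induction stays absolutely irreducible. Your spelled-out determinant argument ($\chi^s=\chi^{-1}$ on inertia, hence $\chi\neq\chi^s$) is just a more explicit version of the paper's one-line justification.
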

\begin{proof} According to the discussion above, we may assume~\(p\neq r\). Corollary~\ref{C:inertiaOrder} implies that $J_r/K$ has a supercuspidal inertial type at~$\Fq_2$. Furthermore, 
from part~\eqref{item:(a)} of the proof of Theorem~\ref{T:conductorJI} we know that $\rho_{J_r,\Fp}|_{D_{\Fq_2}}$ is an irreducible induction from the quadratic unramified extension of~$K_{\Fq_2}$ of a character having order~$r$ on inertia. Since $\Fp \nmid 2r$ the residual representation $\rhobar_{J_r,\Fp}|_{D_{\Fq_2}}$ is also 
an irreducible induction because there is no intersection of the image of inertia with the kernel of reduction. Therefore, 
$\rhobar_{J_r,\Fp}$ is absolutely irreducible since $K$ is totally real.
\end{proof}

\begin{proposition} \label{P:irredSupercuspidalAtR}
Assume that $r \not\equiv 1 \pmod{4}$ and that $r \nmid a+b$.
Then, for all primes~$p \neq 2$ and all~$\Fp \mid p$ in~$K$, the representation~$\rhobar_{J_r,\Fp}$ is absolutely irreducible.
\end{proposition}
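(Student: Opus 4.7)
The plan is to adapt the strategy of Proposition~\ref{P:irredSupercuspidal}, exploiting the supercuspidal local structure at $\Fq_r$ instead of $\Fq_2$. First, I dispatch the case $\Fp=\Fp_r$: Theorem~\ref{T:irred7} already shows that $\rhobar_{J_r,\Fp_r}|_{G_{\Q(\zeta_r)}}$ is absolutely irreducible, so in particular $\rhobar_{J_r,\Fp_r}$ itself is absolutely irreducible. Hence one may assume $\Fp\nmid r$, and combined with $p\neq 2$ this gives $\Fp\nmid 2r$.

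Next, under the hypotheses $r\nmid a+b$ and $r\not\equiv 1\pmod 4$, Corollary~\ref{C:inertiaOrderAt7} tells us that the inertial type of $J_r/K$ at $\Fq_r$ is supercuspidal and that $\rho_{J_r,\Fp}(I_{\Fq_r})$ is cyclic of order~$4$. I would then run the reasoning of part~(b) of the proof of Theorem~\ref{T:conductorJI}---which, as noted there, parallels part~(a) with the added simplification that no exceptional supercuspidals exist in odd residual characteristic---to conclude that
\[
\rho_{J_r,\Fp}|_{D_{\Fq_r}}\simeq\Ind_{G_M}^{D_{\Fq_r}}\chi,
\]
where $M/K_{\Fq_r}$ is the unramified quadratic extension and $\chi|_{I_{\Fq_r}}$ is a character of order~$4$.

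Finally, I would argue that irreducibility survives reduction modulo~$\Fp$: since $\Fp\nmid 2r$, the kernel of reduction modulo~$\Fp$ is pro-$p$ with $p$ odd and coprime to~$4$, so it meets the tame inertia image of order~$4$ trivially. Consequently the reduced character $\overline{\chi}|_{I_{\Fq_r}}$ still has order~$4$, so $\overline{\chi}\neq\overline{\chi}^{s}$ for $s$ generating $\Gal(M/K_{\Fq_r})$, and the reduced induction remains irreducible. This forces $\rhobar_{J_r,\Fp}|_{D_{\Fq_r}}$, and therefore $\rhobar_{J_r,\Fp}$ itself, to be absolutely irreducible. The only delicate point is verifying that reduction modulo~$\Fp$ does not collapse the inertia image, which is automatic here because $\gcd(4,p)=1$ for $p$ odd; no separate argument of Mazur-Merel type is needed, precisely because the supercuspidal local structure already rules out any reducible sub-quotient on a decomposition group.
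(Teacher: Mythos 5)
Your proposal is correct and follows essentially the same route as the paper: dispatch $\Fp=\Fp_r$ via Theorem~\ref{T:irred7}, invoke Corollary~\ref{C:inertiaOrderAt7} to get a supercuspidal type at $\Fq_r$ with cyclic inertia image of order $4$, identify $\rho_{J_r,\Fp}|_{D_{\Fq_r}}$ as an irreducible induction from the unramified quadratic extension as in the proof of Theorem~\ref{T:conductorJI}, and observe that the order-$4$ inertia image meets the kernel of reduction trivially since $p$ is odd. The paper simply phrases this as ``the result now follows as in the proof of Proposition~\ref{P:irredSupercuspidal}''; your write-up fills in the same steps explicitly.
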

\begin{proof} 
According to the discussion before the Proposition~\ref{P:irredSupercuspidal}, we may assume~\(p\neq r\). Corollary~\ref{C:inertiaOrderAt7} implies that $J_r/K$ has a supercuspidal inertial type at~$\Fq_r$ with inertia image of order~$4$. The result now follows as in the proof of Proposition~\ref{P:irredSupercuspidal}.
\end{proof}

The previous results give sharp lower bounds for irreducibility of the residual representations attached to~$J_r/K$ under certain constraints on~$r$ and~$a,b \in \Z$. The following result has a weaker conclusion 
but holds for all~$r$.

\begin{proposition} \label{P:irredGeneral}
There exists a constant $B_r$, depending only on $r$, such that the following holds.
Assume~\(a,b\) satisfy $ab \neq 0$ and
$2r \nmid a+b$.

Then, for every prime $p>B_r$ and all $\Fp \mid p$ in $K$, the representation $\rhobar_{J_r,\Fp}$ is absolutely irreducible.
\label{prop:big_ired}
\end{proposition}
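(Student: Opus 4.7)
The plan is to argue by contradiction following a standard isogeny-character approach, using the conductor bounds of Section~\ref{S:conductor} to trap the possible characters in a finite set depending only on $r$. Suppose that for infinitely many primes $p$ there exists a prime $\Fp \mid p$ in $K$ such that $\rhobar_{J_r,\Fp}$ is reducible; write its semisimplification as $\theta_1 \oplus \theta_2$ with $\theta_1 \theta_2 = \chi_p$ (Theorem~\ref{lem:det}). The first step is to bound the conductors of $\theta_1,\theta_2$ uniformly in $p$ away from the residual characteristic.

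For this, I would extend the local analysis of Propositions~\ref{P:typeAt2}, \ref{P:typeAt7a}, \ref{P:typeAt7b}, and~\ref{P:multiplicativeRedJ} to the present hypothesis $ab\neq 0$ and $2r \nmid a+b$, observing that the arguments go through essentially unchanged to give: (a) at every prime $\Fq$ of $K$ with $\Fq\nmid 2r$, the variety $J_r$ has good or multiplicative reduction, so $\rho_{J_r,\Fp}|_{I_\Fq}$ is either trivial or unipotent, hence becomes trivial after semisimplification; (b) at the primes $\Fq_2 \mid 2$ and $\Fq_r \mid r$, the inertial image of $\rho_{J_r,\Fp}$ is finite of order dividing a constant depending only on $r$, so the conductor exponent is bounded by some $c(r)$. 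Consequently $\theta_1 \oplus \theta_2$ is unramified away from primes above $2rp$, with conductor exponents at primes above $2r$ uniformly bounded. At primes $\Fp' \mid p$ of $K$, we use that $J_r$ is semistable there (the last clause of Theorem~\ref{T:conductorJI} extends to the present context), so $\rho_{J_r,\Fp}|_{D_{\Fp'}}$ is semistable with Hodge-Tate weights in $\{0,1\}$; standard $p$-adic Hodge theory (Raynaud in the good reduction case, Fontaine-Laffaille/Breuil-Kisin in the semistable one) then forces each $\theta_i|_{I_{\Fp'}}$ to be a power of a fundamental character of level at most $2$ with exponent in a fixed finite set.

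Combining these, each $\theta_i$ lies in a set $\Sigma_r$ of characters whose size depends only on $r$ (through the narrow class group of $K$, the unit structure, and the local data above $2r$), with no dependence on $p$ or on $(a,b)$. The contradiction comes from trace comparison: pick a prime $\Fq$ of $K$ coprime to $2r$ where $J_r$ has good reduction, and use
\[
a_\Fq(J_r) \equiv \theta_1(\Frob_\Fq) + \theta_2(\Frob_\Fq) \pmod{\Fp}.
\]
The left-hand side is a fixed algebraic integer of size $\leq 2\sqrt{\Norm(\Fq)}$ (Weil bound), while the right-hand side ranges over a finite set determined by $\Sigma_r$ and $\chi_p(\Frob_\Fq) = \Norm(\Fq)$; hence the congruence can hold for only finitely many $p$, provided enough auxiliary primes $\Fq$ are chosen to separate the distinct admissible pairs in $\Sigma_r$, yielding the required bound $B_r$. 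The main obstacle I foresee is the CM case $(a,b) = \pm(1,1)$, where $J_r$ has CM by $\Q(\zeta_r)$ (Remark~\ref{rk:JisCM}): here reducibility of $\rhobar_{J_r,\Fp}$ corresponds to ordinary primes of the CM abelian variety and must be disposed of separately, using that for $p$ large the two CM Hecke characters cannot become isomorphic modulo $\Fp$ since their ratio is a nontrivial Hecke character of fixed conductor.
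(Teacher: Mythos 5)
Your proposal does not follow the paper's route: the paper's proof of Proposition~\ref{P:irredGeneral} is a short reduction to an external black box, namely \cite[Corollary~1]{BCDF1}. It only checks the hypotheses of that result: since $2r\nmid a+b$, either $2\nmid a+b$ or $r\nmid a+b$, so by Proposition~\ref{P:typeAt2} or Proposition~\ref{P:typeAt7a} the variety $J_r$ has potentially good reduction at some fixed prime $\Fq\mid 2r$; it is semistable away from $2r$ (Propositions~\ref{P:multiplicativeRedJ} and~\ref{P:good_reduction}); and it is of $\GL_2$-type with all endomorphisms defined over $K$. The constant $B_r$ is then the constant furnished by the cited corollary, which depends only on $K=\Q(\zeta_r)^+$, $g=(r-1)/2$ and $\Fq$. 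What you propose is essentially to reprove that cited result inline via isogeny characters, which is the correct underlying mechanism, but your sketch has a genuine gap.

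The gap is the step ``each $\theta_i$ lies in a set $\Sigma_r$ of characters whose size depends only on $r$, with no dependence on $p$.'' This cannot be literally true: the $\theta_i$ take values in $\F_\Fp^\times$, which varies with $p$, and when $p$ splits in $K$ and each of $\theta_1,\theta_2$ ramifies at a nonempty proper subset of the primes above $p$ (restricting to $\chi_p$ on the corresponding inertia groups), neither character has order bounded independently of $p$. In that mixed case your trace comparison collapses, because $\theta_i(\Frob_\Fq)$ is no longer the reduction of a root of unity from a fixed finite set, so the right-hand side of the congruence does not range over a $p$-independent finite set of algebraic numbers. The standard repair — and the one used both in \cite{BCDF1} and in the paper's own Proposition~\ref{P:irred_CFT} — is class field theory applied to global units: a fixed power $\theta^m$ (with $m$ depending only on $r$) is unramified outside the primes above $p$ where it equals $\chi_p^m$ on inertia, and evaluating the reciprocity map on totally positive units $\epsilon$ yields $p\mid \Norm_{K/\Q}(\epsilon^{ms}-1)\neq 0$ (Lemma~\ref{L:KrausAppendix}), which is what actually bounds $p$. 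Two secondary points: (i) under the hypothesis $2r\nmid a+b$ you only control the inertial image at \emph{one} of $\Fq_2$, $\Fq_r$ — at $\Fq_r$ with $r\mid a+b$ the reduction is potentially multiplicative with infinite inertia image (Proposition~\ref{P:typeAt7b}), and at $\Fq_2$ with $2\mid a+b$ the paper computes nothing — so your claim (b) is too strong, though one controlled prime suffices; (ii) the level-$2$ fundamental character case must be excluded or treated (the paper's Proposition~\ref{P:irred_CFT} does so only when the residue degree of $\F_\Fp$ over $\F_p$ is odd), and your sketch does not address how it interacts with the finiteness claim.
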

\begin{proof}
Since $2r\nmid a+b$ we have either $2\nmid a+b$ or $r\nmid a+b$. 
From Proposition~\ref{P:typeAt2} and~Proposition~\ref{P:typeAt7a}, we have that~$J_r/K$ has potentially good reduction at the primes~$\Fq_2 \mid 2$
or at~$\Fq_r$, respectively. Moreover, according to Theorem~\ref{T:conductorJI}, we have that~\(J_r\) is semistable at all primes not dividing~\(2r\) and from Theorem~\ref{T:GL2typeJr} it is of~\(\GL_2\)-type by the totally real field~\(K\). Finally, the fact that all endomorphisms of~\(J_r\) are defined over~\(K\) follows from Proposition~\ref{P:multiplicativeRedJ} because $ab \neq 0$ implies there is a prime $q \neq 2,r$ dividing $a^r + b^r$. 

An application of~\cite[Corollary 1]{BCDF1} with $\Fq = \Fq_2 \mid 2$ or $\Fq=\Fq_r$ then implies that there is a constant $C(K,g,\Fq)$ such that for every $p>C(K,g,\Fq)$ the proposition holds. Since $K=\Q(\zeta_r)^+$, $g=\frac{r-1}{2}$ we conclude that the maximum of such constants obtained by varying $\Fq \mid 2r$ depends only on $r$. This is the desired constant~\(B_r\).
\end{proof}

Note that for~\(r = 7\) (or~\(r = 23\) for instance), Proposition~\ref{P:irredSupercuspidal} does not apply while Proposition~\ref{P:irredSupercuspidalAtR} only gives the desired irreducibility of~\(\rho_{J_r,\Fp}\) when~\(r\nmid a + b\) (and~\(p\) odd). The next proposition is another criterion based on Class Field Theory that applies in particular to~\(r = 7\) and
it is important for the Diophantine applications we study in~\cite{xhyper_vol2}.

Write~\(g = \frac{r - 1}{2}\) and let~\(\{\infty_1,\dots,\infty_g\}\) be the set of all infinite places of~\(K\). We define a modulus~\(\mathfrak{m}\) of~\(K\) as a pair~\((\mathfrak{m}_0,\mathfrak{m}_\infty)\) consisting of an integral ideal~\(\mathfrak{m}_0\) of~\(K\) together with a (possibly empty) subset~\(\mathfrak{m}_\infty\) of~\(\{\infty_1,\dots,\infty_g\}\). We also write this formally as~\(\mathfrak{m} = \mathfrak{m}_0\mathfrak{m}_\infty\). Given a modulus~\(\mathfrak{m}\), we write~\(h_\mathfrak{m}\) for the cardinality of the corresponding ray class group (see~\cite[\S3.2]{Coh00}).

We say that a modulus~\(\mathfrak{m} = \mathfrak{m}_0\mathfrak{m}_\infty\) divides a modulus~\(\mathfrak{m}' = \mathfrak{m}_0'\mathfrak{m}_\infty'\), and we write~\(\mathfrak{m}\mid \mathfrak{m}'\), if~\(\mathfrak{m}_0\mid \mathfrak{m}_0'\) (that is~\(\mathfrak{m}_0\supset\mathfrak{m}_0'\)) and~\(\mathfrak{m}_\infty\subset\mathfrak{m}_\infty'\). When~\(\mathfrak{m}\mid\mathfrak{m}'\), we have~\(h_{\mathfrak{m}}\mid h_{\mathfrak{m}'}\) (\emph{loc. cit.}) and if moreover~\(\mathfrak{m}_\infty = \mathfrak{m}_\infty'\), then, by~\cite[Corollary~3.2.4]{Coh00}, \(h_{\mathfrak{m}'}/h_{\mathfrak{m}}\) divides
\begin{equation}\label{E:Cohen}
\phi\left(\mathfrak{m}_0'\mathfrak{m}_0^{-1}\right) = \mathrm{N}_{K/\Q}(\mathfrak{m}_0'\mathfrak{m}_0^{-1})\prod_{\Fq\mid \mathfrak{m}_0'\mathfrak{m}_0^{-1}}\left(1 - \frac{1}{\mathrm{N}_{K/\Q}(\Fq)}\right).
\end{equation}

Write~\(m = (2^{f_2} - 1)(r - 1)\) with~\(f_2\) the residual degree of~\(K\) at~\(2\). Let~\((\epsilon_1,\dots,\epsilon_{g - 1})\) be a basis for the free part of the group of units in~\(K\).

A special case of the following result will be used in~\cite{xhyper_vol2}.

\begin{proposition}\label{P:irred_CFT}
Assume that~\(a,b\) satisfy $a \equiv 0 \pmod{2}$ and~$b \equiv 1 \pmod{4}$, and that we have~:
\begin{enumerate}[(i)]
\item\label{item:Hi} \(g\) is an odd prime number;
\item\label{item:Hii}
the integer~\(h_{2\mathfrak{m}}/h_{\mathfrak{m}}\) is not divisible by~\(r\), where~\(\mathfrak{m} = \Fq_r\infty_1\cdots\infty_g\). 
\end{enumerate}
If, for some prime ideal~$\Fp \mid p$ in~\(K\) with~\(p\nmid 2r\), the representation~\(\rhobar_{J_r,\Fp}\) is not absolutely irreducible, then the following assertions hold:
\begin{enumerate}
\item\label{item:Ccl1} \(p\) is completely split in~\(K\);
\item\label{item:Ccl2}  there exists an integer~\(s\) such that \(1\leq s\leq\lfloor\frac{g}{2}\rfloor\) and~\(p\) divides
\[
\underset{1\leq i\leq g - 1}{\gcd}\left( \Norm_{K/\Q}(\epsilon_i^{2ms}-1)\right);
\]
\item\label{item:Ccl3} there exist~\(s\) distinct prime ideals \(\Fp_1,\dots,\Fp_s\) above~\(p\) in~\(K\) such that \(r\) divides the integer~\(h_{2\widetilde{\mathfrak{m}}}/h_{\widetilde{\mathfrak{m}}}\) where~\(\widetilde{\mathfrak{m}} = \Fp_1\cdots\Fp_s\mathfrak{m}\).
\end{enumerate}
\end{proposition}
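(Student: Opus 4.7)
The plan is a Mazur-style class field theory argument. Suppose $\rhobar := \rhobar_{J_r,\Fp}$ is reducible; by Theorem~\ref{lem:det} we may write $\rhobar^{\mathrm{ss}} = \theta \oplus \theta'$ with $\theta\theta' = \chi_p$. My first step would be to reduce to the only interesting local scenario using the supercuspidal irreducibility criteria already proved. Since $g$ is an odd prime, $r = 2g+1 \equiv 3 \pmod{4}$, so Proposition~\ref{P:irredSupercuspidalAtR} forces $r \mid a+b$; similarly Proposition~\ref{P:irredSupercuspidal} forces $r \mid 2^{f_2}-1$. The inertial information then gives: $\theta|_{I_{\Fq_2}}$ has order exactly $r$ (Corollary~\ref{C:inertiaOrder}); $\theta|_{I_{\Fq_r}}$ has order dividing $2$ (Proposition~\ref{P:typeAt7b} implies that $J_r$ becomes multiplicative over a ramified quadratic extension of $K_{\Fq_r}$); $\theta|_{I_\Fq}$ is unramified at every other prime $\Fq$ of multiplicative reduction of $J_r$; and $\theta$ takes values in $\{\pm 1\}$ at each infinite place.

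Next I would consider the character $\psi := \theta^m$ with $m = (2^{f_2}-1)(r-1)$. The local information above shows $\psi$ is unramified at every finite prime not dividing $p$ and at every infinite place (using that $m$ is even). At a prime $\Fp' \mid p$, since $\Fp' \nmid 2r$ is unramified over $\Q_p$ and $J_r$ is semistable at $\Fp'$ by Theorem~\ref{T:conductorJI}, the Raynaud/Fontaine classification of finite flat group schemes identifies $\theta|_{I_{\Fp'}}$ with a product of fundamental characters of level dividing $2 f_{\Fp'}$, where $f_{\Fp'}$ denotes the residual degree of $\Fp'$. By class field theory, $\psi$ then factors through a character of a quotient of a ray class group whose finite part is supported only at primes above $p$.

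From here I would extract the three conclusions. The hypothesis $r \nmid h_{2\mathfrak{m}}/h_\mathfrak{m}$ combined with formula~\eqref{E:Cohen} implies that a nontrivial order-$r$ character of $G_K$ unramified outside $\{\Fq_2,\Fq_r\}\cup\{\infty_1,\dots,\infty_g\}$ cannot factor through $\Cl_{2\mathfrak{m}}(K)$; therefore $\psi$ must be genuinely ramified at some nonempty set of primes $\Fp_1, \ldots, \Fp_s$ above $p$. The fundamental-character description at each $\Fp_i$ further constrains matters: if a prime above $p$ had residual degree $>1$, then the Galois conjugates of the fundamental characters would introduce incompatible ramification, forcing $p$ to be completely split in $K$ (conclusion~(\ref{item:Ccl1})); and by swapping $\theta \leftrightarrow \theta'$ if necessary, one may assume at most half of the primes above $p$ are ramified for $\theta$, giving $s \leq \lfloor g/2 \rfloor$. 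Conclusion~(\ref{item:Ccl3}) is then the statement that $\psi$ realizes a nontrivial order-$r$ character of the finite quotient $\Cl_{2\widetilde{\mathfrak{m}}}(K)/\Cl_{\widetilde{\mathfrak{m}}}(K)$ with $\widetilde{\mathfrak{m}} = \Fp_1\cdots\Fp_s\mathfrak{m}$. Finally, to prove conclusion~(\ref{item:Ccl2}), I would evaluate $\psi$ on the fundamental units $\epsilon_i$: as $\epsilon_i$ is a principal idele, $\psi(\epsilon_i) = 1$, and unwinding this via the local components at $\Fp_1, \ldots, \Fp_s$ yields $\epsilon_i^{2ms} \equiv 1 \pmod{\Fp_j}$ for every $j$, whence $p \mid \Norm_{K/\Q}(\epsilon_i^{2ms}-1)$.

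The hardest part is the Raynaud/Fontaine analysis at primes above $p$: pinning down exactly which fundamental characters occur in $\theta|_{I_{\Fp'}}$ for the $\GL_2(K)$-type variety $J_r$ (rather than an elliptic curve) requires care, and the precise exponent $2ms$ appearing in~(\ref{item:Ccl2}), as well as the bound $s \leq \lfloor g/2 \rfloor$ in~(\ref{item:Ccl3}), must be tracked faithfully through these identifications. The assumption that $g$ is prime is used to simplify the orbit structure of $\Gal(K/\Q)$ acting on primes above $p$, ensuring that only the completely split and the inert cases arise, the latter of which is then ruled out by the fundamental-character analysis.
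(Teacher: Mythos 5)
Your overall strategy is the same as the paper's: write $\rhobar_{J_r,\Fp}^{\mathrm{ss}} \simeq \theta\oplus\theta'$ with $\theta\theta'=\chi_p$, bound the conductors of $\theta,\theta'$ away from $p$ by $2\Fq_r$, analyse the primes above $p$ via Raynaud/ordinarity, use hypothesis~(ii) to force ramification above $p$, deduce complete splitting and $s\le\lfloor g/2\rfloor$ from the parity of $g$, and feed $\theta^m$ into the Kraus-type unit lemma (Lemma~\ref{L:KrausAppendix}). Two steps, however, are not right as you describe them. The local analysis at $\Fq\mid p$ --- which you flag as the hardest part but leave as a black box --- is settled in the paper not by any ``incompatible ramification of Galois conjugates,'' but by the observation that $\theta,\theta'$ take values in $\F_\Fp^*$ and $[\F_\Fp:\F_p]$ divides $g$, hence is odd, so $\F_{p^2}\not\subset\F_\Fp$ and the level-$2$ fundamental characters cannot occur. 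This is what forces $\rhobar_{J_r,\Fp}|_{I_\Fq}\simeq\chi_p|_{I_\Fq}\oplus 1$, i.e.\ exactly one of $\theta,\theta'$ unramified at each $\Fq\mid p$; it is the essential second use of hypothesis~(i) (besides the inert-or-split dichotomy you do mention), and without it neither conclusion~(1) nor the partition argument giving $s\le\lfloor g/2\rfloor$ goes through. One must also justify the good/ordinary dichotomy at $\Fq\mid p$: in the multiplicative case the paper invokes modularity of $J_r$ and the fact that a parallel weight $2$ Steinberg form is ordinary.

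The second problem is that you run the ray-class obstruction with $\psi=\theta^m$, asserting that ``$\psi$ must be genuinely ramified at some nonempty set of primes above $p$.'' But you yourself note $r\mid 2^{f_2}-1$, so $r\mid m$ and $\psi|_{I_{\Fq_2}}$ is \emph{trivial}: $\psi$ carries no order-$r$ ramification at $\Fq_2$, so hypothesis~(ii) says nothing about it. The argument must be run with $\theta$ itself, whose restriction to $I_{\Fq_2}$ has exact order $r$: if $\theta$ were unramified at every prime above $p$ it would be a ray class character of modulus $2\mathfrak{m}$ of order $r$ on $I_{\Fq_2}$, forcing $r\mid h_{2\mathfrak{m}}/h_{\mathfrak{m}}$ and contradicting~(ii). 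The same substitution is needed for conclusion~(3), where it is $\theta$, not $\psi$, that realizes an order-$r$ character of the quotient measured by $h_{2\widetilde{\mathfrak{m}}}/h_{\widetilde{\mathfrak{m}}}$. The power $\theta^m$ enters only at the very end, to kill the ramification at $\Fq_2$, $\Fq_r$ and the infinite places so that the unit lemma applies with $S=\{\Fp_1,\dots,\Fp_s\}$ and $n_{\mathfrak{P}}=m$.
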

\begin{remark}
Under assumption~(\ref{item:Hi}), we note that the prime~\(2\) is either totally split or inert in~\(K\). In the former case, we have~\(\#\F_{\Fq_2}^* = 1\) for any~\(\Fq_2\mid 2\) and Proposition~\ref{P:irredSupercuspidal} already gives an optimal irreducibility result.  In the latter case, by formula~\eqref{E:Cohen} above, the integer~\(h_{2\mathfrak{m}}/h_{\mathfrak{m}}\) is a divisor of~\(\phi(2) = 2^{g} - 1 = \#\F_{\Fq_2}^*\) where~\(\Fq_2\) is the unique prime above~\(2\). In situations where \(2^g - 1\) is divisible by~\(r\) (that is, when~\(r\equiv\pm1\pmod{8}\)), Proposition~\ref{P:irredSupercuspidal} does not apply, but the result above shows that~\(\rhobar_{J_r,\Fp}\) is still absolutely irreducible for all but finitely many values of~\(p\) in an explicit set of primes, assuming that~\(h_{2\mathfrak{m}}/h_{\mathfrak{m}}\) is not divisible by~\(r\).
\end{remark}
\begin{proof}[Proof of Proposition~\ref{P:irred_CFT}]
Assume that the representation~\(\rhobar_{J_r,\Fp}\) is not absolutely irreducible. 

Since $\rhobar_{J_r,\Fp}$ is odd and $K$ is totally real it follows
that $\rhobar_{J_r,\Fp}: G_K\rightarrow\GL_2(\F_\Fp)$ is reducible.  Here~$\F_{\Fp}$ denotes the residual field of $K$ at $\Fp$. Recall that $\det \rhobar_{J_r,\Fp} = \chi_p$ is the mod~$p$ cyclotomic character by Theorem~\ref{lem:det}. Therefore, we have~\(\rhobar_{J_r,\Fp}^{ss} \simeq \theta\oplus\theta'\)
with~\(\theta, \theta' : G_K \rightarrow \F_{\Fp}^*\) satisfying \(\theta \theta' = \chi_p\).

For all primes $\Fq \nmid p$, we have $\theta'|_{I_\Fq} = \theta^{-1}|_{I_\Fq}$ (since $\chi_p$ is unramified away from $p$). In particular, the conductor exponent of~$\theta$ and~$\theta'$ is the same at each such prime, and these characters are unramified at primes where the conductor~$N(\rhobar_{J_r,\Fp})$ of~\(\rhobar_{J_r,\Fp}\) has odd valuation. Moreover, we know that $N(\rhobar_{J_r,\Fp})$ divides the conductor~\(\calN\) of the compatible system~\(\{\rho_{J_r,\lambda}\}\). According to Theorem~\ref{T:conductorJI}, we have $\calN = 2^2 \Fq_r^2\n$ where~\(\n\) is a squarefree ideal coprime to~\(2r\). It then follows that the conductor of $\theta$ and $\theta'$ away from~$p$ divides $2\Fq_r$.

For each prime~$\Fq \mid p$,  
the restriction to~$I_{\Fq}$ of $\rhobar_{J_r,\Fp}$ is isomorphic to either 
$\chi_p|_{I_\Fq} \oplus 1$ or $\psi \oplus \psi^p$, where~$\psi$ is a fundamental character of level~$2$. Indeed, we know from the results in Section~\ref{S:conductor} that~\(J_r\) has either good or bad semistable reduction at~\(\Fq\). In the former case, the desired description of~\(\rhobar_{J_r,\Fp}|_{I_{\Fq}}\) follows from~\cite[Corollaire 3.4.4]{Raynaud}. In the latter case, we rely on the modularity of ~\(J_r\) proved in Section~\ref{S:modularity}: since the reduction of the abelian variety at ~\(\Fq\) is multiplicative the corresponding Hilbert modular form is Steinberg locally at ~\(\Fq\), and since it is a form of parallel weight $2$ it is well known (see for instance ~\cite{Geraghty}) that in this case the attached Galois representation $\rho_{J_r,\Fp}$ is ordinary locally at ~\(\Fq\), thus implying that the restriction to~$I_{\Fq}$ of $\rhobar_{J_r,\Fp}$ is isomorphic to 
$\chi_p|_{I_\Fq} \oplus 1$.

Since $\theta$, $\theta'$ are valued in 
$\F_{\Fp}$ which has odd degree over~\(\F_p\) by assumption~(\ref{item:Hi}), the case of fundamental 
characters of level 2 is excluded because $\F_{p^2} \not\subset \F_{\Fp}$. We conclude that, for each~$\Fq \mid p$, we have 
\begin{equation}\label{E:inertiaAction}
 \rhobar_{J_r,\Fp}|_{I_\Fq} \simeq \theta|_{I_\Fq} \oplus \theta'|_{I_\Fq} \simeq \chi_p|_{I_\Fq} \oplus 1
\end{equation}
and exactly one of~\(\theta,\theta'\) is unramified at~\(\Fq\), while the other one restricts to~\(I_\Fq\) as the cyclotomic character.
\begin{itemize}
\item[(a)] Suppose that one of $\theta$, $\theta'$ is unramified at every prime 
dividing $p$. We can assume it is~$\theta$ (after relabeling if needed). 
From the above it follows that $\theta$ is a character
of the ray class group of modulus $2\mathfrak{m}$. From Corollary~\ref{C:inertiaOrder} we know that 
$\rhobar_{J_r,\Fp}(I_{\Fq_2})$ is of order~\(r\) for any~\(\Fq_2\) above~\(2\). In particular, \(r\) divides~\(\#\Gal\left(K(2\mathfrak{m})/K(\mathfrak{m})\right) = h_{2\mathfrak{m}}/h_{\mathfrak{m}}\) where~\(K(2\mathfrak{m})\) and~\(K(\mathfrak{m})\) denote the ray class fields of modulus~\(2\mathfrak{m}\) and~\(\mathfrak{m}\) respectively (see~\cite[\S3.2]{Coh00}). This contradicts assumption~(\ref{item:Hii}). Therefore this case does not occur.

\item[(b)] If $p$ is inert in $K$, then from \eqref{E:inertiaAction} one of $\theta, \theta'$ is unramified above the prime above $p$ and from (a) we conclude this case does not happen. Hence, $p$ is totally split in $K$ as $K/\Q$ is Galois of prime degree $g$, proving (\ref{item:Ccl1}).

\item[(c)] Assume now that $p$ is totally split in $K$ and that $\theta$, $\theta'$ ramify at some prime above $p$. As $g$ is odd, one of~\(\theta,\theta'\) (say~\(\theta\) after relabeling if necessary) ramifies at strictly less prime ideals than the other. In particular, there exist an integer~\(s\) such that \(1\leq s\leq\lfloor\frac{g}{2}\rfloor\) and~\(s\) distinct prime ideals \(\Fp_1,\dots,\Fp_s\) above~\(p\) in~\(K\) such that~\(\theta\) ramifies precisely at~\(\Fp_1,\dots,\Fp_s\) among all prime ideals above~\(p\). Let~\(\Fq_2\) be a prime ideal above~\(2\) in~\(K\). From the conductor of~$\theta$, we have
that~$\theta|_{I_{\Fq_2}}$ factors (after applying Artin's reciprocity map from class field theory)  
via $(\calO_K/\Fq_2)^* = \F_{2^{f_2}}^*$ 
and $\theta|_{I_{\Fq_r}}$
factors 
via $(\calO_K/\Fq_r)^* = \F_r^*$. It follows that $\theta^{m}$ is unramified away from~\(\Fp_1,\dots,\Fp_s\) (including infinity) and that~$\theta^{m}|_{I_{\Fp_j}} = (\chi_p|_{I_{\Fp_j}})^{m}$ for all $1\leq j\leq s$. An application of Lemma~\ref{L:KrausAppendix} below with~\(F = K\),  $S=\{ \Fp_1,\dots,\Fp_s\}$, \(\varphi = \theta^m\),~\(n_\mathfrak{P} = m\) (for any~\(\mathfrak{P}\in S\))  and~\(u = \epsilon_i^2\) gives~$\epsilon_i^{2ms} \equiv 1 \pmod{p}$ for all $1\leq i\leq g - 1$ (note that since $p$ splits completely in~\(K\), the norm maps are the identity map in this setting). In particular, \(p\) divides~\(\underset{1\leq i\leq g - 1}{\gcd}\left( \Norm_{K/\Q}(\epsilon_i^{2ms}-1)\right)\), hence proving~(\ref{item:Ccl2}).

\item[(d)] Following the argument with~$I_{\Fq_2}$ as in (a), but where now $\theta$ is a character of the ray class group of modulus $2 \widetilde{\mathfrak{m}}$ and \(\widetilde{\mathfrak{m}} = \Fp_1\cdots\Fp_s\mathfrak{m}\)  gives assertion~(\ref{item:Ccl3}).
\end{itemize}
\end{proof}

To complete the previous proof, it remains to prove the following auxiliary result.
\begin{lemma}\label{L:KrausAppendix}
Let $F$ be a number field with ring of integers~$\calO_F$. Let~$p$ be a prime number unramified in~$F$ and~$S_p$ the set of places in~$F$ above~$p$. Let $S \subseteq S_p$ and
$\varphi : G_F \to \F_p^*$ be a character satisfying the following conditions:
\begin{enumerate}
 \item $\varphi$ is unramified at all places of~$F$ outside~$S$ (including the places at infinity);
 \item\label{item:2inKraus} For all~$\mathfrak{P} \in S$, the restriction $\varphi|_{I_\mathfrak{P}}$ is equal to $(\chi_p|_{I_\mathfrak{P}})^{n_\mathfrak{P}}$ for some positive integer $n_{\mathfrak{P}}$, where $\chi_p$ is the $p$th cyclotomic character.
\end{enumerate}
Then, for all totally positive units~$u \in \calO_F^*$, we have 
\[
\prod_{\mathfrak{P} \in S} N_\mathfrak{P}(u + \mathfrak{P})^{n_\mathfrak{P}} = 1, 
\]
where $N_\mathfrak{P} : (\calO_F/\mathfrak{P})^* \to \F_p^*$ is the norm map.
\end{lemma}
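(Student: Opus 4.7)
My plan is to deduce the identity from global Artin reciprocity applied to~$\varphi$ viewed as a character on the idele class group, together with the explicit formula for the mod~$p$ cyclotomic character on residue-field units at primes above~$p$.

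First, by Artin reciprocity I would view the character~$\varphi$ as a finite-order character
\[
\varphi : \Adeles_F^*/F^*\longrightarrow\F_p^*,
\]
where $\Adeles_F$ denotes the ring of adeles of~$F$. Since $u\in\calO_F^*$ sits diagonally in $F^*\subset\Adeles_F^*$, evaluating~$\varphi$ at the principal idele attached to~$u$ gives
\[
1 \;=\; \prod_v \varphi_v(u_v),
\]
where the product runs over all places~$v$ of~$F$ and $\varphi_v$ denotes the local component of~$\varphi$ at~$v$.

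Next I would kill all local factors except those at primes in~$S$. At each archimedean place~$v$, every $\F_p^*$-valued character of~$F_v^*$ is either trivial (when $v$ is complex, since $\C^*$ is divisible) or the sign character (when $v$ is real); the totally positivity of~$u$ ensures $u_v>0$ at every real place, so $\varphi_v(u_v)=1$ in either case. For a finite place~$\mathfrak{Q}\notin S$, the hypothesis that $\varphi$ is unramified at~$\mathfrak{Q}$ translates, via local reciprocity, into $\varphi_\mathfrak{Q}$ being trivial on~$\calO_\mathfrak{Q}^*$, and since $u$ is a unit we get $\varphi_\mathfrak{Q}(u_\mathfrak{Q})=1$. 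Thus only the primes in~$S$ survive.

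Finally, for $\mathfrak{P}\in S$ I would plug in the explicit shape of the local mod~$p$ cyclotomic character. Because $p$ is unramified in~$F$, local class field theory (geometric normalization) gives, for any $x\in\calO_\mathfrak{P}^*$,
\[
\chi_p\bigl(\mathrm{rec}_\mathfrak{P}(x)\bigr)\;=\;N_\mathfrak{P}(x+\mathfrak{P})^{-1}\in\F_p^*,
\]
where $N_\mathfrak{P}:(\calO_F/\mathfrak{P})^*\to\F_p^*$ is the residue-field norm; this is the standard computation showing that the mod~$p$ cyclotomic character on the inertia of a place above~$p$ factors through the reduction and the residue-field norm to~$\F_p$ (composed with inversion). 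Using the hypothesis $\varphi|_{I_\mathfrak{P}}=(\chi_p|_{I_\mathfrak{P}})^{n_\mathfrak{P}}$ together with the vanishing of all other factors, the global product formula becomes
\[
1 \;=\; \prod_{\mathfrak{P}\in S} N_\mathfrak{P}(u+\mathfrak{P})^{-n_\mathfrak{P}},
\]
and inverting both sides yields the claimed identity. The two delicate points are the bookkeeping of the normalization convention for local reciprocity and the verification that no archimedean or outside-$S$ contribution survives; the totally positivity of~$u$ is precisely what is needed to handle the real archimedean places.
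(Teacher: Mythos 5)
Your argument is correct and is essentially the paper's approach: the paper simply invokes class field theory and defers to Kraus's Appendice 1, where the same idele-theoretic product-formula computation (trivially extended from $n_\mathfrak{P}=1$ to general $n_\mathfrak{P}$) is carried out. Your write-up supplies the details — evaluating $\varphi$ on the principal idele of $u$, killing the archimedean and outside-$S$ factors, and identifying the local factor at $\mathfrak{P}\mid p$ with the residue-field norm — and the sign ambiguity from the reciprocity normalization is harmless since the conclusion is that a product equals $1$.
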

\begin{proof} This follows from class field theory. The special case where $n_\mathfrak{P} = 1$ for all~$\mathfrak{P}\in S$ is proved in~\cite[Appendice 1]{Kraus8}. The exact same proof applies for general~$n_\mathfrak{P}$; indeed, see the top of page~24 in {\it loc. cit.} for the unique place where hypothesis~(\ref{item:2inKraus}) is used.
\end{proof}

\subsection{Avoiding the `bad projectively dihedral' case}

The results of the previous subsection guarantee (under some conditions)
that~$\rhobar_{J_r,\Fp}$ is absolutely irreducible, but for the application of level lowering we will explain in Section~\ref{S:levelLowering} we also need that ~$\rhobar_{J_r,\Fp}$ is not `bad projectively dihedral', that is, $\rhobar_{J_r,\Fp}|_{G_{K(\zeta_p)}}$ is absolutely irreducible, where $\zeta_p$ is a primitive $p$th root of unity. 

For~\(p = r\), this is the content of Theorem~\ref{T:irred7}. For~\(p\nmid 2r\), assuming that~$\rhobar_{J_r,\Fp}$ is absolutely irreducible, this is supplied by the following lemma (generalizing a result of Ribet~\cite{Ribet97BadDihedral} over~$\Q$) when applied to~\(A = J_r\) (see Propositions~\ref{P:multiplicativeRedJ} and~\ref{P:good_reduction} which show that the assumptions of the following lemma are satisfied in this case). 

\begin{lemma} \label{L:badDihedral}
Let~$K/\Q$ be a number field. 
Let $A/K$ be a~$\GL_2$-type abelian variety with real multiplication by a field $F$.
Let $p >3 $  be a prime unramified in~$K$, and assume that $A$ has good or multiplicative reduction at all primes of~$K$ above~$p$.
Let $\Fp \mid p$ be a prime in~$F$ and suppose that $\rhobar_{A,\Fp} : G_K \to \GL_2(\F_{\Fp})$ is absolutely irreducible. 
Let $M/K$ be a quadratic extension where some
prime $\Fp' \mid p$ in~$K$ ramifies in~$M$.

Then $\rhobar_{A,\Fp}|_{G_{M}}$ is absolutely irreducible.
\end{lemma}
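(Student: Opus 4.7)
Plan: We argue by contradiction. Suppose $\rhobar := \rhobar_{A,\Fp}$ is absolutely irreducible on $G_K$ but $\rhobar|_{G_M}$ is absolutely reducible. Since $[M:K]=2$ and $p>2$, Clifford theory produces a character $\psi \colon G_M \to \Fbar_\Fp^{*}$ with
\[
\rhobar \simeq \Ind_{G_M}^{G_K} \psi,
\]
or equivalently a self-twist isomorphism $\rhobar \otimes \epsilon \simeq \rhobar$, where $\epsilon \colon G_K \to \{\pm 1\}$ is the quadratic character attached to $M/K$. By Theorem~\ref{lem:det}, $\det\rhobar = \chi_p$. Let $\mathfrak{P}$ denote the unique prime of $M$ above $\Fp'$ and fix any $c \in I_{\Fp'} \setminus I_\mathfrak{P}$; such a $c$ exists since $\Fp'$ ramifies in $M$. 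Because $c$ swaps the two characters appearing in $\rhobar|_{G_M}$, one has $\tr\rhobar(c') = 0$ for every element $c'$ of the coset $cI_\mathfrak{P}$ inside $I_{\Fp'}$.

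Now invoke the reduction hypothesis at $\Fp'$. If $A$ has multiplicative reduction at $\Fp'$, then by Grothendieck the semisimplification $\rhobar|_{I_{\Fp'}}^{\mathrm{ss}}$ equals $\chi_p|_{I_{\Fp'}} \oplus 1$; if $A$ has good ordinary reduction at $\Fp'$, Raynaud's classification of finite flat group schemes (valid for $p>2$) yields the same description on inertia. In either case the trace-zero condition gives $\chi_p(c') = -1$ for every $c' \in cI_\mathfrak{P}$, which forces $\chi_p|_{I_\mathfrak{P}} = 1$, i.e.\ $\zeta_p \in M_\mathfrak{P}$. But $M_\mathfrak{P}/\Q_p$ has ramification index $2$ (coming from the ramified quadratic extension $M_\mathfrak{P}/K_{\Fp'}$, as $p$ is unramified in $K$), whereas $\Q_p(\zeta_p)/\Q_p$ is totally tamely ramified of degree $p-1 > 2$ for $p>3$. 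This contradiction disposes of the ordinary and multiplicative cases.

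The remaining and most delicate situation is good supersingular reduction at $\Fp'$, where $\rhobar|_{D_{\Fp'}}$ is itself absolutely irreducible and $\rhobar|_{I_{\Fp'}}$ is given by a pair of conjugate level-$2$ fundamental characters $\psi_1, \psi_1^{p}$, factoring through the procyclic (hence abelian) tame quotient $I_{\Fp'}^{\mathrm{tame}}$. The self-twist property exhibits $\rhobar|_{D_{\Fp'}}$ as an induction from $D_\mathfrak{P}$, so its image in $\PGL_2(\F_\Fp)$ is generalized dihedral; but this image must simultaneously be abelian (since it factors through $I_{\Fp'}^{\mathrm{tame}}$), forcing it to be of Klein-four type. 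This translates into $\psi_1^{1-p}|_{I_\mathfrak{P}} = 1$, and combining with the tame quadratic ramification $[I_{\Fp'}^{\mathrm{tame}} : I_\mathfrak{P}^{\mathrm{tame}}]=2$ (valid since $p>2$) and the standard order of level-$2$ fundamental characters yields the required contradiction for $p>3$. The supersingular case is the main obstacle: the ordinary/multiplicative cases adapt Ribet's argument over $\Q$ in a routine way, whereas here one must carefully track how level-$2$ fundamental characters valued in $\F_\Fp$ interact with the ramification of $M/K$ at primes above $p$.
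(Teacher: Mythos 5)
Your ordinary/multiplicative case is sound: the trace of $\Ind_{G_M}^{G_K}\psi$ vanishes off $G_M$, and combined with $\rhobar_{A,\Fp}|_{I_{\Fp'}}^{\mathrm{ss}}\simeq \chi_p|_{I_{\Fp'}}\oplus 1$ this forces $\chi_p$ to have order at most $2$ on $I_{\Fp'}$, impossible for $p>3$ with $p$ unramified in $K$ (your detour through $\zeta_p\in M_{\mathfrak P}$ is a slight imprecision --- triviality of $\chi_p$ on $I_{\mathfrak P}$ only says $M_{\mathfrak P}(\zeta_p)/M_{\mathfrak P}$ is unramified --- but the ramification-index count still closes the argument). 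The supersingular case, however, which you correctly identify as the crux, is not actually carried out. First, the image of $\rhobar_{A,\Fp}|_{D_{\Fp'}}$ does \emph{not} factor through the tame quotient of $I_{\Fp'}$ (only the image of inertia does), so your claim that the projective image of $D_{\Fp'}$ is ``simultaneously dihedral and abelian, hence of Klein-four type'' has no basis, and the key assertion $\psi_1^{1-p}|_{I_{\mathfrak P}}=1$ is never derived. Second, writing $\rhobar_{A,\Fp}|_{I_{\Fp'}}$ as $\psi_1\oplus\psi_1^{p}$ with $\psi_1$ of level exactly $2$ silently assumes that $\Fp'$ has residue degree $1$; the lemma is stated for an arbitrary number field $K$ unramified at $p$, and in general Raynaud's classification gives a pair of characters $\psi_s^{e_0+e_1p+\cdots+e_{s-1}p^{s-1}}$ and $\psi_s^{f_0+\cdots+f_{s-1}p^{s-1}}$ with $s=w$ or $2w$ for some $w$ dividing the residue degree, $e_i,f_i\in\{0,1\}$ and $e_i\neq f_i$ for all $i$.

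The way to close the gap (and the route the paper takes, uniformly in all reduction types, so that no case division is needed) is to bound from below the order $k$ of the cyclic group $\PP\rhobar_{A,\Fp}(I_{\Fp'})$: it is the least positive integer such that $p^s-1$ divides $k\sum_i(e_i-f_i)p^i$, and since $\bigl|\sum_i(e_i-f_i)p^i\bigr|\le 1+p+\cdots+p^{s-1}$ while $p^s-1=(p-1)(1+p+\cdots+p^{s-1})$, one gets $k\ge p-1\ge 4$. A cyclic subgroup of order $>2$ of the dihedral group $H=\PP\rhobar_{A,\Fp}(G_K)$ must lie in its index-two cyclic subgroup $C=\PP\rhobar_{A,\Fp}(G_M)$, so $I_{\Fp'}$ maps trivially to $H/C\simeq\Gal(M/K)$, i.e.\ $\Fp'$ is unramified in $M$ --- the desired contradiction. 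Your first two cases are the specialization with all $e_i=1$ and all $f_i=0$.
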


\begin{proof} 
Assume, for a contradiction, that $\rhobar_{A,\Fp} \otimes \Fbar_p$ is reducible when restricted to $G_M$. It follows, by~\cite[Theorem~2.12]{Feit},  that the image of
$\rhobar_{A,\Fp}$ lies in the normalizer of a Cartan subgroup of~$\GL_2(\F_{\Fp})$
and the projectivization $\PP\rhobar_{A,\Fp}$ has dihedral image. Letting~$H$ be this dihedral image, we have the short exact sequence of groups
$$ 0 \to  C \to H \to C_2 \to 0,$$
where $C$ is a cyclic group and~$C_2$ is of order 2.
Moreover, the restriction of $\PP\rhobar_{A,\Fp}$ to~$G_{M}$ has image~$C$
and $C_2 \simeq \Gal(M/K)$.
Let~$I_{\Fp'} \subset G_K$ be an inertia subgroup at~$\Fp'$, where $\Fp' \mid p$ is a prime of~$K$ ramified in~$M$. 
We claim that 
$\PP\rhobar_{A,\Fp}(I_{\Fp'}) \subset H$ is cyclic of order $> 2$. Thus, since $H$ is dihedral, we have
$\PP\rhobar_{A,\Fp}(I_{\Fp'}) \subset C$ and so
$\Gal(M/K)$ corresponds to an extension where~$\Fp'$ 
is unramified, a contradiction. 

Let $\Fp'$ be the prime in~$K$ above~$p$ as above. Let $z$ denote the inertial degree of $\Fp'$. In this case, 
the restriction to~${I_{\Fp'}}$ 
of~$\rhobar_{A,\Fp} \otimes \Fbar_p$
can be described by two fundamental characters of some level $w$ dividing $z$, or by a pair of conjugated fundamental characters of level $2w$ for some $w$ dividing $z$. 

Following \cite[Corollaire 3.4.4]{Raynaud}, we have the following description for the action of ${I_{\Fp'}}$:
$$\psi_s^{e_0 + e_1 p + \ldots + e_{s-1} p^{s-1}}  \oplus \psi_s^{f_0 + f_1 p + \ldots + f_{s-1} p^{s-1}}$$
where $s= w$ or $s= 2 w$ depending on the case, all the numbers $e_i$ and $f_i$ are $0$ or $1$, and $e_i \neq f_i$ for every $i= 0 , 1, \ldots, s-1$. Observe that here we allow some redundancy, for example if $w>1$, $s=w$ and all $e_i$ are equal to $1$ this agrees with the case of a fundamental character of level $1$, which is a case already covered in the work of Ribet~\cite{Ribet97BadDihedral}.

We want to get a lower bound for the order $k$ of $\PP\rhobar_{A,\Fp}(I_{\Fp'})$. This is the smallest positive exponent $k$ such that:
\[
\psi_s^{  k (e_0 + e_1 p + \ldots + e_{s-1} p^{s-1})}  = \psi_s^{  k (f_0 + f_1 p + \ldots + f_{s-1} p^{s-1})}.
\]
Thus, since the order of $\psi_s$ is $p^s - 1$, we search for the smallest positive $k$ such that $p^s-1$ divides $k ( (e_0 - f_0) + (e_1 - f_1) p + \ldots + (e_{s-1} - f_{s-1}) p^{s-1})$.

Since $p^s-1 = (p-1) (1+ p + \ldots+ p^{s-1})$, from the inequality
\[
\left|    (e_0 - f_0) + (e_1 - f_1) p + \ldots + (e_{s-1} - f_{s-1}) p^{s-1} \right|  \leq 1+ p + \ldots+ p^{s-1},
\]
we conclude that $k$ must satisfy $k \geq p-1$, and since we are assuming that $p$ is at least $5$ we conclude that $k \geq 4$. 
\end{proof}

\section{Finiteness of specializations of Frey representations}
\label{S:finiteness}

Besides irreducibility, another essential hypothesis for level lowering is that the relevant mod~$p$ representation 
is finite at all primes~$\Fp \mid p$. In the context of the modular method, this guarantees the representation is of parallel weight 2, hence independent of~$p$ and the putative solution. When working with Frey elliptic curves, the theory of the Tate curve provides a simple criterion, using the valuation of the minimal discriminant, to decide if the $p$-torsion representation is indeed finite. In~\cite{DarmonDuke}, Darmon implicitly uses that an analogous criterion is available for Frey varieties, but we are not aware of a complete reference. 
We note also that~\cite{Ellenberg} gives a related criterion which is a direct generalization of the usual criterion for elliptic curves. However, it is hard to use due to the need to determine a discriminantal set.

In this section we give such a finiteness criterion that in particular can be applied to the Frey variety~$J_r$ and suffices 
for our Diophantine application below. 
More concretely, the objective of this section is to prove Theorem~\ref{T:finite} which is a critical technical result needed for the proof of Proposition~\ref{P:SerreCond}. We shall derive it from the more general Theorem~\ref{finiteness}.

\subsection{Finiteness of residual Galois representations}

Let $\ell$ be a prime. Let $K$ be a finite extension of $\Q_\ell$ with ring of integers $\calO$, normalized valuation $v$, and residue field $k$. Let $G$ be the absolute Galois group of $K$, $P \subseteq I$ the wild inertia and inertia subgroup of $G$, respectively.

Let $K^\nr$ be the maximal unramified extension of $K$, which is fixed by the inertia subgroup~$I$ of $G$.
Let $K^\text{tr}$ be the maximal tamely unramified extension of $K^\nr$, which corresponds to the wild inertia subgroup $P$ of $I$.
We denote also by~$\vv$ the extension of~$\vv$ to~$K^\nr$.

Let $\rhobar : G \rightarrow \GL_{2}(\F)$ be a representation with $\F$ a finite field of size $p^f$, where $p$ is a prime.

The group $\rhobar(I)$ corresponds to the Galois group of a finite totally ramified extension $M/K^\nr$ and $\rhobar(P)$ is an elementary $p$-group in $\GL_{2}(\F)$ which corresponds to a finite tamely ramified subextension $M^t$ of $M/K^\nr$.

We have that $M = M^t(x_1^{1/p}, \ldots, x_m^{1/p})$ for some $x_i \in K^\nr$ and $p^m = \#\rhobar(P)$ by Kummer theory.  

We say that $\rhobar$ is {\it peu ramifi\'e} if $\vv(x_i) \equiv 0 \pmod p$ for all $i$. Note the case that $\rhobar$ is tamely ramified (i.e. $\rhobar(P) = 1$) is included in the peu-ramifi\'e case. 

We say that $\rhobar$ is {\it finite} if $\rhobar$ arises from a finite flat group 
scheme over $\calO$.

\begin{theorem}
\label{ppff-equivalence}
$\rhobar$ is peu ramifi\'e if and only if $\rhobar$ is finite.
\end{theorem}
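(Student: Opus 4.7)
The plan is to separate the argument into two cases according to whether $p = \ell$, and to invoke the theory of finite flat group schemes in the equi-characteristic case.

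If $p \neq \ell$, the wild inertia $P$ is a pro-$\ell$ group while $\rhobar(P)$ is a finite $p$-group inside $\GL_2(\F)$, hence $\rhobar(P) = 1$; so $m = 0$ and the peu-ramifié condition holds vacuously. At the same time, any tame representation of $G$ is realized on an étale (hence finite flat) $\calO$-group scheme, so $\rhobar$ is finite. Both conditions thus hold automatically, and the equivalence is trivial.

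Assume now $p = \ell$. Kummer theory applied over $K^{\nr}$ (which contains $\mu_p$) gives a short exact sequence
\[
1 \To \calO_{K^{\nr}}^*/\calO_{K^{\nr}}^{*p} \To K^{\nr,*}/K^{\nr,*p} \xrightarrow{\vv \bmod p} \Z/p\Z \To 1,
\]
which identifies the peu-ramifié condition with the assertion that each $x_i$ can be chosen to be a unit. For the implication peu ramifié $\Rightarrow$ finite, given unit Kummer generators $x_i$ one constructs for each $i$ the closed $\calO_{K^{\nr}}$-subscheme $\{y^p = x_i\}\subset\Gm$, which is a finite flat group scheme (a form of $\mu_p$) whose generic fiber realizes $K^{\nr}(x_i^{1/p})$. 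Combining these pieces with the étale part coming from $M^t$ and descending along the $G/I$-equivariance of the construction yields a finite flat $\calO$-group scheme whose generic fiber realizes $\rhobar$.

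The hard direction is finite $\Rightarrow$ peu ramifié. Suppose $\rhobar$ is realized by a finite flat $\calO$-group scheme $\calG$. After base change to $\calO_{K^{\nr}}$, the connected-étale sequence isolates the tame part (étale) from the wild part (connected, which by Cartier duality is essentially of multiplicative type). Raynaud's classification \cite{Raynaud} of commutative finite flat group schemes of type $(p,\dots,p)$, together with the Oort--Tate parameters, then forces the Kummer generators of the wild part to lie in $\calO_{K^{\nr}}^*$ modulo $p$-th powers, which is exactly the peu-ramifié condition. The principal obstacle is ensuring that the Raynaud-type classification applies in the required ramification range: for the applications in this paper the absolute ramification index at the relevant primes is small enough for the classical form of \cite{Raynaud} to suffice, but in complete generality one appeals to the Breuil--Kisin refinement, and to the fact that on the generic fiber the extension classes in $H^1(G, \F(\chi))$ arising from finite flat models are precisely the peu-ramifié ones in the sense of Serre.
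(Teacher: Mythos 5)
The paper does not prove this statement itself --- it quotes \cite[Proposition~4.2.1]{Ste19} (and \cite[Proposition~8.2]{Edixhoven} for $K=\Q_\ell$) --- so your sketch has to stand on its own, and it does not. The serious gap is the direction ``peu ramifi\'e $\Rightarrow$ finite'' for $\ell=p$. Knowing that the splitting field of $\rhobar|_I$ over $K^{\nr}$ is generated by $p$-th roots of units does not by itself produce a finite flat model of the two-dimensional $\F$-module $\rhobar$; the sentence ``combining these pieces with the \'etale part \dots\ and descending \dots\ yields a finite flat $\calO$-group scheme'' is the entire content of the implication, not a proof of it. (Also, $\{y^p=x_i\}$ is a $\mu_p$-torsor, not a group scheme, and $K^{\nr}$ contains $\mu_p$ only when $K$ does.) Worse, with the conventions of this section --- every tamely ramified $\rhobar$ counts as peu ramifi\'e --- this implication is not available without a hypothesis on the shape of $\rhobar|_I$ that you never invoke: over $\Q_p$ with $p\ge 5$, the tame representation $\chi_p^2\oplus 1$ is peu ramifi\'e but not finite, since by Oort--Tate the inertia characters of order-$p$ group schemes over $\Z_p$ are only $1$ and $\chi_p|_I$, whereas a finite flat model of $\chi_p^2\oplus 1$ would have (via scheme-theoretic closure) an order-$p$ subquotient with inertia character $\chi_p^2|_I$. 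A correct argument must use that $\rhobar|_I$ is, up to twist, an extension of $1$ by $\chi_p$ --- which is what actually holds in the paper's application, via the Mumford/Tate parametrization --- and then genuinely construct the corresponding extension of $\Z/p\Z$ by $\mu_p$ over $\calO$ from a unit Kummer class, e.g.\ via $H^1_{\mathrm{fppf}}(\Spec\calO,\mu_p)\cong\calO^*/(\calO^*)^p\hookrightarrow K^*/(K^*)^p$.

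Two further points. Your $\ell\neq p$ case is wrong: an \'etale finite flat group scheme over $\calO$ has \emph{unramified} (not merely tame) generic fibre, so a ramified but tamely ramified representation is peu ramifi\'e (vacuously) yet not finite; this case is really outside the intended scope of the statement, which the paper only ever invokes for $\ell=p$ (for $\ell\neq p$, Theorem~\ref{peu-ramifie-finite} concludes ``unramified'' by a separate, elementary argument). For ``finite $\Rightarrow$ peu ramifi\'e'' your outline (connected--\'etale sequence plus \cite{Raynaud}/Oort--Tate) is the right family of ideas, but you yourself flag that you cannot control the ramification range; the standard way to close this for arbitrary absolute ramification index is Fontaine's bound on the higher ramification of $K(\calG(\overline{K}))/K$ for $\calG$ finite flat and killed by $p$, which tr\`es ramifi\'e extensions violate. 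As written, the proposal is an outline of a known strategy in which both key steps are either missing or incorrect.
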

\begin{proof}
  This is \cite[Proposition~4.2.1]{Ste19}; 
  see also \cite[Proposition 8.2]{Edixhoven} for another proof in the case  $K = \Q_\ell$. 
\end{proof}

\begin{theorem}
\label{peu-ramifie-finite}
Suppose the extension cut out by the representation $\rhobar : G \rightarrow \GL_{2}(\F)$ is contained in an extension of the form $K^\text{nr}(y_1^{1/p}, \ldots, y_n^{1/p})$ in the case $\ell \not= p$, and $K^\text{tr}(y_1^{1/p}, \ldots, y_n^{1/p})$ in the case $\ell = p$.

Assume $y_j \in K^\text{nr}$ and $v(y_j) \equiv 0 \pmod p$ for all $j = 1,\ldots, n$. 

Then $\rhobar$ is unramified in the case $\ell \not= p$, and finite in the case $\ell = p$.
\end{theorem}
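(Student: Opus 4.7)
The plan splits naturally into the two cases.

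For $\ell \neq p$, I will show via Hensel's lemma that $K^\text{nr}(y_1^{1/p}, \ldots, y_n^{1/p}) = K^\text{nr}$, so that $\rhobar$ is unramified. Writing $y_j = u_j \pi^{p k_j}$ with $\pi$ a uniformizer of $K^\text{nr}$ and $u_j$ a unit (possible because $v(y_j) \equiv 0 \pmod p$), the reduction of $T^p - u_j$ modulo the maximal ideal has a root in the algebraically closed residue field $\bar k$; since $p$ is invertible in $\bar k$ this root is simple, and Hensel's lemma lifts it to $\calO_{K^\text{nr}}$, giving $y_j^{1/p} \in K^\text{nr}$. Consequently the extension cut out by $\rhobar$ is contained in $K^\text{nr}$.

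For $\ell = p$, my plan is to invoke Theorem~\ref{ppff-equivalence} and prove $\rhobar$ is peu ramifié. Since $K^\text{nr}(\mu_p)/K^\text{nr}$ has degree $p-1$ and is therefore tame, one has $\mu_p \subset K^\text{tr}$, so $E := K^\text{tr}(y_1^{1/p}, \ldots, y_n^{1/p})$ is an abelian Kummer $p$-extension of $K^\text{tr}$ with Kummer subgroup $\langle \bar y_1, \ldots, \bar y_n \rangle \subset K^{\text{tr},*}/K^{\text{tr},*p}$. The hypothesis yields $M \cdot K^\text{tr} \subseteq E$, and since $M \cap K^\text{tr} = M^t$, restriction produces a canonical isomorphism $\text{Gal}(M \cdot K^\text{tr}/K^\text{tr}) \simeq \text{Gal}(M/M^t)$. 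The Kummer subgroup $V' \subseteq \langle \bar y_j \rangle$ corresponding to $M \cdot K^\text{tr}$ then lifts to a basis of products $x_i' = \prod_j y_j^{a_{i,j}} \in K^\text{nr}$, which automatically satisfy $v(x_i') \equiv 0 \pmod p$ and give $M \cdot K^\text{tr} = K^\text{tr}(x_1'^{1/p}, \ldots, x_m'^{1/p})$.

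The main obstacle will be the descent from $K^\text{tr}$ back to $M^t$: proving $M = M^t(x_1'^{1/p}, \ldots, x_m'^{1/p})$. The key input is the injectivity of the natural map $K^{\text{nr},*}/K^{\text{nr},*p} \to K^{\text{tr},*}/K^{\text{tr},*p}$: if $z \in K^{\text{nr},*}$ becomes a $p$-th power in $K^\text{tr}$, then $K^\text{nr}(z^{1/p})/K^\text{nr}$ is a $p$-subextension of the tame extension $K^\text{tr}/K^\text{nr}$ of degree dividing $p$, hence trivial. Starting from the preliminary presentation $M = M^t(x_1^{1/p}, \ldots, x_m^{1/p})$ with $x_i \in K^\text{nr}$ furnished by the setup of Section~\ref{S:finiteness}, comparing Kummer classes over $K^\text{tr}$ gives $x_i = \prod_j (x_j')^{c_{ij}} \cdot \gamma_i^p$ with $\gamma_i \in K^\text{tr}$; injectivity forces each $\gamma_i \in K^\text{nr} \subseteq M^t$, while the matrix $(c_{ij})$ is invertible over $\F_p$ because both $\{\bar x_i\}$ and $\{\bar x_j'\}$ are bases of the $m$-dimensional space $V'$ (via the canonical isomorphism above). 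This lets me rewrite each $(x_j')^{1/p}$ as a product of the $x_i^{1/p}$'s and elements of $M^t$, placing it in $M$; the resulting peu ramifié presentation of $M/M^t$ then yields that $\rhobar$ is finite by Theorem~\ref{ppff-equivalence}.
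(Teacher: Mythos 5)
Your proof is correct and takes essentially the same route as the paper's: the $\ell \neq p$ case is handled by observing the extension is unramified (you supply the Hensel's lemma reason), and the $\ell = p$ case reduces to showing the Kummer generators $x_i$ of $M/M^t$ have valuation divisible by $p$, which the paper dispatches with a one-line appeal to ``Kummer theory'' and which you work out in full detail. The only cosmetic point is that your $\gamma_i$ a priori lies in $\mu_p \cdot K^{\nr,*}$ rather than in $K^{\nr,*}$ itself, but since $\mu_p \subset M^t$ (forced by the Galois Kummer presentation of $M/M^t$) this changes nothing.
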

\begin{proof} Recall that $P$ is the largest pro-$\ell$ subgroup of $G$.

If $\ell \not= p$, then the hypotheses imply that $\rhobar$ is unramified. 

If $\ell = p$, then we have that the extension $M^t(x_1^{1/p}, \ldots, x_m^{1/p})$ corresponding to $\rhobar(P)$ is contained in $K^\text{tr}(y_1^{1/p}, \ldots, y_n^{1/p})$. By Kummer theory, the condition $v(y_j) \equiv 0 \pmod p$ for all $j$ implies that $v(x_i) \equiv 0 \pmod p$ for all $i$. Hence, $\rhobar$ is peu-ramifi\'e at $\ell = p$. By Theorem~\ref{ppff-equivalence}, $\rhobar$ is finite.
\end{proof}

Before proceeding, we introduce some notation that will be of use until the end of this section.

For $z_0 \in \mathbb{P}^1(K)$, let $\pi_0(t) = t - z_0$ if $z_0 \neq \infty$ and $\pi_0(t) = 1/t$ if $z_0 = \infty$. We will write $\pi_0$ for $\pi_0(t)$. Let $R = \calO[[\pi_0]]$ and let~$F$ be its field of fractions.

Let $A$ be an abelian variety over the function field $K(t)$ of dimension $d$. Assume~:
\begin{enumerate}[(i)]
\item $A$ has a model over~$R$ with multiplicative reduction at $\pi_0$ with full toric rank;
\item\label{item:abelian_scheme} $A$ extends to an abelian scheme over~$\calO((\pi_0))$;
\item\label{item:mult_red} $A$ over~$k[[\pi_0]]$ has multiplicative reduction at $\pi_0$ with full toric rank.
\end{enumerate}

Condition~(\ref{item:abelian_scheme}) ensures that $A(s_0)$ for $s_0 \not= z_0$ in the unit disk centered at $z_0$ is an abelian variety over $K$ and condition~(\ref{item:mult_red}) ensures such $A(s_0)$ has multiplicative reduction over $K$.

Replacing $K$ by a finite unramified extension if necessary, we may also assume that $A$ over~$K[[\pi_0]]$ has split  multiplicative reduction at $\pi_0$. By Mumford uniformization \cite{Mumford} (see also~\cite[Theorem~4.5]{Lutkebohmert}), we have that
\begin{equation}
\label{mumford-uniform}
  A(\overline{F}) \simeq \G_m^d(\overline{F})/Y,
\end{equation}
where $Y$ is a set of periods and $d = \dim A$ (a set of periods $Y$ is a subgroup of $\G_m^d(F)$ isomorphic to $\Z^d$ satisfying an analogue of the Riemann conditions; see~\cite[Definition~1.1]{Mumford}).

Let~$X(T) = \Hom(T,\G_m)$ be the group  of (algebraic) characters from $T = \G_m^d$ to~$\G_m$. Denote by~$\delta_1, \ldots, \delta_d$ the component characters for~$X(T)$. Let $y_1, \ldots, y_d$ be a basis for $Y$. 

\begin{proposition}
\label{Tate-period}
For all $\chi$ in~$X(T)$ and $y \in Y$, we have that
\(\chi(y) = \pi_0^nu(\pi_0)\) where~$n$ is an integer and~$u(\pi_0)$ is a unit in~$\calO[[\pi_0]]$.
\end{proposition}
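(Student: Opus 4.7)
The plan is to deduce the statement from the identity $R[1/\pi_0]^{*} = \pi_0^{\Z} \cdot R^{*}$ together with the stronger claim that the full period lattice satisfies $Y \subset T(R[1/\pi_0])$. The identity follows from the fact that $R = \calO[[\pi_0]]$ is a unique factorization domain (by Auslander--Buchsbaum, being a $2$-dimensional regular local ring) in which $\pi_0$ is prime: indeed, if $v \in R[1/\pi_0]^{*}$ is written as $v = \pi_0^{a} r$ with $a \in \Z$ and $r \in R$ not divisible by $\pi_0$, then writing $v^{-1}$ analogously and using $v \cdot v^{-1} = 1$ forces the product of the ``coprime to $\pi_0$'' parts of $v$ and $v^{-1}$ to equal a power of $\pi_0$ in $R$, which by unique factorization is possible only when that power is $0$ and $r \in R^{*}$. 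Granting $Y \subset T(R[1/\pi_0]) = (R[1/\pi_0]^{*})^{d}$, any character $\chi = \sum_{i} a_{i} \delta_{i}$ evaluated at a basis element~$y_{j} \in Y$ yields the monomial $\chi(y_{j}) = \prod_{i} \delta_{i}(y_{j})^{a_{i}} \in R[1/\pi_0]^{*} = \pi_0^{\Z} \cdot R^{*}$, which is precisely the required form.

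To establish the inclusion $Y \subset T(R[1/\pi_0])$, I would invoke the Mumford--Raynaud construction in the relative setting over the complete base $R$. After replacing $K$ by a finite unramified extension (which does not affect the conclusion), we may assume the torus in the degeneration is split. Assumption~(i) provides a semi-abelian model $\calG$ of $A$ over $R$ whose fiber over the closed point is a split torus $T \cong \G_m^{d}$ of maximal rank. By the relative Mumford construction (\cite{Mumford}; see also the uniformization behind~\cite[Theorem~4.5]{Lutkebohmert}), such degeneration data over a complete local base is equivalent to the datum of a period lattice $Y \subset T(R[1/\pi_0])$ of full rank $d$, with $A$ recovered rigid-analytically as $T/Y$. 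Hypothesis~(ii) ensures the quotient is an honest abelian scheme on the punctured formal disk, so the periods are algebraic (rather than merely formal) elements of $T(R[1/\pi_0])$; and hypothesis~(iii) ensures compatibility of the uniformization with reduction modulo~$\pi_\ell$, equivalently that the coordinates of any $y \in Y$, expanded as Laurent series in $\pi_0$, have coefficients in $\calO$ rather than only in $K$.

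The main obstacle is justifying the integrality $Y \subset T(R[1/\pi_0])$: a priori, Mumford uniformization over the non-complete fraction field $F$ only places the periods in $T(F)$, and one must use hypotheses~(i)--(iii) in combination to control simultaneously the behaviour along both the $(\pi_0)$- and $(\pi_\ell)$-directions on $\Spec(R)$. Concretely, I would argue that a period $y \in Y$ specializes canonically to the Mumford period lattice of $A \bmod \pi_\ell$ over $k[[\pi_0]]$ provided by~(iii); the existence of this specialization forces each coordinate of $y$, viewed in $K((\pi_0))$, to have its Laurent expansion with coefficients in $\calO$, hence to lie in $R[1/\pi_0]^{*}$. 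This is a technical but standard invocation of the relative Raynaud extension theorem over the mixed complete local base $R$, together with the compatibility of its construction with the two reduction maps $R \to \calO$ and $R \to k[[\pi_0]]$.
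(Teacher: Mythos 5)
Your opening reduction is fine as far as it goes: $R=\calO[[\pi_0]]$ is a $2$-dimensional regular local ring, hence a UFD in which $\pi_0$ is prime, so indeed $R[1/\pi_0]^{*}=\pi_0^{\Z}\cdot R^{*}$, and the proposition is exactly the assertion that each $\chi_i(y_j)$ lies in $R[1/\pi_0]^{*}$. But this means the entire content of the statement has been relocated into the inclusion $Y\subset T(R[1/\pi_0])$, and there you have a genuine gap. The Mumford/Faltings--Chai theory over a complete normal base only places the periods in $T(F)$ with $F=\Frac(R)$; there is no citable ``relative Raynaud extension theorem'' asserting $Y\subset T(R[1/\pi_0])$ in this generality, and the justification you sketch does not close the gap. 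Concretely, let $\pi$ be a uniformizer of $\calO$ and consider $v=(\pi_0-\pi)/\pi_0=1-\pi/\pi_0\in F^{\times}$. Its Laurent expansion has coefficients in $\calO$, and its image under $R\to k[[\pi_0]]$ is $1\in k((\pi_0))^{\times}$, so it passes both tests you propose (compatibility with the reduction to $k[[\pi_0]]$, and integrality of Laurent coefficients); yet $v\notin\pi_0^{\Z}R^{*}$, because $\pi_0-\pi$ is a prime of $R$ not associate to $\pi_0$ (equivalently, $v^{-1}=\pi_0/(\pi_0-\pi)$ has Laurent coefficients $-\pi^{-k}\notin\calO$). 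So the two specializations you invoke cannot force $\chi_i(y_j)\in R[1/\pi_0]^{*}$; moreover the reduction $R\to\calO$, $\pi_0\mapsto 0$, carries no information at all here, since that is the centre of the disk, where the periods genuinely degenerate.

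What is missing is the second specialization used in the paper's proof: evaluation at points $s_0\neq z_0$ of the open unit disk. Writing $\chi_i(y_j)=\pi^{m}\,(f/g)\,u$ by Weierstrass preparation, with $f,g$ distinguished polynomials and $u\in R^{*}$, the functoriality of Mumford uniformization under $R\to k[[\pi_0]]$ (hypothesis (iii)) gives $m=0$ --- this is the part of your argument that works. To show that $f$ and $g$ are powers of $\pi_0$ one must use hypothesis (ii): a root of $f$ or $g$ other than $0$ is of the form $\pi_0(s_0)$ for some $s_0\neq z_0$ in the disk over a finite extension $K'/K$, so $\chi_i(y_j)$ would acquire a zero or pole at $s_0$; but the periods of $A(s_0)$ are precisely the specializations of the $\chi_i(y_j)$, and $A(s_0)$ is an abelian variety with multiplicative reduction, so its periods are finite and nonzero --- a contradiction. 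In the example above, $v$ is ruled out exactly by evaluating at the point with $\pi_0(s_0)=\pi$. Without this step your proof does not go through.
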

\begin{proof}
We prove the statement for~$\chi_i(y_j)$ where~$\chi_1, \ldots, \chi_d$ is a basis for~$X(T)$. As $\chi_i(y_j) \in F^\times$, by Weierstrass preparation theorem, it is uniquely of the form 
\begin{equation*}
  \pi^m \frac{f(\pi_0)}{g(\pi_0)} u(\pi_0),
\end{equation*}
where $\pi$ is a uniformizer for $\mathcal{O}$, $m$ is an integer, $f(\pi_0), g(\pi_0) \in \mathcal{O}[\pi_0]$ are distinguished polynomials in $\pi_0$, and $u(\pi_0)$ is a unit in $\mathcal{O}[[\pi_0]]$. Using functoriality of Mumford uniformization with respect to the base, the periods of $A/k[[\pi_0]]$ are  precisely the reductions of the periods of $A/K[[\pi_0]]$, hence $m = 0$.

If $f$ or $g$ is not a power of $\pi_0$, then there is an element $s_0 \not= z_0 \in \mathcal{O}'$ the ring of integers of a finite extension $K'/K$ such that $f(\pi_0)/g(\pi_0)$ and hence $\chi_i(y_j)$ has a zero or pole at $s_0$. Similarly, the periods of $A(s_0)$ are precisely the specializations of the periods of $A/\calO[[\pi_0]]$, hence $\chi_i(y_j)$ cannot have a zero or pole at $s_0$.
\end{proof}

\begin{theorem}
\label{finiteness}
Assume further the following two hypotheses~:  
\begin{enumerate}
\item\label{item:finiteness1} $A$ is of $\GL_2$-type with real multiplications by $F \hookrightarrow \End_{K(t)}(A) \otimes \Q$;
\item\label{item:finiteness3} there exists~$t_1 \in \mathbb{P}^1(K)$ satisfying $\ord_K(\pi_0(t_1)) > 0$ and $\ord_K(\pi_0(t_1) )\equiv 0 \pmod p$.
\end{enumerate}
Let $\Fp$ be any prime of $F$ above a rational prime~$p$. Then $A(t_1)[\Fp]$ is unramified (resp.\ finite) if $\ell \not= p$ (resp.\ $\ell = p$).
\end{theorem}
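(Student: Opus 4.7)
The plan is to reduce the statement to Theorem~\ref{peu-ramifie-finite} by using the Mumford uniformization of~\eqref{mumford-uniform} to describe $A(t_1)[p]$ explicitly, and then observing that the hypothesis on~$\ord_K(\pi_0(t_1))$ forces the generators of the corresponding Kummer extension to have valuations divisible by~$p$. Since $K(A(t_1)[\Fp]) \subset K(A(t_1)[p])$, it suffices to control the field of definition of the full $p$-torsion.

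First, I would pass (harmlessly) to a finite unramified extension of~$K$ so that the reduction of~$A$ at~$\pi_0$ is split multiplicative and the Mumford uniformization of~\eqref{mumford-uniform} applies. Specializing at~$t_1$ — which lies in the open disk $\ord_K(\pi_0(t_1))>0$ where the uniformization is convergent by hypothesis~(\ref{item:abelian_scheme}) — yields an identification
\[
A(t_1)(\overline{K}) \simeq \G_m^d(\overline{K})/Y(t_1),
\]
where $Y(t_1)$ is the specialization of~$Y$. Choosing a basis $y_1,\dots,y_d$ of~$Y$ and the component characters $\delta_1,\dots,\delta_d$ of~$T=\G_m^d$, one has that $A(t_1)[p]$ consists of classes of tuples $z\in\G_m^d(\overline{K})$ with $z^p\in Y(t_1)$, so that
\[
K\bigl(A(t_1)[p]\bigr) \subset K\bigl(\zeta_p,\,\{\delta_i(y_j)(t_1)^{1/p}\}_{i,j}\bigr).
\]

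Next, I would apply Proposition~\ref{Tate-period} to each component character $\delta_i$: it gives $\delta_i(y_j) = \pi_0^{n_{ij}}\, u_{ij}(\pi_0)$ with $n_{ij}\in\Z$ and $u_{ij}\in\calO[[\pi_0]]^\times$. Because $\ord_K(\pi_0(t_1))>0$, the evaluation $u_{ij}(\pi_0(t_1))$ is a unit in $\calO$, so
\[
v\bigl(\delta_i(y_j)(t_1)\bigr) = n_{ij}\cdot\ord_K(\pi_0(t_1)) \equiv 0 \pmod{p}
\]
by hypothesis~(\ref{item:finiteness3}). The elements $\delta_i(y_j)(t_1)$ therefore meet the valuation hypothesis of Theorem~\ref{peu-ramifie-finite}.

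Finally, I would split into the two cases. If $\ell\neq p$, then $\zeta_p\in K^{\nr}$ and the field cut out by $\rhobar_{A(t_1),\Fp}$ is contained in $K^{\nr}\bigl(\{\delta_i(y_j)(t_1)^{1/p}\}\bigr)$; Theorem~\ref{peu-ramifie-finite} gives that $\rhobar_{A(t_1),\Fp}$ is unramified. If $\ell=p$, then $[K(\zeta_p):K]$ divides $p-1$ and hence is prime to $\ell$, so $\zeta_p\in K^{\mathrm{tr}}$; the field cut out by $\rhobar_{A(t_1),\Fp}$ is then contained in $K^{\mathrm{tr}}\bigl(\{\delta_i(y_j)(t_1)^{1/p}\}\bigr)$, and Theorem~\ref{peu-ramifie-finite} yields that $\rhobar_{A(t_1),\Fp}$ is finite. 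The main technical point — and the step I would treat with the most care — is the compatibility of the Mumford uniformization with the specialization $t\mapsto t_1$: one needs that the specialized periods $\delta_i(y_j)(t_1)$ genuinely generate the Kummer-type field of definition of $A(t_1)[p]$, which is precisely what hypotheses~(\ref{item:abelian_scheme}) and~(\ref{item:mult_red}) are designed to ensure.
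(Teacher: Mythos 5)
Your proposal is correct and follows essentially the same route as the paper's proof: Mumford uniformization specialized at $t_1$, Proposition~\ref{Tate-period} to show the specialized periods have valuation divisible by $p$, the inclusion $K(A(t_1)[\Fp]) \subseteq K(\zeta_p)\bigl(\{\delta_i(y_j(\pi_0(t_1)))^{1/p}\}\bigr)$, and then Theorem~\ref{peu-ramifie-finite}. Your explicit verification that $\zeta_p$ lies in $K^{\nr}$ (resp.\ $K^{\mathrm{tr}}$) is a detail the paper leaves implicit, but otherwise the arguments coincide.
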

\begin{proof} 
Recall that~$\delta_i$ are the component characters for~$X(T)$ and $y_j(\pi_0)$ is a basis for $Y$. 
By Proposition~\ref{Tate-period}, we write 
\begin{equation*}
  \delta_i(y_j(\pi_0)) = \alpha_n \pi_0^n + \alpha_{n+1} \pi_0^{n+1} + \alpha_{n+2} \pi_0^{n+2} + \ldots\in\pi_0^nR
\end{equation*}
where $\alpha_n \in \calO^\times$. By assumption~\eqref{item:finiteness3}, we have~$\ord_K(\pi_0(t_1)) > 0$. Therefore, $\delta_i(y_j(\pi_0))$ converges to an element in~$K$ after evaluating at~$t = t_1$. By the second part of assumption~\eqref{item:finiteness3} and the fact that~$\alpha_n$ is a unit, we conclude that~$\ord_K(\delta_i(y_j(\pi_0(t_1)))) \equiv 0 \pmod p$ for all $i,j$. Hence, by assumption~\eqref{item:finiteness1} and~\eqref{mumford-uniform}, we have
\begin{equation*}
  K(A(t_1)[\Fp]) \subseteq K(A(t_1)[p]) \subseteq K(\zeta_p) \left( \left\{ \delta_i(y_j(\pi_0(t_1)))^{1/p} : i, j = 1, \ldots, d \right\} \right),
\end{equation*}
where $\zeta_p$ is a primitive $p$th root of unity. By Theorem~\ref{peu-ramifie-finite}, $A(t_1)[\Fp]$ is unramified (resp.\ finite) if $\ell \not= p$ (resp.\ $\ell = p$).
\end{proof}

\subsection{Specializations coming from solutions} 

We now give the criterion to prove finiteness of $\Fp$-torsion representations arising on our Frey varieties. We again use the notation of the previous sections. In particular, $r\geq5$ is a prime, $a,b$ are coprime integers such that $a^r + b^r \neq 0$ and we write~$J_r$ for the base change of~$J_r(a,b) = \Jac(C_r(a,b))$ to~$K = \Q(\zeta_r)^+$.

\begin{theorem}\label{T:finite}
Let~\(p\) be a rational prime number.
Let~$\Fq$ be a prime in~$K$ not dividing~$2r$ such that~$v_\Fq(a^r + b^r)\equiv 0\pmod{p}$. We have the following conclusions~:
\begin{itemize}
\item If $\Fq$ does not divide $p$, then~$\rhobar_{J_r,\Fp}$ is unramified at~$\Fq$ for all~\(\Fp\mid p\) in~\(K\);
\item If $\Fq$ divides $p$, then~$\rhobar_{J_r,\Fp}$ is finite at~$\Fq$ for all~\(\Fp\mid p\) in~\(K\).
\end{itemize}
\end{theorem}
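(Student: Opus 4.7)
The plan is to deduce the theorem from Theorem~\ref{finiteness} by realizing $J_r$ as the fiber at $u=0$ of a one-parameter family of abelian varieties having a nearby point of multiplicative-type degeneration. The first step is to dispose of the case $v_\Fq(a^r+b^r)=0$: here Proposition~\ref{P:good_reduction} yields good reduction of $J_r/K_\Fq$, so $\rhobar_{J_r,\Fp}$ is unramified at $\Fq$ when $\Fq\nmid p$, and arises from the étale (hence finite flat) $\Fp$-torsion of the N\'eron model when $\Fq\mid p$. From now on I would assume $v_\Fq(a^r+b^r)\geq p>0$; coprimality of $a,b$ together with $\Fq\nmid 2r$ then forces $\Fq\nmid ab$.

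Next I would set up the family. Consider $A(u):=J_r(a,b+u)$ over $K_\Fq(u)$, whose fiber at $u_1=0$ is $J_r$. Hensel's lemma applied to $f(u)=(b+u)^r+a^r\in\calO_{K_\Fq}[u]$ at the approximate root $u=0$---whose derivative $rb^{r-1}$ is a unit at $\Fq$---produces a unique $u_0\in\calO_{K_\Fq}$ with $u_0\equiv 0\pmod{\Fq}$ and $a^r+(b+u_0)^r=0$, and a first-order expansion yields $v_\Fq(u_0)=v_\Fq(a^r+b^r)$. Setting $z_0=u_0$ and $\pi_0(u)=u-u_0$, the valuation $v_\Fq(\pi_0(u_1))=v_\Fq(u_0)$ is positive and divisible by $p$ by hypothesis. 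The $\GL_2(K)$-structure on $A(u)$ is inherited from Theorem~\ref{T:GL2typeJr} via the change of variables relating $C_r(a,b+u)$ to the universal family $C_r(s)$ (up to the unramified twist by $z_0(u)=\sqrt{a(b+u)}$, which preserves $\GL_2$-type), and condition~(ii) of Theorem~\ref{finiteness} holds on a sufficiently small punctured formal disk around $u_0$: the other roots $-b-a\zeta$ of $f(u)$, with $\zeta\in\mu_r$, reduce to distinct nonzero elements of the residue field and hence lie outside the formal neighborhood of $u_0$. Applying Theorem~\ref{finiteness} to the data $(A,z_0,u_1)$ will then produce precisely the statement of Theorem~\ref{T:finite}.

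The main obstacle will be the verification of conditions~(i) and~(iii) of Theorem~\ref{finiteness}, namely that $A(u)$ has multiplicative reduction of full toric rank $(r-1)/2$ at $u=u_0$, both over $\calO_{K_\Fq}[[\pi_0]]$ and over the residue-field base change $k[[\pi_0]]$. This is a formal-disk analogue of Proposition~\ref{P:multiplicativeRedJ}: using Lemmas~\ref{L:firstKind} and~\ref{L:secondKind} together with the identity $a^r+(b+u_0)^r=0$, the Weierstrass equation of $C_r(a,b+u)$ reduced modulo $\pi_0$ acquires one simple root together with $(r-1)/2$ double roots over the appropriate algebraic closure, so that the central fiber is a rational curve with $(r-1)/2$ ordinary nodes and arithmetic genus $(r-1)/2$. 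Chevalley's structure theorem applied to the N\'eron model of the Jacobian then forces the linear part of the special fiber to be a torus of rank equal to the first Betti number of the dual graph, namely $(r-1)/2$---exactly the full toric rank required. The same analysis carried out after reduction modulo $\pi_\Fq$ yields condition~(iii), completing the verification.
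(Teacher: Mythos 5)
Your proposal is correct in substance and follows the same strategy as the paper: both proofs reduce Theorem~\ref{T:finite} to Theorem~\ref{finiteness} by exhibiting a one-parameter family through $J_r$ that degenerates multiplicatively with full toric rank at the boundary point $t=\infty$ of the parameter line, with the nodal structure of the central fiber verified via Lemmas~\ref{L:firstKind}, \ref{L:secondKind} and~\ref{evaluation-point}. The difference is the choice of local model near that point. The paper passes through $C'_r(t)\to C'_r(u)\to W'_r(u)$ with $u=1/t$, degenerates at $u=0$, and must then undo a quadratic twist by the unit $-ib^{(r-1)/2}/a^{(r+1)/2}$ to return to $C_r(a,b)$; you instead perturb the coefficient, taking $A(u)=J_r(a,b+u)$ centered at the Hensel root $u_0$ of $(b+u)^r+a^r$. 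Your choice is arguably cleaner at both ends: the fiber at $u=0$ is literally $J_r$, so no twist bookkeeping is needed, and the central fiber of the reduction over $k[[\pi_0]]$ is literally $C_r(a,b)\bmod\Fq$, so condition~(iii) is exactly Proposition~\ref{P:multiplicativeRedJ}. Condition~(ii) also checks out as you say, since $f(u)$ vanishes to order exactly one at $u_0$ (as $f'(u_0)=r(b+u_0)^{r-1}$ is a unit), so the discriminant is $\pi_0^{r-1}$ times a unit of $\calO[[\pi_0]]$.

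Two points deserve more care than your sketch gives them. First, hypothesis~(\ref{item:finiteness1}) of Theorem~\ref{finiteness} requires $K\hookrightarrow\End_{K_\Fq(u)}(A(u))\otimes\Q$ over the function field itself, and $C_r(a,b+u)$ is only a specialization of $C_r(s)$ twisted by $z_0(u)=\sqrt{a(b+u)}$, which lives in a quadratic extension of $K_\Fq(u)$; the descent of $[\zeta_r]^*+[\zeta_r^{-1}]^*$ to $K_\Fq(u)$ is not automatic from Theorem~\ref{T:GL2typeJr} (which concerns the fixed specialization) but requires the cocycle computation of Theorem~\ref{hyperelliptic-darmon} applied to this twist. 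It does go through, but it must be said. Second, in your treatment of the case $v_\Fq(a^r+b^r)=0$ with $\Fq\mid p$, the $\Fp$-torsion of the N\'eron model is finite flat but certainly not \'etale in residue characteristic $p$; the parenthetical ``\'etale (hence finite flat)'' should be deleted, though the conclusion ``finite'' stands.
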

\begin{remark}
When~\(p\nmid r - 1\), we note by~\eqref{E:discriminant} that the condition~\(v_\Fq(a^r + b^r)\equiv 0\pmod{p}\) in Theorem~\ref{T:finite} is equivalent to the more usual condition~\(v_\Fq(\Delta(C_r)) \equiv 0\pmod{p}\), where~\(\Delta(C_r)\) is the discriminant of~\(C_r = C_r(a,b)\) defined in~\eqref{kraushyper}.
\end{remark}
\begin{proof}[Proof of Theorem~\ref{T:finite}]
Suppose $\Fq$ lies above the rational prime $q$ not dividing $2r$.

If $q \nmid a^r+b^r$, then $J_r$ has good reduction at $q$, and we have the desired conclusions. 

Let us now assume that~$q\mid a^r + b^r$, and hence~$q\nmid ab$. Recall from Lemma~\ref{model-t} that $C'_r(t)$ is given by
\begin{equation}
  C'_r(t) \; : \; y^2 = x^r + c_1 \alpha^2 x^{r-2} + \ldots + c_{\frac{r-1}{2}} \alpha^{r-1} x + \alpha^{r-1} (2t - 1),
\end{equation}
where $\alpha$ satisfies equation~\eqref{eq:sign_for_alpha}. According to equation~\eqref{eq:disc_C_r_dash}, the discriminant of $C'_r(t)$ is given by
\begin{equation*}
  \Delta_r'(t) = (-1)^\frac{r-1}{2} 2^{2(r - 1)}r^r \alpha^{(r - 1)^2} = (-1)^{\frac{r - 1}{2}}2^{2(r - 1)}r^r(t(1 - t))^{\frac{(r - 1)^2}{2}}.
\end{equation*}
Letting $u = 1/t$ so $\alpha^2 = (u-1)/u^2$, we obtain
\begin{equation}
  C'_r(u) \; : \; y^2 = x^r + c_1 \alpha^2 x^{r-2} + \ldots + c_{\frac{r-1}{2}} \alpha^{r-1} x + \alpha^{r-1} (2 - u)/u,
\end{equation}
whose discriminant is given by 
\begin{equation}
  \Delta_r'(u) = (-1)^\frac{r-1}{2} 2^{2(r - 1)} r^r \left(\frac{u-1}{u^2}\right)^\frac{(r - 1)^2}{2}.
\end{equation}
Let~$i$ be a  fixed primitive fourth root of unity. Replacing $x$ by $x/(iu)$ and $y$ by $y/(iu)^\frac{r}{2}$,  yields the hyperelliptic curve
\begin{equation*}
  W'_r(u) \; : \; y^2 = x^r - c_1 \alpha^2 u^2 x^{r-2} + \ldots \pm c_{\frac{r-1}{2}} \alpha^{r-1} u^{r-1}  x + i^r \alpha^{r-1} u^{r-1} (2 - u) ,
\end{equation*}
with discriminant
\begin{equation}
\label{disc-u}
  2^{2(r - 1)} r^r (u-1)^\frac{(r - 1)^2}{2} u^{r-1}.
\end{equation}
Let~$\calO'$ be the ring of integers of $K' = K_\Fq(i)$, where $K_\Fq$ is the completion of $K$ at $\Fq$.

Let us write~$t_0 = \frac{a^r}{a^r+b^r}$ and~$J' = \Jac(W_r'(u))$. By Lemma~\ref{L:twistedKraus}, the curve~$C'_r(t_0)$ is a quadratic twist of $C_r(a,b)$ by $-\frac{(ab)^\frac{r-1}{2}}{a^r+b^r}$. Therefore, $W'_r(u_0)$ is the quadratic twist of $C_r(a,b)$ by $-ib^{\frac{r - 1}{2}}/a^{\frac{r + 1}{2}}$. As~$q$ does not divide~$2ab$, the desired result will follow from the same conclusion for~$\rhobar_{J',\Fp}$. Therefore, we wish to apply Theorem~\ref{finiteness} to the abelian variety~$J'$ and~$t_1 = \frac{1}{t_0}$. 

From the discriminant formula~\eqref{disc-u}, it follows that~$J'$ extends to an abelian scheme over~$\calO'((u))$, since~$r$, $u$ and~$u - 1$ are all invertible in~$\calO'((u))$.

Since~$v_q(a^r + b^r)\equiv0\pmod{p}$,  we have~$v_\Fq(t_1) > 0$ and $v_\Fq(t_1) \equiv 0 \pmod p$. In particular, condition~\eqref{item:finiteness3} of Theorem~\ref{finiteness} is satisfied.

To apply Theorem~\ref{finiteness} to~$J'$ we have to show that the above model for $W'_r(u)$ over $\calO'[u]$ is such that it has the maximal number of double roots over~$K'[[u]]$ and~$k'[[u]]$ when $u = 0$, where~$k'$ denotes the residue field of~$K'$. We first notice that~$(\alpha u)^2 = u - 1$ reduces to~$-1$ modulo~$u$ and hence $i^r\alpha^{r - 1}u^{r - 1} = i^r((\alpha u)^2)^{\frac{r - 1}{2}}$ reduces to~$i^r(-1)^{\frac{r - 1}{2}} = i^{2r - 1} = i$. Therefore the reduction of~$W'_r(u)$ modulo~$u$ is given by 
\[
y^2 = x^r + c_1x^{r - 2} + \ldots + c_{\frac{r - 1}{2}}x + 2i = xh(x^2 + 2) + 2i,
\]
where~$h$ is defined in~\eqref{eq:def_h}.  Let us assume~$r\equiv 1\pmod{4}$. From Lemma~\ref{L:firstKind}, we know that $xh(x^2 + 2) = 2iT_r\left(\frac{x}{2i}\right)$ where $T_r(x)$ is the $r$-th Chebyshev polynomial of the first kind. Hence, $-2i$ is a root of~$xh(x^2 + 2) + 2i$ since~$T_r(-1) = -1$; by Lemma~\ref{evaluation-point}, $-i\omega_j$ for~$j = 1,\dots,\frac{r - 1}{2}$ are also roots of~$xh(x^2 + 2) + 2i$. Moreover, according to Lemma~\ref{L:secondKind}, these latter roots have multiplicity~$>1$. Since~$xh(x^2 + 2) + 2i$ has degree~$r$, they are double roots and we have found that this polynomial has the maximal number of double roots. 

Finally observe that~$\omega_j-\omega_k = \zeta_r^j(1 - \zeta_r^{k - j})(1 - \zeta_r^{-j - k})$ is a unit~$K'(\zeta_r)$ unless~$j = k$. Therefore, the elements~$-i\omega_j$ in~$\calO'$ reduce to different elements in~$k'$. In particular, the above model also has the maximal number of double roots over~$k'[[u]]$, completing the verification of the hypotheses in Theorem~\ref{finiteness} in the case~$r\equiv1\pmod{4}$. We proceed similarly in the case~$r\equiv 3\pmod{4}$.
\end{proof}

\section{Refined level lowering}\label{S:levelLowering}

This section focuses on the `level lowering' step by combining results from the previous two sections. Our approach replaces the classical level lowering theorems of Fujiwara--Jarvis--Rajaei~\cite{Fuj,Jarv,Raj} by a result of Breuil--Diamond~\cite[Theorem~3.2.2]{BreuilDiamond}. The latter has more restrictive hypotheses, but it gives existence and modularity of all lifts with prescribed inertial types as long as the types are compatible with the fixed residual representation. As we shall discuss here and again in Section~\ref{S:eliminationJ}, this approach has various advantages which are fundamental for the elimination step.
We call this approach {\em level lowering with prescribed types} or simply {\em refined level lowering}.

Let~$r, p \geq 5$ be prime numbers, and~$d \in \Z_{>0}$
not divisible by any $r$th power. Assume $p \nmid 2rd$.

Suppose $(a,b,c)$ is a non-trivial primitive solution to equation~\eqref{E:rrp}, that is,
\[
 a^r + b^r = dc^p, \qquad abc \ne 0, \qquad \gcd(a,b,c) = 1.
\]
We have that~\(a,b,c\) are pairwise coprime, due to our assumptions.

Throughout this section, we assume that~\(a,b\) satisfy the following parity condition:
\begin{equation}\label{eq:parity}
    a\equiv0\pmod{2}\quad\text{and}\quad b\equiv1\pmod{4}.
\end{equation}
As usual, we let $J_r = J_r(a,b)/K$ be the attached Frey variety, where $K=\Q(\zeta_r )^+$. Recall from Theorem~\ref{T:GL2typeJr} that~$J_r$ is of~$\GL_2$-type with real multiplications by~$K$. Let $\Fp$ be a prime ideal in~$K$ dividing~\(p\). We write~\(\rhobar_{J_r,\Fp}\) for the corresponding representation of~\(G_K\) (see Section~\ref{S:Freyrrp}). We denote by~\(\Fq_r\) the unique prime ideal above~\(r\) in~\(K\).

\subsection{Finiteness of the representation and Serre level}

For a representation $\rhobar : G_K \to \GL_2(\Fbar_p)$ its {\it Serre level~$N(\rhobar)$} is defined as the Artin conductor of~$\rhobar$ away from~$p$.

The following proposition gives the finite possible values for the Serre level of~\(\rhobar_{J_r,\Fp}\). It uses the results from Theorems~\ref{T:conductorJI} and~\ref{T:finite}.

\begin{proposition} \label{P:SerreCond}
The following assertions hold.
\begin{enumerate}
\item\label{item:Serre_level} The Serre level~$N(\rhobar_{J_r,\Fp})$ of $\rhobar_{J_r,\Fp}$ divides $2^2 \Fq_r^2 \n_d $ 
where $\n_d$ is the squarefree product of the primes $\Fq \nmid 2r$ such that $\Fq \mid d$.
\item\label{item:finite_rep} For each prime $\Fq \mid p$ in~$K$, 
  the representation $\rhobar_{J_r,\Fp}|_{D_\Fq}$ is finite.
\end{enumerate}
\end{proposition}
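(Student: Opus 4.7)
The plan is to deduce both assertions directly from the conductor computation of Theorem~\ref{T:conductorJI} combined with the finiteness criterion of Theorem~\ref{T:finite}, using the factorization $a^r+b^r = dc^p$ with $\gcd(a,b,c)=1$ (and the divisibility hypotheses $p\nmid 2rd$) to decide, prime by prime, which local contributions survive.

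For assertion~\eqref{item:Serre_level}, I would first recall that $N(\rhobar_{J_r,\Fp})$ is by definition the prime-to-$p$ part of the Artin conductor of $\rhobar_{J_r,\Fp}$, and that reduction modulo~$\Fp$ cannot increase local Artin conductors. Hence $N(\rhobar_{J_r,\Fp})$ divides the prime-to-$p$ part of the conductor~$\calN$ of the compatible system $\{\rho_{J_r,\lambda}\}$, which by Theorem~\ref{T:conductorJI} equals $2^2\Fq_r^2\mathfrak{n}$, where $\mathfrak{n}$ is the squarefree product of the primes $\Fq\nmid 2r$ dividing $a^r+b^r$. The key step is then to eliminate those primes $\Fq\mid\mathfrak{n}$ with $\Fq\nmid p$ that do not divide~$d$: for such a prime, the relation $a^r+b^r=dc^p$ together with $\Fq\nmid d$ forces $\Fq\mid c$, so $v_\Fq(a^r+b^r)=p\,v_\Fq(c)\equiv 0\pmod p$. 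The first bullet of Theorem~\ref{T:finite} then gives that $\rhobar_{J_r,\Fp}$ is unramified at~$\Fq$, so $\Fq$ contributes nothing to the Serre level. What remains at the primes $\Fq\nmid 2r$ of residue characteristic~$\neq p$ are exactly those dividing~$d$, each contributing at most~$\Fq^1$, yielding the claimed bound $2^2\Fq_r^2\n_d$.

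For assertion~\eqref{item:finite_rep}, fix a prime $\Fq\mid p$ of~$K$. If $\Fq\nmid a^r+b^r$, then by the discriminant formula~\eqref{E:discriminant} (together with $\Fq\nmid 2r$, which follows from $p\nmid 2r$) the model \eqref{kraushyper} of $C_r(a,b)$ is smooth at~$\Fq$, so $J_r$ has good reduction there; consequently $J_r[\Fp]$ extends to a finite flat group scheme over the ring of integers of $K_\Fq$, and $\rhobar_{J_r,\Fp}|_{D_\Fq}$ is finite. Otherwise $\Fq\mid a^r+b^r$; since $p\nmid 2rd$ and $\Fq\mid p$ we have $\Fq\nmid 2rd$, so from $a^r+b^r=dc^p$ we again deduce $\Fq\mid c$ and $v_\Fq(a^r+b^r)\equiv 0\pmod p$. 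The second bullet of Theorem~\ref{T:finite} then yields finiteness of $\rhobar_{J_r,\Fp}|_{D_\Fq}$.

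There is no real obstacle here: the proof is essentially bookkeeping with the earlier local results. The only point requiring mild care is the standard invariance statement that local Artin conductors do not grow under reduction modulo~$\Fp$, invoked implicitly when passing from~$\calN$ to~$N(\rhobar_{J_r,\Fp})$, and the verification that the hypothesis $\Fq\nmid 2rd$ of Theorem~\ref{T:finite} is automatic in each case under consideration.
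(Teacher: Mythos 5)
Your proof is correct and follows essentially the same route as the paper: bound the prime-to-$p$ conductor by $\calN=2^2\Fq_r^2\n$ via Theorem~\ref{T:conductorJI}, then use $a^r+b^r=dc^p$ to see that every prime of $\n$ not dividing $d$ (and every prime above $p$) has $v_\Fq(a^r+b^r)\equiv 0\pmod p$, so Theorem~\ref{T:finite} removes it from the Serre level, respectively gives finiteness. The only cosmetic differences are that you make explicit the standard fact that reduction mod~$\Fp$ does not increase local Artin conductors, and that you split the case $\Fq\mid p$ according to whether $\Fq$ divides $a^r+b^r$, whereas the paper applies Theorem~\ref{T:finite} uniformly.
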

\begin{proof} 

From Theorem~\ref{T:conductorJI} we know that  the conductor of 
$\rho_{J,\Fp}$ and~$\calN = 2^2 \Fq_r^2 \n$ (where $\n$ is the product of all prime ideals dividing~$a^r + b^r$ which are coprime to~$ 2r$) agree away from the places above~\(p\). We have~\(\n_d\mid\n\) and we set~\(\n' = \n\n_d^{-1}\). Let~\(\Fq\) be a prime in~\(K\) dividing~\(\n'\), but not dividing~\(p\). Then~\(\Fq\) is coprime to~\(2rd\) and divides~\(a^r + b^r\). Therefore, $p$ divides the valuation at~$\Fq$ of~\(a^r + b^r\) and from the first part of Theorem~\ref{T:finite}, it follows that~\(\rhobar_{J_r,\Fp}\) is unramified at~\(\Fq\), hence proving~\eqref{item:Serre_level}.   

Let now~\(\Fq\) be a prime ideal in~\(K\) above~\(p\). By our assumptions, \(\Fq\) is coprime to~\(2rd\), and as before the valuation at~$\Fq$ of~\(a^r + b^r\) is divisible by~\(p\). We finally conclude using the second part of Theorem~\ref{T:finite}.
\end{proof}

\subsection{Level lowering}

\begin{theorem}\label{T:levelLowering} 
Suppose that $\rhobar_{J_r,\Fp}$ is absolutely irreducible. Then, there is a Hilbert newform~$g$ over~$K$ satisfying the following properties:
\begin{enumerate}[(i)]
\item\label{item:LLitemi} $g$ is of parallel weight 2, trivial character and level $2^2 \Fq_r^2 \n_d$; 
\item\label{item:LLitemii} $\rhobar_{J_r,\Fp} \simeq \rhobar_{g,\mathfrak{P}}$ for some $\mathfrak{P} \mid p$ in the field of coefficients~$K_g$ of~$g$; 
\item\label{item:LLitemiii} for all~$\Fq_2 \mid 2$ in~$K$, we have $(\rho_{g,\mathfrak{P}} \otimes \Qbar_p)|_{I_{\Fq_2}} \simeq \delta \oplus \delta^{-1}$, where~$\delta$ is a character of order~$r$;
\item\label{item:LLitemiv} $K \subset K_g$.
\end{enumerate}
Moreover, if $\n_d \neq 1$ then~$g$ has no complex multiplication. 
\end{theorem}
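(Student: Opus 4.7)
The plan is to derive Theorem~\ref{T:levelLowering} by applying the refined level-lowering theorem of Breuil--Diamond~\cite[Theorem~3.2.2]{BreuilDiamond} to the compatible system $\{\rho_{J_r,\lambda}\}$, starting from the modularity of $J_r$. By Theorem~\ref{T:modularity} and Theorem~\ref{T:conductorJI}, there exists a Hilbert newform $f$ over $K$ of parallel weight $2$, trivial character and level $\calN = 2^2\Fq_r^2\n$ (where $\n$ is the squarefree product of primes $\Fq \nmid 2r$ dividing $a^r+b^r$) such that $\rho_{J_r,\lambda} \simeq \rho_{f,\lambda}$ for every $\lambda$. The goal is then to lower the level of $f$ at precisely the primes dividing $\n' := \n\n_d^{-1}$, while fixing the inertial type at every prime in the non-lowered part of $\calN$.

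The hypotheses required by Breuil--Diamond are verified as follows: absolute irreducibility of $\rhobar_{J_r,\Fp}$ is the standing assumption; finiteness of $\rhobar_{J_r,\Fp}|_{D_\Fq}$ at each $\Fq \mid p$ is Proposition~\ref{P:SerreCond}(\ref{item:finite_rep}); and the fact that $\rhobar_{J_r,\Fp}$ is unramified at every $\Fq \mid \n'$ is Proposition~\ref{P:SerreCond}(\ref{item:Serre_level}), which rests on the finiteness criterion of Theorem~\ref{T:finite}. At the primes we do \emph{not} lower we prescribe the inertial type coming from $J_r$: at $\Fq_2$ the principal-series or supercuspidal type with inertial character of order $r$ exhibited in parts (\ref{item:(a)}) of the proof of Theorem~\ref{T:conductorJI}; at $\Fq_r$ the type determined in parts (\ref{item:(b)}) and~(c) of that proof; and at each $\Fq \mid \n_d$ the (twist of) Steinberg type furnished by Proposition~\ref{P:multiplicativeRedJ}. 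This yields items~(\ref{item:LLitemi}) and~(\ref{item:LLitemii}), while item~(\ref{item:LLitemiii}) is the prescribed type at $\Fq_2$ itself.

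For item~(\ref{item:LLitemiv}), the plan is to invoke the result relating the coefficient field of a Hilbert newform to its inertial types mentioned in the introduction. The inertial character $\delta$ of order $r$ takes values in $\mu_r \subset \overline{\Q}^\times$, and the traces $\delta + \delta^{-1}$ on inertia generate the totally real subfield $K = \Q(\zeta_r)^+$. Any $\sigma \in \Gal(\overline{\Q}/\Q)$ sends $g$ to a newform $g^\sigma$ whose inertial type at $\Fq_2$ is $\delta^\sigma \oplus (\delta^\sigma)^{-1}$; if $K$ were not contained in $K_g$, there would exist $\sigma$ fixing $K_g$ (hence $g$) but not $K$, producing a genuinely different prescribed type for $g = g^\sigma$, a contradiction. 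Hence $K \subset K_g$.

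Finally, suppose $\n_d \neq 1$ and that $g$ has complex multiplication, so that $\rho_{g,\mathfrak P} \simeq \Ind_{G_E}^{G_K} \chi$ for some CM quadratic extension $E/K$ and Hecke character $\chi$. The restriction of an induced representation to any decomposition group is semisimple, so in particular $\rho_{g,\mathfrak P}|_{D_\Fq}$ can never be a twist of Steinberg. However, fix any prime $\Fq \mid \n_d$: by Proposition~\ref{P:multiplicativeRedJ}, $J_r$ has multiplicative reduction at $\Fq$, so $\rho_{J_r,\lambda}|_{D_\Fq}$ is a twist of Steinberg; by the prescription of the Steinberg type at $\Fq$ in the Breuil--Diamond application, $\rho_{g,\mathfrak P}|_{D_\Fq}$ is therefore also a twist of Steinberg, which is a contradiction. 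The main obstacle in executing this plan is locating and justifying the precise auxiliary statement on inertial types and coefficient fields needed for~(\ref{item:LLitemiv}), and ensuring that the Breuil--Diamond hypotheses (in particular the structure of $\rhobar_{J_r,\Fp}|_{D_\Fq}$ for $\Fq$ above $p$, $2$ and $r$) are exactly in the form required by that theorem.
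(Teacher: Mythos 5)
Your proposal follows essentially the same route as the paper: level lowering with prescribed inertial types via Breuil--Diamond~\cite[Theorem~3.2.2]{BreuilDiamond}, with the types at $\Fq_2$, $\Fq_r$ and the Steinberg primes dividing $\n_d$ read off from Theorem~\ref{T:conductorJI} and Proposition~\ref{P:multiplicativeRedJ}, finiteness/unramifiedness from Proposition~\ref{P:SerreCond}, item~(\ref{item:LLitemiii}) as the prescribed type itself, item~(\ref{item:LLitemiv}) from the relation between inertial types and coefficient fields (your Galois-conjugation argument is a legitimate variant of Proposition~\ref{P:coefficientField}, which the paper proves instead via strict compatibility of the system $\{\rho_{g,\lambda}\}$), and the non-CM conclusion from the presence of a Steinberg prime.

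There is, however, one genuine gap. The hypothesis of Breuil--Diamond is not absolute irreducibility of $\rhobar_{J_r,\Fp}$ but the Taylor--Wiles condition that $\rhobar_{J_r,\Fp}|_{G_{K(\zeta_p)}}$ be absolutely irreducible (with an extra exclusion when $p=5$). Your verification list only cites the standing assumption that $\rhobar_{J_r,\Fp}$ itself is absolutely irreducible; you flag in general terms that the hypotheses must be matched, but this particular upgrade is exactly the nontrivial point and must be argued. The paper handles it by Lemma~\ref{L:badDihedral} (a generalization of Ribet's result excluding ``bad projectively dihedral'' image, using that $J_r$ is semistable at the primes above $p$ and that some prime above $p$ ramifies in $K(\zeta_p)/K$), and for $p=5$ it additionally rules out $\rhobar_{J_r,\Fp}(G_{K(\zeta_5)})\simeq \PSL_2(\F_5)$ by observing that $\rhobar_{J_r,\Fp}(I_{\Fq_2})$ has prime order $r\geq 7$, which cannot sit inside a group of order $60$. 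Without these two steps your application of Breuil--Diamond is not justified; the rest of your argument is sound.
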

\begin{proof} 
From Theorem~\ref{T:modularity}, $J_r/K$ is modular, 
hence~$\rhobar_{J_r,\Fp}$ is modular.

From the proof of part~\eqref{item:(a)} of Theorem~\ref{T:conductorJI}, we know that the inertial type of~$J_r/K$ at any $\Fq_2 \mid 2$ is 
of the form $(\rho_{J_r,\Fp} \otimes \Qbar_p)|_{I_{\Fq_2}} \simeq \delta \oplus \delta^{-1}$, where~$\delta$ is a character of order~$r$ (note that this decomposition applies for both the principal series case and the supercuspidal case induced from the unramified quadratic extension of~\(K_{\Fq_2}\)). The same theorem also gives that
$\rho_{J_r,\Fp}$ is Steinberg at all~$\Fq \nmid 2\Fq_r$ dividing~$d$. In particular, since $\rho_{J_r,\Fp}$ is a lift of $\rhobar_{J_r,\Fp}$, these inertial types are compatible with the residual representation~$\rhobar_{J_r,\Fp}$. 

We note that the proof of \cite[Lemma 1.13]{DPSerre} also holds over~$K$, so $\rhobar_{J_r,\Fp}|_{G_{K(\zeta_p)}}$ is absolutely irreducible if and only if $\rhobar_{J_r,\Fp}|_{G_M}$ is absolutely irreducible where $M$ is the unique quadratic extension of~\(K\) contained in~\(K(\zeta_p)\). By our assumptions and
Lemma~\ref{L:badDihedral} we know that~$\rhobar_{J_r,\Fp}|_{G_{K(\zeta_p)}}$
is absolutely irreducible. Now, for all $p > 5$, conclusions (\ref{item:LLitemi})--(\ref{item:LLitemiii}) follow from Proposition~\ref{P:SerreCond} and~\cite[Theorem~3.2.2]{BreuilDiamond}. 

For $p=5$ (hence $r \geq 7$) conclusions (\ref{item:LLitemi})--(\ref{item:LLitemiii}) follow by the same results if we additionally show 
that~$\rhobar_{J_r,\Fp}(G_{K(\zeta_5)}) \not\simeq \PSL_2(\F_5)$. Indeed, this is the case because $\PSL_2(\F_5)$ has order 60, 
$[K(\zeta_5): K] = 4$ and 
$\rhobar_{J_r,\Fp}(I_{\Fq_2})$ 
has prime order~$r \geq 7$ (because there is no intersection of the image of inertia $\rho_{J_r,\Fp}(I_{\Fq_2})$ with the kernel of reduction mod~$\Fp \mid 5$). 

Conclusion~(\ref{item:LLitemiv}) follows from Proposition~\ref{P:coefficientField} below. Finally, if~$\n_d \neq 1$ then~$g$ has a Steinberg prime which is incompatible with~$g$ having complex multiplication.
\end{proof}

\begin{proposition}\label{P:coefficientField} 
Let~$K$ be any totally real field. Let 
$g$ a Hilbert modular form over~$K$ with field of coefficients~$K_g$. Let $\lambda$ be a prime in~$K_g$ above the 
rational prime~$p$. 
Let~$\Fq \nmid p$ be a prime in~$K$. 
Suppose that $(\rho_{g,\lambda} \otimes \Qbar_p)|_{I_\Fq} \simeq \delta \oplus \delta^{-1}$, where $\delta$ has order~$n$. 

Then $\Q(\zeta_n + \zeta_n^{-1}) \subset K_g$ where $\zeta_n$ is a primitive $n$th root of unity.
\end{proposition}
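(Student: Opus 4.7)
The plan is to exploit the strict compatibility of the system of $\lambda$-adic representations $\{\rho_{g,\lambda}\}_\lambda$ attached to $g$ (with coefficient field $K_g$) to transfer rationality information from the Hecke eigenvalues to the inertial type. By strict compatibility, there is a Frobenius-semisimplified Weil-Deligne representation $W = W_\Fq$ defined over $\Qbar$ such that, for every prime $\lambda$ of $K_g$ whose residue characteristic differs from that of $\Fq$ and every $K_g$-linear embedding $\varsigma \colon \Qbar \hookrightarrow \Qbar_p$, one has $\WD(\rho_{g,\lambda}|_{D_\Fq})^{\Frob\text{-ss}} \simeq \varsigma W$. Note that the restriction of $W$ to $I_\Fq$ has finite image (since the inertial type is assumed to be principal series or supercuspidal, as it is $\delta \oplus \delta^{-1}$ with $\delta$ of finite order), and matches $\delta \oplus \delta^{-1}$ after applying $\varsigma$.

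The central step is to show that the trace function of $W$ takes values in $K_g$. Fix any $\tau \in \Aut(\Qbar/K_g)$ and any $K_g$-linear embedding $\varsigma \colon \Qbar \hookrightarrow \Qbar_p$. Then $\varsigma\circ\tau$ is again $K_g$-linear, so applying the compatibility condition twice yields
\[
\varsigma W \simeq \WD(\rho_{g,\lambda}|_{D_\Fq})^{\Frob\text{-ss}} \simeq (\varsigma\circ\tau) W = \varsigma(\tau W)
\]
as representations over $\Qbar_p$. Since both $W$ and $\tau W$ are semisimple on $I_\Fq$ (finite image), Brauer-Nesbitt applied to their characters gives, for every $\sigma \in I_\Fq$, the identity $\varsigma(\tr W(\sigma)) = \varsigma(\tau(\tr W(\sigma)))$. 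Injectivity of $\varsigma$ then forces $\tr W(\sigma) = \tau(\tr W(\sigma))$ in $\Qbar$. As $\tau$ ranges over $\Aut(\Qbar/K_g)$, this shows $\tr W(\sigma) \in K_g$ for all $\sigma \in I_\Fq$.

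To finish, pick $\sigma \in I_\Fq$ with $\delta(\sigma)$ a primitive $n$th root of unity $\zeta_n$. Under $\varsigma$, the restriction $\varsigma W|_{I_\Fq}$ is isomorphic to $\delta \oplus \delta^{-1}$, so $\varsigma(\tr W(\sigma)) = \zeta_n + \zeta_n^{-1}$. Identifying $\zeta_n$ with its preimage under $\varsigma$ in $\Qbar$, the previous step yields $\zeta_n + \zeta_n^{-1} \in K_g$, which proves $\Q(\zeta_n + \zeta_n^{-1}) \subset K_g$.

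The main technical obstacle is the first step: transferring the $K_g$-rationality of the compatible system (guaranteed through Hecke eigenvalues by local-global compatibility for Hilbert modular forms) to the $\Aut(\Qbar/K_g)$-invariance of the isomorphism class of the Weil-Deligne representation $W$. This is a formal but careful argument about embeddings, made possible precisely because the compatibility condition in the definition of a strictly compatible system is phrased in an embedding-independent (i.e., $K_g$-linear embedding compatible) way.
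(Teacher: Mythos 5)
Your proof is correct, but the descent step is carried out differently from the paper. The paper's own argument stays entirely with the $\lambda$-adic realizations: since each $\rho_{g,\lambda}$ is valued in $\GL_2(K_{g,\lambda})$ and the inertial type at $\Fq$ is independent of $\lambda$ by strict compatibility, the trace $\zeta_n+\zeta_n^{-1}$ of a generator of the inertia image lies in $K_{g,\lambda}$ for every $\lambda \nmid \Norm(\Fq)$; hence almost all primes of $K_g$ split completely in $K_g(\zeta_n+\zeta_n^{-1})$, and the extension is trivial by Chebotarev. You instead invoke the stronger form of compatibility --- the existence of a single Frobenius-semisimple Weil--Deligne representative $W$ over $\Qbar$ whose isomorphism class is independent of the choice of $K_g$-linear embedding $\varsigma$ --- and deduce by Galois descent that $\tr W(\sigma) \in K_g$ for $\sigma \in I_\Fq$, then evaluate at a suitable inertia element. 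Both routes are valid; yours avoids the Chebotarev step but requires the deeper local-global compatibility input (the $\Qbar$-rational Weil--Deligne representative, as in the B\"ockle/BLGGT formulation the paper cites), whereas the paper's needs only that the local representations ``have the same shape'' across $\lambda$ and are definable over the completions. Two small quibbles: the appeal to Brauer--Nesbitt is superfluous, since the direction you use (isomorphic representations have equal traces) is trivial; and in the last step one should note that the preimage under $\varsigma$ of $\zeta_n+\zeta_n^{-1}$ is $\zeta_n'+\zeta_n'^{-1}$ for some primitive $n$th root of unity $\zeta_n'$ in $\Qbar$, which generates the same field $\Q(\zeta_n+\zeta_n^{-1})$ --- you gesture at this with ``identifying $\zeta_n$ with its preimage,'' which is fine.
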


\begin{proof} We know that the system of Galois representations~$\rho_{g,\lambda} : G_K \to \GL_2(K_{g,\lambda})$ is strictly compatible. Thus, 
for all primes $\lambda$ in~$K_g$ such that $\lambda \nmid \Norm(\Fq)$, the restriction~$\rho_{g,\lambda}|_{I_\Fq}$ has the same shape, hence, using the hypothesis, we have  
$\zeta_n + \zeta_n^{-1} \in K_{g,\lambda}$ for every such~$\lambda$.
This implies that the extension $K_g(\zeta_n + \zeta_n^{-1})/K_g$ is of degree~$1$ because almost all primes are split in it and hence $\Q(\zeta_n + \zeta_n^{-1}) \subset K_g$.
\end{proof}

\begin{remark}\label{rk:elliptic_case}
The previous result relates the field of coefficients of a Hilbert modular form to its inertial types. This allows for 
conclusion~(\ref{item:LLitemiv}) in Theorem~\ref{T:levelLowering} which 
turns out to be a powerful new elimination technique 
only available in the context of Frey abelian varieties which are not elliptic curves; indeed, it is well known that for an elliptic curve~$J$, if the restriction to inertia of~$\rho_{J,p}$ at a prime~$q \neq p$ is of the form $\delta \oplus \delta^{-1}$ then $\delta$ is a character of order $n=2,3,4$ or~$6$. In this case, 
Proposition~\ref{P:coefficientField}  leads to the trivial conclusion that $\Q(\zeta_n + \zeta_n^{-1}) = \Q \subset K_g$. 
\end{remark}

\begin{remark}\label{rk:Steinberg}
Note that Proposition~\ref{P:SerreCond} states that $N(\rhobar_{J,\Fp})$ 
is a divisor of $2^2 \Fq_r^2 \n_d$ but Theorem~\ref{T:levelLowering} concludes that we only need 
to consider newforms in the largest level $2^2 \Fq_r^2 \n_d$. This is a consequence of refined level lowering in the proof of Theorem~\ref{T:levelLowering} which, in particular, forces the presence of the Steinberg primes~$\Fq \mid \n_d$.
{\it A priori} this may not look like much of an improvement, as we have to compute at the larger level~$2^2 \Fq_r^2 \n_d$ compared to~\(2^2 \Fq_r^2\) for instance, but it turns out to cut running time considerably. Indeed, the current algorithms to compute Hilbert newforms of square level are very inefficient.
\end{remark}

Let $\chi_r$ denote the mod~$r$ cyclotomic character. Note that $\chi_r|_{G_K}$ is a quadratic character.

\begin{corollary}\label{C:levelLowering} 
Assume the hypotheses of Theorem~\ref{T:levelLowering}.
Suppose further that $r \mid a+b$.
Then, there is a Hilbert newform~$g$ over $K$ satisfying the following properties:
\begin{enumerate}[(i)]
\item $g$ is of parallel weight 2, trivial character and level $2^2 \Fq_r \n_d$; 
\item $\rhobar_{J_r,\Fp} \otimes \chi_r|_{G_K} \simeq \rhobar_{g,\mathfrak{P}}$ for some $\mathfrak{P} \mid p$ in the field of coefficients~$K_g$ of~$g$; 
\item for all~$\Fq_2 \mid 2$ in~$K$, we have $(\rho_{g,\mathfrak{P}} \otimes \Qbar_p)|_{I_{\Fq_2}} \simeq \delta \oplus \delta^{-1}$, where~$\delta$ is a character of order~$r$;
\item $K \subset K_g$.
\end{enumerate}
Moreover, if $\n_d \neq 1$ then~$g$ has no complex multiplication. 
\end{corollary}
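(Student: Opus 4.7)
The plan is to run exactly the same refined level-lowering argument as in the proof of Theorem~\ref{T:levelLowering}, but applied to the quadratic twist
\[
\rhobar' \;:=\; \rhobar_{J_r,\Fp}\otimes\chi_r|_{G_K}
\]
instead of to $\rhobar_{J_r,\Fp}$ itself. The whole point of this twist is to absorb the ramified quadratic character at $\Fq_r$ that, under the extra hypothesis $r\mid a+b$, makes the conductor exponent at $\Fq_r$ jump to~$2$. Indeed, by Proposition~\ref{P:typeAt7b} and part~(c) of the proof of Theorem~\ref{T:conductorJI}, when $r\mid a+b$ the local representation $\rho_{J_r,\lambda}|_{D_{\Fq_r}}$ is a twist of a Steinberg representation by the quadratic character of a ramified quadratic extension of $K_{\Fq_r}$. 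Since $K(\zeta_r)=\Q(\zeta_r)$ is totally ramified over $K$ at $\Fq_r$, the restriction of $\chi_r|_{G_K}$ to $D_{\Fq_r}$ is precisely this local quadratic character, so $\rho_{J_r,\lambda}\otimes\chi_r|_{G_K}$ becomes genuinely Steinberg at $\Fq_r$ with conductor exponent~$1$ rather than~$2$.

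Next I would read off the new prescribed inertial types of the lift $\rho_{J_r,\lambda}\otimes\chi_r|_{G_K}$ of $\rhobar'$. At each $\Fq_2\mid 2$ the character $\chi_r|_{G_K}$ is unramified, so the principal-series type $\delta\oplus\delta^{-1}$ with $\delta$ of order~$r$ established in part~(a) of the proof of Theorem~\ref{T:conductorJI} is preserved, with conductor exponent~$2$. At each $\Fq\mid\n_d$, which is coprime to~$2r$, the same remark applies and the Steinberg type with conductor exponent~$1$ persists. Combined with the $\Fq_r$-computation, this gives Artin conductor away from~$p$ equal to $2^2\Fq_r\n_d$, and since
\[
\det\rhobar' \;=\; \chi_p\cdot(\chi_r|_{G_K})^2 \;=\; \chi_p
\]
(using that $\chi_r|_{G_K}$ is quadratic), the lift still has trivial nebentype.

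Having pinned down the types, I would then verify that the hypotheses of Breuil--Diamond's Theorem~3.2.2 transfer from $\rhobar_{J_r,\Fp}$ to $\rhobar'$: absolute irreducibility of $\rhobar'$ and of $\rhobar'|_{G_{K(\zeta_p)}}$ is preserved under twisting by a character (Lemma~\ref{L:badDihedral} applies verbatim since the projective image is unchanged); finiteness of $\rhobar'|_{D_\Fq}$ for each $\Fq\mid p$ follows from Proposition~\ref{P:SerreCond} together with the unramifiedness of $\chi_r|_{G_K}$ at such $\Fq$ (recall $p\nmid r$); and the auxiliary exclusion of $\rhobar'(G_{K(\zeta_5)})\simeq\PSL_2(\F_5)$ when $p=5$ goes through unchanged for the same reason. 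Breuil--Diamond will then deliver a Hilbert newform $g$ over $K$ of parallel weight~$2$, trivial character and level $2^2\Fq_r\n_d$, together with a prime $\mathfrak{P}\mid p$ in $K_g$ realising $\rhobar'\simeq\rhobar_{g,\mathfrak{P}}$ with the prescribed types; conclusion~(iv) will then be Proposition~\ref{P:coefficientField} applied to the order-$r$ character $\delta$ at $\Fq_2$, and the non-CM assertion will follow because $g$ has a Steinberg prime at any $\Fq\mid\n_d$. The hard part of the argument, and really the only one with new content beyond Theorem~\ref{T:levelLowering}, is the identification at $\Fq_r$ of the twisting quadratic character as the restriction of $\chi_r|_{G_K}$; this is the precise moment where the hypothesis $r\mid a+b$ enters and, as noted, it is already built into the conductor computation of Theorem~\ref{T:conductorJI}.
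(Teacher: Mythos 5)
Your proposal is correct and follows essentially the same route as the paper: the paper's proof likewise observes that $\Q(\zeta_r)/K$ is ramified only at $\Fq_r$ with $e=2$, invokes Proposition~\ref{P:typeAt7b} to see that the corresponding quadratic twist of $J_r$ has conductor $\Fq_r^1$ at $\Fq_r$, and then reruns the arguments of Proposition~\ref{P:SerreCond} and Theorem~\ref{T:levelLowering} with $\rhobar_{J_r,\Fp}$ replaced by $\rhobar_{J_r,\Fp}\otimes\chi_r|_{G_K}$. Your write-up just makes explicit the details the paper leaves implicit (preservation of the types at $\Fq_2$ and at $\Fq\mid\n_d$, of the determinant, and of the Breuil--Diamond hypotheses under an unramified-away-from-$r$ quadratic twist).
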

\begin{proof}
Assume all hypotheses of Theorem~\ref{T:levelLowering} plus
$r \mid a+b$.

The quadratic extension $\Q(\zeta_r)/K$ fixed by the quadratic character $\chi_r|_{G_K}$ is unramified away from~$r$ and has ramification index $e=2$ at~$\Fq_r$. From Proposition~\ref{P:typeAt7b} and its proof it follows that the Jacobian of the quadratic twist of $C_r/K$ corresponding to~$\Q(\zeta_r)/K$ has conductor~$\Fq_r$ at~$\Fq_r$. Now applying the arguments in the proofs of Proposition~\ref{P:SerreCond} and Theorem~\ref{T:levelLowering} where we replace $\rhobar_{J_r, \Fp}$ by $\rhobar_{J_r, \Fp} \otimes \chi_r|_{G_K}$ yields the corollary.
\end{proof}

\begin{remark}
We note that because of the parity condition~\eqref{eq:parity}, the assumption that~\(\rhobar_{J_r,\Fp}\) is absolutely irreducible which appears in both Theorem~\ref{T:levelLowering} and Corollary~\ref{C:levelLowering} is satisfied for~\(p > B_r\) where~\(B_r\) is the constant (depending only on~\(r\)) introduced in Proposition~\ref{P:irredGeneral}.
\end{remark}

\section{Challenges in the elimination step}
\label{S:eliminationJ}

When applying the modular method it is often in the elimination step where one faces the biggest challenges, 
due to the lack of general methods to distinguish Galois representations. 

Difficulties start to arise when we have to compare traces with many newforms or newforms with large coefficients fields.

In this section, we discuss the elimination procedure for the Frey abelian variety~\(J_r\) (for arbitrary~\(r\)) and describe the main issues. There are several challenges to overcome which we explain, together with the techniques we have developed to circumvent them, both theoretical and computational.

Some of the techniques below are used in the proof of Theorem~\ref{T:main11}. Furthermore,
in volume~II of this series~\cite{xhyper_vol2} we show how putting them all together allows for resolution of the equation $x^7 + y^7 =3z^p$ that is computationally more efficient than using the classical approach with Frey curves.

\subsection{Hecke constituents}\label{ss:Hecke}
Let~\(K\) be a totally real number field and let~\(\calN\) be an ideal in the integer ring~\(\calO_K\) of~\(K\). 

We denote by~\(S\) the set of Hilbert modular newforms of parallel weight~\(2\) and level~\(\calN\). According to Shimura~\cite{shi78}, there is a left action \((\tau, f)\mapsto {}^\tau f\) of~\(G_\Q\) on~\(S\) such that
\[
a_\Fq\left(^\tau f\right) = \tau(a_\Fq(f)),
\]
for any prime ideal~\(\Fq\) in~\(\calO_K\). The Hecke constituent of~\(f\in S\) is defined as
\[
[f] = \{{}^\tau f : \tau\in G_\Q\}.
\]
For a form~\(f\in S\), we view its Hecke (or coefficient) field~\(K_f\) as a subfield of~\(\Qbar\) and note that two forms in the same Hecke constituent have conjugated Hecke fields, i.e.
$$K_{{}^\tau f} = \tau(K_f).$$

\subsection{Towards a bound on~$p$}\label{ss:divisibility_relation}
From now on, and until the end of this section, we consider a prime~\(r\geq3\) and two coprime integers~\(a,b\) such that~\(ab(a^r + b^r)\neq0\). As usual, we write~\(J_r\) for the Jacobian of~\(C_r(a,b)\) base changed to~\(K = \Q(\zeta_r)^+\) and we recall that~\(K\) is the field of real multiplications of~$J_r$ (see~Theorem~\ref{T:GL2typeJr}). Moreover, we assume throughout that we have an isomorphism 
\begin{equation}\label{E:iso7}
\rhobar_{J_r,\Fp} \simeq \rhobar_{g,\mathfrak{P}}, 
\end{equation}
where $\Fp \mid p$ in~$K$, $g$ is a Hilbert newform over~$K$ of some level $\calN$, parallel weight $2$, and trivial character; here~$\mathfrak{P}$ denotes a prime ideal above~\(p\) in the  coefficient field~$K_g$ of~$g$. We also assume that~\(K_g\) contains~\(K\). According to Theorem~\ref{T:levelLowering}  such an isomorphism is expected to hold for a form~$g$ of level \(\calN = 2^2 \Fq_r^2 \n_d\) if there exists~\(c\in\Z\) such that~\((a,b,c)\) is a non-trivial primitive solution to equation~\eqref{E:rrp} satisfying some parity conditions.

We stress the fact that everything to be discussed until the end of this section depends on~\((a,b)\) such that~\(J_r = \Jac(C_r(a,b))/K\). To ease the reading, we drop the pair~\((a,b)\) from our notation (e.g., in the set~\(\mathcal{T}_q\) below) except in~Subsection~\ref{ss:Interchanging} and in~Subsection~\ref{ss:final}.

We view~\(K_g \supset K\) as a subfield of~\(\Qbar\) and note that choosing another embedding~\(K_g\hookrightarrow \Qbar\) amounts to changing~\(g\) by a conjugate form~\({}^\tau g\). The following proposition shows that, after replacing~$g$ by an appropriate conjugate, we can assume that~\eqref{E:iso7} holds and \(\mathfrak{P}\cap\calO_K = \Fp\). 

\begin{proposition}\label{pp:compatilbeIdeal}
Let~$g$ be as above.
There is~$\tau \in G_\Q$ such that the conjugated form~${}^\tau g$ 
satisfies $\rhobar_{J_r,\Fp} \simeq \rhobar_{{}^\tau g,\tau(\mathfrak{P})}$
and $\tau(\mathfrak{P}) \cap \calO_K = \Fp$. 
\end{proposition}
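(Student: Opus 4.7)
The plan is in three steps, making essential use of the fact that $K=\Q(\zeta_r)^+$ is an abelian (in particular Galois) extension of $\Q$.

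First, set $\Fp_0 := \mathfrak{P}\cap\calO_K$; this is a prime of $K$ above $p$. Since $\Gal(K/\Q)$ acts transitively on the primes of $K$ above $p$, there exists $\sigma\in\Gal(K/\Q)$ with $\sigma(\Fp_0)=\Fp$, which I lift to $\tau\in G_\Q$. Because $K/\Q$ is Galois, $\tau(\calO_K)=\calO_K$, and consequently the prime $\tau(\mathfrak{P})$ of $\tau(K_g)=K_{{}^\tau g}$ satisfies
\[
\tau(\mathfrak{P})\cap\calO_K \;=\; \tau(\mathfrak{P}\cap\calO_K) \;=\; \tau(\Fp_0) \;=\; \sigma(\Fp_0) \;=\; \Fp,
\]
establishing the second required property.

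Second, to prove the isomorphism $\rhobar_{J_r,\Fp}\simeq \rhobar_{{}^\tau g, \tau(\mathfrak{P})}$, I would compare traces of Frobenius at the primes $\Fq$ of $K$ unramified in both systems (Brauer--Nesbitt combined with Chebotarev density). By the defining property of the Shimura action recalled in \S\ref{ss:Hecke}, the trace of $\rhobar_{{}^\tau g, \tau(\mathfrak{P})}(\Frob_\Fq)$ equals $\tau(a_\Fq(g))\bmod\tau(\mathfrak{P})$. The isomorphism $\tau : K_g \to \tau(K_g)$ carries $\mathfrak{P}$ onto $\tau(\mathfrak{P})$ and therefore induces a canonical isomorphism of residue fields $\F_\mathfrak{P}\xrightarrow{\sim}\F_{\tau(\mathfrak{P})}$ under which $a_\Fq(g)\bmod\mathfrak{P}$ corresponds to $\tau(a_\Fq(g))\bmod\tau(\mathfrak{P})$. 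Combined with the hypothesis $\rhobar_{J_r,\Fp}\simeq\rhobar_{g,\mathfrak{P}}$, this identifies the traces of $\rhobar_{J_r,\Fp}$ and $\rhobar_{{}^\tau g, \tau(\mathfrak{P})}$ in a common residue field in $\overline{\F}_p$, yielding the desired isomorphism by Brauer--Nesbitt.

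The delicate point I expect is the bookkeeping of residue-field embeddings: the isomorphism class of a mod-$p$ representation in $\GL_2(\overline{\F}_p)$ depends on the chosen embeddings $\F_\Fp,\F_{\tau(\mathfrak{P})}\hookrightarrow\overline{\F}_p$, and one must verify that the natural embedding singled out by the equality $\tau(\mathfrak{P})\cap\calO_K=\Fp$ is consistent with the identifications implicit in the original hypothesis. Once this is unwound, everything reduces to the Galois-equivariance of the reduction map together with the Shimura formula $a_\Fq({}^\tau g)=\tau(a_\Fq(g))$.
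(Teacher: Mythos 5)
Your argument is correct and, for the substantive half of the statement --- the isomorphism $\rhobar_{g,\mathfrak{P}} \simeq \rhobar_{{}^\tau g,\tau(\mathfrak{P})}$ --- it coincides with the paper's proof: both identify the residue fields at $\mathfrak{P}$ and $\tau(\mathfrak{P})$ via $x+\mathfrak{P}\mapsto \tau(x)+\tau(\mathfrak{P})$, use the Shimura relation $a_\Fq({}^\tau g)=\tau(a_\Fq(g))$ to match traces of Frobenius, and conclude by semisimplicity/Brauer--Nesbitt (the paper also matches determinants, which is redundant for odd $p$ but costs nothing). The one place you genuinely diverge is in producing $\tau$: the paper fixes a prime $\overline{\Fp}\mid\Fp$ in the Galois closure $\overline{K}_g$ of $K_g/\Q$ and uses transitivity of $\Gal(\overline{K}_g/\Q)$ on the primes above $p$, whereas you use transitivity of $\Gal(K/\Q)$ on the primes of $K$ above $p$ and lift the resulting $\sigma$ to $G_\Q$. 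Since $K/\Q$ is Galois, $\tau(\calO_K)=\calO_K$, so your computation $\tau(\mathfrak{P})\cap\calO_K=\tau(\mathfrak{P}\cap\calO_K)=\sigma(\Fp_0)=\Fp$ is valid; this route is slightly more economical in that it never needs $\overline{K}_g$, at the negligible cost of invoking surjectivity of $G_\Q\to\Gal(K/\Q)$. The ``delicate point'' you flag about residue-field embeddings is exactly what the paper's definitions $\phi_\tau$ and $\iota_{\mathfrak{P}}:=\iota_{\tau(\mathfrak{P})}\circ\phi_\tau$ make precise, so nothing further is missing.
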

\begin{proof}
Let $\overline{K}_g \subset \Qbar$ be the Galois closure 
of $K_g/\Q$, and fix $\overline{\Fp} \mid \Fp$ in $\overline{K}_g$.
Since $\Gal(\overline{K}_g / \Q)$ acts transitively on the prime ideals of $\overline{K}_g$ above~$p$, all such ideals are of 
the form~$\sigma(\overline{\Fp})$ with $\sigma \in \Gal(\overline{K}_g / \Q)$.
In particular, $\tau(\overline{\mathfrak{P}}) = \overline{\Fp}$ for some~$\tau \in \Gal(\overline{K}_g / \Q)$ and
$\overline{\mathfrak{P}}$ an ideal in $\overline{K}_g$ above~$\mathfrak{P}$.
Moreover, we have $\calO_{K_{{}^\tau g}} = \tau(\calO_{K_g})$ and $\calO_K \subset \calO_{K_{{}^\tau g}}$, thus
$\tau(\overline{\mathfrak{P}}) \cap \calO_{K_{{}^\tau g}} = \tau(\mathfrak{P})$ and
\[
\tau(\mathfrak{P}) \cap\calO_K = 
\tau(\overline{\mathfrak{P}}) \cap\calO_K = \overline{\Fp} \cap\calO_K = \Fp,
\]
proving the last conclusion in the statement.

We will next show 
that $\rhobar_{g,\mathfrak{P}} \simeq \rhobar_{{}^\tau g,\tau(\mathfrak{P})}$
and so the result follows from~\eqref{E:iso7}. These representations are valued in different residue fields, so to make sense of the isomorphism we first need to see them defined over a common field.

Let~$f$ be the common inertial degree of the prime ideals in~$\overline{K}_g$ above~$p$, so that the residue fields at these primes are all isomorphic to~$\F_q$ with $q = p^f$. 
We identify the residue field  of~$\calO_{K_g}$ at~$\mathfrak{P}$ and that of~$\calO_{K_{{^\tau }g}}$ at~$\tau(\mathfrak{P})$ 
as the same subfield of~$\F_q$ as follows.

Fix an embedding
$\iota_{\tau(\mathfrak{P})} : \tau(\calO_{K_g})/\tau(\mathfrak{P}) \hookrightarrow \F_{q}$ 
and define the embedding
$$\iota_\mathfrak{P} : \calO_{K_g}/\mathfrak{P} \hookrightarrow \F_q \quad \text{ by } \quad \iota_\mathfrak{P} := \iota_{\tau(\mathfrak{P})} \circ \phi_\tau$$ 
where~$\phi_\tau : \calO_{K_g}/\mathfrak{P} \to \tau(\calO_{K_g})/\tau(\mathfrak{P})$ is the isomorphism of residue fields 
given by
\begin{equation}\label{eq:residualhom}
\phi_\tau(x + \mathfrak{P}) := \tau(x) + \tau(\mathfrak{P}).  
\end{equation}
Therefore, by composing with~\(\iota_\mathfrak{P}\) and~\(\iota_{\tau(\mathfrak{P})}\) the representations 
$\rhobar_{g,\mathfrak{P}}$ and~$\rhobar_{{}^\tau g,\tau(\mathfrak{P})}$ respectively arise on vector spaces defined over the same subfield of~$\F_q$. Now for all unramified~$\Fq$, we have
\begin{align*}
    \det\rhobar_{{}^\tau g,\tau(\mathfrak{P})}(\Frob_\Fq) & = \iota_{\tau(\mathfrak{P})}(N(\Fq) + \tau(\mathfrak{P})) \\
    & = \iota_{\tau(\mathfrak{P})}(\tau(N(\Fq)) + \tau(\mathfrak{P})) \\
    & = \det\rhobar_{g,\mathfrak{P}}(\Frob_\Fq)
\end{align*}
and
\begin{align*}
   \Tr \rhobar_{{}^\tau g,\tau(\mathfrak{P})}(\Frob_\Fq) 
 &  =  \iota_{\tau(\mathfrak{P})}(a_\Fq({}^\tau g) + \tau(\mathfrak{P})) \\
&  =  \iota_{\tau(\mathfrak{P})}(\tau(a_\Fq(g)) + \tau(\mathfrak{P})) \\
&  =  \iota_{\mathfrak{P}}(a_\Fq(g) + \mathfrak{P}) \\
& =  \Tr \rhobar_{g,\mathfrak{P}}(\Frob_\Fq), 
\end{align*}
showing that $\rhobar_{{}^\tau g,\tau(\mathfrak{P})} \simeq \rhobar_{g,\mathfrak{P}}$, as both representations are semisimple by construction.
\end{proof}
\begin{remark} The second part of the previous proof in fact shows that
 $\rhobar_{g,\mathfrak{P}} \simeq \rhobar_{{}^\tau g,\tau(\mathfrak{P})}$ for any Hilbert modular newform~\(g\), any prime~$\mathfrak{P}$ in its field of coefficients, and any~$\tau \in G_\Q$.
\end{remark}

Until the end of~Subsection~\ref{ss:strict_subfields}, we fix a rational prime~\(q\) such that~\(q\nmid 2r(a^r + b^r)\) and~\(q\) is coprime to~\(\calN p\). Let~\(\Fq\) be a prime ideal of~\(K\) above~\(q\). According to Proposition~\ref{P:good_reduction}, \(J_r\) has good reduction at~\(\Fq\). Moreover, by Theorem~\ref{T:GL2typeJr} we have that~\(a_\Fq(J_r)\in K \subset K_g\). Therefore~\eqref{E:iso7} implies that for every~\(\sigma\in\Gal(K/\Q)\), we have
\begin{equation}\label{E:congmodP}
    a_{\sigma(\Fq)}(J_r) \equiv a_{\sigma(\Fq)}(g)\pmod{\mathfrak{P}}
\end{equation}
and hence for every non-empty subset~\(\mathfrak{S}\subset\Gal(K/\Q)\), we have
\begin{equation}\label{eq:divisibility_relation}
    p\mid  N_{\Fq,\mathfrak{S}}(g) : = \gcd_{\sigma\in\mathfrak{S}}\Norm_{K_g/\Q}\left(a_{\sigma(\Fq)}(J_r) - a_{\sigma(\Fq)}(g)\right). 
\end{equation}
Observe that $N_{\Fq,\mathfrak{S}}(g)$ is independent of~$p$.  Thus, in order to reach a contradiction with~\eqref{E:iso7} in practice, we first use {\tt Magma} to compute the Hecke constituents in the relevant space of Hilbert newforms; then we pick a representative~$g$ for each constituent and compute $N_{\Fq,\mathfrak{S}}(g)$ using some auxiliary prime~$\Fq$ and some subset~\(\mathfrak{S}\) of~\(\Gal(K/\Q)\). For all primes~$p$ not dividing $N_{\Fq,\mathfrak{S}}(g)$, we have a contradiction to~\eqref{E:iso7} with the form~$g$. For this procedure to contradict~\eqref{E:iso7} for all ${}^\tau g \in [g]$ it is necessary that $N_{\Fq,\mathfrak{S}}(g)$ is independent of choice of representative. The following calculation indicates this is not necessarily the case. Indeed, we have
\begin{align*}
      N_{\Fq,\{1\}}({}^\tau g) & = \Norm_{K_{^\tau g}/\Q}\left(a_\Fq(J_r) - a_\Fq(^\tau g)\right) \\ 
      & =  \Norm_{\tau(K_g)/\Q}\left( a_\Fq(J_r) - \tau(a_\Fq(g))\right) \\
      & = \Norm_{\tau(K_g)/\Q}\left(\tau\left(\tau^{-1}(a_\Fq(J_r) - a_\Fq(g)\right)\right) \\
      &  = \Norm_{K_g/\Q}\left(\tau^{-1}(a_\Fq(J_r)) - a_\Fq(g)\right),
\end{align*}
showing that, in general, $N_{\Fq,\mathfrak{S}}({}^\tau g) \neq N_{\Fq,\mathfrak{S}}(g)$. To amend for this we will introduce a modification of~$N_{\Fq,\mathfrak{S}}(g)$ 
in~\eqref{eq:divisibility_relation_bis}. For now we remark this issue does not occur when applying the modular method with Frey elliptic curves; indeed, if $J_r$ is an elliptic curve, then $a_{\Fq}(J_r) \in \Q$ and so $\tau^{-1}(a_\Fq(J_r)) = a_\Fq(J_r)$, as desired.

\subsection{Computing traces for~\(J_r\)}\label{ss:computing_traces}
The above discussion indicates that, to discard~\eqref{E:iso7}, we will need to compute~\(a_\Fq(J_r)\). 
Here we discuss an issue with the determination of \(a_\Fq(J_r)\). Fortunately, it turns out that, the solution to this issue will also solve the problem of $N_{\Fq,\mathfrak{S}}(g)$ not being an invariant of the Hecke constituent of~$g$, as we explained in~Subsection~\ref{ss:divisibility_relation}.

Recall from~Subsection~\ref{ss:GL2typeAV} that~\(a_\Fq(J_r) = \tr \rho_{J_r,\Fp}(\Frob_\Fq)\) arises as the middle coefficient of the characteristic polynomial of~\(\Frob_\Fq\) acting on the \(2\)-dimensional \(K_\Fp\)-vector space~\(V_\Fp(J_r)\). Unfortunately, current {\tt Magma} functionality includes only computations of Euler factors for the full Tate module~\(V_p(J_r)\) of~\(J_r/K\) (which is a \(\Q_p\)-vector space of dimension~\(r - 1\)). To circumvent this issue we proceed as follows.

For each prime ideal~\(\Fp\) above~\(p\) in~\(K\), we write~\(\iota_\Fp : K \hookrightarrow K_\Fp\) for the canonical embedding of~\(K\) into its \(\Fp\)-adic completion; we identify~\(K\otimes_\Q\Q_p\) with~\(\prod_{\Fp\mid p}K_\Fp\) as discussed in~Subsection~\ref{ss:GL2typeAV}.

For each  prime ideal~\(\Fp\) above~\(p\) in~\(K\) and each embedding~\(\tau_\Fp : K_\Fp\hookrightarrow \Qbar_p\), we let~\(\rho_{J_r,\tau_\Fp}\) be the composite
\[
G_K\xrightarrow[]{\rho_{J_r,\Fp}}\GL_2(K_\Fp)\hookrightarrow\GL_2(\Qbar_p)
\]
with the latter map being induced by~\(\tau_\Fp\). 

On the other hand, we regard~\(\rho_{J_r,p}\) as the \(K\otimes\Qbar_p\)-representation
via the composition
\[
G_K\xrightarrow[]{\rho_{J_r,p}}\GL_2(K\otimes\Q_p)\hookrightarrow\GL_2(K\otimes\Qbar_p). 
\]

From~\eqref{eq:decomp}, we then have
\[
\rho_{J_r,p} \simeq \bigoplus_{\Fp\mid p}\bigoplus_{\tau_\Fp : K_\Fp\hookrightarrow \Qbar_p}\rho_{J_r,\tau_\Fp}
\simeq\bigoplus_{\tau : K\hookrightarrow \Qbar_p}\rho_\tau
\]
where~\(\rho_\tau = \rho_{J_r,\tau_\Fp}\) for a unique~\((\Fp,\tau_\Fp)\) as
\[
\left\{\tau : K \hookrightarrow \Qbar_p\right\} = \bigsqcup_{\Fp\mid p}\left\{\tau_\Fp\circ\iota_\Fp\ ;\  \tau_\Fp : K_\Fp \hookrightarrow \Qbar_p\right\}. 
\]
Since~\(K/\Q\) is Galois, each~\(\tau :  K\hookrightarrow \Qbar_p\) is of the shape~\(\iota\circ\sigma\) where~\(\iota :  K\hookrightarrow \Qbar_p\) is a fixed embedding and~\(\sigma\in\Gal(K/\Q)\). Therefore, we have
\begin{equation}\label{eq:decomp_rep}
    \rho_{J_r,p}\simeq\bigoplus_{\tau : K\hookrightarrow \Qbar_p}\rho_\tau\simeq\bigoplus_{\sigma \in \Gal(K/\Q)}\rho_{\iota\circ \sigma}.
\end{equation}

As before, let~\(q\) be a rational prime such that~\(q\nmid 2r(a^r + b^r)\) and~\(q\) is coprime to~\(\calN p\). Define
\begin{equation*}
    \mathcal{T}_{q} = \mathcal{T}_q(a,b) := \left\{ a_{\Fq'}(J_r) : \Fq'\mid q\text{ in }\calO_K \right\}.
\end{equation*}
Since $K/\Q$ is Galois it follows from~\eqref{E:traces} that for any fixed prime ideal~\(\Fq\) in~\(K\) dividing~\(q\), we have
\begin{equation}\label{eq:Tq_ppty}
\mathcal{T}_{q} = \left\{ a_{\sigma(\Fq)}(J_r) : \sigma \in \Gal(K/\Q) \right\} = \left\{ \sigma(a_{\Fq}(J_r)) : \sigma \in \Gal(K/\Q) \right\}.
\end{equation}
In particular, \(\mathcal{T}_{q}\) consists of exactly one element in $\Z$ when $q$ is inert in~$K$ and of at most \([K:\Q] = (r - 1)/2\) conjugated elements in $\calO_K$ when $q$ splits in~\(K\).

According to~\eqref{eq:decomp_rep} the characteristic polynomial of~\(\rho_{J_r,p}(\Frob_\Fq)\) is the product over all~\(\sigma's\) of the characteristic polynomials of~\(\rho_{\iota\circ \sigma}(\Frob_\Fq)\) and hence it is given by
\begin{equation}\label{eq:Euler_fact}
\prod_{\sigma \in \Gal(K/\Q)} (X^2 - \sigma(a_{\Fq}(J_r)) X + N(\Fq)),
\end{equation}
where we have dropped the notation~\(\iota\) to emphasize that it actually belongs to~\(\calO_K[X]\) (by the \(K\)-integrality property; see~Subsection~\ref{ss:GL2typeAV}).
This polynomial factorization can be obtained in {\tt Magma} in the following way: we compute the Euler factor $L_\Fq(C_r,X)$ (via the command \texttt{EulerFactor}), take its reverse polynomial and factor it in $K[X]$.

\subsection{A bound on~$p$}\label{ss:boundonp}
As explained above, when computing with  {\tt Magma}, we are unable to detect which element of~$\mathcal{T}_{q}$
 corresponds to the trace of the two dimensional block $\rho_{J_r,\Fp}$ we are considering in~\eqref{E:iso7}. 
Nevertheless, since the relevant trace belongs to~$\mathcal{T}_{q}$, it follows from~\eqref{eq:divisibility_relation} and~\eqref{eq:Tq_ppty} that for every non-empty subset~\(\mathfrak{S}\subset\Gal(K/\Q)\), we have
\begin{equation}\label{eq:divisibility_relation_bis}
    p \mid N_{\Fq,\mathfrak{S}}(g) : = \prod_{u\in\mathcal{T}_q} \gcd_{\sigma\in\mathfrak{S}}\Norm_{K_g/\Q}\left(\sigma(u) - a_{\sigma(\Fq)}(g)\right). 
\end{equation}
We now show that this version of~$N_{\Fq,\mathfrak{S}}(g)$  does not change when~\(g\) is replaced by another form in its Hecke constituent. Indeed, if~\(\tau\in G_\Q\), then~\(K_{^\tau g} = \tau(K_g)\) and
\begin{align*}
     N_{\Fq,\mathfrak{S}}({}^\tau g) = & \prod_{u\in\mathcal{T}_q} \gcd_{\sigma\in\mathfrak{S}}\Norm_{K_{^\tau g}/\Q}\left(\sigma(u) - a_{\sigma(\Fq)}(^\tau g)\right) \\
     = & \prod_{u\in\mathcal{T}_q} \gcd_{\sigma\in\mathfrak{S}}\Norm_{\tau(K_g)/\Q}\left(\sigma(u) - \tau(a_{\sigma(\Fq)}(g))\right) \\
     = & \prod_{u\in\mathcal{T}_q} \gcd_{\sigma\in\mathfrak{S}}\Norm_{\tau(K_g)/\Q}\left(\tau\left(\tau^{-1}\sigma(u) - a_{\sigma(\Fq)}(g)\right)\right) \\
     = & \prod_{u\in\mathcal{T}_q} \gcd_{\sigma\in\mathfrak{S}}\Norm_{K_g/\Q}\left(\sigma\tau^{-1}(u) - a_{\sigma(\Fq)}(g)\right) \\
     = &\prod_{u\in\mathcal{T}_q} \gcd_{\sigma\in\mathfrak{S}}\Norm_{K_g/\Q}\left(\sigma(u) - a_{\sigma(\Fq)}(g)\right) = N_{\Fq,\mathfrak{S}}(g)
\end{align*}
since~\(\tau^{-1}\sigma(u) = \sigma\tau^{-1}(u)\) for~\(u\in\mathcal{T}_q\) and~\(\mathcal{T}_q = \{\tau^{-1}(u) : u  \in \mathcal{T}_q\}\).
In particular, it suffices to rule out the divisibility relation~\eqref{eq:divisibility_relation_bis} to discard the existence of the isomorphism~\eqref{E:iso7} for any conjugated form of~\(g\).

We note that if~\(\mathfrak{S}'\subset\mathfrak{S}\) are (non-empty) subsets of~\(\Gal(K/\Q)\), then we have
\[
N_{\Fq,\mathfrak{S}}(g)\mid N_{\Fq,\mathfrak{S}'}(g).
\]
\begin{remark}
This last divisibility relation suggests that, at least theoretically, working with the bigger set~\(\mathfrak{S}\) (e.g., with the whole group~\(\Gal(K/\Q)\)) gives sharper bounds. However, in practice it is sometimes more efficient to deal with more auxiliary primes~\(q\) and smaller sets~\(\mathfrak{S}\) than with bigger sets~\(\mathfrak{S}\) and less auxiliary primes.
\end{remark}

\subsection{Using split primes to bound~$p$}\label{ss:splitprimes}

We have the following formula which holds for any~\(\sigma\in\Gal(K/\Q)\) and any subset~\(\mathfrak{S}\) of~\(\Gal(K/\Q)\)~:
\[
N_{\sigma(\Fq),\mathfrak{S}}(g) =  N_{\Fq,\mathfrak{S}\sigma}(g)
\]
where~\(\mathfrak{S}\sigma:=\{\sigma'\sigma : \sigma'\in\mathfrak{S}\}\). In particular, we have~\(N_{\Fq,\{\sigma\}} = N_{\sigma(\Fq),\{1\}}\).

Let~\(\Fq_1,\dots,\Fq_d\) be prime ideals in~\(K\) above the rational prime~\(q\). For every~\(j\in\{1,\dots,d\}\), there exists~\(\sigma_j\in\Gal(K/\Q)\) such that~\(\sigma_j(\Fq_1) = \Fq_j\). Set~\(\mathfrak{S} = \{\sigma_j : 1\le j \le d\}\). We have
\begin{align*}
    N_{\Fq_1,\mathfrak{S}}(g) & = \prod_{u\in\mathcal{T}_q} \gcd_{\sigma\in\mathfrak{S}}\Norm_{K_g/\Q}\left(\sigma(u) - a_{\sigma(\Fq_1)}(g)\right) \\
     & = \prod_{u\in\mathcal{T}_q} \gcd_{1\le j \le d}\Norm_{K_g/\Q}\left(\sigma_j(u) - a_{\sigma_j(\Fq_1)}(g)\right).
\end{align*}
Hence for any~\(j\in\{1,\dots,d\}\), we have that~\(N_{\Fq_1,\mathfrak{S}}(g)\) divides
\[
\prod_{u\in\mathcal{T}_q} \Norm_{K_g/\Q}\left(\sigma_j(u) - a_{\sigma_j(\Fq_1)}(g)\right) =
N_{\Fq_1,\{\sigma_j\}} = N_{\sigma_j(\Fq_1),\{1\}} = N_{\Fq_j,\{1\}}.
\]
Therefore, we have
\[
N_{\Fq_1,\mathfrak{S}}(g) \mid \gcd_{1\le j \le d}N_{\Fq_j,\{1\}}
\]
showing that in general it is more efficient to deal with the primes~\(\Fq_1,\dots,\Fq_d\) simultaneously than with each of them in turn.

\begin{remark}
The congruence~\eqref{E:congmodP} also leads for any~\(\mathfrak{S}\subset\Gal(K/\Q)\) to the following condition
\[
p \mid \prod_{u\in \mathcal{T}_q} \Norm_{K_g/\Q}\left( \gcd_{\sigma \in \mathfrak{S}} \left(\sigma(u) - a_{\sigma(\Fq)}(g)\right)\right)  
\]
which is sharper than~\eqref{eq:divisibility_relation_bis}. A similar calculation as above 
shows that the previous bound is an invariant of~$[g]$. However, in practice we prefer not to use this bound as it requires computing several $\gcd$'s over~$K$ which is usually more time consuming.
\end{remark}

\subsection{Coefficients of~\(g\) living in subfields of~\(K_g\)}\label{ss:strict_subfields}

Assume here that~\(\sigma(\calN) = \calN\), for all \(\sigma\in G_\Q\) (such hypothesis is satisfied when~\(\calN = 2^2 \Fq_r^2 \n_d\) or~\(\calN = 2^2 \Fq_r \n_d\) for instance). Recall that~\(S\) denotes the set of Hilbert modular newforms of parallel weight~\(2\) and level~\(\calN\). Then, there is a right action~\((f,\sigma)\mapsto f^\sigma\) of~\(\Gal(K/\Q)\) on~\(S\) such that
\[
a_\Fq\left(f^\sigma\right) = a_{\sigma(\Fq)}(f),
\]
for any prime ideal~\(\Fq\) in~\(\calO_K\) (see \cite[\S 6.3]{Gelbart-Boston} for a high-level description).

This action commutes with that defined in~Subsection~\ref{ss:Hecke} and thus~\(\Gal(K/\Q)\) acts on the set of Hecke constituents~\(\{[f] : f\in S\}\) as~\(([f],\sigma)\mapsto [f^\sigma]\).
\begin{definition}
Let~\(F/\Q\) be a subextension of~\(K/\Q\). We say that~\(f\in S\) is a base change from~\(F\) (to~\(K\)) if for all \(\sigma\in \Aut(K/F)\), we have~\(f^\sigma = f\).
\end{definition}
Observe that if~\(f\in S\) is a base change from~\(\Q\), then~\(a_\Fq(f) = a_{\sigma(\Fq)}(f)\) for all~\(\sigma\) in~\(\Gal(K/\Q)\) and all prime ideals~\(\Fq\) in~\(\calO_K\), and thus the orbit of its Hecke constituent~\([f]\) under the \(\Gal(K/\Q)\)-action is of size~\(1\); we remark the converse is not true (see~\cite[\S 3]{BCDDF} for an example).

Let~\(\sigma_0\) be a fixed generator of~\(\Gal(K/\Q)\) and assume that~\(g\in S\) is such that its Hecke field~\(K_g\) contains~\(K\) and~\(g\) is not a base change from any (non-trivial) subextension of~\(K/\Q\). Suppose further that the orbit of~\([g]\) under the action of~\(\Gal(K/\Q)\) has size~\(1\), or in other words that~\(g\) and~\(g^{\sigma_0}\) define the same Hecke constituent. Hence there exists~\(\tau_0\in G_\Q\) such that we have~\(g^{\sigma_0} = {}^{\tau_0} g\). Moreover~\(\tau_0\) acts through its restriction to~\(K_g\) and this restriction is an automorphism of~\(K_g\) since~\(\tau_0(a_\Fq(g)) = a_{\sigma_0(\Fq)}(g)\in K_g\) for all prime ideals~\(\Fq\) in~\(\calO_K\). 

Let~\(E_g\) be the subfield of~\(K_g\) fixed by the subgroup of~\(\Aut(K_g)\) generated by~\(\tau_0\). Let us show that~\(\tau_0\) has order~\([K:\Q]\) in~\(\Aut(K_g)\). For every~\(k\geq1\), we have~\(g^{\sigma_0^k} = {}^{\tau_0^k}g\) as the two actions commute. If~\(\sigma_0^k\) is trivial, then we have~\(\tau_0^k(a_\Fq(g)) = a_\Fq({}^{\tau_0^k} g) = a_\Fq(g)\) for all prime ideals~\(\Fq\) in~\(\calO_K\), and hence~\(\tau_0^k\) is also trivial. Conversely, if~\(\tau_0^k\) is trivial, then we have~\(g^{\sigma_0^k} = g\) and we conclude that~\(\sigma_0^k\) is trivial by the assumption that~\(g\) is not a base change from any subextension of~\(K\).

Therefore, we have~\([K_g:\Q] = [K_g:E_g] [E_g:\Q] = [K:\Q] [E_g:\Q]\), and hence
\[
[E_g:\Q] = [K_g:\Q]/[K:\Q] = [K_g:K].
\]
Assume now that~\(q\) is inert in~\(K\) and write~\(q\calO_K = \Fq\). On one hand, we know from Lemma~\ref{lem:K-inv_traces} that~$a_\Fq(J_r) \in\Z\subset E_g$ and thus~\(\mathcal{T}_q = \{a_\Fq(J_r)\}\) consists of a single integer. On the other hand, we have 
\[
\tau_0(a_\Fq(g)) = a_\Fq({}^{\tau_0} g) = a_\Fq(g^{\sigma_0}) = a_{\sigma_0(\Fq)}(g) = a_\Fq(g)
\]
and hence~\(a_\Fq(g)\in E_g\).

Therefore, if~\eqref{E:iso7} holds for~\(g\) satisfying all of the above assumptions, then for any subset~\(\mathfrak{S}\) of~\(\Gal(K/\Q)\), we have
\[
N_{\Fq,\mathfrak{S}}(g) = N_{\Fq,\{1\}}(g) = \Norm_{K_g/\Q}\left(a_\Fq(J_r) - a_\Fq(g)\right)
\]
and hence the divisibility relation~\eqref{eq:divisibility_relation_bis} simplifies to 
\begin{equation}\label{eq:divisibility_with_E}
    p\mid \Norm_{E_g/\Q}\left(a_\Fq(J_r) - a_\Fq(g)\right)
\end{equation}
because
$$\Norm_{K_g/\Q}\left(a_\Fq(J_r) - a_\Fq(g)\right) = \Norm_{E_g/\Q}\left(a_\Fq(J_r) - a_\Fq(g)\right)^{[K_g:E_g]}.$$ 
We note that the right-hand side of~\eqref{eq:divisibility_with_E} does not change if~\(g\) is replaced by a conjugate~\({}^\tau g\). Indeed, this follows from the previous equality and the fact that 
$[K_{\tau^g}:E_{\tau^g}] = [K_g:E_g]$, where this equality of degrees holds 
because~\(\left({}^\tau g\right)^{\sigma_0} = {}^{\tau\tau_0\tau^{-1}}\left({}^\tau g\right)\) and hence~\(E_{{}^\tau g} := K_{{}^\tau g}^{\langle\tau\tau_0\tau^{-1}\rangle} = \tau(E_g)\).

We point out that, due to possible large degrees of the Hecke fields involved in our applications, this improvement on the norm map will turn out to be very significant.

\subsection{A bound at primes of multiplicative reduction}\label{ss:multiplicativeBound}
Here we again assume~\(\sigma(\calN) = \calN\) for all \(\sigma\in G_\Q\). Suppose also that~\(q\) is a rational prime such that~\(q\nmid 2r\), \(q\mid a^r + b^r\) and~\(q\) is coprime to~\(\calN\). We know from Proposition~\ref{P:multiplicativeRedJ} that \(J_r\) has bad multiplicative reduction at any prime ideal above~\(q\) in~\(K\). Moreover, since~\(q\) is coprime to~\(\calN\),  we see from~\eqref{E:iso7} that~$g$ satisfies
level raising mod~$\mathfrak{P}$ at every such prime.

Therefore, for any prime ideal~\(\Fq\) above~\(q\) in~\(K\) and any non-empty subset~\(\mathfrak{S}\) of~\(\Gal(K/\Q)\), we have
\[
p\mid M_{\Fq,\mathfrak{S}}(g) := \gcd_{\sigma\in\mathfrak{S}}\Norm_{K_g/\Q}\left(a_{\sigma(\Fq)}(g)^2 - (N(\Fq)+1)^2\right),
\]
where we used that~\(N(\sigma(\Fq)) = N(\Fq)\).

For all~$\tau \in G_\Q$ and~\(\sigma\in\mathfrak{S}\), since~\(\tau(N(\Fq) + 1) = N(\Fq) + 1\), we have 
\begin{align*}
  & \Norm_{K_{^\tau g}/\Q}\left(a_{\sigma(\Fq)}(^\tau g)^2 - (N(\Fq)+1)^2\right) \\ = & \Norm_{\tau(K_g)/\Q}\left((\tau(a_{\sigma(\Fq)}(g)))^2 - (N(\Fq)+1)^2\right) \\  
= & \Norm_{\tau(K_g)/\Q}\left(\tau\left(a_{\sigma(\Fq)}(g)^2 - (N(\Fq)+1)^2\right)\right) \\
= & \Norm_{K_g/\Q}\left(a_{\sigma(\Fq)}(g)^2 - (N(\Fq)+1)^2\right)
\end{align*}
showing that~$M_{\Fq,\mathfrak{S}}(^\tau g) = M_{\Fq,\mathfrak{S}}(g)$.

\subsection{Interchanging $a$ and~$b$} 
\label{ss:Interchanging}

From Proposition~\ref{P:Crba} we see that $C_r(a,b)$ and~$C_r(b,a)$ are related by the quadratic twist by~$-1$, which, for all~$\Fq$ of good reduction for~$J_r(a,b)$, gives the relation of traces
\begin{equation*}
 a_\Fq(J_r(b,a)) = \chi_{-1}|_{G_K}(\Frob_\Fq) a_\Fq(J_r(a,b))
 = \left\{
\begin{array}{ll}
a_\Fq(J_r(a,b)) & \text{if }N(\Fq) \equiv 1\pmod{4} \\
-a_\Fq(J_r(a,b)) & \text{if }N(\Fq) \equiv -1\pmod{4} \\
\end{array}
\right.
\end{equation*}
where $\chi_{-1}$ is the character of~$G_\Q$ fixing~$\Q(i)$. We stress the fact that, since~\(K/\Q\) is Galois, the condition~\(N(\Fq)\equiv\pm1\pmod{4}\) only depends on the rational prime~\(q\) below~\(\Fq\).

\subsection{Quadratic twists by~$\chi_r$}\label{ss:twists}

Let $\chi_r$ denote the mod~$r$ cyclotomic character of~$G_\Q$. It has conductor~$r$ and order~$r-1$. In particular, $\chi_r|_{G_K}$ is quadratic and unramified away from~$\Fq_r$ in~$K$. Therefore, for~$g\in S$ of level~\(\calN = \Fq_r^2\calN'\) with~\(\Fq_r\nmid\calN'\), the quadratic twist $g\otimes \chi_r|_{G_K}$ is of level $\calN'$, $\Fq_r\calN'$ or $\Fq_r^2\calN'$. 
Moreover, for all~\(\Fq \nmid \calN\), we have the following relation on Fourier coefficients
\begin{equation*}
 a_\Fq(g \otimes \chi_r|_{G_K}) = \chi_r(\Frob_\Fq) a_\Fq(g)
 = \left\{
\begin{array}{ll}
    a_\Fq(g) & \text{if }N(\Fq) \equiv 1\pmod{r} \\
    -a_\Fq(g) & \text{if }N(\Fq) \equiv -1\pmod{r} \\
\end{array} 
 \right..
\end{equation*}

In particular, this relation shows that we only need to access the value of the trace of one form in each pair of twisted forms. This still saves significant amounts of time due to the time that~{\tt Magma} needs to access Fourier coefficients at primes of large norm. As before, we note that, since~\(K/\Q\) is Galois, the condition~\(N(\Fq)\equiv\pm1\pmod{r}\) only depends on the rational prime~\(q\) below~\(\Fq\).

\subsection{Discarding isomorphisms}\label{ss:final}
In this last subsection, we put together all the arguments explained so far and discuss how we can finally rule out the existence of certain non-trivial primitive solutions to equation~\eqref{E:rrp} for an explicit value of~\(r\) using properties of Kraus' Frey hyperelliptic curves.

Assume that there exists~\(c\in\Z\) such that~$(a,b,c)$ is a non-trivial primitive solution to equation~\eqref{E:rrp} satisfying
\[
a\equiv0\pmod{2}\quad\text{and}\quad b\equiv1\pmod{4}.
\]
Moreover, assume that for every prime ideal~\(\Fp\) in~\(K\) above~\(p\), the representation~\(\rhobar_{J_r,\Fp}\) is absolutely irreducible. According to Theorem~\ref{T:levelLowering}, there exists~\(g\in S\), the set of Hilbert newforms of level~$\calN = 2^2 \Fq_r^2 \n_d$, parallel weight~$2$, and trivial character such that~\eqref{E:iso7} holds and in addition~\(K\subset K_g\).

For each rational prime~\(q\) such that~\(q\nmid 2rd\), each prime ideal~\(\Fq\) of~\(K\) above~\(q\) and each non-empty subset~\(\mathfrak{S}\) of~\(\Gal(K/\Q)\), we construct an integer~\(B_{\Fq,\mathfrak{S}}(g)\) (to be computed with~\texttt{Magma} in practical situations) which only depends on the Hecke constituent~\([g]\) of~\(g\). We will have that~\(p\mid B_{\Fq,\mathfrak{S}}(g)\).

Therefore, if for each~\(g\in S\) running over a set of representatives of the Hecke constituents, we contradict this divisibility relation for one such prime~\(q\), then we have ruled out the existence of a solution~\((a,b,c)\) as above. The construction of $B_{\Fq,\mathfrak{S}}(g)$ uses all the results proved so far.

Let~$(x',y')\in\{0,\dots,q-1\}^2\backslash\{(0,0)\}$ be such that we have
$(a,b) \equiv (x',y') \pmod{q}$. 

Assume that~$q \nmid x'^r + y'^r$. Then \(J_r(a,b)\) has good reduction at all prime ideals above~$q$ in~\(K\) and
\begin{align*}
\mathcal{T}_q(a,b)  & = \left\{ a_{\Fq'}(J_r(a,b)) : \Fq'\mid q\text{ in }\calO_K \right\} \\
& = \left\{ a_{\Fq'}(J_r(x',y')) : \Fq'\mid q\text{ in }\calO_K \right\} = \mathcal{T}_q(x',y'). 
\end{align*} 
From the discussion in~\S \ref{ss:boundonp}--Subsection~\ref{ss:strict_subfields} 
it follows that for any subset~\(\mathfrak{S}\) of~\(\Gal(K/\Q)\), we have
\[
p\mid \prod_{u\in\mathcal{T}_q(x',y')}\gcd_{\sigma\in\mathfrak{S}}\Norm_{E_g/\Q}(\sigma(u) - a_{\sigma(\Fq)}(g)),
\]
where~\(E_g = K_g\) or $E_g$ is a subextension of~\(K_g/\Q\) such that~\([E_g:\Q] = [K_g : K]\). This latter case occurs only when~\(q\) is inert in~\(K\), \(g\) is not a base change from any subextension of~\(K/\Q\) and~\([g]\) is fixed under the action of~\(\Gal(K/\Q)\).

Putting this together with the discussion in~Subsection~\ref{ss:multiplicativeBound}, 
we conclude that, independently of the reduction type of~\(J_r(a,b)\) at the prime ideals above~\(q\) (that is independently of~\(a^r + b^r\pmod{q}\)), we have that $p$ divides
\begin{equation*}
M_{\Fq,\mathfrak{S}}(g)  \cdot
\prod_{\substack{0\leq x, y\leq q-1 \\ q\nmid x^r + y^r}}  \prod_{u\in\mathcal{T}_q(x,y)}\gcd_{\sigma\in\mathfrak{S}}\Norm_{E_g/\Q}\left(\sigma(u) - a_{\sigma(\Fq)}(g)\right).
\end{equation*}
Moreover, the  two quantities above are invariants of $[g]$ since they are products of quantities which we know to have this property. 

We will now refine this quantity using the discussion in~Subsection~\ref{ss:Interchanging}; these improvements considerably lower the total computational time in practice.

Since~\(K/\Q\) is Galois, there is a unique residue degree among the primes~$\Fq \mid q$; denote it by~\(f_q\). 

Assume~\(q^{f_q}\equiv 1\pmod{4}\). Then, for any~\(\Fq\mid q\) in~\(K\), we have~\(N(\Fq) \equiv 1\pmod{4}\) and by~\S \ref{ss:Interchanging}, for any~\((x,y)\in\{0,\dots,q-1\}^2\) such that~\(q\nmid x^r + y^r\), we have~\(a_\Fq(J_r(x,y)) = a_\Fq(J_r(y,x))\). 

In this case, we set
\begin{equation}\label{eq:Bq1}
B_{\Fq,\mathfrak{S}}(g) := M_{\Fq,\mathfrak{S}}(g)  \cdot
\prod_{\substack{0\leq x \leq y\leq q-1 \\ q\nmid x^r + y^r}}  \prod_{u\in\mathcal{T}_q(x,y)}\gcd_{\sigma\in\mathfrak{S}}\Norm_{E_g/\Q}\left(\sigma(u) - a_{\sigma(\Fq)}(g)\right)
\end{equation}
where the notation is exactly as above; we highlight the difference that the outermost product differs from above in that it contains the additional restriction $x \leq y$.
 
Assume~\(q^{f_q}\equiv -1\pmod{4}\). Then, for any~\(\Fq\mid q\) in~\(K\), we have~\(N(\Fq) \equiv 1\pmod{4}\). Then, for any~\((x,y)\in\{0,\dots,q-1\}^2\) such that~\(q\nmid x^r + y^r\), we have~\(a_\Fq(J_r(x,y)) = -a_\Fq(J_r(y,x))\). 

In this case, we set
\begin{equation}\label{eq:Bq2}
B_{\Fq,\mathfrak{S}}(g) := M_{\Fq,\mathfrak{S}}(g)  \cdot
\prod_{\substack{0\leq x \leq y\leq q-1 \\ q\nmid x^r + y^r}}  \prod_{u\in\mathcal{T}_q(x,y)}\gcd_{\sigma\in\mathfrak{S}}\Norm_{E_g/\Q}\left(\sigma(u)^2 - a_{\sigma(\Fq)}(g)^2\right).
\end{equation}
Finally, from the discussion above, we have that $p \mid B_{\Fq,\mathfrak{S}}(g)$ in all cases; furthermore, when~$\mathfrak{S} = \Gal(K/\Q)$, we obtain a quantity~$B_q(g) = B_{\Fq,\mathfrak{S}}(g)$ depending only on~$q$ and~$[g]$.

We now explain how the discussion in~Subsection~\ref{ss:twists} allows to reduce the number of forms to consider. Assume here that~\(g = h \otimes \chi_r|_{G_K}\) where~\(h\) is in~\(S\) (i.e., \(h\) also has level~\(2^2\Fq_r^2\n_d\)). 
We have~\(a_\Fq(g) = \chi_r(\Frob_\Fq) a_\Fq(h)\) 
for any prime ideal~\(\Fq\nmid 2\Fq_r\n_d\) in~\(K\), where~\(\chi_r(\Frob_\Fq)\in\{\pm1\}\) depends only on the rational prime~\(q\) below~$\Fq$. 

Further, we have~\(K_g = K_h\), the quantity~$M_{\Fq,\mathfrak{S}}(g)$ defined in~Subsection~\ref{ss:multiplicativeBound} satisfies
\begin{align*}
    M_{\Fq,\mathfrak{S}}(g)  & = \gcd_{\sigma\in\mathfrak{S}}\Norm_{K_g/\Q}\left(a_{\sigma(\Fq)}(g)^2 - (N(\Fq)+1)^2\right) \\
    & = \gcd_{\sigma\in\mathfrak{S}}\Norm_{K_g/\Q}\left(a_{\sigma(\Fq)}(h)^2 - (N(\Fq)+1)^2\right) \\
    & = M_{\Fq,\mathfrak{S}}(h).
\end{align*}
Moreover, the fields $E_g$ and~$E_h$ are equal in all cases explained above.

Now, if~\(q^{f_q}\equiv 1\pmod{r}\) or~\(q^{f_q}\equiv -1\pmod{4}\), then we deduce that~\(B_{\Fq,\mathfrak{S}}(g) = B_{\Fq,\mathfrak{S}}(h)\).

Else, if~\(q^{f_q}\equiv -1\pmod{r}\) and~\(q^{f_q}\equiv 1\pmod{4}\), then
$B_{\Fq,\mathfrak{S}}(g)B_{\Fq,\mathfrak{S}}(h)$ divides  
\[
M_{\Fq,\mathfrak{S}}(g)^2 \cdot
\prod_{\substack{0\leq x \leq y\leq q-1 \\ q\nmid x^r + y^r}}  \prod_{u\in\mathcal{T}_q(x,y)}\gcd_{\sigma\in\mathfrak{S}}\Norm_{E_g/\Q}\left(\sigma(u)^2 - a_{\sigma(\Fq)}(g)^2\right),  
\]
and proving that~$p$ does not divide this number implies 
$p \nmid B_{\Fq,\mathfrak{S}}(g)$ and $p \nmid B_{\Fq,\mathfrak{S}}(h)$.

In both cases, dealing with one form allows to get the desired information on the other.

Finally, if we further assume that~\(r\mid a + b\), then according to Corollary~\ref{C:levelLowering}, we have 
\[
\rhobar_{J_r,\Fp} \otimes \chi_r|_{G_K} \simeq \rhobar_{g,\mathfrak{P}}
\]
for some $\mathfrak{P} \mid p$ in~$K_g$ of~$g$, where $g$ is a Hilbert modular newform over~\(K\) of parallel weight~2, trivial character and level $2^2 \Fq_r \n_d$ such that $K \subset K_g$ and~$\mathfrak{P} \cap \calO_K = \Fp$.

Let again~\(q\nmid 2rd\) be a rational prime. With the same notation as before, if~\(q^{f_q}\equiv -1\pmod{4}\), then define~\(B_{\Fq,\mathfrak{S}}(g)\)
as in~\eqref{eq:Bq2}; if~\(q^{f_q}\equiv 1\pmod{4}\) and~\(q^{f_q}\equiv 1\pmod{r}\), then define~
\(B_{\Fq,\mathfrak{S}}(g)\) 
as in~\eqref{eq:Bq1}.
Lastly, if~\(q^{f_q}\equiv 1\pmod{4}\) and~\(q^{f_q}\equiv -1\pmod{r}\), we define
\begin{equation}\label{eq:Bq3}
B_{\Fq,\mathfrak{S}}(g) := M_{\Fq,\mathfrak{S}}(g)  \cdot
\prod_{\substack{0\leq x \leq y\leq q-1 \\ q\nmid x^r + y^r}}  \prod_{u\in\mathcal{T}_q(x,y)}\gcd_{\sigma\in\mathfrak{S}}\Norm_{E_g/\Q}\left(\sigma(u) + a_{\sigma(\Fq)}(g)\right)
\end{equation}
and the same reasoning as above, implies that $p$ divides $B_{\Fq,\mathfrak{S}}(g)$ in all cases.

\subsection{Refined elimination}\label{ss:refined_elimination}

Sometimes, after following the procedure explained in~Subsection~\ref{ss:final}, we are unable to obtain a contradiction to~\eqref{E:iso7} for a specific Hecke constituent~$[g]$ and a concrete exponent~$p$. It might be the case that this remaining exponent can be eliminated using the above procedure with additional auxiliary primes; however, we do not know how to predict a new prime $\Fq$ that succeeds and, moreover, the required computational time grows fast with the norm of~$\Fq$ due to the growing number of congruence classes of~$(x,y)$ mod~$q$ to be considered.   

Alternatively, we can use a technique introduced by the authors in~\cite[\S 7.3]{BCDF2}
which we refer to as {\it refined elimination}. This technique is designed to discard the possibility that concrete primes $\mathfrak{P} \mid p$ in~$K_g$ appear on the right hand side of~\eqref{E:iso7}; the main difficulty with refined elimination is to compute the prime factorization of~$p$ in~$K_g$. In {\it loc. cit.} we explained  how to apply it in the context of the modular method using Frey elliptic curves; to apply it in our present setting, we need to slightly modify the procedure due to the fact that the field of coefficients of our Frey variety is not~$\Q$. 

In the previous sections, after replacing~$g$ by~${}^\tau g$ if needed, we assumed that~\eqref{E:iso7} holds and $\mathfrak{P} \cap \calO_K = \Fp$. This led to~\eqref{E:congmodP} and then to $p \mid B_{\Fq,\mathfrak{S}}(g)$ for some~$\Fq$.
The observation behind refined elimination is that some prime~$p$ may divide $B_{\Fq,\mathfrak{S}}(g)$ but~\eqref{E:iso7} does not hold for all~$\mathfrak{P} \mid p$ in~$K_g$. To detect this we work directly with the residual representations $\rhobar_{J_r,\Fp}$ and $\rhobar_{g,\mathfrak{P}}$. We stress that 
$\rhobar_{g,\mathfrak{P}}$ and $\rhobar_{g,\mathfrak{P}'}$ do not have to be isomorphic when $\mathfrak{P}$ and~$\mathfrak{P}'$ are different primes in~$K_g$ dividing~$p$. So we will show that $\rhobar_{J_r,\Fp} \not\simeq \rhobar_{g,\mathfrak{P}}$ for all $\mathfrak{P} \mid p$ in $K_g$ which implies the same is true for all the forms in~$[g]$ because $\rhobar_{g,\mathfrak{P}} \simeq \rhobar_{{}^\tau g,\mathfrak{P}^\tau}$ for all~$\tau \in G_\Q$ by Proposition~\ref{pp:compatilbeIdeal}.  

We assume that $K \subset K_g$ and $p$ is totally split in~$K_g$. In particular, $p$ is totally split in~$K$ and we write the two prime factorizations as
\[
p\calO_K = \Fp_1 \cdots \Fp_k \qquad \text{ and } \qquad p\calO_{K_g} = \mathfrak{P}_1 \cdots \mathfrak{P}_n. 
\]
Since $\Z \subset \calO_K \subset \calO_{K_g}$ and $\Z/p\Z = \F_p$ has no non-trivial automorphisms, we have a natural identification of all the residue fields
\[
 \calO_K/\Fp_i = \calO_{K_g}/\mathfrak{P}_j= \F_p.
\]
Assume now that a non-trivial primitive solution~$(a,b,c)$ to~\eqref{E:rrp} gives rise to
~\eqref{E:iso7} for some~$i,j$, that is 
\begin{equation}\label{eq:isoij}
\rhobar_{J_r,\Fp_i} \simeq \rhobar_{g,\mathfrak{P}_j}.
\end{equation}
Let~$q\nmid 2rd$ be a prime such that~$J_r$ has multiplicative reduction at one (hence all)~$\Fq \mid q$ in~$K$. From the isomorphism~\eqref{eq:isoij} we see that~$g$ satisfies the level raising condition mod~$\mathfrak{P}_j$ at all~$\Fq \mid q$. 
Therefore, for all~$\Fq \mid q$, we have
\[
(\Norm_{K/\Q}(\Fq) + 1)^2 - a_\Fq(g)^2 \pmod{\mathfrak{P}_j} = 0 \; \text{ in } \F_p.
\]
With {\tt Magma} we run over all the values for~$j$ and check there is always some~$\Fq$ for which this equality fails. We conclude that $J_r$ has good reduction at all~$\Fq \mid q$ in~$K$. 
 
Let \((x,y)\in\{0,\dots,q-1\}^2\) be such that~\(q\nmid x^r + y^r\) 
and $(a,b) \equiv (x,y) \pmod{q}$. 

Pick $u \in \calT_q(x,y)$ and assume that $a_\Fq(J_r) = u$.

Now by taking traces at $\Frob_{\Fq}$ in~\eqref{eq:isoij} and using~\eqref{E:traces} we have 
\begin{equation}\label{eq:equalityFp}
\sigma(u) \pmod{\Fp_i} = a_{\sigma(\Fq)}(g) \pmod{\mathfrak{P}_j} \; \text{ as elements in } \F_p \; \text{ for all } \sigma \in \Gal(K/\Q).
\end{equation}

With {\tt Magma} we run over all the values for~$i,j$ and verify that there is always some~$\sigma$ for which this equality fails. We repeat this for all $u \in \calT_q(x,y)$, reaching the same conclusion, hence $(a,b) \not\equiv (x,y) \pmod{q}$. Repeating this for all $(x,y)$ as above contradicts~\eqref{eq:isoij}. 

In practice, we can accelerate this procedure by using the properties discussed in the previous sections. This is analogous to the various cases explained in~Subsection~\ref{ss:final}.
Recall that $f_q$ denotes the common residue degree of the prime ideals above~\(q\) in~\(K\), so that $\Norm_{K/\Q}(\Fq) = q^{f_q}$ for all~$\Fq \mid q$. 

We have the following cases:
\begin{enumerate}
    \item We want to discard only~$g$.
We can run only over congruence classes such that $x \leq y$ while replacing~\eqref{eq:equalityFp} with 
\begin{equation}\label{eq:equalityFpsquare}
\sigma(u)^2 \pmod{\Fp_i} = a_{\sigma(\Fq)}(g)^2 \pmod{\mathfrak{P}_j} \; \text{ as elements in } \F_p \; \text{ for all } \sigma \in \Gal(K/\Q)
\end{equation}
if $q^{f_q} \equiv -1 \pmod{4}$;
    \item We want to discard both $g$ and~$g \otimes \chi_r$ at the same level.
We run over congruence classes such that $x \leq y$ while
testing~\eqref{eq:equalityFp} when $q^{f_q} \equiv 1 \pmod{4}$ and 
$q^{f_q} \equiv 1 \pmod{r}$, otherwise testing~\eqref{eq:equalityFpsquare};
    \item We want to discard only a twist of~$g$. This occurs when $r \mid a+b$, in which case~\eqref{eq:isoij} is replaced by 
$\rhobar_{J_r,\Fp_i} \simeq \rhobar_{g,\mathfrak{P}_j} \otimes \chi_r|_{G_K}$.
We run over congruence classes such that $x \leq y$ while
testing~\eqref{eq:equalityFp} if $q^{f_q} \equiv 1 \pmod{4}$ and 
$q^{f_q} \equiv 1 \pmod{r}$, testing~\eqref{eq:equalityFpsquare} 
if $q^{f_q} \equiv -1 \pmod{4}$ and otherwise testing
\begin{equation*}
\sigma(u) \pmod{\Fp_i} = - a_{\sigma(\Fq)}(g) \pmod{\mathfrak{P}_j} \; \text{ as elements in } \F_p \; \text{ for all } \sigma \in \Gal(K/\Q).
\end{equation*}
\end{enumerate}

\section{A multi-Frey approach to $x^{11} + y^{11} = z^p$ using Frey abelian varieties} \label{S:1111p}

This section is devoted to proving Theorem~\ref{T:main11} using a multi-Frey approach.

\subsection{A Frey curve over a totally real quintic field}
\label{ss:quintic}

The goal of this section is to prove the following result which settles (almost) Theorem~\ref{T:main11} in the case~\(2\mid a + b\).

\begin{theorem}
\label{result-from-F}
For every prime~\(p\geq5\), there are no integer solutions $(a,b,c)$ to the equation
\begin{equation}\label{main-equ_case_11}
    x^{11} + y^{11} = z^p
\end{equation}
such that $abc \not= 0$, $\gcd(a,b,c) = 1$, and $2 \mid a + b$. 
\end{theorem}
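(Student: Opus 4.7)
The plan is to apply the modular method using a Frey elliptic curve $E=E(a,b)$ defined over the totally real quintic field $K=\Q(\zeta_{11})^+$, following the construction of Freitas~\cite{F} for signature $(r,r,p)$ with $r=11$. Given a putative non-trivial primitive solution $(a,b,c)$ with $2\mid a+b$, $abc\neq 0$, and $p\geq 5$, after possibly swapping $a$ and $b$ and changing signs, the coprimality and parity hypotheses ensure that exactly one of $a,b$ is even; hence the factor $a+b$ is even, which forces~$E$ to have multiplicative reduction at the primes of $K$ above $2$. This is the crucial point, because it pins down the conductor exponent at $2$ to be $1$ rather than leading to a potentially good reduction type with a much larger and less explicit Serre level.

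The next steps proceed along the standard lines. First, modularity of $E/K$ must be established: since $K$ is totally real and the relevant modularity lifting theorems for elliptic curves over totally real fields are available (through the work of Breuil--Diamond, Freitas--Le Hung--Siksek, and others), $E/K$ is modular provided the residual representation $\rhobar_{E,3}$ or $\rhobar_{E,5}$ meets the required hypotheses; standard checks of this kind can be carried out case-by-case. Second, irreducibility of $\rhobar_{E,p}$ for $p\geq 5$ must be shown; this can be done using isogeny bounds over $K$ (via Mazur-type results for elliptic curves over number fields of small degree, or via the rational point analysis on modular curves $X_0(\ell)/K$) together with the structure of the inertial types given by multiplicative reduction at the primes above $2$. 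Third, level lowering then gives an isomorphism
\[
\rhobar_{E,p}\simeq \rhobar_{g,\Fp}
\]
for some Hilbert newform $g$ over $K$ of parallel weight $2$, trivial character, and some explicit Serre level $\mathcal{N}_0$ depending only on the ramification at $2$ and at the prime above $11$.

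The remaining step is elimination: one computes (using \texttt{Magma}) all Hilbert newforms at level $\mathcal{N}_0$ over $K$ and, for each such $g$, shows that $\rhobar_{E,p}\not\simeq \rhobar_{g,\Fp}$ by comparing Hecke eigenvalues at several auxiliary primes $\Fq\nmid 22p$, exactly as in the refined elimination procedure of Section~\ref{S:eliminationJ} (suitably simplified because $E$ is an elliptic curve rather than a higher-dimensional Frey variety, so traces live in $\Q$ and many of the complications in \S\ref{ss:computing_traces}--\S\ref{ss:strict_subfields} evaporate). To strengthen the elimination where a single Frey source is insufficient, one combines information from $E$ with information from the hyperelliptic Frey variety $J_{11}(a,b)/K$: a multi-Frey comparison using both systems of traces rules out the stubborn forms and small exponents.

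The main obstacle is the elimination step for non-CM Hilbert newforms at level $\mathcal{N}_0$ over the quintic field~$K$: the space is large, the Hecke fields of the newforms can have substantial degree, and trace computations at primes of large norm become expensive. Particular care is needed for the small exponents $p\in\{5,7,13\}$, where the naive norm-of-difference-of-traces divisibility bounds may fail to exclude $p$; these cases will require either the refined elimination technique of~\S\ref{ss:refined_elimination} (working prime by prime in the Hecke field of~$g$) or a multi-Frey argument combining the traces of $E$ and of $J_{11}$ at the same auxiliary prime $\Fq$. Once all newforms and all small exponents are dispatched, the contradiction is complete and the theorem follows.
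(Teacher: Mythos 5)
Your outline follows the same route as the paper: Freitas' Frey curve $F_{a,b}$ over $K=\Q(\zeta_{11})^+$, modularity from~\cite{F}, irreducibility of $\rhobar_{F,p}$ for $p\geq 5$, classical level lowering to level $\Fq_2$ or $\Fq_2\Fq_{11}$, and elimination by trace comparison. However, there is a genuine gap at the end. After standard elimination with the auxiliary primes and refined elimination, what survives is the unique Hecke constituent $\Ff_1$ at level $\Fq_2$ for the exponents $p\in\{5,31\}$ (not $\{5,7,13\}$), and these cases \emph{cannot} be dispatched by further trace comparisons of either kind you propose: the reason they survive is that $\rhobar_{\Ff_1,\Fp_5}$ and $\rhobar_{\Ff_1,\Fp_{31}}$ are in fact \emph{reducible}, so the congruences of Hecke eigenvalues one is trying to contradict can genuinely hold. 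The paper's essential extra ingredient is a theorem of Martin on Eisenstein congruences (\cite[Theorem 2.1]{Martin}), applied to the quaternionic space of level $\Fq_2$ over $\Q(\sqrt 5)$, which proves this reducibility; the contradiction then comes from the irreducibility of $\rhobar_{F^{(\delta)},p}$ established separately (via \cite{FS}, torsion bounds, and explicit analysis of $X_0(14)$, $X_0(17)$, $X_0(19)$, $X_0(20)$, $X_0(26)$ over $K$). Without this reducibility argument your proof does not close.

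Your proposed multi-Frey fallback with $J_{11}(a,b)$ also fails specifically in the regime of this theorem: if $2\mid a+b$ and $\gcd(a,b)=1$ then \emph{both} $a$ and $b$ are odd (your claim that ``exactly one of $a,b$ is even'' is backwards), so no relabelling can achieve the normalization $a\equiv 0\pmod 2$, $b\equiv 1\pmod 4$ on which the conductor computation at $\Fq_2$ (Theorem~\ref{T:conductorJI}) and the level-lowering statement (Theorem~\ref{T:levelLowering}) for $J_{11}$ rest. The paper uses $J_{11}$ only for the complementary case $2\nmid a+b$, $11\mid a+b$. To salvage your plan you would either need to import Martin's reducibility argument, or redo the local analysis of $J_{11}$ at $2$ for $a,b$ both odd, neither of which is indicated in your proposal.
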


Let $K = \Q(\zeta_{11})^+$ and let~$\calO_K$ denote its ring of integers. Write $\Fq_2$ and $\Fq_{11}$ for the unique primes in~$K$ above 2 and 11, respectively.

Let~$(a,b,c)$ be a non-trivial primitive solution to equation~\eqref{main-equ_case_11}. In particular, \(a\) and~\(b\) are non-zero and coprime.

We consider the Frey curve $F = F_{a,b} := E_{(a,b)}^{(1,2)}$ over~$K$ as defined in \cite[p.~619]{F} and given by a model of the form
\begin{equation}\label{eq:FoverK11}
F_{a,b}\ : \ Y^2 = X(X-A_{a,b})(X+B_{a,b}),
\end{equation}
where
\begin{eqnarray*}
   A_{a,b} & = & (\omega_2 - \omega_1)(a + b)^2 \\
   B_{a,b} & = & (2 - \omega_2)(a^2 + \omega_1ab + b^2)
\end{eqnarray*}
and~\(\omega_j = \zeta_{11}^j + \zeta_{11}^{-j}\). The standard invariants of model~\eqref{eq:FoverK11} are
\begin{eqnarray*}
c_4(F_{a,b}) & = & 2^4(A_{a,b}^2 + A_{a,b}B_{a,b} + B_{a,b}^2), \\
c_6(F_{a,b}) & = & 2^5(2A_{a,b}^3 + 3A_{a,b}^2B_{a,b} - 3A_{a,b}B_{a,b}^2 - 2B_{a,b}^3), \\
\Delta(F_{a,b}) & = & 2^4\left(A_{a,b}B_{a,b}C_{a,b}\right)^2,
\end{eqnarray*}
where
\[
C_{a,b}:=-(A_{a,b} + B_{a,b})=(\omega_1 -2)(a^2 + \omega_2ab + b^2).
\]
Note that $F_{a,b}$ has full $2$-torsion defined over $K$. 

Write $F = F_{a,b}$ and let
\begin{equation*}
  \delta = \begin{cases}
    -11 & \text{ if } 11 \nmid a + b \\
       1 & \text{ if }  11 \mid a + b.
\end{cases}
\end{equation*}
We write~\(F^{(\delta)}\) for the quadratic twist of~$F$ by~$\delta$ given by the model
\begin{equation*}
y^2 = x(x - \delta A_{a,b})(x + \delta B_{a,b})
\end{equation*}
with standard invariants
\[
c_4\big(F^{(\delta)}\big) = \delta^2 c_4(F),\qquad c_6\big(F^{(\delta)}\big) = \delta^3 c_6(F),\qquad \Delta\big(F^{(\delta)}\big) = \delta^6 \Delta(F).
\]
For a prime ideal~$\Fq$ in~$K$ and an ideal~$\mathfrak{a}$, we denote by~$v_\Fq(\mathfrak{a})$ the valuation at~$\Fq$ of~$\mathfrak{a}$. For an element~$\alpha\in\calO_K\backslash\{0\}$, we write~$v_\Fq(\alpha)$ for~$v_\Fq(\alpha\calO_K)$. 

Let~$N(\bar{\rho}_{F^{(\delta)},p})$ be the Serre level of the mod~$p$ representation~$\bar{\rho}_{F^{(\delta)},p}$ associated with~$F^{(\delta)}$ (i.e., the prime-to-$p$ part of its Artin conductor). 

\begin{proposition}\label{prop:condCurve11}
Suppose $2 \mid a + b$. Then we have that $N(\bar{\rho}_{F^{(\delta)},p}) = \Fq_2 \Fq_{11}^t$ with 
\begin{equation*}
t = \left\{ \begin{array}{ll}
0 & \text{if $11 \nmid a + b$}, \\
1& \text{if $11 \mid a + b$}. \\
\end{array} \right.
\end{equation*}
\end{proposition}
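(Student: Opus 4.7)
The plan is to determine the conductor exponent of $\bar\rho_{F^{(\delta)},p}$ at each prime $\Fq$ of $K$ coprime to $p$ and take the product; the Serre level is the prime-to-$p$ part of the Artin conductor of $\bar\rho_{F^{(\delta)},p}$. I would split the analysis into three regions: primes $\Fq \nmid 22$, the prime $\Fq_{11}$, and the prime $\Fq_{2}$.

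For primes $\Fq \nmid 22$, I would use the factorization
\[
a^{11}+b^{11} = (a+b)\prod_{j=1}^{5}(a^2+\omega_j ab+b^2) = c^p
\]
in $\calO_K$ together with the explicit formulas for $A_{a,b}$, $B_{a,b}$, $C_{a,b}$ to see that $\Fq \mid \Delta(F) = 2^4(A_{a,b}B_{a,b}C_{a,b})^2$ if and only if $\Fq \mid c$. If $\Fq \nmid c$, both $F$ and $F^{(\delta)}$ have good reduction at $\Fq$, contributing $0$ to the exponent. If $\Fq \mid c$, exactly one of $A_{a,b}, B_{a,b}, C_{a,b}$ is a non-unit at $\Fq$ while $c_4(F)$ is a unit, giving multiplicative reduction; moreover, the non-unit factor is a $p$-th power at $\Fq$ by coprimality of $(a+b)$ and the factors $a^2+\omega_j ab +b^2$ locally at $\Fq$, so $v_\Fq(\Delta_{\min})$ is divisible by $p$. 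Tate curve theory then gives that $\bar\rho_{F^{(\delta)},p}$ is unramified at $\Fq$, yielding exponent $0$.

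At $\Fq_{11}$, the analysis depends on whether $11$ divides $a+b$. If $11 \nmid a+b$, then $\Fq_{11} \nmid A_{a,b}$ and the valuations of the standard invariants show that $F$ has additive, potentially multiplicative reduction trivialized by the ramified quadratic twist $\sqrt{-11}$; consequently $F^{(-11)}$ acquires good reduction at $\Fq_{11}$, so $t = 0$. If $11 \mid a+b$, then $\delta = 1$, $\Fq_{11} \mid A_{a,b}$, and Tate's algorithm shows multiplicative reduction with $v_{\Fq_{11}}(\Delta_{\min})$ prime to $p$ (for $p \geq 5$), giving exponent $1$ and so $t = 1$. At $\Fq_2$: since $2 \mid a+b$ and $\gcd(a,b) = 1$, both $a$ and $b$ are odd, so $a^2+\omega_j ab+b^2 \equiv \omega_j \pmod 2$ is a unit at $\Fq_2$ whereas $v_{\Fq_2}(A_{a,b}) \geq 2 v_{\Fq_2}(a+b) \geq 2$. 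Applying Tate's algorithm to the model~\eqref{eq:FoverK11}, and noting that $\delta \in \{1,-11\}$ is a unit at $\Fq_2$, yields multiplicative reduction with conductor exponent $1$ at $\Fq_2$, accounting for the factor $\Fq_2$ in the claimed formula.

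The main obstacle is expected to be the local analysis at $\Fq_2$: applying Tate's algorithm at an unramified prime of residue characteristic $2$ is notoriously delicate, and one must keep careful track of the $2$-adic valuations of the units $\omega_2 - \omega_1$ and $2 - \omega_j$ in $K$ to ensure the reduction type is correctly identified. Much of the underlying local data should already be available from Freitas' paper~\cite{F}, where the family $F_{a,b}$ was introduced, so the proof is likely to be organized as a concise invocation of those results combined with the observation that the twist by $\delta$ only modifies the $\Fq_{11}$-behavior.
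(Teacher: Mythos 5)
Your overall strategy is the same as the paper's: determine the local conductor exponent of $F^{(\delta)}$ at each prime and then pass to the Serre level via the standard criterion ($\Fq$ drops out at a multiplicative prime iff $p \mid v_\Fq(\Delta_{\mathrm{min}})$). The only real difference is that the paper outsources the computation at $\Fq_2$ and $\Fq_{11}$ to {\tt Magma}, a series expansion of the invariants in $a,b$, and Papadopoulos' tables, and quotes \cite{F} for multiplicative reduction away from $2\cdot 11$, whereas you propose to run Tate's algorithm by hand. That is viable, but two points need fixing.

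First, at $\Fq_2$ you establish multiplicative reduction with conductor exponent $1$ for the curve, but you never check that $\Fq_2$ survives in the Serre level of the \emph{mod $p$} representation; this is exactly the check you do carry out at the other multiplicative primes, and it is the content of the proposition at $\Fq_2$ (otherwise the answer could be $\Fq_{11}^t$ with no factor of $\Fq_2$). The check does hold: since $a,b$ are odd and $\phi_{11}(a,b)$ is odd, $v_2(a+b)=p\,v_2(c)$, and with $B_{a,b},C_{a,b}$ units at $\Fq_2$ one gets $v_{\Fq_2}(\Delta_{\mathrm{min}})=4v_2(a+b)-8\equiv -8 \pmod p$, which is nonzero mod $p$ for $p$ odd — but you must say this. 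Second, at $\Fq_{11}$ in the case $11\nmid a+b$ you write that $F$ has ``potentially multiplicative reduction trivialized by the ramified quadratic twist''; as stated this is self-contradictory, since a quadratic twist of a potentially multiplicative curve can never acquire good reduction. What is true (and what you need) is that $F$ has \emph{potentially good} reduction at $\Fq_{11}$, with the obstruction killed precisely by the ramified twist by $-11$, so that $F^{(-11)}$ has good reduction there. A minor further slip: $\Fq\mid\Delta(F)$ for $\Fq\nmid 2\cdot 11$ implies $\Fq\mid c$ but not conversely ($\Fq$ may divide a factor $a^2+\omega_jab+b^2$ with $j\neq 1,2$ not appearing in $A_{a,b}B_{a,b}C_{a,b}$); this does not affect the argument, since such primes are good-reduction primes anyway.
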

\begin{proof}
Recall that~\(\omega_2 - \omega_1\), \(2 - \omega_2\) and~\(\omega_1 - 2\) all generate the unique prime ideal~\(\Fq_{11}\) above~\(11\) in~\(K\). Let us show that~$A=A_{a,b}$, $B=B_{a,b}$, and $C=C_{a,b}$ are pairwise coprime away from~\(\Fq_{11}\).

Let~\(\Fq \neq \Fq_{11}\) be a prime ideal in~\(K\) above a rational prime~\(q\). Assume that~\(\Fq\) divides~\(A\). Then, we have~\(a \equiv -b \pmod{q}\) and hence
\[
a^2 + \omega_1 a b + b^2  \equiv a^2 (2 - \omega_1) \not\equiv 0 \pmod{\Fq}, \quad
a^2 + \omega_2 a b + b^2  \equiv a^2 (2 - \omega_2) \not\equiv 0 \pmod{\Fq}
\]
Therefore, \(\Fq\) divides neither~\(B\) nor~\(C\).

Assume now that~\(\Fq\) divides~\(B\). Then, we have~\(a^2 + b^2 \equiv -\omega_1 a b \pmod{\Fq}\) and hence
\[
(a + b)^2 \equiv (2 - \omega_1) a b \not \equiv 0 \pmod{\Fq}, \quad
a^2 + \omega_2 a b + b^2 \equiv (2 - \omega_2) a b \not \equiv 0 \pmod{\Fq}.
\]
Therefore, \(\Fq\) divides neither~\(A\) nor~\(C\).

From the formulas above, it follows that~\(F^{(\delta)}\) is semistable away from~\(\Fq_2\) and~\(\Fq_{11}\) with bad multiplicative reduction precisely at the prime ideals dividing~\(ABC\).

Moreover, for a prime ideal~$\Fq \nmid \Fq_2 \Fq_{11}$, we have~$v_\Fq(\Delta(F^{(\delta)})) = v_\Fq(\Delta(F)) \equiv 0 \pmod p$ by  the assumption that~\((a,b,c)\) is a solution to~\eqref{main-equ}, so the multiplicative primes $\Fq \mid \Delta(F^{(\delta)})$ such that $\Fq \nmid \Fq_2 \Fq_{11}$ do not divide the Serre level $N(\rhobar_{F^{(\delta)},p})$. 

Let us now consider the reduction of~\(F^{(\delta)}\) at~\(\Fq_{11}\). Assume first that~\(11 \nmid a + b\). Then, we have~\(v_{\Fq_{11}}(A) = v_{\Fq_{11}}(\omega_2 - \omega_1) = 1\). Moreover, we have
\[
a^2 + \omega_1 a b + b^2 \equiv a^2 + 2 a b + b^2 \equiv (a + b)^2 \not\equiv 0 \pmod{\Fq_{11}}.
\]
Therefore, we have~\(v_{\Fq_{11}}(B) = 1\) and, similarly, \(v_{\Fq_{11}}(C) = 1\). It follows that~\(v_{\Fq_{11}}(\Delta(F)) = 6\) and hence~\(v_{\Fq_{11}}(\Delta(F^{(\delta)})) = 12\) since~\(\delta = -11\) in this case. In particular, a change of variables shows that the curve~\(F^{(\delta)}\) has good reduction at~\(\Fq_{11}\).

Assume now that~\(11\) divides~\(a + b\). Since, \(11 \sim \Fq_{11}^5\) (the prime~\(11\) is totally ramified in~\(K\)), we have that~\(v_{\Fq_{11}}(A) \geq 11\). Moreover, we have~\((a + b)^2 \equiv 0 \pmod{11^2}\) and in particular, it follows that~\(a^2 + b^2 \equiv - 2 a b \pmod{\Fq_{11}^2}\). We get that
\[
B = (2 - \omega_2) (a^2 + \omega_1 a b + b^2) \equiv a b (2 - \omega_2) (\omega_1 - 2) \pmod{\Fq_{11}^3}.
\]
Therefore, we have~\(v_{\Fq_{11}}(B) = 2\) and, similarly, \(v_{\Fq_{11}}(C) = 2\). Since~\(\delta = 1\) in this case, we have
\[
v_{\Fq_{11}}(c_4(F^{(\delta)})) = v_{\Fq_{11}}(c_4(F)) = 4, \quad v_{\Fq_{11}}(\Delta(F^{(\delta)})) = v_{\Fq_{11}}(\Delta(F)) = 8 + v_{\Fq_{11}}(A) \geq 19.
\]
It follows that (after a change of variables) the curve~\(F^{(\delta)}\) has bad multiplicative reduction at~\(\Fq_{11}\).

It remains to deal with the prime ideal~\(\Fq_2\). We have~\(2 \mid a + b\) by assumption. It then follows that~\(8 \mid a + b\) and~\(2 \nmid a b\) as~\((a,b,c)\) is a solution to~\eqref{main-equ}. We have~\(v_{\Fq_2}(A) = 2v_2(a + b) \geq 6\) since~\(2\) is inert in~\(K\). Moreover, from~\(a^2 + b^2 \equiv - 2 a b \pmod{64}\) we get
\[
a^2 + \omega_1 a b + b^2 \equiv a b (\omega_1 -2) \not\equiv 0 \pmod{\Fq_{2}}.
\]
Therefore, we have~\(v_{\Fq_2}(B) = 0\) and, similarly, \(v_{\Fq_2}(C) = 0\). Since~\(\delta\) is coprime to~\(2\), we have
\[
\left(v_{\Fq_2}(c_4(F^{(\delta)})), v_{\Fq_2}(c_6(F^{(\delta)})), v_{\Fq_2}(\Delta(F^{(\delta)}))\right) = (4, 6, 4 + 2 v_2(A)) = (4, 6, \geq 16).
\]
According to~\cite[Tableau~IV]{papado}, we are either in case~\(7\) in Tate's classification or the equation defining~\(F^{(\delta)}\) is not minimal at~\(\Fq_2\). We use Proposition~4 in \emph{loc. cit.} to distinguish between these two possible cases. Write~\(a_{2, \delta}\) and~\(b_{8, \delta}\) for the \(a_2\)- and~\(b_8\)-coefficients of the Weierstrass model of~\(F^{(\delta)}\), respectively. We have~\(b_{8, \delta} = - \delta^4 (A B)^2\) and hence~\(b_{8, \delta} \equiv 0 \pmod{\Fq_2^5}\). According to this latter proposition, we are in a non-minimal case if and only if~\(a_{2, \delta} = \delta (B - A)\) is a square modulo~\(\Fq_2^2\).

We have~\(A \equiv 0 \pmod{\Fq_2^2}\) and
\begin{align*}
B & = (2 - \omega_2) (a^2 + \omega_1 a b + b^2) \\
& = (4 - \omega_1^2) ((a + b)^2 + (\omega_1 - 2) a b) \\
& = (4 - \omega_1^2) ((a + b)^2 + (\omega_6^2 - 4) a b) \\
& \equiv a^2 \omega_1^2 \omega_6^2 \pmod{\Fq_2^2}.
\end{align*}
Since~\(\delta \in \{-11, 1\}\) is a square modulo~\(4\), we get that~\(a_{2, \delta}\) is a square modulo~\(\Fq_2^2\) and hence we are in the non-minimal case. It follows that the conductor~\(N_{F^{(\delta)}}\) of~\(F^{(\delta)}\) is
\begin{equation*}
N_{F^{(\delta)}} = \Fq_2 \Fq_{11}^t \prod_{\substack{\Fq \mid \Delta(F^{(\delta)}) \\ \Fq \nmid \Fq_2 \Fq_{11}}} \Fq,
\end{equation*}
where the product runs over the prime ideals in~\(K\) dividing~\( \Delta(F^{(\delta)})\) and coprime to~\(\Fq_2 \Fq_{11}\), and~\(t\) is as in the statement.

Finally, we have that $v_\Fq(N(\bar{\rho}_{F^{(\delta)},p})) = v_\Fq(N(\rho_{F^{(\delta)},p})) = v_\Fq(N_{F^{(\delta)}}) $ for $\Fq = \Fq_2$ and $\Fq = \Fq_{11}$ since the valuation $v_\Fq(\Delta(F^{(\delta)})) \not \equiv 0 \pmod p$ for $\Fq = \Fq_2$ and 
for $\Fq = \Fq_{11}$ when $t=1$.
\end{proof}

Let $S_2(N(\bar{\rho}_{F^{(\delta)},p}))$ denote the space of Hilbert cuspforms of level~$N(\bar{\rho}_{F^{(\delta)},p})$, parallel weight~2 and trivial character. The curve $F$ is modular by~\cite[Corollary 6.4]{F} and hence so is~$F^{(\delta)}$.

\begin{proposition}
\label{irred-F}
  Suppose $2 \mid a + b$. The representation~$\rhobar_{F^{(\delta)},p}$ is irreducible for $p \not= 2,3$.
\end{proposition}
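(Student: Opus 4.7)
The plan is to reduce the statement to an irreducibility question for the untwisted curve $F = F_{a,b}$ and then exploit its full $2$-torsion, together with its restricted conductor away from $\Fq_2\Fq_{11}$, to force a non-cuspidal $K$-point on a modular curve that is known to have none.

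First I would observe that $F^{(\delta)}$ is the quadratic twist of $F$ by the character $\chi_\delta$ cutting out $K(\sqrt\delta)/K$, so
\[
\rhobar_{F^{(\delta)},p}\simeq \rhobar_{F,p}\otimes\chi_\delta.
\]
Twisting by a quadratic character preserves (ir)reducibility, hence it suffices to show that $\rhobar_{F,p}$ is irreducible for every prime $p\neq 2,3$. Suppose for contradiction that $\rhobar_{F,p}$ is reducible. Since $p$ is odd and $F/K$ has all of its $2$-torsion rational, $F$ acquires a $K$-rational cyclic subgroup of order $2p$, so there is a non-cuspidal $K$-rational point on $X_0(2p)$. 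The strategy is to rule this out.

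Next I would use Proposition~\ref{prop:condCurve11} (and the underlying analysis of $c_4(F),c_6(F),\Delta(F)$) to show that $F$ is semistable at all primes of $K$ outside $\Fq_2\Fq_{11}$, and that at every prime $\Fq\nmid 2\cdot 11$ dividing the discriminant, we have $v_\Fq(\Delta(F))\equiv 0\pmod p$ (using that $(a,b,c)$ is a primitive solution to $a^{11}+b^{11}=c^p$ and that $ABC = -(\omega_2-\omega_1)(2-\omega_2)(\omega_1-2)(a+b)^2\phi_{11}(a,-b)$ factors essentially as a unit times $c^p$ up to the primes above $2$ and $11$). Combined with reducibility of $\rhobar_{F,p}$ at a multiplicative prime, this forces the isogeny character to be unramified outside $\Fq_2\Fq_{11}\cdot p$, putting us in position to apply the standard class-field-theoretic bounds (of the type used in \cite{F} and in Proposition~\ref{P:irred_CFT} above) to the two characters of the semisimplification of $\rhobar_{F,p}$.

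For large $p$, the main obstacle is controlling the isogeny character at the bad primes $\Fq_2$ and $\Fq_{11}$; I would handle this by combining (i)~the explicit local description of $F$ at $\Fq_2$ (where it has multiplicative reduction, so the isogeny character is either unramified or equal to the cyclotomic character on inertia) and at $\Fq_{11}$ (where the reduction type is read off from $c_4,c_6,\Delta$), with (ii)~a Kraus-type argument on totally positive units of $K$ (as in Lemma~\ref{L:KrausAppendix}) to derive a congruence $u^{12}\equiv 1\pmod p$ that can only hold for $p$ in a finite explicit list. For each such small prime $p\in\{5,7,11,13,\dots\}$ below the resulting bound, I would finish by exhibiting a non-cuspidal $K$-point on $X_0(2p)$ and using known results on $X_0(N)(K)$ for $K=\Q(\zeta_{11})^+$ (via its $j$-line and the fact that $F$ would then have potentially good reduction of integral $j$-invariant at a prime of multiplicative reduction, a contradiction). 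The main difficulty I anticipate is the case $p=5$ and $p=7$, where $X_0(10)$ and $X_0(14)$ have small genus and additional CM or exceptional $K$-points must be excluded by hand, most likely by comparing the $j$-invariant of $F$ to the finite list of $j$-invariants of elliptic curves over $K$ with a cyclic $2p$-isogeny.
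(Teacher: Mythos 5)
Your overall strategy (isogeny-character analysis via class field theory for large $p$, explicit modular-curve computations for small $p$) belongs to the same family as the paper's proof, but there is a genuine gap in how you propose to handle large $p$. You plan to derive a unit congruence of the shape $u^{12}\equiv 1\pmod p$ and conclude that $p$ lies in a finite explicit list. This does not work here: the informative congruences coming from Lemma~\ref{L:KrausAppendix} arise only from \emph{mixed} ramification signatures of the isogeny character at the primes above $p$; the constant signatures give $\Norm_{K/\Q}(u)^{12}\equiv 1\pmod p$, which holds for \emph{every} $p$ since $\Norm_{K/\Q}(u)=\pm 1$. For this curve the relevant quantity (the constant $B$ of \cite{FS}) equals $1$, so mixed signatures never occur and no bound on $p$ is produced by the unit argument. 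What the computation actually yields is the structural conclusion that one of the two diagonal characters is unramified at every prime above $p$, hence (being also unramified outside $p$ by the semistability of $F^{(\delta)}$, and since $K$ has narrow class number $1$) trivial. The paper converts this into a $K$-rational point of order $p$ on $F^{(\delta)}$ (or a $p$-isogenous curve) and invokes the uniform torsion bounds of Derickx--Kamienny--Stein--Stoll to dispose of all $p>19$. Your proposed endgame---a non-cuspidal $K$-point on $X_0(2p)$---cannot be ruled out for arbitrarily large $p$ by any known unconditional result, since no uniform boundedness of isogenies over a quintic field is available; so without the trivial-character/torsion-point step your argument does not close for large $p$.

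Two smaller remarks. First, it is cleaner to run the reducibility analysis on $F^{(\delta)}$ rather than on $F$: Proposition~\ref{prop:condCurve11} shows that $F^{(\delta)}$ has no additive primes, so the diagonal characters are immediately unramified outside $p$, whereas for the untwisted $F$ with $11\nmid a+b$ the reduction at $\Fq_{11}$ is additive and you would have to carry that ramification through the class-field-theoretic step. Second, your treatment of the small primes matches the paper's in spirit (explicit curves such as $X_0(20)$ and $X_0(14)$, and a quotient of $X_0(26)$ for $p=13$) and is fine once the torsion-point, rather than merely isogeny, information is in hand.
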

\begin{proof}
Suppose that $\rhobar_{F^{(\delta)},p}$ is reducible, that is,
\[ \rhobar_{F^{(\delta)},p} \sim \begin{pmatrix} \theta & \star\\ 0 & \theta' \end{pmatrix}
\quad \text{with} \quad \theta, \theta' : G_K \rightarrow \F_p^* 
\quad \text{satisfying} \quad \theta \theta' = \chi_p.\]

If $\Fq \nmid p$ is a multiplicative prime of~$F$ for which $\theta$ ramifies, then $\theta'$ also ramifies at~$\Fq$ and the conductor exponent at~$\Fq$ of $\rhobar_{F,p}$ is $\geq 2$, contradicting the fact that the conductor exponent of $F$ at $\Fq$ is~$1$. Therefore, the characters $\theta$ and $\theta'$ ramify only at $p$ and additive primes of $F^{(\delta)}$. We see from
Proposition~\ref{prop:condCurve11} that there are no such additive primes, hence $\theta$ and $\theta'$ are unramified away from~$p$.

Let us first assume~\(p \geq 17\). The unit group of~$K$ is generated by $\{-1, \epsilon_1, \epsilon_2, \epsilon_3, \epsilon_4\}$ where
\[
\epsilon_1 = \omega_1^4 - 3\omega_1^2 + 1, \quad
\epsilon_2 = \omega_1^2 + \omega_1 - 1, \quad
\epsilon_3 = \omega_1, \quad
\epsilon_4 = -\omega_1^4 - \omega_1^3 + 3\omega_1^2 + 3\omega_1. 
\]
According to Proposition~\ref{prop:condCurve11} and the theory of Tate curves, for every prime ideal~\(\Fp\) above~\(p\) in~\(K\), there exists an integer~\(r_\Fp \in \{0,1\}\) such that~\(\theta|_{I_\Fp} = (\chi_p|_{I_\Fp})^{r_\Fp}\). Write~\(s_\Fp = 12r_\Fp\). Let~\(G = \Gal(K/\Q)\) be the Galois group of~\(K\) over~\(\Q\). Then \(G\) acts transitively on~\(\{\Fp \mid p\}\). Fix~\(\Fp_0 \mid p\). For each~\(\tau \in G\), write~\(s_\tau\) for the number~\(s_\Fp\) associated to the ideal~\(\Fp:=\tau^{-1}(\Fp_0)\) as above. We refer to~\(\mathbf{s}:=(s_\tau)_{\tau \in G}\) as the isogeny signature of~\(F\) at~\(p\). For an element~\(\alpha\) of~\(K\) we define the twisted norm associated to~\(\mathbf{s}\) by
\[
\mathcal{N}_{\mathbf{s}}(\alpha) = \prod_{\tau \in G}\tau(\alpha)^{s_\tau}.
\]
According to~\cite[Corollary~3.2]{FS}, we have
\[
p \mid A_{\mathbf{s}}:=\Norm\Big(\gcd_{1 \leq i \leq 4}(\left(\mathcal{N}_{\mathbf{s}}(\epsilon_1) - 1)\calO_K\right)\Big)
\]
and hence, in the notation of \cite[Theorem~1]{FS}, \(p\) divides the integer~\(B\) which is defined as the least common multiple of the~\(A_{\mathbf{s}}\) taken over all signatures~\(\mathbf{s} \neq (0)_{\tau \in G}, (12)_{\tau \in G}\). Here, we compute $B = 1$. We conclude that we have~\(\mathbf{s} = (0)_{\tau \in G}\) or~\((12)_{\tau \in G}\). Therefore, we have either~\(r_{\Fp} = 0\) for every~\(\Fp \mid p\), or~\(r_{\Fp} = 1\) for every~\(\Fp \mid p\). Since~\(\theta \theta' = \chi_p\), this means that exactly one of $\theta$, $\theta'$ ramifies at~$p$. So, replacing~$F^{(\delta)}$ by a $p$-isogenous curve if necessary, we can assume $\theta$ is unramified at~$p$. Thus $\theta$ is unramified at all finite primes. Since $K$ has narrow class number 1 it follows that $\theta = 1$.

We conclude that $F^{(\delta)}$ has a $p$-torsion point defined over~$K$. From \cite[Theorem~1.2]{smallTorsion}, we see that this is impossible for $p > 19$, concluding the proof in this case. Note that $F^{(\delta)}$ has full $2$-torsion over~$K$ so,
for $p = 5, 7, 11, 17, 19$, the curve $F^{(\delta)}$ gives rise to points on the modular curves
\[
X = X_0(20), X_0(14), X_0(11), X_0(17), X_0(19),
\]
respectively, which are all elliptic curves with finitely many $K$-rational points. Using an explicit map to the $j$-line, we deduce that all $K$-rational points are either cuspidal or have $j$-invariant in the following list:
\begin{align*}
      N & = 14: j = 16581375, -3375 \\
      N & = 11: j = -32768, -121, -24729001 \\
      N & = 17: j = -882216989/131072, -297756989/2\\
      N & = 19: j = -884736.
\end{align*}
We then check that there are no $a,b \in \Z$ such that the $j$-invariant of $F = F_{a,b}$ lies in the above list. 

For $p = 13$, we need to rule out non-cuspidal $K$-rational points on $X_0(26)$. The quotient $X_0(26)/\langle w_2 \rangle$ is isomorphic to an elliptic curve~\(E\) such that $E(K) = E(\Q) \simeq \Z/3\Z$. Using an explicit map $\psi : X_0(26) \rightarrow E$, we deduce that the $K$-rational points of $X_0(26)$ are contained in the inverse image $\psi^{-1}(E(K))$. The $K$-rational points of the inverse image consist exactly of the four cusps of~$X_0(26)$.
\end{proof}  
  
\begin{remark}
If~\(11\mid a + b\), then~\(\delta = 1\) and the proposition gives the irreducibilty of~\(\rhobar_{F,p}\). Else, if~\(11\nmid a + b\), then~\(\rhobar_{F,p}\) is also irreducible as a twist of~\(\rhobar_{F^{(-11)},p}\). Therefore, Proposition~\ref{irred-F} makes \cite[Theorem~7.1]{F} explicit and sharp in the case~\(r = 11\).
\end{remark}

We now prove Theorem~\ref{result-from-F}. Suppose $2 \mid a + b$. For~\(p\ge5\), an application of level lowering theorems for Hilbert modular forms (see~\cite{Fuj,Jarv,Raj}), together with the previous result, implies that there is a Hilbert newform $f \in S_2(N(\bar{\rho}_{F^{(\delta)},p}))$
such that for a prime~$\Fp \mid p$ in~$\Qbar$  we have
\begin{equation}
\label{level-lower-F}
\rhobar_{F^{(\delta)},p} \simeq \rhobar_{f,\Fp}.
\end{equation}
We have $N(\bar{\rho}_{F^{(\delta)},p}) = \Fq_2$ or $\Fq_2 \Fq_{11}$ depending on whether $11 \nmid a + b$ or $11 \mid a + b$ by Proposition~\ref{prop:condCurve11}.

There is a single Hecke constituent (see~Subsection~\ref{ss:Hecke} for background and notation on Hecke constituents) at level~$\Fq_2$ and two at level $\Fq_2 \Fq_{11}$.  We write~$\Ff_1$ for one of the forms belonging to the unique Hecke constituent at level~$\Fq_2$.
Using the auxiliary 
primes $q=3,23$ we discard all the newforms in both levels for all prime exponents~$p$ except for the form $\Ff_1$ when $p \in \{5,31\}$. The field of coefficients of $\Ff_1$ 
is $\Q_{\Ff_1} = \Q(\sqrt{5})$ in which we have the prime ideal factorizations 
$(5) = \Fp_5^2$ and $(31) = \Fp_{31}\Fp_{31}'$. 
Applying refined elimination for $p=31$ with the auxiliary prime $q=23$ allows to discard also one of the prime ideals above~\(31\), say~$\Fp_{31}'$. (We note this calculation in particular shows that $\rhobar_{\Ff_1,\Fp_{31}'}$ is irreducible.) Hence, it remains possible that  
\begin{equation}\label{eq:remaining_isom}
\rhobar_{F,5} \simeq \rhobar_{\Ff_1,\Fp_5} \qquad \text{or} \qquad \rhobar_{F,31} \simeq \rhobar_{\Ff_1,\Fp_{31}}.
\end{equation}
We claim that the representations on the right hand side of the previous isomorphisms are reducible. To prove this claim we apply \cite[Theorem 2.1]{Martin} with $r=1$, $d=5$, $\mathfrak{N} = \mathfrak{N}_1 \mathfrak{N}_2$ with $\mathfrak{N}_1 = \Fq_2$ and $\mathfrak{N}_2 = 1$. 
To compute the quantity $m(\calO)$ we use formula~(1.6) in {\it loc. cit.} where the value of $\zeta_{K}(-1)$ is obtained using {\tt Magma}. We find that~\(m(\calO) = 155/132\) and hence that $p=5$ and $p=31$ divide the numerator of $m(\calO)$. Therefore, for all primes $\Fp \mid p$ in $\Q_{\Ff_1}$, there is at least one newform in the space whose associated mod~$\Fp$ representation is reducible. The only newforms in the space are $\Ff_1$ and $^\sigma\Ff_1$ (obtained by conjugating the Fourier coefficients by the unique non-trivial element $\sigma \in \Gal(\Q_{\Ff_1}/\Q)$). Since $\sigma(\Fp_{31}) = \Fp_{31}'$, it follows
from the prime factorizations of~$5$ and~$31$ that 
$\rhobar_{\Ff_1,\Fp_5}$, $\rhobar_{^\sigma\Ff_1,\Fp_5}$,  $\rhobar_{\Ff_1,\Fp_{31}}$, and $\rhobar_{^\sigma\Ff_1,\sigma(\Fp_{31})}$
are reducible.  

Therefore the isomorphisms~\eqref{eq:remaining_isom} do not hold as the representations attached to the Frey curve are irreducible by Proposition~\ref{irred-F}. This eliminates the remaining exponents, completing the proof of Theorem~\ref{result-from-F}.

\begin{remark}
In principle, the Frey curve $F$ and its twists can also be used to approach the remaining unobstructed congruence class $2 \nmid a + b$, $11 \nmid a + b$, $11 \nmid ab$. Further study would show that this approach requires the computation of Hilbert newforms  over $K$ of level $\Fq_2^3$. Although this space  only has dimension 1,201, a full computation is currently out of reach. The reason is that this level requires the indefinite quaternion algebra algorithm \cite{hilbert-computation}, which is slow even for moderate dimensions.
\end{remark}

\subsection{Proof of Theorem~\ref{T:main11}}

For $p = 2, 3$, the result follows from \cite{DarmonMerel}. Therefore assume~\(p\geq5\).

We will apply a multi-Frey approach which we outline below.

The case $2 \mid a + b$ is proven in Theorem~\ref{result-from-F} using the curve~\(F\). Therefore, assume that~$2 \nmid a + b$ and~\(11\mid a + b\). For $p = 11$, this is a special case of Fermat's Last Theorem (which is proved using the classical Hellegouarch-Frey curve, different from those in this paper). Hence, we may further suppose that~\(p\neq11\).

At the expense of switching $a$ and $b$, and negating both $a$ and $b$, we may assume that $a \equiv 0 \pmod 2$ and $b \equiv 1 \pmod 4$. Let $C = C_{11}(a,b)$ be the hyperelliptic Frey curve constructed in Section~\ref{S:Freyrrp}. To ease notation, we write~$J = \Jac(C)/K$ with~\(K = \Q(\zeta_{11})^+\).

By Theorem~\ref{P:irredSupercuspidal}, $\rhobar_{J,\Fp}$ is irreducible  and from Corollary~\ref{C:levelLowering}, we have that
\begin{equation}
  \rhobar_{J,\Fp} \otimes \chi_{11}|_{G_K} \simeq \rhobar_{g,\mathfrak{P}}
\end{equation}
for some Hilbert newform $g$ of parallel weight $2$, trivial character, and level $\Fq_2^2 \Fq_{11}$ where $\Fp \mid p$ is a prime of $K$ and $\mathfrak{P} \mid p$ is a prime of the field of coefficients $K_g$ of $g$. Moreover, we have~\(K\subset K_g\).

There are 14 Hilbert newforms at level $\Fq_2^2 \Fq_{11}$, but only 6 of these have coefficient field containing~$K$. The indices of these newforms \cite{programs} are
\begin{equation*}
  \left\{ 7, 8, 9, 10, 11, 12 \right\},
\end{equation*}
 and the respective degrees of their coefficient fields are
 \begin{equation*}
   \left\{ 5, 10, 25, 25, 50, 60 \right\}.
 \end{equation*}  
It takes slightly less than 4 hours (see~\cite{programs} for the detail on the machine used) to compute the space of Hilbert newforms and around 20 minutes to perform the elimination steps described in~Subsection~\ref{ss:final} using the auxiliary primes $q = 3, 5, 7, 23$. We conclude that the exponent~$p$ lies in $\left\{ 2, 3, 11 \right\}$, which gives a contradiction.

\section{Reduction to CM forms in Darmon's program}

In this section, we prove Theorem~\ref{eliminate-to-CM} and Corollary~\ref{assume-conj}. Let~\((a,b,c)\) be a integers such that
\begin{equation*}
  a^5 + b^5 = c^p, \qquad abc \ne 0, \qquad \gcd(a,b,c) = 1
\end{equation*}
where~\(p\) is a prime, \(p\notin\{2,3,5\}\).

From~\cite[Theorem 4]{BCDF2}, we have that $2 \nmid a +b$ and $5 \nmid a + b$. So up to switching the roles of~$a$ and~$b$ and/or negating both $a$ and $b$, we may assume that $a \equiv 0 \pmod 2$ and $b \equiv 1 \pmod 4$.

Here~\(K\) denotes the quadratic field~\(\Q(\zeta_5)^+ = \Q(\sqrt{5})\). Write~\(\Fq_2\) and~\(\Fq_5\) for the unique prime ideals above~\(2\) and~\(5\) in~\(K\) respectively.

Let $\Fp$ be a prime ideal above~\(p\) in~$K$. From Proposition~\ref{P:irredSupercuspidal} and Theorem~\ref{T:levelLowering} we have
\begin{equation*}
\rhobar_{J_5(a,b),\Fp} \simeq \rhobar_{g,\mathfrak{P}},
\end{equation*}
where~$g$ is in the new subspace $S$ of Hilbert newforms of level $\Fq_2^2 \Fq_{5}^2$, parallel weight $2$, and trivial character. Moreover,  from conclusion~(\ref{item:LLitemiv}) of that theorem we have~$K \subset K_g$ and $K_g$ is the coefficient field of $g$.

The space $S$ has dimension $3$ and decomposes into $2$ Hecke constituents. The only Hilbert newform with field of coefficients $K$ corresponds to $J_5(0,1)$.

Finally, because switching the roles of $a$ and $b$ and/or negating both $a$ and $b$ changes $\rhobar_{J_5(0,1),\Fp}$ by a character of order dividing $2$, we obtain the conclusion in the form stated.

We finally prove Corollary~\ref{assume-conj}.  The variety $J_5(a,b)$ satisfies the hypotheses of Conjecture~\ref{CartanCase}. Thus for large enough~$p$, the representation~$\rhobar_{J_5(a,b),\Fp}$ has image not contained in the normalizer of a Cartan subgroup. This contradicts the isomorphism $\rhobar_{J_5(a,b),\Fp} \simeq \rhobar_{J_5(0,1),\Fp} \otimes \chi$ in Theorem~\ref{eliminate-to-CM} because $\rhobar_{J_5(0,1),\Fp}$ is contained in the normalizer of a Cartan since $J_5(0,1)$ has CM.


\begin{thebibliography}{10}

\bibitem{SGA7}
{\em Groupes de monodromie en g\'{e}om\'{e}trie alg\'{e}brique. {I}}.
\newblock Lecture Notes in Mathematics, Vol. 288. Springer-Verlag, Berlin-New
  York, 1972.
\newblock S\'{e}minaire de G\'{e}om\'{e}trie Alg\'{e}brique du Bois-Marie
  1967--1969 (SGA 7 I), Dirig\'{e} par A. Grothendieck. Avec la collaboration
  de M. Raynaud et D. S. Rim.

\bibitem{FreyConductor}
Martin Azon, Mar Curcó-Iranzo, Maleeha Khawaja, Céline Maistret, and Diana
  Mocanu.
\newblock Conductor exponents for families of hyperelliptic curves.
\newblock ArXiv preprint,
  \href{https://arxiv.org/abs/2410.21134}{https://arxiv.org/abs/2410.21134},
  2024.

\bibitem{BLGGT}
Thomas Barnet-Lamb, Toby Gee, David Geraghty, and Richard Taylor.
\newblock Potential automorphy and change of weight.
\newblock {\em Ann. of Math. (2)}, 179(2):501--609, 2014.

\bibitem{beal-survey}
Michael Bennett, Preda Mih\u{a}ilescu, and Samir Siksek.
\newblock The {G}eneralized {F}ermat {E}quation.
\newblock In {\em Open problems in mathematics}, pages 173--205. Springer,
  [Cham], 2016.

\bibitem{Bennett-Dahmen}
Michael~A. Bennett and Sander~R. Dahmen.
\newblock Klein forms and the generalized superelliptic equation.
\newblock {\em Ann. of Math. (2)}, 177(1):171--239, 2013.

\bibitem{Billerey-cubic}
Nicolas Billerey.
\newblock Formes homog\`enes de degr\'{e} 3 et puissances {$p$}-i\`emes.
\newblock {\em J. Number Theory}, 128(5):1272--1294, 2008.

\bibitem{BCDDF}
Nicolas Billerey, Imin Chen, Lassina Demb\'el\'e, Luis Dieulefait, and Nuno
  Freitas.
\newblock Some extensions of the modular method and {F}ermat equations of
  signature $(13,13,n)$.
\newblock {\em Publ. Mat.}, 67(2):715--741, 2023.

\bibitem{programs}
Nicolas Billerey, Imin Chen, Luis Dieulefait, and Nuno Freitas.
\newblock Supporting files for this paper,
  \href{https://github.com/NicolasBillerey/xhyper}{https://github.com/NicolasBillerey/xhyper}.

\bibitem{BCDF1}
Nicolas Billerey, Imin Chen, Luis Dieulefait, and Nuno Freitas.
\newblock A result on the equation {$x^p+y^p=z^r$} using {F}rey abelian
  varieties.
\newblock {\em Proc. Amer. Math. Soc.}, 145(10):4111--4117, 2017.

\bibitem{BCDF2}
Nicolas Billerey, Imin Chen, Luis Dieulefait, and Nuno Freitas.
\newblock A multi-{F}rey approach to {F}ermat equations of signature $(r,r,p)$.
\newblock {\em Trans. Amer. Math. Soc.}, 371(12):8651--8677, 2019.

\bibitem{xhyper_vol2}
Nicolas Billerey, Imin Chen, Luis Dieulefait, and Nuno Freitas.
\newblock On {D}armon's program for the generalized {F}ermat equation, {II}.
\newblock {\em Math. Comp.}, 94(354):1977--2003, 2025.

\bibitem{xhyper_vol1_published}
Nicolas Billerey, Imin Chen, Luis Dieulefait, and Nuno Freitas.
\newblock On {D}armon’s program for the generalized {F}ermat equation,~{I}.
\newblock {\em Journal für die reine und angewandte Mathematik (Crelles
  Journal)}, 2025.

\bibitem{BilPar11}
Yuri Bilu and Pierre Parent.
\newblock Serre's uniformity problem in the split {C}artan case.
\newblock {\em Ann. of Math. (2)}, 173(1):569--584, 2011.

\bibitem{Boeckle}
Gebhard B\"{o}ckle.
\newblock Galois representations.
\newblock In {\em Travaux math\'{e}matiques. {V}ol. {XXIII}}, volume~23 of {\em
  Trav. Math.}, pages 5--35. Fac. Sci. Technol. Commun. Univ. Luxemb.,
  Luxembourg, 2013.

\bibitem{Brauer-Nesbitt}
Richard Brauer and Cecil Nesbitt.
\newblock On the modular characters of groups.
\newblock {\em Ann. of Math. (2)}, 42:556--590, 1941.

\bibitem{BreuilDiamond}
Christophe Breuil and Fred Diamond.
\newblock Formes modulaires de {H}ilbert modulo {$p$} et valeurs d'extensions
  entre caract\`eres galoisiens.
\newblock {\em Ann. Sci. \'Ec. Norm. Sup\'er. (4)}, 47(5):905--974, 2014.

\bibitem{Carayol86}
Henri Carayol.
\newblock Sur les repr\'{e}sentations {$l$}-adiques associ\'{e}es aux formes
  modulaires de {H}ilbert.
\newblock {\em Ann. Sci. \'{E}cole Norm. Sup. (4)}, 19(3):409--468, 1986.

\bibitem{ChenKoutsianas1}
Imin Chen and Angelos Koutsianas.
\newblock A modular approach to {F}ermat {E}quations of signature $(p,p,5)$
  using {F}rey hyperelliptic curves.
\newblock ArXiv preprint,
  \href{https://arxiv.org/abs/2210.02316}{https://arxiv.org/abs/2210.02316},
  2025.

\bibitem{ChenSiksek}
Imin Chen and Samir Siksek.
\newblock Perfect powers expressible as sums of two cubes.
\newblock {\em J. Algebra}, 322(3):638--656, 2009.

\bibitem{Chi-W}
Wen-Chen Chi.
\newblock {$l$}-adic and {$\lambda$}-adic representations associated to abelian
  varieties defined over number fields.
\newblock {\em Amer. J. Math.}, 114(2):315--353, 1992.

\bibitem{Coh00}
Henri Cohen.
\newblock {\em Advanced topics in computational number theory}, volume 193 of
  {\em Graduate Texts in Mathematics}.
\newblock Springer-Verlag, New York, 2000.

\bibitem{CoDiTa99}
Brian Conrad, Fred Diamond, and Richard Taylor.
\newblock Modularity of certain potentially {B}arsotti-{T}ate {G}alois
  representations.
\newblock {\em J. Amer. Math. Soc.}, 12(2):521--567, 1999.

\bibitem{Curtis-Reiner}
Charles\thinspace{}W. Curtis and Irving Reiner.
\newblock {\em Representation theory of finite groups and associative
  algebras}.
\newblock AMS Chelsea Publishing, Providence, RI, 2006.
\newblock Reprint of the 1962 original.

\bibitem{DahmenPhD}
Sander\thinspace{}R. Dahmen.
\newblock {\em Classical and modular methods applied to Diophantine equations}.
\newblock PhD thesis, University of Utrecht, 2008.

\bibitem{Darmon44p}
Henri Darmon.
\newblock The equation {$x^4-y^4=z^p$}.
\newblock {\em C. R. Math. Rep. Acad. Sci. Canada}, 15(6):286--290, 1993.

\bibitem{DarmonNN2}
Henri Darmon.
\newblock The equations {$x^n+y^n=z^2$} and {$x^n+y^n=z^3$}.
\newblock {\em Internat. Math. Res. Notices}, (10):263--274, 1993.

\bibitem{DarmonEps}
Henri Darmon.
\newblock Faltings plus epsilon, {W}iles plus epsilon, and the generalized
  {F}ermat equation.
\newblock {\em C. R. Math. Rep. Acad. Sci. Canada}, 19(1):3--14, 1997.

\bibitem{DarmonRigid}
Henri Darmon.
\newblock Modularity of fibres in rigid local systems.
\newblock {\em Ann. of Math. (2)}, 149(3):1079--1086, 1999.

\bibitem{DarmonDuke}
Henri Darmon.
\newblock Rigid local systems, {H}ilbert modular forms, and {F}ermat's {L}ast
  {T}heorem.
\newblock {\em Duke Math. J.}, 102(3):413--449, 2000.

\bibitem{DG}
Henri Darmon and Andrew Granville.
\newblock On the equations $z^m = f(x,y)$ and $ax^p+by^q = cz^r$.
\newblock {\em Bull. London Math. Soc.}, 27(6):513--543, 1995.

\bibitem{DarmonMerel}
Henri Darmon and Lo\"ic Merel.
\newblock Winding quotients and some variants of {F}ermat's {L}ast {T}heorem.
\newblock {\em J. Reine Angew. Math.}, 490:81--100, 1997.

\bibitem{Deligne-Mumford}
Pierre Deligne and David Mumford.
\newblock The irreducibility of the space of curves of given genus.
\newblock {\em Inst. Hautes \'{E}tudes Sci. Publ. Math.}, (36):75--109, 1969.

\bibitem{hilbert-computation}
Lassina Demb\'{e}l\'{e} and John Voight.
\newblock Explicit methods for {H}ilbert modular forms.
\newblock In {\em Elliptic curves, {H}ilbert modular forms and {G}alois
  deformations}, Adv. Courses Math. CRM Barcelona, pages 135--198.
  Birkh\"{a}user/Springer, Basel, 2013.

\bibitem{smallTorsion}
Maarten Derickx, Sheldon Kamienny, William Stein, and Michael Stoll.
\newblock Torsion points on elliptic curves over number fields of small degree.
\newblock {\em Algebra Number Theory}, 17(2):267--308, 2023.

\bibitem{Diamond-Shurman}
Fred Diamond and Jerry Shurman.
\newblock {\em A first course in modular forms}, volume 228 of {\em Graduate
  Texts in Mathematics}.
\newblock Springer-Verlag, New York, 2005.

\bibitem{DF2}
Luis Dieulefait and Nuno Freitas.
\newblock Fermat-type equations of signature {$(13,13,p)$} via {H}ilbert
  cuspforms.
\newblock {\em Math. Ann.}, 357(3):987--1004, 2013.

\bibitem{DF1}
Luis Dieulefait and Nuno Freitas.
\newblock The {F}ermat-type equations {$x^5+y^5=2z^p$} or {$3z^p$} solved
  through {$\mathbb{Q}$}-curves.
\newblock {\em Math. Comp.}, 83(286):917--933, 2014.

\bibitem{DPP21}
Luis Dieulefait, Ariel Pacetti, and Panagiotis Tsaknias.
\newblock On the number of {G}alois orbits of newforms.
\newblock {\em J. Eur. Math. Soc. (JEMS)}, 23(8):2833--2860, 2021.

\bibitem{DPSerre}
Luis Dieulefait and Ariel~Mart\'in Pacetti.
\newblock A simplified proof of {S}erre's conjecture.
\newblock {\em Rev. R. Acad. Cienc. Exactas F\'is. Nat. Ser. A Mat. RACSAM},
  117(4):Paper No. 153, 17, 2023.

\bibitem{DDMM-types}
Tim Dokchitser, Vladimir Dokchitser, C\'{e}line Maistret, and Adam Morgan.
\newblock Semistable types of hyperelliptic curves.
\newblock In {\em Algebraic curves and their applications}, volume 724 of {\em
  Contemp. Math.}, pages 73--135. Amer. Math. Soc., [Providence], RI, [2019]
  \copyright 2019.

\bibitem{DDMM-local}
Tim Dokchitser, Vladimir Dokchitser, C\'{e}line Maistret, and Adam Morgan.
\newblock Arithmetic of hyperelliptic curves over local fields.
\newblock {\em Math. Ann.}, 385(3-4):1213--1322, 2023.

\bibitem{Edixhoven}
Bas Edixhoven.
\newblock The weight in {S}erre's conjectures on modular forms.
\newblock {\em Invent. Math.}, 109(3):563--594, 1992.

\bibitem{Ellenberg}
Jordan~S. Ellenberg.
\newblock Finite flatness of torsion subschemes of {H}ilbert-{B}lumenthal
  abelian varieties.
\newblock {\em J. Reine Angew. Math.}, 532:1--32, 2001.

\bibitem{Feit}
Walter Feit.
\newblock {\em The representation theory of finite groups}, volume~25 of {\em
  North-Holland Mathematical Library}.
\newblock North-Holland Publishing Co., Amsterdam-New York, 1982.

\bibitem{F}
Nuno Freitas.
\newblock Recipes to {F}ermat-type equations of the form {$x^r+y^r=Cz^p$}.
\newblock {\em Math. Z.}, 279(3-4):605--639, 2015.

\bibitem{Freitas33p}
Nuno Freitas.
\newblock On the {F}ermat-type equation {$x^3+y^3=z^p$}.
\newblock {\em Comment. Math. Helv.}, 91(2):295--304, 2016.

\bibitem{FS}
Nuno Freitas and Samir Siksek.
\newblock Criteria for irreducibility of {${\rm mod}\, p$} representations of
  {F}rey curves.
\newblock {\em J. Th\'eor. Nombres Bordeaux}, 27(1):67--76, 2015.

\bibitem{Fuj}
Kazuhiro {Fujiwara}.
\newblock {Level optimization in the totally real case}.
\newblock ArXiv preprint,
  \href{https://arxiv.org/abs/math/0602586}{https://arxiv.org/abs/math/0602586},
  2006.

\bibitem{Gelbart-Boston}
Stephen Gelbart.
\newblock Three lectures on the modularity of {$\overline\rho_{E,3}$} and the
  {L}anglands reciprocity conjecture.
\newblock In {\em Modular forms and {F}ermat's last theorem ({B}oston, {MA},
  1995)}, pages 155--207. Springer, New York, 1997.

\bibitem{Geraghty}
David Geraghty.
\newblock Modularity lifting theorems for ordinary {G}alois representations.
\newblock {\em Math. Ann.}, 373(3-4):1341--1427, 2019.

\bibitem{Guitart}
Xavier Guitart.
\newblock Abelian varieties with many endomorphisms and their absolutely simple
  factors.
\newblock {\em Rev. Mat. Iberoam.}, 28(2):591--601, 2012.

\bibitem{Jarv}
Frazer Jarvis.
\newblock Correspondences on {S}himura curves and {M}azur's principle at {$p$}.
\newblock {\em Pacific J. Math.}, 213(2):267--280, 2004.

\bibitem{Ken82}
Monsur\thinspace{}A. Kenku.
\newblock On the number of {${\mathbb Q}$}-isomorphism classes of elliptic
  curves in each {${\mathbb Q}$}-isogeny class.
\newblock {\em J. Number Theory}, 15(2):199--202, 1982.

\bibitem{khareThorne}
Chandrashekhar Khare and Jack\thinspace{}A. Thorne.
\newblock Automorphy of some residually {$S_5$} {G}alois representations.
\newblock {\em Math. Z.}, 286(1-2):399--429, 2017.

\bibitem{serreconj1}
Chandrashekhar Khare and Jean-Pierre Wintenberger.
\newblock Serre's modularity conjecture. {I}.
\newblock {\em Invent. Math.}, 178(3):485--504, 2009.

\bibitem{serreconj2}
Chandrashekhar Khare and Jean-Pierre Wintenberger.
\newblock Serre's modularity conjecture. {II}.
\newblock {\em Invent. Math.}, 178(3):505--586, 2009.

\bibitem{kraushyper}
Alain Kraus.
\newblock On the equation $x^r+y^r=z^p$.
\newblock Notes for a talk given at IEM, Universit\"at Duisburg-Essen, 1998. A
  copy has been included in \cite{programs}.

\bibitem{kraus1}
Alain Kraus.
\newblock Sur l'\'equation $a^3 + b^3 = c^p$.
\newblock {\em Experiment. Math.}, 7:1--13, 1998.

\bibitem{Kraus8}
Alain Kraus.
\newblock Courbes elliptiques semi-stables sur les corps de nombres.
\newblock {\em Int. J. Number Theory}, 3(4):611--633, 2007.

\bibitem{Langlands}
Robert~P. Langlands.
\newblock {\em Base change for {${\rm GL}(2)$}}, volume~96 of {\em Annals of
  Mathematics Studies}.
\newblock Princeton University Press, Princeton, N.J.; University of Tokyo
  Press, Tokyo, 1980.

\bibitem{lemos}
Pedro Lemos.
\newblock Serre's uniformity conjecture for elliptic curves with rational
  cyclic isogenies.
\newblock {\em Trans. Amer. Math. Soc.}, 371(1):137--146, 2019.

\bibitem{Liu-book}
Qing Liu.
\newblock {\em Algebraic geometry and arithmetic curves}, volume~6 of {\em
  Oxford Graduate Texts in Mathematics}.
\newblock Oxford University Press, Oxford, 2002.
\newblock Translated from the French by Reinie Ern\'{e}, Oxford Science
  Publications.

\bibitem{Lockhart}
Paul Lockhart.
\newblock On the discriminant of a hyperelliptic curve.
\newblock {\em Trans. Amer. Math. Soc.}, 342(2):729--752, 1994.

\bibitem{Lutkebohmert}
Werner L\"{u}tkebohmert.
\newblock From {T}ate's elliptic curve to abeloid varieties.
\newblock {\em Pure Appl. Math. Q.}, 5(4, Special Issue: In honor of John Tate.
  Part 1):1385--1427, 2009.

\bibitem{Martin}
Kimball Martin.
\newblock The {J}acquet-{L}anglands correspondence, {E}isenstein congruences,
  and integral {$L$}-values in weight 2.
\newblock {\em Math. Res. Lett.}, 24(6):1775--1795, 2017.

\bibitem{Mason}
John\thinspace{}C. Mason and David\thinspace{}C. Handscomb.
\newblock {\em Chebyshev polynomials}.
\newblock Chapman \& Hall/CRC, Boca Raton, FL, 2003.

\bibitem{beal-prize}
R.~Daniel Mauldin.
\newblock A generalization of {F}ermat's {L}ast {T}heorem: the {B}eal
  conjecture and prize problem.
\newblock {\em Notices Amer. Math. Soc.}, 44(11):1436--1437, 1997.

\bibitem{Mumford}
David Mumford.
\newblock An analytic construction of degenerating abelian varieties over
  complete rings.
\newblock {\em Compositio Math.}, 24:239--272, 1972.

\bibitem{Najman_former_appendix}
Filip Najman.
\newblock On {$r$}-isogenies over {$\Bbb {Q}(\zeta _r)$} of elliptic curves
  with rational {$j$}-invariants.
\newblock {\em Rev. R. Acad. Cienc. Exactas F\'is. Nat. Ser. A Mat. RACSAM},
  118(3):Paper No. 134, 9, 2024.

\bibitem{papado}
Ioannis Papadopoulos.
\newblock Sur la classification de {N}\'eron des courbes elliptiques en
  caract\'eristique r\'esiduelle {$2$} et {$3$}.
\newblock {\em J. Number Theory}, 44(2):119--152, 1993.

\bibitem{Raj}
Ali Rajaei.
\newblock On the levels of mod {$\ell$} {H}ilbert modular forms.
\newblock {\em J. Reine Angew. Math.}, 537:33--65, 2001.

\bibitem{Raynaud}
Michel Raynaud.
\newblock Sch\'{e}mas en groupes de type {$(p,\dots, p)$}.
\newblock {\em Bull. Soc. Math. France}, 102:241--280, 1974.

\bibitem{RibetGalois}
Kenneth\thinspace{}A. Ribet.
\newblock Galois action on division points of {A}belian varieties with real
  multiplications.
\newblock {\em Amer. J. Math.}, 98(3):751--804, 1976.

\bibitem{RibetKorea}
Kenneth\thinspace{}A. Ribet.
\newblock Abelian varieties over {${\mathbb Q}$} and modular forms.
\newblock In {\em Algebra and topology 1992 ({T}aej\u{o}n)}, pages 53--79.
  Korea Adv. Inst. Sci. Tech., Taej\u{o}n, 1992.

\bibitem{Ribet97BadDihedral}
Kenneth\thinspace{}A. Ribet.
\newblock Images of semistable {G}alois representations.
\newblock Number Special Issue, pages 277--297. 1997.
\newblock Olga Taussky-Todd: in memoriam.

\bibitem{Romagny}
Matthieu Romagny.
\newblock Models of curves.
\newblock In {\em Arithmetic and geometry around {G}alois theory}, volume 304
  of {\em Progr. Math.}, pages 149--170. Birkh\"{a}user/Springer, Basel, 2013.

\bibitem{Ser72}
Jean-Pierre Serre.
\newblock Propri\'et\'es galoisiennes des points d'ordre fini des courbes
  elliptiques.
\newblock {\em Invent. Math.}, 15(4):259--331, 1972.

\bibitem{Serre87}
Jean-Pierre Serre.
\newblock Sur les repr\'esentations modulaires de degr\'e $2$ de
  $\text{Gal}(\overline{{\mathbb Q}}/{{\mathbb Q}})$.
\newblock {\em Duke Math. J.}, 54:179--230, 1987.

\bibitem{shi78}
Goro Shimura.
\newblock The special values of the zeta functions associated with {H}ilbert
  modular forms.
\newblock {\em Duke Math. J.}, 45(3):637--679, 1978.

\bibitem{Ste19}
Misja~F.A. Steinmetz.
\newblock {\em Explicit Serre weights for two-dimensional Galois
  representations}.
\newblock PhD thesis, King's College London, 2020.

\bibitem{ttv}
Walter Tautz, Jaap Top, and Alain Verberkmoes.
\newblock Explicit hyperelliptic curves with real multiplication and
  permutation polynomials.
\newblock {\em Canad. J. Math.}, 43(5):1055--1064, 1991.

\bibitem{UlmerConductor}
Douglas Ulmer.
\newblock Conductors of {$\ell$}-adic representations.
\newblock {\em Proc. Amer. Math. Soc.}, 144(6):2291--2299, 2016.

\bibitem{Wiles}
Andrew Wiles.
\newblock Modular elliptic curves and {F}ermat's {L}ast {T}heorem.
\newblock {\em Annals of Mathematics}, 144:443--551, 1995.

\bibitem{WuPhD}
Chenyan Wu.
\newblock {\em $F$-virtual Abelian Varieties of $\GL_2$-type and Rallis Inner
  Product Formula}.
\newblock PhD thesis, Columbia University, 2011.
\newblock Available at
  \href{https://academiccommons.columbia.edu/doi/10.7916/D8RF621W}{https://academiccommons.columbia.edu/doi/10.7916/D8RF621W}.

\bibitem{zyw}
David Zywina.
\newblock On the possible images of the mod $\ell$ representations associated
  to elliptic curves over $\mathbb{Q}$.
\newblock ArXiv preprint,
  \href{https://arxiv.org/abs/1508.07660}{https://arxiv.org/abs/1508.07660},
  215.

\end{thebibliography}

\end{document}